\documentclass[aop,preprint]{imsart}

\usepackage{amsmath,amsthm,amssymb,mathtools}
\usepackage{mathrsfs,bm}
\numberwithin{equation}{section}
\allowdisplaybreaks[1]

\usepackage{graphicx,xcolor}
\usepackage{float}
\usepackage{placeins}

\usepackage[numbers,sort&compress]{natbib}
\usepackage{enumitem}
\setlist[enumerate,1]{label=\upshape(\arabic*)}
\usepackage[colorlinks=true,linkcolor=blue,citecolor=blue,urlcolor=blue,linktocpage=true]{hyperref}

\startlocaldefs

\theoremstyle{plain}
\newtheorem{thm}{Theorem}[section]
\newtheorem{lem}[thm]{Lemma}
\newtheorem{cor}[thm]{Corollary}
\newtheorem{prop}[thm]{Proposition}

\theoremstyle{definition}
\newtheorem{df}[thm]{Definition}
\newtheorem{ex}[thm]{Example}

\theoremstyle{definition}
\newtheorem{rmk}[thm]{Remark}

\newcommand{\C}{\mathbb{C}}
\newcommand{\E}{\mathbb{E}}
\newcommand{\F}{\mathbb{F}}

\newcommand{\I}{\mathbb{I}}

\newcommand{\N}{\mathbb{N}}
\renewcommand{\P}{\mathbb{P}}
\newcommand{\Q}{\mathbb{Q}}
\newcommand{\R}{\mathbb{R}}

\newcommand{\cA}{\mathcal{A}}

\newcommand{\cD}{\mathcal{D}}

\newcommand{\cF}{\mathcal{F}}

\newcommand{\cM}{\mathcal{M}}

\newcommand{\cP}{\mathcal{P}}

\newcommand{\cS}{\mathcal{S}}
\newcommand{\cT}{\mathcal{T}}

\newcommand{\cX}{\mathcal{X}}

\newcommand{\sfD}{\mathsf{D}}

\renewcommand{\a}{\alpha}
\renewcommand{\b}{\beta}
\newcommand{\gm}{\gamma}
\newcommand{\Gm}{\Gamma}
\newcommand{\dl}{\delta}
\newcommand{\Dl}{\Delta}
\newcommand{\eps}{\varepsilon}
\newcommand{\lm}{\lambda}

\renewcommand{\phi}{\varphi}
\newcommand{\sg}{\sigma}
\newcommand{\Sg}{\Sigma}

\newcommand{\Om}{\Omega}

\newcommand{\relmiddle}[1]{\mathrel{}\middle#1\mathrel{}}
\newcommand{\1}{\mbox{\rm1}\hspace{-0.25em}\mbox{\rm l}}
\newcommand{\la}{\langle}
\newcommand{\ra}{\rangle}
\newcommand{\del}{\partial}
\renewcommand{\tilde}{\widetilde}

\newcommand{\esa}{\mathrm{esa}}
\newcommand{\fin}{\mathrm{fin}}
\newcommand{\loc}{\mathrm{loc}}
\newcommand{\LW}{\mathrm{LW}}
\newcommand{\up}{\mathrm{up}}

\DeclareMathOperator{\Bernoulli}{Bernoulli}

\DeclareMathOperator{\dist}{dist}
\DeclareMathOperator{\Dgm}{Dgm}
\DeclareMathOperator{\Dom}{Dom}
\DeclareMathOperator{\Hom}{Hom}
\DeclareMathOperator{\im}{Im}
\DeclareMathOperator{\Ker}{Ker}
\DeclareMathOperator{\Law}{Law}

\DeclareMathOperator{\nullity}{nullity}
\DeclareMathOperator{\OT}{OT}
\DeclareMathOperator{\Pers}{Pers}
\DeclareMathOperator{\Poisson}{Poisson}
\DeclareMathOperator{\PGW}{PGW}

\DeclareMathOperator{\rank}{rank}
\DeclareMathOperator{\RSM}{RSM}
\DeclareMathOperator{\sgn}{sgn}

\DeclareMathOperator{\st}{st}
\DeclareMathOperator{\Supp}{Supp}
\DeclareMathOperator{\tr}{tr}

\makeatletter
\renewcommand\nocontentsline{%
  \let\@@addcontentsline\addcontentsline
  \def\addcontentsline##1##2##3{\let\addcontentsline\@@addcontentsline}%
}
\makeatother

\endlocaldefs

\begin{document}
\begin{frontmatter}
\title{Limit Theorems for Persistent Homology of Multiparameter Complex Processes}
\runtitle{Persistent Homology of Multiparameter Complex Processes}

\begin{aug}
\author[A]{\fnms{Masanori}~\snm{Hino}\ead[label=e1]{hino@math.kyoto-u.ac.jp}\orcid{0009-0004-8835-7115}}
\author[B]{\fnms{Shu}~\snm{Kanazawa}\ead[label=e2]{s.kanazawa@qmul.ac.uk}\orcid{0000-0002-3368-1629}}
\address[A]{Department of Mathematics, Graduate School of Science, Kyoto University\printead[presep={,\ }]{e1}}
\address[B]{School of Mathematical Sciences, Queen Mary University of London\printead[presep={,\ }]{e2}}
\end{aug}

\begin{abstract}
  Multiparameter complex processes, based on the Costa--Farber model, provide a common framework for growing random simplicial complexes, including the Linial--Meshulam and random flag complex processes.
  We establish laws of large numbers for persistent Betti numbers of these processes and prove that, after a deterministic time change and normalization, their persistence diagrams converge weakly in probability to a deterministic finite measure.
  Its quadrant masses are determined by universal limits of normalized persistent Betti numbers, characterized by an explicit finite-dimensional optimization problem.
  We extend the diagram convergence to integrals of continuous functions with polynomial growth in the death coordinate and controlled singularities at the diagonal.
  These results yield convergence of additive and multiplicative lifetime measures and total persistence.
  We further deduce convergence in optimal transport distance.
  The proofs combine local weak convergence of simplicial pairs, spectral analysis of compressed Laplacians accounting for dependence through shared faces, and quantitative estimates for large death coordinates and short lifetimes.
\end{abstract}

\begin{keyword}[class=MSC]
\kwd{55N31}
\kwd{60D05}
\end{keyword}

\begin{keyword}
\kwd{Multiparameter random complexes}
\kwd{persistence diagrams}
\kwd{persistent Betti numbers}
\kwd{local weak convergence}
\kwd{compressed Laplacians}
\end{keyword}
\end{frontmatter}

\tableofcontents

\section{Introduction}
\label{sec:intro}
Random simplicial complexes provide higher-dimensional analogues of random graphs and have been studied through the asymptotic behavior of their geometric and topological invariants.
Among the basic examples are the Linial--Meshulam complex~\cite{LM2006,MW2009} and the random flag complex~\cite{Kah2009,Kah2014}, in which randomness is introduced in a single dimension.
The multiparameter random complex model of Costa and Farber~\cite{CF2016a,CF2016b} places these and many other models in a common framework by allowing a separate inclusion probability in every dimension.

Betti numbers are among the most basic invariants describing the topology of an individual simplicial complex.
For an increasing family of complexes, however, one may also ask when holes appear and disappear as the complexes grow.
Informally, persistent Betti numbers count the holes present at one time that persist until a later time, while a persistence diagram records the birth and death times of holes~\cite{ELZ2002,ZC2005}.
By representing each birth--death pair as a point, a persistence diagram can be viewed as a multiset of points in the birth--death plane, or equivalently as a counting measure on this plane.
Persistent Betti numbers can then be recovered as the masses assigned by this measure to certain quadrants.

Earlier work on persistent homology of random simplicial complex processes includes asymptotic results for lifetime sums~\cite{HK2019,HS2017}.
For the Linial--Meshulam complex process with complete lower skeleton, all birth times in the relevant homological degree are zero, and the persistence diagram is therefore determined by the death times, for which limit theorems were established in~\cite{FMT2023,STY2020}.
More recently, M\'esz\'aros~\cite{Me2025} obtained limits for persistent Betti numbers and persistence diagrams in a modified Linial--Meshulam complex process in which top-dimensional simplices are added together with their faces.

In this paper, we establish limit theorems for persistent Betti numbers and persistence diagrams of the multiparameter complex process as the number of vertices tends to infinity.
Here, every dimension carries its own randomness: each simplex receives an independent weight whose distribution depends on its dimension alone, and a simplex appears once the weights of all its nonempty faces have been exceeded.
The Linial--Meshulam and random flag complex processes, in which randomness is confined to a single dimension, are special cases.
Our first result is a law of large numbers for persistent Betti numbers, whose limit we identify by maximizing an explicit function over the fixed points of an explicit map on a finite-dimensional cube.
Since these limits determine the quadrant masses of the diagrams, the suitably time-changed and normalized diagrams converge weakly in probability to a deterministic finite measure, and their mean measures converge weakly to the same limit.
We also prove convergence for unbounded test functions, allowing polynomial growth in the death coordinate and a controlled singularity at the diagonal.
Consequences include convergence of additive and multiplicative lifetime measures, total persistence, and convergence in the optimal transport distance.
Throughout, homology is reduced and is taken with coefficients in a fixed field of characteristic zero.

\subsection{Multiparameter Complex Processes}
Let $[n]:=\{1,2,\ldots,n\}$, and let $K(n):=2^{[n]}$ denote the complete simplicial complex on the vertex set $[n]$.
Given a multiparameter $\bm p_n=(p_j)_{j=0}^{n-1}\in[0,1]^n$, a random subcomplex of $K(n)$ is constructed inductively over dimension as follows.
Each vertex is first retained independently with probability $p_0$.
Having determined the $(j-1)$-skeleton, every $j$-simplex whose boundary is present is included independently with probability $p_j$.
The resulting random simplicial complex is called the \emph{multiparameter random complex} on $[n]$ with multiparameter $\bm p_n$, and its law is denoted by $X(n,\bm p_n)$.

For this model, define
\begin{equation}\label{eq:qk-rk}
\begin{gathered}
  q_{-1}:=1,
  \quad q_k:=\prod_{j=0}^k p_j^{\binom{k+1}{j+1}}
  \qquad(0\le k\le n-1),\\
  r_k:=\frac{q_{k+1}}{q_k}
  =\prod_{j=0}^{k+1}p_j^{\binom{k+1}j}
  \qquad(-1\le k\le n-2).
\end{gathered}
\end{equation}
Here and throughout, we adopt the conventions $0/0:=0$ and $\binom{0}{0}:=1$.
If $X_n\sim X(n,\bm p_n)$, then
\[
q_k=\P(\sg\in X_n)
\quad\text{and}\quad
r_k=\P(\tau\in X_n\mid\sg\in X_n)
\quad\text{whenever $q_k>0$}
\]
for every $k$-simplex $\sg\in K(n)$ and every $(k+1)$-simplex $\tau\in K(n)$ containing $\sg$.
Thus, the quantity
\[
\binom n{k+1}q_k=\E[f_k(X_n)]
\]
is the expected number of $k$-simplices in $X_n$, where $f_k(X_n)$ denotes the number of $k$-simplices in $X_n$.
Moreover, whenever $q_k>0$, the quantity $(n-k-1)r_k$ is the expected degree of a given $k$-simplex conditional on its presence in $X_n$.

Linial and Peled~\cite{LP2016a,LP2016b} obtained a law of large numbers for Betti numbers of the $d$-Linial--Meshulam complex $Y_d(n,p)$, which corresponds to the multiparameter random complex with $p_j=1$ for $j<d$, $p_d=p\in[0,1]$, and $p_j=0$ for $j>d$.
In~\cite{Kan2022}, the second author established the corresponding limit theorem for multiparameter random complexes.
For $k\in\N_0$ and $c\ge0$, define a continuous map $\Phi_{k,c}\colon[0,1]\to[0,1]$ by
\[
\Phi_{k,c}(u)
:=\exp\bigl(-c(1-u)^{k+1}\bigr),
\qquad u\in[0,1],
\]
and a continuous function $\Psi_{k,c}\colon[0,1]\to\R$ by
\[
\Psi_{k,c}(u)
:=u+c\,u(1-u)^{k+1}-\frac c{k+2}\bigl(1-(1-u)^{k+2}\bigr),
\qquad u\in[0,1].
\]
Set
\begin{equation}\label{eq:hk_c}
h_k(c)
:=\max\bigl\{\Psi_{k,c}(u)\bigm|u\in[0,1],\,u=\Phi_{k,c}(u)\bigr\}.
\end{equation}
The maximization in~\eqref{eq:hk_c} can be restricted to the least and greatest fixed points of $\Phi_{k,c}$; see~\cite[Lemma~4.5]{LP2016a}.
It was shown in~\cite{Kan2022} that, if $X_n\sim X(n,\bm p_n)$,
\[
\binom n{k+1}q_k\longrightarrow\infty
\quad\text{and}\quad
nr_k\longrightarrow c\in(0,\infty)
\quad\text{as $n\to\infty$,}
\]
then, for every $p\in[1,\infty)$,
\begin{equation}\label{eq:LLN_Betti}
\frac{\b_k(X_n)}{\binom n{k+1}q_k}
\xrightarrow{L^p}h_k(c).
\end{equation}
Here and throughout, $\xrightarrow{L^p}$ denotes convergence in $L^p$.

\smallskip
We next introduce the multiparameter complex process, whose one-time marginals are multiparameter random complexes.
Let
\[
\bm p
=\bigl(p_j\colon[0,\infty]\to[0,1]\bigr)_{j=0}^\infty
\]
be a sequence of distribution functions on $[0,\infty]$; that is, each $p_j$ is a nondecreasing and right-continuous function on $[0,\infty]$ satisfying $p_j(\infty)=1$.
We do not require $p_j(0)=0$ or $\lim_{t\to\infty}p_j(t)=1$, so the corresponding distributions may have atoms at $0$ and at $\infty$.
Assign to every nonempty simplex $\rho\in K(n)$ an independent random weight $w_\rho$ with distribution function $p_{\dim(\rho)}$, and set
\begin{equation}\label{eq:def_MCP}
X_n(t)
:=\{\sg\in K(n)\mid w_\rho\le t\text{ for every $\emptyset\ne\rho\subset\sg$}\},
\qquad t\ge0.
\end{equation}
The increasing family of random simplicial complexes
\[
\cX_n=\{X_n(t)\}_{t\ge0}
\]
is called the \emph{multiparameter complex process} on $[n]$ associated with $\bm p$~\cite{HK2019}.
Here, \emph{multiparameter} refers to the probability parameters in different dimensions; the process $\cX_n$ itself is indexed by the single time parameter $t$.
For $t\ge0$, let
\[
\bm p_n(t):=\bigl(p_j(t)\bigr)_{j=0}^{n-1}\in[0,1]^n.
\]
Then
\[
X_n(t)\sim X\bigl(n,\bm p_n(t)\bigr),
\qquad t\ge0.
\]
Thus, $t\mapsto\bm p_n(t)$ may be viewed as a right-continuous path in the parameter space $[0,1]^n$ of multiparameter random complex models on $[n]$.

Replacing $p_j$ by $p_j(t)$ in~\eqref{eq:qk-rk}, define
\begin{equation}\label{eq:qkt-rkt}
\begin{gathered}
  q_{-1}(t):=1,
  \quad q_k(t):=\prod_{j=0}^k p_j(t)^{\binom{k+1}{j+1}},\\
  r_k(t):=\frac{q_{k+1}(t)}{q_k(t)}
  =\prod_{j=0}^{k+1}p_j(t)^{\binom{k+1}j}.
\end{gathered}
\end{equation}
The quantities $q_k(t)$ and $r_k(t)$ have the same probabilistic interpretations as in the static model.
The functions $q_k$ and $r_k$ determine the normalization and the time scale, respectively, in the limit theorems below.

\subsection{Main Results}
Throughout this subsection, fix $k\in\N_0$ and a sequence $\bm p=(p_j)_{j=0}^\infty$ of distribution functions on $[0,\infty]$.
For each $n\in\N$, let $\cX_n=\{X_n(t)\}_{t\ge0}$ be the multiparameter complex process on $[n]$ associated with $\bm p$.

\subsubsection*{A law of large numbers for persistent Betti numbers}
For a simplicial pair $K\subset X$~(i.e., an inclusion pair of simplicial complexes), let
\[
\b_k(K\subset X)
:=\rank\bigl(H_k(K)\longrightarrow H_k(X)\bigr)
\]
denote its $k$th persistent Betti number, where the homomorphism on homology is induced by inclusion.
We refer to Section~\ref{ssec:H_PB_PD} for further details and the homological conventions used throughout the paper.

Our first main result is a law of large numbers for the persistent Betti numbers of the multiparameter complex process $\cX_n=\{X_n(t)\}_{t\ge0}$.
The limit in our first result involves, in addition to $h_k(c)$, a deterministic constant $h_k(c,\bm\theta)$ with $c\ge0$ and $\bm\theta=(\theta_j)_{j=0}^{k+1}\in[0,1]^{k+2}$.
As in~\eqref{eq:hk_c}, its definition involves maximizing a function over the fixed points of a map.
To formulate this optimization problem, let
\[
\cT_k
:=\bigl\{\xi\colon2^{[k+1]}\setminus\{\emptyset\}\to\{0,1\}\bigr\}
\]
be the \emph{type space} of the $k$-simplex $[k+1]=\{1,2,\ldots,k+1\}$.
Thus, each type $\xi\in\cT_k$ records a zero--one labeling on the nonempty faces of $[k+1]$.
No downward-closed condition is imposed on these labelings.
For $c\ge0$ and $\bm\theta=(\theta_j)_{j=0}^{k+1}\in[0,1]^{k+2}$, let
\[
\Phi_{k,c,\bm\theta}
\colon[0,1]^{\cT_k}\longrightarrow[0,1]^{\cT_k}
\quad\text{and}\quad
\Psi_{k,c,\bm\theta}
\colon[0,1]^{\cT_k}\longrightarrow\R
\]
denote the continuous map and function defined explicitly in Section~\ref{ssec:hk_main}, respectively.
We then set
\begin{equation}\label{eq:hk_c_theta}
h_k(c,\bm\theta)
:=\max\Bigl\{\Psi_{k,c,\bm\theta}(u)\Bigm|u\in[0,1]^{\cT_k},\,u=\Phi_{k,c,\bm\theta}(u)\Bigr\}.
\end{equation}
Thus, for each fixed $k$, the quantity $h_k(c,\bm\theta)$ is defined by a finite-dimensional optimization problem on a cube of dimension $|\cT_k|=2^{2^{k+1}-1}$.
In the multiparameter setting, the maximum in~\eqref{eq:hk_c_theta} may be attained at fixed points other than the least and greatest ones in the coordinatewise order, reflecting a richer fixed-point structure than in~\eqref{eq:hk_c}.
The two extremal cases reduce to
\begin{equation}\label{eq:hk_extremal}
h_k(c,\bm0)=h_k(c)
\quad\text{and}\quad
h_k(c,\bm1)=0.
\end{equation}
Further simplifications of $h_k(c,\bm\theta)$ under additional conditions on $c$ and $\bm\theta$ are given in Section~S2 of the Supplementary Material~\cite{HKsupp}.
The scalar reductions there are useful for numerical evaluation.

Our law of large numbers for persistent Betti numbers now takes the following form.

\begin{thm}\label{thm:LLN_PB}
  Let $c>0$.
  Suppose that $0\le s_n\le t_n<\infty$ satisfy
  \[
  \lim_{n\to\infty}\tbinom n{k+1}q_k(t_n)=\infty
  \quad\text{and}\quad
  \lim_{n\to\infty}nr_k(t_n)=c,
  \]
  and that the limits
  \[
  \theta_j
  :=\lim_{n\to\infty}\frac{p_j(s_n)}{p_j(t_n)}
  \in[0,1],
  \qquad0\le j\le k+1,
  \]
  exist.
  Set $\bm\theta:=(\theta_j)_{j=0}^{k+1}$.
  Then, for every $p\in[1,\infty)$,
  \begin{equation}\label{eq:LLN_PB}
  \frac{\b_k\bigl(X_n(s_n)\subset X_n(t_n)\bigr)}{\binom n{k+1}q_k(t_n)}
  \xrightarrow{L^p}h_k(c)-h_k(c,\bm\theta)
  \quad\text{as $n\to\infty$.}
  \end{equation}
\end{thm}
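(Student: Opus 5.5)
We start from the rank--nullity identity for the induced map $H_k(K)\to H_k(X)$ with $K=X_n(s_n)\subset X=X_n(t_n)$:
\[
\b_k(K\subset X)=\b_k(K)-\dim\ker\bigl(H_k(K)\to H_k(X)\bigr).
\]
Over a field of characteristic zero, with the cycle space $Z_k(K)=\ker\partial_k|_{C_k(K)}\subset Z_k(X)$, the kernel equals $(Z_k(K)\cap B_k(X))/B_k(K)$. Hence
\[
\b_k(K\subset X)=\dim Z_k(K)-\dim\bigl(Z_k(K)\cap B_k(X)\bigr)=\dim\bigl(Z_k(K)+B_k(X)\bigr)-\dim B_k(X).
\]
Equivalently,
\[
\b_k(K\subset X)=\b_k(X)-\bigl[\dim Z_k(X)-\dim(Z_k(K)+B_k(X))\bigr].
\]
By the law of large numbers~\eqref{eq:LLN_Betti}, applied at time $t_n$ with $c=\lim nr_k(t_n)$, the term $\b_k(X)/\binom n{k+1}q_k(t_n)$ converges in $L^p$ to $h_k(c)$. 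It therefore suffices to show that the normalized quantity
\[
D_n:=\dim Z_k(X)-\dim\bigl(Z_k(K)+B_k(X)\bigr)=\dim\bigl(Z_k(X)/(Z_k(K)+B_k(X))\bigr)
\]
converges to $h_k(c,\bm\theta)$. Since $0\le D_n\le\b_k(X)\le f_k(X)$, and $f_k(X)/\binom n{k+1}q_k(t_n)$ is bounded in every $L^p$ (it is a sum of binomial-type counts with mean $1$ after normalization, and has small variance because $\binom n{k+1}q_k\to\infty$), uniform integrability holds. So $L^p$ convergence follows from convergence in probability, and for that it is enough to prove convergence of the mean together with concentration.

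Concentration would be obtained from a bounded-differences (Efron--Stein or Azuma) argument over the weights: resampling the weight of one simplex changes $D_n$ by at most a constant times the number of $k$- and $(k+1)$-simplices affected. In the sparse regime $nr_k\to c$, this number has bounded moments after the appropriate conditioning, which gives a variance of order $o\bigl((\binom n{k+1}q_k)^2\bigr)$.

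For the mean, we would write $D_n$ spectrally. Let $L$ be the up-Laplacian $\partial_{k+1}\partial_{k+1}^*$ of $X$, compressed to the subspace $C_k(K)$ together with the orthogonal complement of the cycles of $K$. Then $D_n$ is the nullity of an explicit operator built from $\partial_{k+1}^X$ and the projection onto $Z_k(K)^\perp\cap C_k(K)$, namely the number of dimensions of $Z_k(X)$ missed by $Z_k(K)$ and $B_k(X)$. We would express it as $\lim_{\eps\downarrow0}$ of the spectral measure mass at $0$ of a compressed Laplacian $A_n$ acting on $C_k(X)$, averaged over $k$-simplices:
\[
\frac{\E D_n}{\binom n{k+1}q_k(t_n)}\approx\E\,\mu_{A_n,o}(\{0\}),
\]
where $o$ is a uniformly chosen $k$-simplex of $X$. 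The key probabilistic input is the local weak convergence of the rooted simplicial pair $(K\subset X,o)$. The local limit is a Poisson Galton--Watson-type random simplicial tree: each $k$-simplex has $\Poisson(c)$ cofaces of dimension $k+1$, each coface contributing $k+1$ new $k$-faces. Each nonempty face carries a mark indicating whether it is already present at time $s_n$; a face of dimension $j$ is present independently with probability $\theta_j$, since $p_j(s_n)/p_j(t_n)\to\theta_j$ is the conditional survival probability. The types $\xi\in\cT_k$ record precisely the labels on the faces of a $k$-simplex, including faces shared between neighbouring simplices. This is why the type space has no downward-closed constraint: it encodes the marks on the faces of a root as seen from a neighbour.

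On the limiting tree, the kernel dimension at the root is computed by the usual recursive (cavity) method. For each type $\xi$ one defines $u_\xi$ as the probability that a branch rooted at a $k$-simplex of type $\xi$ is ``free'' in the sense of the Linial--Peled/Bordenave--Lelarge--Salez analysis. The recursion over Poisson offspring with face-mark probabilities gives the fixed-point equation $u=\Phi_{k,c,\bm\theta}(u)$, and the root kernel mass equals $\Psi_{k,c,\bm\theta}(u)$ evaluated at the appropriate fixed point.

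The main obstacle is twofold. First, the atom at $0$ is not continuous under local weak convergence, so one only obtains an inequality $\limsup\le$ or $\liminf\ge$ directly. Second, the correct fixed point must be identified as the maximizer in~\eqref{eq:hk_c_theta}. Following Linial--Peled, we would get the upper bound from weak convergence plus upper semicontinuity of the atom at zero. For the matching bound we would use a counting argument via the rank of $\partial_{k+1}$: the rank of the compressed operator is matched from below by a greedy (core/peeling) construction on the finite complex, whose asymptotic size is given by $\Psi_{k,c,\bm\theta}$ at every fixed point. Maximizing over fixed points then yields the limit $h_k(c,\bm\theta)$. As consistency checks, the extremal cases~\eqref{eq:hk_extremal} should be recovered: when $\bm\theta=\bm0$ we have $K$ trivial and $D_n=\b_k(X)$, and when $\bm\theta=\bm1$ we have $K=X$ and $D_n=0$. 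This gives the limit $h_k(c)-h_k(c,\bm\theta)$.
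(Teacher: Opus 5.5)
Your reduction $\b_k(K\subset X)=\b_k(X)-D_n$ is correct, and $D_n$ agrees with the paper's $\nullity(\Dl_k^\up(X\setminus K))$ up to the term $\dim B_{k-1}(X)-\dim B_{k-1}(K)=O(f_{k-1}(X))$, which is negligible after normalization. For the spectral step, however, you must compress onto the span of the $k$-simplices of $X$ \emph{not} in $K$. The operator you describe involves the projection onto $Z_k(K)^\perp\cap C_k(K)$, which is not local, so local weak convergence says nothing about its rooted spectral measures.

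\textbf{The gap: the matching lower bound.} Upper semicontinuity of $\mu\mapsto\mu(\{0\})$ only gives that the $\limsup$ of the normalized nullity is at most the expected root atom. Your remedy, a peeling/core construction whose asymptotic size is $\Psi_{k,c,\bm\theta}(u)$ at \emph{every} fixed point $u$, is not something peeling can deliver.
\begin{enumerate}
\item Peeling is a monotone iteration. As in Linial--Peled, it yields bounds only at extremal fixed points: the Euler-characteristic bound at the top, and the core bound at the least fixed point.
\item In the thinned multi-type system, the maximizer of $\Psi_{k,c,\bm\theta}$ over fixed points of $\Phi_{k,c,\bm\theta}$ need not be extremal (the paper says so explicitly). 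You give no mechanism that realizes an arbitrary fixed point combinatorially.
\item The direction is also off: bounding a rank from below bounds the nullity from \emph{above}, which semicontinuity already does.
\end{enumerate}

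\textbf{The missing idea: integrality.} The compressed Laplacian is a positive semidefinite integer matrix, so the product of its positive eigenvalues is a positive integer. Combined with $\log x\le x$, this gives
\[
\#\{i:0<\lm_i\le\eps\}\le\tr\bigl(\Dl_k^\up(X\setminus K)\bigr)/|\log\eps|\le\sum_{\sg}\deg_X(\sg)/|\log\eps|.
\]
Since the average degree tends to $c$, the averaged spectral measures put uniformly small mass on $(0,\eps]$. Weak convergence then forces the normalized nullity to converge to the expected root atom, with no combinatorial lower bound needed.

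\textbf{The ``every fixed point'' issue} is then handled on the limiting tree, in three steps.
\begin{enumerate}
\item The conditional root laws, indexed by type, form the largest fixed point $\cD_{\max}$ of $\Gm_{k,c,\bm\theta}$. Truncating with boundary value $1$ bounds the root atom from above, and upper semicontinuity gives the reverse inequality.
\item A cyclic symmetrization shows $E_{k,\bm\theta}(\cD_{\max})=\Psi_{k,c,\bm\theta}(u(\cD_{\max}))\le h_k(c,\bm\theta)$.
\item For an arbitrary fixed point $u$, iterate $\Gm_{k,c,\bm\theta}$ from the two-point laws $(1-u^{(\xi)})\dl_0+u^{(\xi)}\dl_1$. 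The iterates decrease stochastically, the inequality $E_{k,\bm\theta}\ge\Psi_{k,c,\bm\theta}(u)$ persists along them, and they converge to a fixed point below $\cD_{\max}$. Hence $E_{k,\bm\theta}(\cD_{\max})\ge\Psi_{k,c,\bm\theta}(u)$.
\end{enumerate}
Your sketch asserts that the root mass equals $\Psi$ at ``the appropriate fixed point'' but supplies none of these steps.

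Finally, your Efron--Stein concentration argument is unnecessary. Convergence in probability follows from local weak convergence in probability (a second-moment count of rooted balls) combined with the deterministic continuity statement above.
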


If $s_n=t_n$, then $\bm\theta=\bm1$, and~\eqref{eq:hk_extremal} shows that Theorem~\ref{thm:LLN_PB} recovers the one-time law of large numbers~\eqref{eq:LLN_Betti}.
The limit exhibits universality in the following sense: for fixed $k$, the limiting constant in~\eqref{eq:LLN_PB} depends on the sequence $\bm p=(p_j)_{j=0}^\infty$ of distribution functions and on the time sequences $(s_n)$ and $(t_n)$ only through $c$ and $\bm\theta$.

\begin{rmk}
  The convergence assumption for $j=k+1$ in Theorem~\ref{thm:LLN_PB} can be omitted.
  Indeed, the ratios $p_{k+1}(s_n)/p_{k+1}(t_n)$ lie in $[0,1]$, so every subsequence has a further subsequence along which they converge.
  The theorem gives the same $L^p$ limit along each such further subsequence, since $h_k(c,\bm\theta)$ is independent of $\theta_{k+1}$~(Remark~\ref{rmk:last_theta}).
\end{rmk}

Our proof strategy and its relation to earlier methods are described in Section~\ref{ssec:pf_strategy}.

\subsubsection*{A law of large numbers for persistence diagrams}
Let $\cX=\{X(t)\}_{t\ge0}$ be an increasing family of simplicial complexes such that $\bigcup_{t\ge0}X(t)$ is finite and
\[
X(t)=\bigcap_{t'>t}X(t')
\qquad\text{for every $t\ge0$.}
\]
Such a family is called a \emph{finite right-continuous filtration}.
The interval decomposition of persistent homology determines a finite multiset of birth--death pairs
\[
\{(b_i,d_i)\}_{i=1}^m,
\qquad
(b_i,d_i)\in\Dl:=\{(s,t)\in[0,\infty]^2\mid s<t\}.
\]
Each pair $(b_i,d_i)$ represents a homology class that appears at time $b_i$ and persists over the interval $[b_i,d_i)$, with $d_i=\infty$ if it persists at all later times.
We refer to Section~\ref{ssec:H_PB_PD} for the precise construction and the conventions used below.
We regard the $k$th persistence diagram of $\cX$ as the counting measure
\[
\Dgm_k^\cX:=\sum_{i=1}^m\dl_{(b_i,d_i)}
\]
on $\Dl$.

We identify $[0,\infty]$ with $[0,1]$ via the bijection $t\mapsto t/(1+t)$ for $t\in[0,\infty)$ and $\infty\mapsto1$.
Under the induced identification $[0,\infty]^2\simeq[0,1]^2$, we equip $\Dl$ with the relative topology inherited from $[0,1]^2$.
Then $\Dl$ is a locally compact, second-countable, Hausdorff space.
On this space, a Borel measure $\sfD$ is a \emph{Radon measure} if it is finite on every compact subset; regularity then follows automatically.
Let $\cM(\Dl)$ denote the space of Radon measures on $\Dl$, and set
\[
\cM_\fin(\Dl):=\{\sfD\in\cM(\Dl)\mid\sfD(\Dl)<\infty\}.
\]

Let $C_c(\Dl)$ and $C_b(\Dl)$ denote the spaces of compactly supported continuous functions and bounded continuous functions from $\Dl$ to $\R$, respectively.
For $\sfD_n,\sfD\in\cM(\Dl)$, \emph{vague convergence} means that
\[
\int_\Dl f\,d\sfD_n\longrightarrow\int_\Dl f\,d\sfD
\quad\text{as $n\to\infty$}
\]
for every $f\in C_c(\Dl)$.
For $\sfD_n,\sfD\in\cM_\fin(\Dl)$, \emph{weak convergence} is defined by the same condition for every $f\in C_b(\Dl)$.
Weak convergence implies vague convergence, and vague convergence of finite Radon measures together with convergence of their total masses implies weak convergence.

\smallskip
Fix $k\in\N_0$.
The expected number of $(k+1)$-cofaces of a given $k$-simplex at time $t$, conditional on its presence, is $(n-k-1)r_k(t)$.
We focus on the \emph{constant-degree regime}, described by time sequences $(t_n)$ satisfying $nr_k(t_n)\to c\in(0,\infty)$.
Under the assumptions below, this regime captures the maximal order of the expected $k$th Betti number along the process, as follows from the one-time law of large numbers~\eqref{eq:LLN_Betti} and the upper bound in~\cite[Theorem~3.6]{HK2019}.
We therefore use $nr_k(t)$ as a deterministic clock, so that the constant-degree regime corresponds to times of order one.

Define the right-continuous generalized inverse of $r_k$ by
\[
r_k^\rightarrow(u):=\inf\{t\in[0,\infty]\mid r_k(t)>u\},
\qquad u\in[0,\infty),
\]
where the infimum of the empty set is understood to be $\infty$.
Setting $X_n(\infty):=K(n)$, define the time-changed filtration
\[
\tilde\cX_n:=\{\tilde X_n(x)\}_{x\ge0}
\quad\text{with}\quad
\tilde X_n(x):=X_n\bigl(r_k^\rightarrow(x/n)\bigr).
\]
The dependence of $\tilde\cX_n$ and $\tilde X_n$ on the fixed dimension $k$ is suppressed from the notation.
This is a finite right-continuous filtration, and $\tilde X_n(x)=K(n)$ for $x\ge n$.

We impose the following conditions.
\begin{enumerate}[label=(C\arabic*),ref=(C\arabic*),leftmargin=*]
  \item\label{cond:C1} It holds that
  \[
  \lim_{n\to\infty}\binom n{k+1}q_k\bigl(r_k^\rightarrow(1/n)\bigr)=\infty.
  \]
  \item\label{cond:C2} We have $r_k(0)=0$, and there exists $t_0>0$ such that $r_k$ is continuous on $[0,t_0]$ and $r_k(t_0)>0$.
  \item\label{cond:C3} For every $0<x<y<\infty$, the limits
  \[
  \theta_{k,j}(x,y):=\lim_{n\to\infty}\frac{p_j\bigl(r_k^\rightarrow(x/n)\bigr)}{p_j\bigl(r_k^\rightarrow(y/n)\bigr)},
  \qquad 0\le j\le k+1,
  \]
  exist.
  Set $\bm\theta_k(x,y):=(\theta_{k,j}(x,y))_{j=0}^{k+1}$.
\end{enumerate}
Condition~\ref{cond:C1} ensures that the expected number of $k$-simplices at time $r_k^\rightarrow(1/n)$ tends to infinity.
Under Condition~\ref{cond:C2}, $r_k^\rightarrow$ is a right inverse of $r_k$ near zero, even if $r_k$ has flat intervals: for each fixed $x>0$ and all sufficiently large $n$,
\[
nr_k\bigl(r_k^\rightarrow(x/n)\bigr)=x.
\]
Thus, the new time parameter $x$ is the limiting conditional expected degree of a $k$-simplex in $\tilde X_n(x)$.
Condition~\ref{cond:C3} specifies the limiting thinning probabilities in each dimension for the earlier complex relative to the later one.

As shown in Lemma~\ref{lem:theta_Theta}, the limits
\[
\Theta_k(y):=\lim_{n\to\infty}\frac{q_k\bigl(r_k^\rightarrow(y/n)\bigr)}{q_k\bigl(r_k^\rightarrow(1/n)\bigr)}\in(0,\infty),
\qquad y>0,
\]
exist.
Define
\[
\hat\b_k(x,y):=\Theta_k(y)\bigl\{h_k(y)-h_k\bigl(y,\bm\theta_k(x,y)\bigr)\bigr\},
\qquad 0<x<y<\infty.
\]
Lemmas~\ref{lem:theta_Theta} and~\ref{lem:hk_conti} imply that $\hat\b_k$ is continuous on this domain.

For all sufficiently large $n$, define
\begin{equation}\label{eq:def_Pi_kn}
\sfD_{k,n}:=\frac{\Dgm_k^{\tilde\cX_n}}{\binom n{k+1}q_k\bigl(r_k^\rightarrow(1/n)\bigr)}.
\end{equation}
Thus, $\sfD_{k,n}$ is the persistence diagram of the time-changed filtration, normalized by the expected number of $k$-simplices in $\tilde X_n(1)$.

Our second main result is the following.
In what follows, $\xrightarrow{\mathrm p}$ denotes convergence in probability.
\begin{thm}\label{thm:LLN_PD}
  Assume that Conditions~\textup{\ref{cond:C1}}--\textup{\ref{cond:C3}} hold.
  Then there exists a unique deterministic measure $\sfD_k\in\cM_\fin(\Dl)$ satisfying
  \[
  \sfD_k\bigl([0,x]\times(y,\infty]\bigr)=\hat\b_k(x,y)
  \]
  for every $0<x<y<\infty$.
  Moreover,
  \[
  \sfD_{k,n}\xrightarrow{\mathrm p}\sfD_k
  \quad\text{and}\quad
  \E[\sfD_{k,n}]\longrightarrow\sfD_k
  \quad\text{as $n\to\infty$}
  \]
  in $\cM_\fin(\Dl)$ equipped with the weak topology.
\end{thm}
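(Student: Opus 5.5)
The plan is to reduce Theorem~\ref{thm:LLN_PD} to Theorem~\ref{thm:LLN_PB} applied along the time-changed clock, and to add a separate tightness estimate that controls mass escaping to the boundary of $\Dl$.

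\emph{Step 1: quadrant masses.} For $0<x<y<\infty$, the persistence diagram satisfies
\[
\Dgm_k^{\tilde\cX_n}\bigl([0,x]\times(y,\infty]\bigr)=\b_k\bigl(\tilde X_n(x)\subset\tilde X_n(y)\bigr).
\]
I will apply Theorem~\ref{thm:LLN_PB} with $s_n=r_k^\rightarrow(x/n)$ and $t_n=r_k^\rightarrow(y/n)$.
\begin{itemize}
\item By \ref{cond:C2}, $nr_k(t_n)=y$ for large $n$.
\item By \ref{cond:C3}, the ratios $p_j(s_n)/p_j(t_n)$ converge to $\bm\theta_k(x,y)$.
\item $\binom n{k+1}q_k(t_n)\to\infty$ follows from \ref{cond:C1} together with $\Theta_k(y)>0$ (Lemma~\ref{lem:theta_Theta}).
\end{itemize}
Dividing by the normalization in~\eqref{eq:def_Pi_kn} and using $\Theta_k(y)$, I get
\[
\sfD_{k,n}\bigl([0,x]\times(y,\infty]\bigr)\longrightarrow\hat\b_k(x,y)\quad\text{in }L^1,
\]
and hence also in expectation.

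\emph{Step 2: existence and uniqueness of $\sfD_k$.} Uniqueness holds because quadrants $[0,x]\times(y,\infty]$ with $0<x<y<\infty$ generate a $\pi$-system. Together with finiteness, this determines a finite measure on $\Dl\cap(0,\infty)^2$. I also need to argue that no mass sits on $\{0\}\times(0,\infty]$ or at $d=\infty$ beyond what the limits of $\hat\b_k$ dictate; otherwise the identity must be read with these limits included.

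For existence, I take subsequential vague limits of $\E[\sfD_{k,n}]$. This requires a uniform bound on $\E[\sfD_{k,n}]$ over compact sets $\{b\le x,\ d\ge y\}$. That bound comes from $\b_k(\tilde X_n(x)\subset\tilde X_n(y))\le f_k(\tilde X_n(y))$, whose normalized expectation tends to $\Theta_k(y)$. Any vague limit has quadrant masses $\hat\b_k(x,y)$ at continuity quadrants, and by continuity of $\hat\b_k$ at all quadrants. So the limit is unique, and this defines $\sfD_k$.

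Finiteness of $\sfD_k$ and convergence of total mass $\sfD_{k,n}(\Dl)\to\sfD_k(\Dl)$ are the genuine issues. Vague convergence on $\Dl$ loses mass near the diagonal (short lifetimes), near $d=\infty$, and near $b=0$.

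\emph{Step 3: tightness, the main obstacle.} I need to show, in expectation, three things.
\begin{enumerate}
\item Points with $d\ge y$ for large $y$ carry asymptotically negligible normalized mass. Points dying after $\tilde X_n(y)$ are counted by $\b_k(\tilde X_n(y))$. By the one-time law~\eqref{eq:LLN_Betti}, $\Theta_k(y)h_k(y)$ is the limit, and this should tend to $0$ as $y\to\infty$ (a $k$-simplex of degree $y$ is rarely a cycle). I will need a quantitative version, for instance via the upper bound in \cite[Theorem~3.6]{HK2019}.
\item Points with $b\le\eps$ carry small mass. This is bounded by $\b_k$-type counts of $k$-simplices present at $\tilde X_n(\eps)$. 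Their normalized number $\Theta_k(\eps)$ tends to $0$ as $\eps\to0$, provided $\Theta_k$ is continuous at $0$ in the appropriate sense.
\item Points near the diagonal, with $d-b<\dl$, carry small mass.
\end{enumerate}
For item 3, I will write the total mass as
\[
\Dgm_k(\Dl)=\sum\{\text{births in dimension }k\},
\]
which is at most the number of $k$-simplices added, and compare it with counting quadrant masses on a fine grid. Specifically, the mass in the strip $\{x_i<b\le x_{i+1},\ d\le x_{i+1}\}$ is bounded by
\[
\#\{k\text{-simplices born in }(x_i,x_{i+1}]\}-\bigl[\b_k(x_{i+1},x_{i+1})-\b_k(x_i,x_{i+1})\bigr]\text{-type terms}.
\]
The first count has normalized expectation $\Theta_k(x_{i+1})-\Theta_k(x_i)$. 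The Step~1 limit must match this increment up to $o(1)$ as the mesh shrinks. This amounts to identifying the limiting total mass
\[
\sfD_k(\Dl)=\lim\Theta_k-\text{(rank of boundaries)},
\]
which I expect to be the technically hardest step. It likely needs an Euler-characteristic identity relating $(k+1)$-simplices killing cycles to the rank of the boundary map, so that the $\b_{k}$ counts account for all $k$-simplex births in the limit.

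\emph{Step 4: from vague to weak convergence in probability.} With tightness in $L^1$ and the $L^1$ convergence of quadrant masses, I approximate any $f\in C_b(\Dl)$ on the compact region $\{\eps\le b,\ d\le y,\ d-b\ge\dl\}$ by finite linear combinations of quadrant indicators. The quadrant masses are differences of $\hat\b_k$ values on grids avoiding atoms, and $\sfD_k$ charges no grid lines by continuity of $\hat\b_k$. The remaining regions are controlled by Step~3. This gives $\int f\,d\sfD_{k,n}\to\int f\,d\sfD_k$ in probability and in mean, which yields both claims by the metrizability of the weak topology on $\cM_\fin(\Dl)$ through a countable convergence-determining family.
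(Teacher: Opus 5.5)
Steps~1, 2 and~4 are sound and essentially follow the paper: quadrant masses come from Theorem~\ref{thm:LLN_PB}, and vague convergence and the identification of $\sfD_k$ follow as in Lemma~\ref{lem:lifting}. Step~3, however, carries all the content beyond vague convergence, and it has genuine gaps.

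\emph{Item~2.} It is both unnecessary and false. The birth axis $\{0\}\times(0,\infty]$ belongs to $\Dl$, and the compact sets $\{(x,y)\in\Dl\mid y\le R,\ y-x\ge h\}$ used for tightness include points with $x=0$. The quadrants $[0,x]\times(y,\infty]$ already generate the Borel sets of all of $\Dl$, so your worry in Step~2 is also unfounded. Mass near $b=0$ need not be small: in the Linial--Meshulam example $\Theta_{d-1}\equiv1$ and $\sfD_{d-1}$ lives entirely on $\{x=0\}$. Truncating to $\{b\ge\varepsilon\}$ in Step~4 would therefore discard a non-vanishing part of the measure.

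\emph{Item~1.} It bounds only the classes alive at time $y$, via $\b_k(\tilde X_n(y))$. Points with $y<b<d$ are not counted there. Controlling these late births uniformly in $n$ is a substantial part of the paper's proof. It uses a fine grid on $(R,n\delta]$ combined with the polynomial decay \eqref{eq:HK_bound} from \cite{HK2019}, plus a separate comparison with the original filtration for births after $n\delta$, where $r_k$ may jump.

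\emph{Item~3 (the decisive gap).} Write $\Theta_{k,n}(x):=q_k(r_k^\rightarrow(x/n))/q_k(r_k^\rightarrow(1/n))$. Counting $k$-simplex births per window gives a bound $\Theta_{k,n}(x_{i+1})-\Theta_{k,n}(x_i)$, which is only first order in the mesh and sums to order one. Subtracting the surviving classes $\hat\b_k(x_{i+1},x_{i+1})-\hat\b_k(x_i,x_{i+1})$ does not rescue it, because a non-vanishing fraction of $k$-simplices never create a class of positive lifetime. For instance, in the random flag complex an edge closing a triangle appears at the same instant as the $2$-simplex, giving a zero-length pair. So the hoped-for matching of increments is false, and no Euler-characteristic identity will produce it.

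What is needed is a bound of second order in the window length. The paper obtains it from the deterministic Lemma~\ref{lem:bars_in_st}: the number of pairs with $s<b<d\le t$ is at most the number of incident pairs $\sigma\subset\tau$, with $\dim\sigma=k$ and $\dim\tau=k+1$, appearing at \emph{distinct} times in $(s,t]$. The appearance time of $\sigma$ and the largest weight among faces of $\tau$ not contained in $\sigma$ are independent. This yields \eqref{eq:bars_in_xy}, namely $\E[\sfD_{k,n}(\Dl_{x,y})]\le(y-x)(\Theta_{k,n}(y)-\Theta_{k,n}(x))$. Summing over windows of width $h$ gives the $O(h)$ estimate \eqref{eq:diagonal_tail}. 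The same bound on a fine grid, combined with \eqref{eq:HK_bound}, handles the late births behind \eqref{eq:d_tail}.

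Without such an estimate you have no tightness of $\E[\sfD_{k,n}]$. You then have neither finiteness of $\sfD_k$ nor convergence of total masses, which is exactly what is needed to pass from vague to weak convergence.
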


The limiting measure $\sfD_k$ has no atoms.
Indeed, by continuity of $\hat\b_k$, taking $y\uparrow y_0$ or $x\uparrow x_0$ in the quadrant identity of Theorem~\ref{thm:LLN_PD} shows that $\sfD_k$ charges no horizontal line $\{y=y_0\}$ with $0<y_0<\infty$ and no vertical line $\{x=x_0\}$ with $x_0>0$.
Theorem~\ref{thm:LLN_PD_test} also excludes mass at $y=\infty$, so every individual point has zero mass.
The birth axis $\{x=0\}$ may nevertheless carry positive mass, as in the Linial--Meshulam example below.

For numerical illustrations of these limits in the random flag complex process, see Figures~\ref{fig:dgm_k1}--\ref{fig:persbetti_k2}.

\subsubsection*{Convergence for unbounded test functions}
Our third main result extends the weak convergence in Theorem~\ref{thm:LLN_PD} to continuous test functions that may grow polynomially in the death coordinate and diverge at the diagonal.
Write
\[
\Dl_\fin:=\Dl\cap[0,\infty)^2.
\]
The following result concerns integrals against the normalized persistence diagrams of the time-changed filtrations.

\begin{thm}\label{thm:LLN_PD_test}
  Assume that Conditions~\textup{\ref{cond:C1}}--\textup{\ref{cond:C3}} hold.
  Then $\sfD_k(\Dl\setminus\Dl_\fin)=0$.
  Let $f\colon\Dl_\fin\to\R$ be continuous, and suppose that there exist $C<\infty$, $\a\ge0$, and $\gm\in[0,1)$ such that
  \begin{equation}\label{eq:PD_test_growth}
  |f(x,y)|\le C(1+y^\a)\bigl\{1+(y-x)^{-\gm}\bigr\},
  \qquad (x,y)\in\Dl_\fin.
  \end{equation}
  Then $\int_{\Dl_\fin}|f|\,d\sfD_k<\infty$ and
  \begin{equation}\label{eq:LLN_PD_test}
  \int_{\Dl_\fin}f\,d\sfD_{k,n}\xrightarrow{L^1}\int_{\Dl_\fin}f\,d\sfD_k
  \quad\text{as $n\to\infty$.}
  \end{equation}
\end{thm}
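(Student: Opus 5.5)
Since Theorem~\ref{thm:LLN_PD} already gives weak convergence in probability of $\sfD_{k,n}$ to $\sfD_k$, and $\E[\sfD_{k,n}]\to\sfD_k$, I will prove Theorem~\ref{thm:LLN_PD_test} by truncation. I decompose $f=f\chi_{M,\eps}+f(1-\chi_{M,\eps})$, where $\chi_{M,\eps}$ is a continuous cutoff equal to $1$ on $\{y\le M,\ y-x\ge2\eps\}$ and vanishing on $\{y\ge 2M\}\cup\{y-x\le\eps\}$. The truncated function $f\chi_{M,\eps}$ extends to a bounded continuous function on $\Dl$ (it vanishes near $y=\infty$). Theorem~\ref{thm:LLN_PD} therefore gives $\int f\chi_{M,\eps}\,d\sfD_{k,n}\xrightarrow{\mathrm p}\int f\chi_{M,\eps}\,d\sfD_k$. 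Upgrading to $L^1$ needs only uniform integrability. The total mass satisfies $\sfD_{k,n}(\Dl)\le\b$-type counts, namely the number of $k$- and $(k+1)$-simplices, divided by the normalization. Its second moment is bounded in the relevant window by the $L^2$ bounds implicit in Theorem~\ref{thm:LLN_PB}, or by a direct count of simplices, once large deaths are handled separately. The whole statement then reduces to two uniform tail estimates:
\[
\lim_{M\to\infty}\limsup_{n\to\infty}\E\Bigl[\int_{\{y>M\}}(1+y^\a)\{1+(y-x)^{-\gm}\}\,d\sfD_{k,n}\Bigr]=0,
\]
\[
\lim_{\eps\to0}\sup_{M}\limsup_{n\to\infty}\E\Bigl[\int_{\{y-x<\eps,\,y\le M\}}(y-x)^{-\gm}\,d\sfD_{k,n}\Bigr]=0 .
\]
The same estimates, applied to the limit via Fatou and vague convergence, give $\int|f|\,d\sfD_k<\infty$ and $\sfD_k(\Dl\setminus\Dl_\fin)=0$. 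Here I use that the quadrant masses of $\E[\sfD_{k,n}]$ near $y=\infty$ tend to zero.

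For the large-death estimate, I bound the expected number of $k$-bars alive past time $y$ by $\E[\b_k(\tilde X_n(y))]$, using $\b_k(K\subset X)\le\b_k(X)$. I then integrate by layers: $\int_{\{y>M\}} g(y)\,d\sfD_{k,n}\le g(M)\sfD_{k,n}(\{y>M\})+\int_M^\infty g'(y)\sfD_{k,n}(\{\text{death}>y\})\,dy$. This requires a quantitative bound such as $\E[\b_k(\tilde X_n(y))]/\binom n{k+1}q_k(r_k^\rightarrow(1/n))\le C e^{-cy}$ (or any superpolynomial decay), uniformly in $n$ and for $y\le n$. The natural route is the upper bound of~\cite[Theorem~3.6]{HK2019}-type arguments. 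A $k$-cycle surviving to large degree $y$ is killed unless some $k$-simplex has no $(k+1)$-coface, so I bound $\b_k$ by the number of $k$-simplices that are not cofaces-covered, plus cycles of small support, using a Garland/Linial–Peled-type estimate. Isolated $k$-faces contribute $\approx\Theta_k(y)e^{-y}$ in normalized form. The growth of $\Theta_k(y)$ is at most polynomial, because $q_k/r_k$ scaling is controlled by the $p_j$ ratios in Lemma~\ref{lem:theta_Theta}. This step is the \emph{main obstacle}: I need a bound uniform over the whole range of $y$ up to $n$, including the polynomially many simplices, and the non-isolated obstructions must be shown negligible.

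For short lifetimes, I count bars with $y-x<\eps$ in a window $[a,a+\eps]$. Such bars are created by a $k$-simplex entering at time $x$ and killed by a $(k+1)$-simplex entering by time $x+\eps$. Their number is bounded by the number of $k$-simplices whose birth falls in a window of length $\eps$ and which acquire a coface within $\eps$ afterwards. In expectation this is $O(\eps)\cdot O(\eps)$ per unit time in normalized units, which heuristically comes from $nr_k$ increasing by $\eps$ with continuity from (C2). Dyadic decomposition in lifetime then gives an expected mass of $O(\eps^{2})$ per bin of lifetime $\asymp\eps$, so $\sum_j 2^{j\gm}\cdot 2^{-j}$-type sums converge since $\gm<1$. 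In fact the bound $\E\,\sfD_{k,n}(\{y-x<\eps\}\cap\{y\le M\})\le C_M\eps$ suffices. I would derive it from Theorem~\ref{thm:LLN_PB} together with the continuity of $\hat\b_k$: the quadrant differences $\hat\b_k(x,x+\eps)-\hat\b_k(x,y)$ summed over a grid give the diagonal strip mass, and these should be $O(\eps)$ by Lipschitz-type regularity of $h_k(c,\bm\theta)$ in $(c,\bm\theta)$. For finite $n$, a direct first-moment simplex count replaces this. Combining the three pieces and letting $n\to\infty$, then $\eps\to0$, then $M\to\infty$, yields \eqref{eq:LLN_PD_test}.
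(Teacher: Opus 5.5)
Your large-death step has a genuine gap. The layer-cake formula needs $\sfD_{k,n}(\{(x',y')\in\Dl\mid y'>y\})$, but the normalized $\b_k(\tilde X_n(y))$ equals only $\sfD_{k,n}([0,y]\times(y,\infty])$. Bars born after $y$ also die after $y$, and no persistent Betti number at time $y$ sees them. Their number is not small for trivial reasons: the crude bound by the $k$-simplices appearing after $y$ is of order $\Theta_{k,n}(n)-\Theta_{k,n}(y)$, which can diverge with $n$ (it is about $n^{k/2}$ for the flag process). So you need a separate birth-tail bound $\E[\sfD_{k,n}(\{x>R\})]\le C_mR^{-m}$. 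This, not the Betti decay you flag as the main obstacle, is the real difficulty. The decay is exactly \cite[Theorem~3.6]{HK2019}, $\E[\b_k(X_n(t))]\le\binom n{k+1}q_k(t)\,C_{k,\ell}(nr_k(t))^{-\ell}$, and can simply be cited.

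The missing ingredient is the deterministic bound of Lemma~\ref{lem:bars_in_st}: the number of bars with $s<b<d\le t$ is at most the number of incident pairs $\sg\subset\tau$ ($\dim\sg=k$) with $s<\kappa(\sg)<\kappa(\tau)\le t$. It is proved by writing the boundary matrix of the simplices appearing in $(s,t]$ in block upper-triangular form by appearance time and using $\rank A-\rank D\le\rank(A-D)$. In normalized time-changed units its expectation is at most $(t-s)\bigl(\Theta_{k,n}(t)-\Theta_{k,n}(s)\bigr)$, i.e.\ second order in $t-s$. Partition $(L,2L]$ into about $L^{k+m+2}$ pieces. A bar born in a piece either dies inside it (incident-pair bound) or is alive at the right endpoint (Betti bound), and summing gives $O(L^{-m})$. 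Births after $n\dl$, beyond the range where $nr_k(r_k^\rightarrow(x/n))=x$ is guaranteed, require a further argument through the original weights $w_\rho$.

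Your short-lifetime step is likewise only heuristic. The $(k+1)$-simplex that kills a class need not contain the $k$-simplex that created it. Counting $k$-simplices that ``acquire a coface within $\eps$'' is therefore justified only through the same rank lemma. Your alternative route, via Theorem~\ref{thm:LLN_PB} and Lipschitz continuity of $h_k$, describes only the limit $\sfD_k$. It gives nothing uniform in $n$, which is exactly what the truncation needs. With the lemma, covering $\{y\le R,\ y-x\le h\}$ by the sets $\Dl_{ih,(i+2)h}$ and telescoping gives $Ch(1+R)^{k+1}$. Where $r_k$ may jump, this needs extra care via $dq_k\le\frac{4(k+1)}{\dl}\,dr_k$.

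Finally, your first display contains the factor $(y-x)^{-\gm}$ on $\{y>M\}$, where large deaths and short lifetimes occur together. Your layer-cake argument handles only functions of $y$, and your diagonal estimate covers only $y\le M$. Since the diagonal bound grows like $R^{k+1}$, it must be combined with the superpolynomial death tail. On dyadic death blocks $\Dl_j$ the paper uses $\E[\sfD_{k,n}(\Dl_j\cap\{y-x\le h\})]\le C_m\min\{2^{j(k+1)}h,2^{-jm}\}$. It integrates $h^{-\eta-1}$ against this bound, with $\eta\in(\gm,1)$ and $m$ large, to get $\sup_n\E[\int g\,d\sfD_{k,n}]<\infty$ for $g=(1+y)^{\a+1}\{1+(y-x)^{-\eta}\}$. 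This single bound then controls all truncation errors, since $|f|\1_{\{y>R\}\cup\{y-x<h\}}\le C(R^{-1}+h^{\eta-\gm})g$.
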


Condition~\eqref{eq:PD_test_growth} allows polynomial growth of any fixed order in the death coordinate and a singularity of order $(y-x)^{-\gm}$, with $\gm<1$, at the diagonal.
Both are measured in the time parameter of $\tilde\cX_n$ and may occur simultaneously.
The restriction $\gm<1$ comes from the short-lifetime estimate~\eqref{eq:diagonal_tail}: for bounded death coordinates, the expected mass within distance $h$ of the diagonal is bounded by a constant multiple of $h$, ensuring integrability of $(y-x)^{-\gm}$ for $\gm<1$.

\subsubsection*{Consequences}
The preceding theorems yield convergence of lifetime measures and total persistence, as well as convergence in the optimal transport distance.

\paragraph*{Additive and multiplicative lifetimes}
The convergence of lifetime measures follows directly from Theorem~\ref{thm:LLN_PD}.
Equip $(0,\infty]$ and $(1,\infty]$ with the relative topologies inherited from $[0,\infty]$, and let $\cM_\fin((0,\infty])$ and $\cM_\fin((1,\infty])$ denote the corresponding spaces of finite Borel measures, each equipped with the weak topology.
Write
\begin{equation}\label{eq:Dgm_tildeXn}
\Dgm_k^{\tilde\cX_n}=\sum_{i=1}^{m_n}\dl_{(b_i,d_i)}.
\end{equation}
For all sufficiently large $n$, define the \emph{normalized additive and multiplicative lifetime measures} of $\tilde\cX_n$ by
\[
\Sg_{k,n}
:=\frac{\sum_{i=1}^{m_n}\dl_{d_i-b_i}}{\binom n{k+1}q_k\bigl(r_k^\rightarrow(1/n)\bigr)}
\quad\text{and}\quad
\Pi_{k,n}
:=\frac{\sum_{i=1}^{m_n}\dl_{d_i/b_i}}{\binom n{k+1}q_k\bigl(r_k^\rightarrow(1/n)\bigr)},
\]
respectively.
Here and below, we use the conventions $\infty-b:=\infty$ for $0\le b<\infty$, $\infty/b:=\infty$ for $0<b<\infty$, and $d/0:=\infty$ for $0<d\le\infty$.

Define finite measures $\Sg_k$ and $\Pi_k$ on $(0,\infty]$ and $(1,\infty]$, respectively, by
\begin{align*}
\Sg_k(A)&:=\sfD_k\bigl(\{(x,y)\in\Dl\mid y-x\in A\}\bigr)
\shortintertext{and}
\Pi_k(B)&:=\sfD_k\bigl(\{(x,y)\in\Dl\mid y/x\in B\}\bigr)
\end{align*}
for Borel sets $A\subset(0,\infty]$ and $B\subset(1,\infty]$.

\begin{cor}
  Assume that Conditions~\textup{\ref{cond:C1}}--\textup{\ref{cond:C3}} hold.
  Then, as $n\to\infty$,
  \[
  \Sg_{k,n}\xrightarrow{\mathrm p}\Sg_k
  \quad\text{and}\quad
  \E[\Sg_{k,n}]\longrightarrow\Sg_k
  \]
  in $\cM_\fin((0,\infty])$, and
  \[
  \Pi_{k,n}\xrightarrow{\mathrm p}\Pi_k
  \quad\text{and}\quad
  \E[\Pi_{k,n}]\longrightarrow\Pi_k
  \]
  in $\cM_\fin((1,\infty])$, where both spaces are equipped with the weak topology.
\end{cor}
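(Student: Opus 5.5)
The corollary follows from Theorem~\ref{thm:LLN_PD} by the continuous mapping theorem. Define the lifetime maps
\[
L\colon\Dl\to(0,\infty],\quad L(x,y):=y-x,
\qquad
M\colon\Dl\to(1,\infty],\quad M(x,y):=y/x,
\]
using the stated conventions for $\infty$ and division by $0$. Then $\Sg_{k,n}=\sfD_{k,n}\circ L^{-1}$ and $\Sg_k=\sfD_k\circ L^{-1}$, and likewise $\Pi_{k,n}=\sfD_{k,n}\circ M^{-1}$ and $\Pi_k=\sfD_k\circ M^{-1}$. For $g\in C_b((0,\infty])$ we have $\int g\,d\Sg_{k,n}=\int g\circ L\,d\sfD_{k,n}$. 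So it suffices to show that $\int g\circ L\,d\sfD_{k,n}\to\int g\circ L\,d\sfD_k$ whenever $\sfD_{k,n}\to\sfD_k$ weakly. This holds if $g\circ L$ is bounded and continuous $\sfD_k$-almost everywhere, by the extended continuous mapping theorem for finite measures (normalize by total mass, which converges, and apply the portmanteau theorem).

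\emph{Continuity of $L$.} In the compactified topology $L$ is continuous on $\Dl_\fin$. At points with $y=\infty$ and $x<\infty$ it is also continuous: if $y_m\to\infty$ and $x_m\to x<\infty$, then $y_m-x_m\to\infty$. The only possible discontinuities are at $(\infty,\infty)$, which is not in $\Dl$ since $s<t$ is required there. Hence $L$ is continuous on all of $\Dl$, and $g\circ L\in C_b(\Dl)$.

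\emph{Continuity of $M$.} $M$ is continuous at points with $0<x<\infty$ and at points with $x=0<y$: there $y_m/x_m\to\infty$ because $y_m\to y>0$ and $x_m\to0$. So $M$ is continuous on all of $\Dl$ as well. In particular, no use of $\sfD_k(\Dl\setminus\Dl_\fin)=0$ or of the absence of atoms is needed, though either would cover any remaining boundary issue.

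\emph{Convergence in probability.} Weak convergence in probability in $\cM_\fin$ can be characterized by $\int\phi\,d\sfD_{k,n}\xrightarrow{\mathrm p}\int\phi\,d\sfD_k$ for each $\phi\in C_b(\Dl)$, together with metrizability. Equivalently, every subsequence has a further subsequence converging almost surely in a metric for the weak topology, such as the Prokhorov metric on finite measures. Along such a subsequence the deterministic continuous-mapping argument above applies pathwise, and since the pushforward map $\sfD\mapsto\sfD\circ L^{-1}$ is weakly continuous, this yields $\Sg_{k,n}\xrightarrow{\mathrm p}\Sg_k$. The same argument gives $\Pi_{k,n}\xrightarrow{\mathrm p}\Pi_k$.

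\emph{Mean measures.} The mean-measure statement follows from $\int g\,d\E[\Sg_{k,n}]=\int g\circ L\,d\E[\sfD_{k,n}]$ together with $\E[\sfD_{k,n}]\to\sfD_k$.

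The only delicate step is verifying continuity of $L$ and $M$ at the boundary of the compactified $\Dl$ (the points $y=\infty$ and $x=0$); everything else is formal.
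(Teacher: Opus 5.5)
Your proposal is correct and follows the paper's own proof. Both view $\Sg_{k,n},\Pi_{k,n}$ and their limits as pushforwards of $\sfD_{k,n}$ and $\sfD_k$ under the continuous maps $(x,y)\mapsto y-x$ and $(x,y)\mapsto y/x$, and apply the continuous mapping theorem (which commutes with taking mean measures) to Theorem~\ref{thm:LLN_PD}. Your explicit check of continuity at the boundary points $y=\infty$ and $x=0$ of $\Dl$ supplies a detail that the paper leaves implicit.
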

\begin{proof}
  The measures $\Sg_{k,n}$ and $\Pi_{k,n}$ are the pushforwards of $\sfD_{k,n}$ under the continuous maps $(x,y)\mapsto y-x$ and $(x,y)\mapsto y/x$, respectively.
  Similarly, $\Sg_k$ and $\Pi_k$ are the pushforwards of $\sfD_k$ under these maps.
  The continuous mapping theorem applied to these pushforward maps, which commute with taking mean measures, gives the assertions from Theorem~\ref{thm:LLN_PD}.
\end{proof}

\paragraph*{Total persistence}
For $p\in\R$ and a Radon measure $\sfD$ on $\Dl_\fin$, define
\[
\Pers_p(\sfD)
:=\int_{\Dl_\fin}(y-x)^p\,d\sfD(x,y).
\]
For $p>0$, this is the \emph{$p$th total persistence} of $\sfD$.
For $p=0$, it is the total mass of $\sfD$, and for $p<0$, it is a negative moment of the associated additive lifetime measure.

All death times of the time-changed filtration $\tilde\cX_n$ are finite, since $\tilde X_n(n)=K(n)$ and we use reduced homology.
Hence, $\Dgm_k^{\tilde\cX_n}$ and $\sfD_{k,n}$ are supported on $\Dl_\fin$.
If $\Dgm_k^{\tilde\cX_n}$ is given by~\eqref{eq:Dgm_tildeXn}, then
\[
\Pers_p(\sfD_{k,n})
=\frac{\Pers_p\bigl(\Dgm_k^{\tilde\cX_n}\bigr)}{\binom n{k+1}q_k\bigl(r_k^\rightarrow(1/n)\bigr)}
=\frac1{\binom n{k+1}q_k\bigl(r_k^\rightarrow(1/n)\bigr)}\sum_{i=1}^{m_n}(d_i-b_i)^p.
\]
Theorem~\ref{thm:LLN_PD_test} immediately implies convergence of these quantities for every $p>-1$.
\begin{cor}\label{cor:LLN_Pers}
  Assume that Conditions~\textup{\ref{cond:C1}}--\textup{\ref{cond:C3}} hold, and let $p\in(-1,\infty)$.
  Then
  \[
  \Pers_p(\sfD_{k,n})\xrightarrow{L^1}\Pers_p(\sfD_k)<\infty
  \quad\text{as $n\to\infty$.}
  \]
\end{cor}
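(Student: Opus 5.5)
This is a direct application of Theorem~\ref{thm:LLN_PD_test} with the test function $f(x,y):=(y-x)^p$ on $\Dl_\fin$. By the definition of $\Pers_p$, we have $\int_{\Dl_\fin}f\,d\sfD_{k,n}=\Pers_p(\sfD_{k,n})$, and $\int_{\Dl_\fin}f\,d\sfD_k=\Pers_p(\sfD_k)$. The measure $\sfD_{k,n}$ is supported on $\Dl_\fin$, and $\sfD_k(\Dl\setminus\Dl_\fin)=0$ by the first assertion of Theorem~\ref{thm:LLN_PD_test}. So the only task is to check that $f$ is continuous on $\Dl_\fin$ and satisfies the growth condition~\eqref{eq:PD_test_growth}.

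Continuity is clear, since $y-x>0$ on $\Dl_\fin$ and $u\mapsto u^p$ is continuous on $(0,\infty)$. For the growth bound we split into cases on the sign of $p$.
\begin{itemize}
\item If $p\ge0$, then $0\le y-x\le y$ because $x\ge0$. Hence $(y-x)^p\le y^p\le 1+y^p$, and~\eqref{eq:PD_test_growth} holds with $C=1$, $\a=p$ and $\gm=0$.
\item If $p\in(-1,0)$, set $\gm:=-p\in(0,1)$. Then $(y-x)^p=(y-x)^{-\gm}\le 1+(y-x)^{-\gm}$, so~\eqref{eq:PD_test_growth} holds with $C=1$, $\a=0$ and this $\gm$.
\end{itemize}
The case $p=0$ is covered by the first bullet; there $f\equiv1$ and the statement is convergence of total masses.

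Theorem~\ref{thm:LLN_PD_test} then gives $\int|f|\,d\sfD_k<\infty$, that is, $\Pers_p(\sfD_k)<\infty$. It also gives the $L^1$ convergence $\Pers_p(\sfD_{k,n})\to\Pers_p(\sfD_k)$.

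There is no real obstacle here. The only point needing care is that the hypothesis $\gm<1$ in Theorem~\ref{thm:LLN_PD_test} is exactly what restricts us to $p>-1$.
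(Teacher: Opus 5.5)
Your proposal is correct and follows the paper's own argument: apply Theorem~\ref{thm:LLN_PD_test} to $f(x,y)=(y-x)^p$, which satisfies~\eqref{eq:PD_test_growth} with $\a=\max\{p,0\}$ and $\gm=\max\{-p,0\}<1$. Your case split on the sign of $p$ is just an explicit verification of those two parameter choices.
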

\begin{proof}
  The function $f(x,y):=(y-x)^p$ satisfies~\eqref{eq:PD_test_growth} with $\a=\max\{p,0\}$ and $\gm=\max\{-p,0\}<1$.
  The conclusion follows from Theorem~\ref{thm:LLN_PD_test}.
\end{proof}

\paragraph*{Convergence in the optimal transport distance}
Building on the transport distance with boundary introduced by Figalli and Gigli~\cite{FG2010} for measures on a bounded domain, Divol and Lacombe~\cite{DL2021} extended partial matching distances between persistence diagrams to Radon measures on $\Dl_\fin$.
They characterized convergence in these distances by vague convergence together with convergence of total persistence.
We first briefly recall the definition.

Set
\[
\overline\Dl_\fin:=\{(x,y)\in[0,\infty)^2\mid x\le y\}.
\]
Fix $p\in[1,\infty)$, and let $\sfD,\sfD'$ be Radon measures on $\Dl_\fin$ satisfying
\begin{equation}\label{eq:pers_condi}
\Pers_p(\sfD)<\infty
\quad\text{and}\quad
\Pers_p(\sfD')<\infty.
\end{equation}
Let $\Gm(\sfD,\sfD')$ denote the set of Borel measures $\gm$ on $\overline\Dl_\fin\times\overline\Dl_\fin$ satisfying
\[
\gm\bigl(A\times\overline\Dl_\fin\bigr)=\sfD(A)
\quad\text{and}\quad
\gm\bigl(\overline\Dl_\fin\times B\bigr)=\sfD'(B)
\]
for all Borel sets $A,B\subset\Dl_\fin$.
Such measures are called \emph{admissible transport plans}.
The marginal constraints apply only off the diagonal, allowing mass to be transported to and from the diagonal.
Define the \emph{optimal transport distance} by
\[
\OT_p(\sfD,\sfD')
:=\left(\inf_{\gm\in\Gm(\sfD,\sfD')}
\int_{\overline\Dl_\fin\times\overline\Dl_\fin}\|z-z'\|^p\,d\gm(z,z')\right)^{1/p},
\]
where $\|\cdot\|$ denotes any norm on $\R^2$.
This defines a metric on the class of Radon measures satisfying the above moment condition~\eqref{eq:pers_condi}.
Since all norms on $\R^2$ are equivalent, neither this class nor the induced convergence depends on the choice of $\|\cdot\|$.

Theorems~\ref{thm:LLN_PD} and~\ref{thm:LLN_PD_test} yield the following result through Corollary~\ref{cor:LLN_Pers}.
Here, $\sfD_{k,n}$ and $\sfD_k$ are regarded as measures on $\Dl_\fin$.
\begin{cor}\label{cor:LLN_OT}
  Assume that Conditions~\textup{\ref{cond:C1}}--\textup{\ref{cond:C3}} hold.
  Then, for every $p\in[1,\infty)$,
  \[
  \OT_p(\sfD_{k,n},\sfD_k)\xrightarrow{L^p}0
  \quad\text{and}\quad
  \OT_p(\E[\sfD_{k,n}],\sfD_k)\longrightarrow0
  \quad\text{as $n\to\infty$.}
  \]
\end{cor}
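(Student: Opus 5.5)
Corollary~\ref{cor:LLN_OT} will follow from the characterization of Divol and Lacombe~\cite{DL2021}: for Radon measures on $\Dl_\fin$ with finite $p$th total persistence, $\OT_p(\sfD_n,\sfD)\to0$ if and only if $\sfD_n\to\sfD$ vaguely on $\Dl_\fin$ and $\Pers_p(\sfD_n)\to\Pers_p(\sfD)$. We will apply this along subsequences, treating the random statement and the mean-measure statement separately.

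\emph{Deterministic statement.}
First, $\Pers_p(\sfD_k)<\infty$ by Corollary~\ref{cor:LLN_Pers}, and $\sfD_k$ lives on $\Dl_\fin$ by Theorem~\ref{thm:LLN_PD_test}. The same bound gives $\Pers_p(\E[\sfD_{k,n}])=\E[\Pers_p(\sfD_{k,n})]<\infty$ for large $n$. Theorem~\ref{thm:LLN_PD} gives $\E[\sfD_{k,n}]\to\sfD_k$ weakly on $\Dl$, and hence vaguely on $\Dl_\fin$, because $C_c(\Dl_\fin)\subset C_b(\Dl)$ after extension by zero. Corollary~\ref{cor:LLN_Pers} gives $\E[\Pers_p(\sfD_{k,n})]\to\Pers_p(\sfD_k)$. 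The Divol--Lacombe criterion then yields $\OT_p(\E[\sfD_{k,n}],\sfD_k)\to0$.

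\emph{Convergence in probability.}
Take any subsequence. Weak convergence in probability on the separable metrizable space $\cM_\fin(\Dl)$, together with $L^1$ convergence of $\Pers_p(\sfD_{k,n})$, allows us to extract a further subsequence along which, almost surely, $\sfD_{k,n}\to\sfD_k$ weakly and $\Pers_p(\sfD_{k,n})\to\Pers_p(\sfD_k)$. Along this subsequence the criterion gives $\OT_p(\sfD_{k,n},\sfD_k)\to0$ almost surely. Since every subsequence has such a further subsequence, $\OT_p(\sfD_{k,n},\sfD_k)\to0$ in probability.

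\emph{Upgrade to $L^p$; this is the main obstacle.}
It suffices to show that $\{\OT_p(\sfD_{k,n},\sfD_k)^p\}_n$ is uniformly integrable. Consider the admissible plan that sends all mass of both measures to the diagonal. Its cost bounds the distance:
\[
\OT_p(\sfD_{k,n},\sfD_k)^p\le C_p\bigl(\Pers_p(\sfD_{k,n})+\Pers_p(\sfD_k)\bigr),
\]
because the distance from $(x,y)$ to the diagonal is a constant multiple of $y-x$. The right-hand side converges in $L^1$ by Corollary~\ref{cor:LLN_Pers}, so it is uniformly integrable. Hence the left-hand side is uniformly integrable, and together with convergence in probability this gives $\E[\OT_p(\sfD_{k,n},\sfD_k)^p]\to0$, which is the $L^p$ claim.

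Two points need checking. One is that the Divol--Lacombe criterion applies with our topology on $\Dl_\fin$: vague convergence there uses compact subsets of $\Dl_\fin$, which may touch the birth axis but not the diagonal, and these are also compact in $\Dl$. The other is the measurability of $\OT_p(\sfD_{k,n},\sfD_k)$, which holds because $\sfD_{k,n}$ takes finitely many values for each $n$.
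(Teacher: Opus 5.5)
This is essentially the paper's own proof (Section~S3.4 of the supplement), step for step: vague convergence on $\Dl_\fin$ from Theorem~\ref{thm:LLN_PD}, convergence of $\Pers_p$ from Corollary~\ref{cor:LLN_Pers}, the Divol--Lacombe characterization along almost surely convergent sub-subsequences, uniform integrability of $\OT_p(\sfD_{k,n},\sfD_k)^p$ via the plan sending all mass to the diagonal, and a second application of the characterization to the mean measures using $\Pers_p(\E[\sfD_{k,n}])=\E[\Pers_p(\sfD_{k,n})]$. The one slip is your measurability justification: $\sfD_{k,n}$ does \emph{not} take finitely many values, since its atoms sit at continuously distributed times $nr_k(\kappa^{\cX_n}(\sg)-)$ (for example at $nw_\tau$ in the Linial--Meshulam case), so measurability of $\OT_p(\sfD_{k,n},\sfD_k)$ (which the paper takes for granted) should instead come from the following fact: on each of the finitely many events fixing the order in which simplices appear, the atom locations are measurable functions of the weights, and $\mu\mapsto\OT_p(\mu,\sfD_k)$ is continuous in those locations.
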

The proof is given in Section~S3.4 of the Supplementary Material~\cite{HKsupp}.

\subsection{Typical Examples}
Our results apply to a general sequence $\bm p=(p_j)_{j=0}^\infty$ of distribution functions on $[0,\infty]$.
We now specialize them to power-law parameter functions, including the Linial--Meshulam and random flag complex processes.
In this setting, the time change, the normalization, and the functions $\Theta_k$ and $\bm\theta_k$ can all be written explicitly in terms of the exponents.
Throughout, we write $a\wedge b:=\min\{a,b\}$ and $a\vee b:=\max\{a,b\}$.

\paragraph*{Power-law parameter functions}
Given $\bm\a=(\a_j)_{j=0}^\infty\in[0,\infty]^{\N_0}$, define
\[
p_j(t):=
\begin{cases}
t^{\a_j}\wedge1
&\text{if $\a_j<\infty$,}\\
0
&\text{if $\a_j=\infty$,}
\end{cases}
\qquad t\in[0,\infty),
\quad\text{and}\quad
p_j(\infty):=1.
\]
Here, we use the convention $t^0:=1$ for every finite $t\ge0$.
Thus, $\a_j=0$ and $\a_j=\infty$ correspond to deterministic weights $0$ and $\infty$, respectively.

Fix $k\in\N_0$, and assume that $\a_j<\infty$ for every $0\le j\le k+1$ and that at least one of these exponents is positive.
Set
\[
\a_k^{(q)}:=\sum_{j=0}^k\binom{k+1}{j+1}\a_j
\quad\text{and}\quad
\a_k^{(r)}:=\sum_{j=0}^{k+1}\binom{k+1}j\a_j.
\]
Then $\a_k^{(q)}\ge0$ and $\a_k^{(r)}>0$.
These are the power-law exponents of the $k$-simplex probability $q_k(t)$ and the conditional coface probability $r_k(t)$, respectively.
The product formulas~\eqref{eq:qkt-rkt} give
\[
q_k(t)=t^{\a_k^{(q)}}\wedge1
\quad\text{and}\quad
r_k(t)=t^{\a_k^{(r)}}\wedge1,
\qquad t\in[0,\infty).
\]
Hence, Condition~\ref{cond:C2} holds with $t_0=1$, and
\[
r_k^\rightarrow(u)=u^{1/\a_k^{(r)}},
\qquad 0\le u<1.
\]
In particular, the time-changed filtration satisfies
\[
\tilde X_n(x)=X_n\bigl((x/n)^{1/\a_k^{(r)}}\bigr),
\qquad 0\le x<n,
\]
and $\tilde X_n(x)=K(n)$ for $x\ge n$.

The limits in Condition~\ref{cond:C3} and the corresponding function $\Theta_k$ are
\[
\Theta_k(y)=y^{\a_k^{(q)}/\a_k^{(r)}}
\quad\text{and}\quad
\theta_{k,j}(x,y)=\left(\frac xy\right)^{\a_j/\a_k^{(r)}},
\qquad 0<x<y<\infty,\quad 0\le j\le k+1.
\]
Moreover,
\[
\binom n{k+1}q_k\bigl(r_k^\rightarrow(1/n)\bigr)
=\binom n{k+1}n^{-\a_k^{(q)}/\a_k^{(r)}}
\sim\frac{n^{k+1-\a_k^{(q)}/\a_k^{(r)}}}{(k+1)!},
\]
where $a_n\sim b_n$ means $a_n/b_n\to1$ as $n\to\infty$.
Therefore, Condition~\ref{cond:C1} is equivalent to
\begin{equation}\label{eq:criterion_C1}
\a_k^{(q)}<(k+1)\a_k^{(r)}.
\end{equation}

Substituting these expressions into Theorem~\ref{thm:LLN_PD} gives the following result.
\begin{cor}\label{cor:power-law}
  Let $\bm\a=(\a_j)_{j=0}^\infty\in[0,\infty]^{\N_0}$ and $k\in\N_0$, and let $\cX_n$ be the multiparameter complex process with the power-law parameter functions defined above.
  Assume that $\a_j<\infty$ for every $0\le j\le k+1$ and that $\a_j>0$ for at least one such $j$.
  Suppose further that~\eqref{eq:criterion_C1} holds.
  Then
  \[
  \frac{(k+1)!}{n^{k+1-\a_k^{(q)}/\a_k^{(r)}}}\Dgm_k^{\tilde\cX_n}
  \xrightarrow{\mathrm p}\sfD_k
  \quad\text{as $n\to\infty$}
  \]
  in $\cM_\fin(\Dl)$ equipped with the weak topology.
  The corresponding mean measures also converge weakly to $\sfD_k$.
  The limiting measure $\sfD_k$ is uniquely characterized by
  \[
  \sfD_k\bigl([0,x]\times(y,\infty]\bigr)
  =y^{\a_k^{(q)}/\a_k^{(r)}}
  \Bigl\{h_k(y)-h_k\Bigl(y,\bigl((x/y)^{\a_j/\a_k^{(r)}}\bigr)_{j=0}^{k+1}\Bigr)\Bigr\},
  \qquad 0<x<y<\infty.
  \]
\end{cor}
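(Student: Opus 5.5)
This corollary is a direct specialization of Theorem~\ref{thm:LLN_PD}. The plan is to verify Conditions~\ref{cond:C1}--\ref{cond:C3} for the power-law parameter functions, compute $\Theta_k$ and $\bm\theta_k$ explicitly, and replace the normalization by an asymptotically equivalent one.

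\emph{Step 1 (product formulas and time change).} For $t\in[0,1]$ we have $p_j(t)=t^{\a_j}$ for $0\le j\le k+1$. The formulas~\eqref{eq:qkt-rkt} then give $q_k(t)=t^{\a_k^{(q)}}$ and $r_k(t)=t^{\a_k^{(r)}}$. For $t\ge1$ every $p_j(t)=1$, so $q_k=r_k=1$ there. Hence $r_k(0)=0$ because $\a_k^{(r)}>0$; this uses that some $\a_j>0$ with $j\le k+1$ and that every binomial coefficient $\binom{k+1}{j}$ with $0\le j\le k+1$ is positive. Moreover, $r_k$ is continuous on $[0,1]$ with $r_k(1)=1$, so Condition~\ref{cond:C2} holds with $t_0=1$. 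Since $r_k$ is strictly increasing on $[0,1]$, we get $r_k^\rightarrow(u)=u^{1/\a_k^{(r)}}$ for $u<1$.

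\emph{Step 2 (Conditions \ref{cond:C1} and \ref{cond:C3}).} For $x/n<1$, substituting gives
\[
q_k\bigl(r_k^\rightarrow(x/n)\bigr)=(x/n)^{\a_k^{(q)}/\a_k^{(r)}}.
\]
Together with $\binom n{k+1}\sim n^{k+1}/(k+1)!$, this shows that Condition~\ref{cond:C1} is equivalent to~\eqref{eq:criterion_C1}. For $0<x<y$ and large $n$, the ratio $p_j(r_k^\rightarrow(x/n))/p_j(r_k^\rightarrow(y/n))$ equals $(x/y)^{\a_j/\a_k^{(r)}}$ exactly, independently of $n$. When $\a_j=0$ this is the constant $1$, consistent with the convention $t^0=1$. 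So Condition~\ref{cond:C3} holds with $\theta_{k,j}(x,y)=(x/y)^{\a_j/\a_k^{(r)}}$. Likewise, the ratio defining $\Theta_k(y)$ equals $y^{\a_k^{(q)}/\a_k^{(r)}}$ for all large $n$.

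\emph{Step 3 (normalization).} Theorem~\ref{thm:LLN_PD} gives $\sfD_{k,n}\xrightarrow{\mathrm p}\sfD_k$ and $\E[\sfD_{k,n}]\to\sfD_k$ weakly. Here $\sfD_{k,n}$ is normalized by
\[
a_n:=\binom n{k+1}n^{-\a_k^{(q)}/\a_k^{(r)}},
\]
and $a_n\bigl/\bigl(n^{k+1-\a_k^{(q)}/\a_k^{(r)}}/(k+1)!\bigr)\to1$. Multiplying by this deterministic factor, which tends to $1$, preserves weak convergence in probability. To see this, note that $\int f\,d(\lambda_n\mu_n)=\lambda_n\int f\,d\mu_n$ for $f\in C_b(\Dl)$, and weak convergence in probability is metrizable in $\cM_\fin(\Dl)$. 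The same argument applies to the mean measures. Finally, substituting $\Theta_k$ and $\bm\theta_k$ into $\hat\b_k$ yields the stated quadrant identity, and uniqueness is inherited from Theorem~\ref{thm:LLN_PD}.

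There is no real obstacle. The only points needing care are checking $\a_k^{(r)}>0$ and handling the edge cases $\a_j=0$, for which $p_j\equiv1$ on $[0,\infty)$. I would also state explicitly the meaning of ``convergence in probability'' in $\cM_\fin(\Dl)$, so that the rescaling argument in Step~3 is justified.
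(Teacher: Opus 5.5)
Your proposal is correct and follows the paper's own route. The paper likewise computes $q_k$, $r_k$, $r_k^\rightarrow$, $\Theta_k$ and $\bm\theta_k$ explicitly for the power-law functions, checks Conditions~\ref{cond:C1}--\ref{cond:C3}, and substitutes into Theorem~\ref{thm:LLN_PD}, using $\binom n{k+1}n^{-\a_k^{(q)}/\a_k^{(r)}}\sim n^{k+1-\a_k^{(q)}/\a_k^{(r)}}/(k+1)!$ to change the normalization. Your explicit handling of the deterministic rescaling factor $\lambda_n\to1$ makes precise a step the paper leaves implicit. The cleanest justification there is the subsequence characterization of convergence in probability, which sidesteps any need to control total masses inside a metric.
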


Under the same condition, Theorem~\ref{thm:LLN_PD_test} and the corollaries concerning lifetime measures, total persistence, and the optimal transport distance also apply.
In the examples below, all measure convergence statements use the weak topology on $\cM_\fin(\Dl)$, and the corresponding mean measures converge to the same limits.

\paragraph*{The Linial--Meshulam complex process}
Let $d\in\N$, and take
\[
\a_j:=
\begin{cases}
  0         &\text{if $0\le j\le d-1$,}\\
  1         &\text{if $j=d$,}\\
  \infty    &\text{if $j\ge d+1$.}
\end{cases}
\]
Thus, at each finite time $t$, $X_n(t)$ has a complete $(d-1)$-skeleton, each $d$-simplex is present independently with probability $t\wedge1$, and no simplices of dimension greater than $d$ are present.

For $k=d-1$, we have
\[
\a_{d-1}^{(q)}=0
\quad\text{and}\quad
\a_{d-1}^{(r)}=1,
\]
so~\eqref{eq:criterion_C1} holds.
The time change and the convergence in Corollary~\ref{cor:power-law} become
\[
\tilde X_n(x)=X_n(x/n),
\qquad 0\le x<n,
\]
and
\[
\frac{d!}{n^d}\Dgm_{d-1}^{\tilde\cX_n}
\xrightarrow{\mathrm p}\sfD_{d-1}
\quad\text{as $n\to\infty$.}
\]
Here,
\[
\Theta_{d-1}(y)=1
\quad\text{and}\quad
\bm\theta_{d-1}(x,y)
=\biggl(1,\ldots,1,\frac xy\biggr),
\qquad 0<x<y<\infty.
\]
Since $h_{d-1}$ does not depend on the last coordinate of its parameter vector by Remark~\ref{rmk:last_theta}, it follows from~\eqref{eq:hk_extremal} that
\[
h_{d-1}\bigl(y,\bm\theta_{d-1}(x,y)\bigr)=h_{d-1}(y,\bm1)=0.
\]
Consequently,
\[
\sfD_{d-1}\bigl([0,x]\times(y,\infty]\bigr)=h_{d-1}(y),
\qquad 0<x<y<\infty.
\]
In this model, every birth--death pair in degree $d-1$ has birth coordinate zero, since the complete $(d-1)$-skeleton is present from time zero.
Accordingly, $\sfD_{d-1}$ is supported on $\{0\}\times(0,\infty)$ and is determined by the death coordinate alone.
A limit theorem for the empirical distribution of these death times was proved by Fraiman, Mukherjee, and Thoppe~\cite{FMT2023}.

\paragraph*{The random flag complex process}
Take $\a_1:=1$ and $\a_j:=0$ for $j\neq1$.
Then $X_n(t)$ is the flag complex (also called the clique complex) of the Erd\H{o}s--R\'enyi graph with edge probability $t\wedge1$; its simplices are the vertex sets spanning complete subgraphs.

Fix $k\ge1$.
We have
\[
\a_k^{(q)}=\binom{k+1}2
\quad\text{and}\quad
\a_k^{(r)}=k+1,
\]
so that $\a_k^{(q)}/\a_k^{(r)}=k/2$ and~\eqref{eq:criterion_C1} holds.
The time change is
\[
\tilde X_n(x)=X_n\bigl((x/n)^{1/(k+1)}\bigr),
\qquad 0\le x<n,
\]
and Corollary~\ref{cor:power-law} yields
\[
\frac{(k+1)!}{n^{1+k/2}}\Dgm_k^{\tilde\cX_n}
\xrightarrow{\mathrm p}\sfD_k
\quad\text{as $n\to\infty$.}
\]
The functions determining the limit are
\[
\Theta_k(y)=y^{k/2}
\quad\text{and}\quad
\bm\theta_k(x,y)=\left(1,\left(\frac xy\right)^{1/(k+1)},1,\ldots,1\right),
\qquad 0<x<y<\infty.
\]
Hence, for $0<x<y<\infty$,
\[
\sfD_k\bigl([0,x]\times(y,\infty]\bigr)
=y^{k/2}
\Bigl\{h_k(y)-h_k\Bigl(y,\bigl(1,(x/y)^{1/(k+1)},1,\ldots,1\bigr)\Bigr)\Bigr\}.
\]

In degree one, the term $h_1(y,\bm\theta_1(x,y))$ in this formula can be evaluated through a scalar fixed-point equation.
For $0<x<y<\infty$, set $\eta:=1-\sqrt{x/y}$, and let $v_{\min}$ and $v_{\max}$ denote the smallest and largest solutions in $(0,1]$ of
\[
v=\exp\bigl(-y(1-\eta v)^2\bigr).
\]
Then
\begin{equation}\label{eq:flag_scalar_intro}
h_1(y,\bm\theta_1(x,y))
=\max_{v\in\{v_{\min},v_{\max}\}}
\left\{\eta v-y\eta^2v^2+\frac{2y}{3}\eta^3v^3\right\}.
\end{equation}
This follows from the scalar reduction in Section~S2 of the Supplementary Material~\cite{HKsupp}; see Remark~S2.2 there.

\paragraph*{Uniform simplex weights}
Take $\a_j:=1$ for every $j\ge0$, so that every nonempty simplex receives an independent weight that is uniform on $[0,1]$.
For every $k\in\N_0$,
\[
\a_k^{(q)}=2^{k+1}-1
\quad\text{and}\quad
\a_k^{(r)}=2^{k+1}.
\]
Thus, $\a_k^{(q)}/\a_k^{(r)}=1-2^{-(k+1)}$, and~\eqref{eq:criterion_C1} holds.
The time change is
\[
\tilde X_n(x)=X_n\bigl((x/n)^{2^{-(k+1)}}\bigr),
\qquad 0\le x<n,
\]
and Corollary~\ref{cor:power-law} gives
\[
\frac{(k+1)!}{n^{k+2^{-(k+1)}}}\Dgm_k^{\tilde\cX_n}
\xrightarrow{\mathrm p}\sfD_k
\quad\text{as $n\to\infty$.}
\]
In this case,
\[
\Theta_k(y)=y^{1-2^{-(k+1)}}
\quad\text{and}\quad
\bm\theta_k(x,y)=\left(\frac xy\right)^{2^{-(k+1)}}\bm1.
\]
Therefore,
\[
\sfD_k\bigl([0,x]\times(y,\infty]\bigr)
=y^{1-2^{-(k+1)}}
\Bigl\{h_k(y)-h_k\Bigl(y,(x/y)^{2^{-(k+1)}}\bm1\Bigr)\Bigr\},
\qquad 0<x<y<\infty.
\]

\paragraph*{Numerical illustrations}
Figures~\ref{fig:dgm_k1}--\ref{fig:persbetti_k2} provide numerical illustrations of these limits for the random flag complex process in degrees one and two.
All homology computations use coefficients in $\Q$.
The normalization in~\eqref{eq:def_Pi_kn} is $\binom n{k+1}n^{-k/2}$ for the random flag complex process.
The limiting cell masses and persistent Betti numbers are evaluated numerically using the formulas for the random flag complex process.
In degree one, we use the scalar formula~\eqref{eq:flag_scalar_intro}.
In degree two, we evaluate the maximum in~\eqref{eq:hk_c_theta} using interval branch and bound over the seven relevant type coordinates.
The search also includes fixed points other than the least and greatest ones with respect to the coordinatewise order.

The limiting persistent Betti function extends continuously to the diagonal, with $\hat\b_k(x,x)\!=x^{k/2}h_k(x)$ for $x>0$.
The diagonal sections $x=y$ of the persistent Betti plots in Figures~\ref{fig:persbetti_k1} and~\ref{fig:persbetti_k2} give the corresponding scaled Betti curves, namely, the normalized Betti numbers viewed as functions of the scaled filtration parameter.
The formula in~\cite[Theorem~3]{Kan2022} shows that the limiting curves $\hat\b_k(x,x)=x^{k/2}h_k(x)$ attain their maxima at $x=k$.
The dashed guides mark the corresponding points $(k,k)$ on the diagonal.

Numerical signatures of the critical constants $c_d^*$ for the homological phase transition in the Linial--Meshulam model, identified by Linial and Peled~\cite{LP2016a}, are also visible in the limiting persistence-diagram heatmaps and persistent Betti contours near the death coordinate $y=c_{k+1}^*$, with $c_2^*\approx2.754$ and $c_3^*\approx3.907$.
These constants also mark a change in the maximizing scalar fixed point in~\eqref{eq:hk_c}: $u=1$ attains the maximum for $c\le c_{k+1}^*$, with $h_k(c)=1-\tfrac c{k+2}$, while the least fixed point attains it for $c\ge c_{k+1}^*$; see~\cite{LP2016a} and Proposition~S2.3 of the Supplementary Material~\cite{HKsupp}.

\begin{figure}[htbp]
  \centering
  \includegraphics[width=\textwidth]{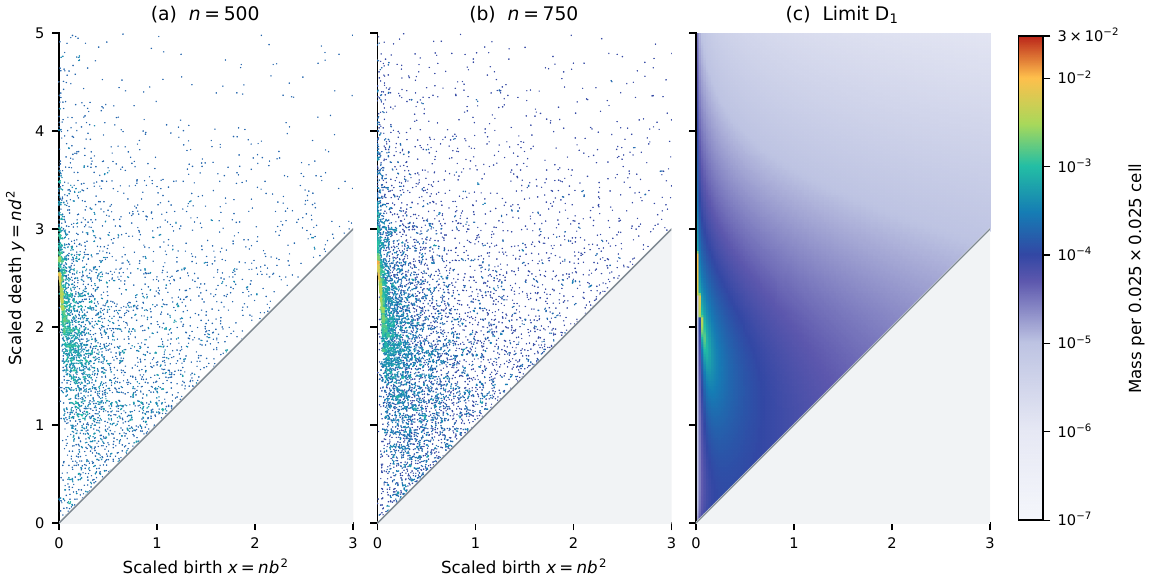}
  \caption{Persistence diagrams of the random flag complex process in degree one: one realization for each of $n=500$ and $n=750$, and the limiting measure $\sfD_1$.
  The coordinates are $(x,y)=(nb^2,nd^2)$, where $b$ and $d$ are the original birth and death times.
  Each point in panels~(a) and~(b) is colored by the mass assigned by $\sfD_{1,n}$ to its $0.025\times0.025$ cell.
  Panel~(c) shows the masses assigned by $\sfD_1$ to the same cells.
  All three panels use a common logarithmic color scale.}
  \label{fig:dgm_k1}
  \vspace{5mm}
  \includegraphics[width=\textwidth]{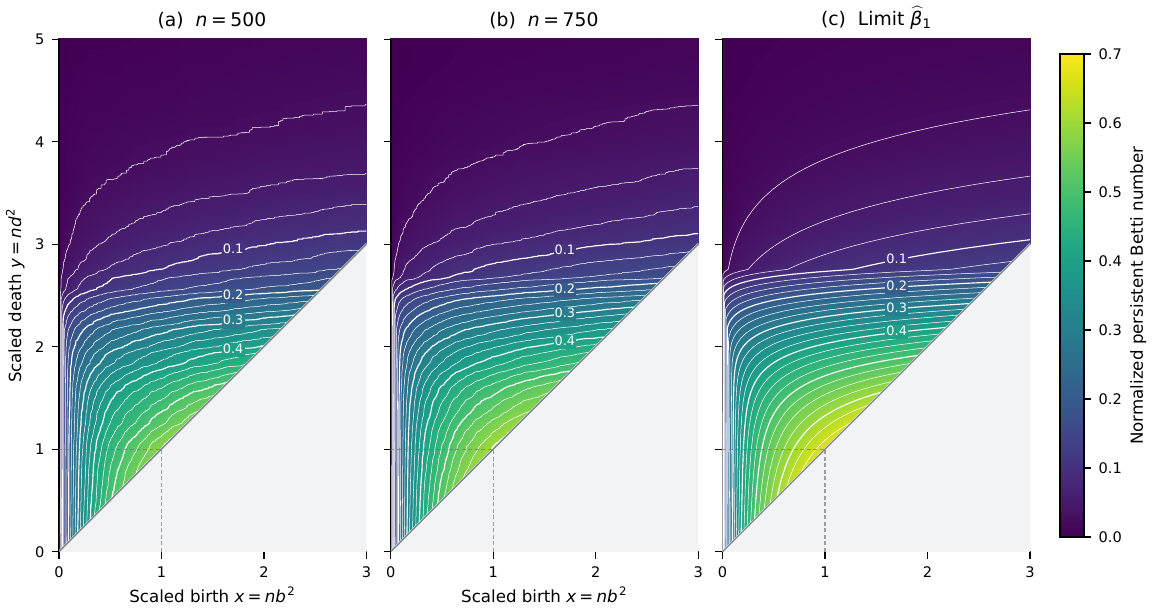}
  \caption{Normalized persistent Betti numbers $\b_1(\tilde X_n(x)\subset\tilde X_n(y))/\bigl(\binom n2 n^{-1/2}\bigr)$ for the same realizations as in Figure~\ref{fig:dgm_k1}.
  Panel~(c) shows their limit $\hat\b_1(x,y)=\sfD_1([0,x]\times(y,\infty])$.
  All three panels use a common linear color scale.
  White contours are spaced at intervals of $0.025$, with multiples of $0.1$ emphasized.}
  \label{fig:persbetti_k1}
\end{figure}

\begin{figure}[htbp]
  \centering
  \includegraphics[width=\textwidth]{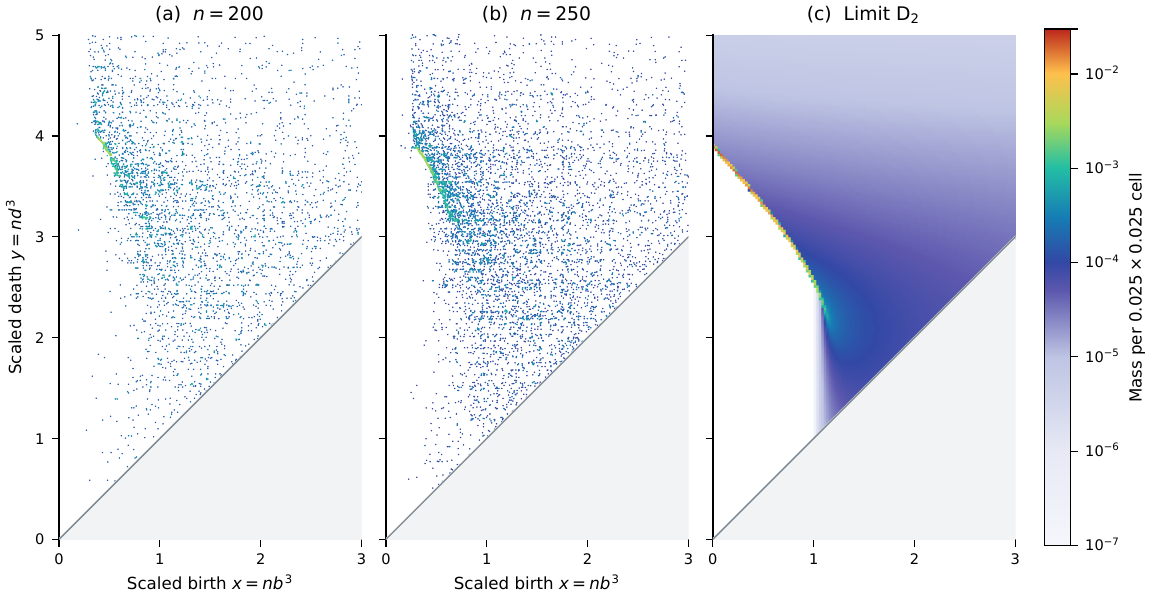}
  \caption{Persistence diagrams of the random flag complex process in degree two: one realization for each of $n=200$ and $n=250$, and the limiting measure $\sfD_2$.
  The coordinates are $(x,y)=(nb^3,nd^3)$, where $b$ and $d$ are the original birth and death times.
  Each point in panels~(a) and~(b) is colored by the mass assigned by $\sfD_{2,n}$ to its $0.025\times0.025$ cell.
  Panel~(c) shows the masses assigned by $\sfD_2$ to the same cells, with masses below $10^{-7}$ left white.
  All three panels use a common logarithmic color scale.}
  \label{fig:dgm_k2}
  \vspace{5mm}
  \includegraphics[width=\textwidth]{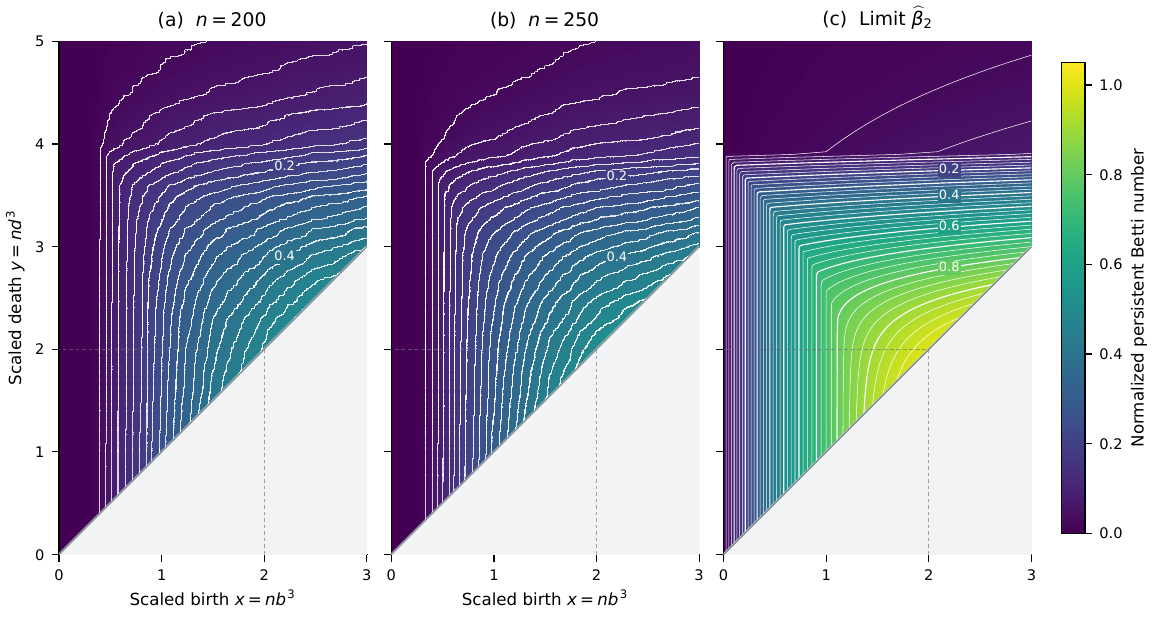}
  \caption{Normalized persistent Betti numbers $\b_2(\tilde X_n(x)\subset\tilde X_n(y))/\bigl(\binom n3 n^{-1}\bigr)$ for the same realizations as in Figure~\ref{fig:dgm_k2}.
  Panel~(c) shows their limit $\hat\b_2(x,y)=\sfD_2([0,x]\times(y,\infty])$.
  All three panels use a common linear color scale.
  White contours are spaced at intervals of $0.025$, with multiples of $0.1$ emphasized.}
  \label{fig:persbetti_k2}
\end{figure}

\FloatBarrier
\subsection{Proof Strategy and Related Work}
\label{ssec:pf_strategy}
Our proof of Theorem~\ref{thm:LLN_PB} combines local weak convergence with spectral analysis.
Informally, local weak convergence describes convergence of the distributions of neighborhoods of each fixed radius around a uniformly chosen root.
The graph framework was introduced by Benjamini and Schramm~\cite{BS2001} and developed in the objective method of Aldous and Steele~\cite{AS2004}; see also Aldous and Lyons~\cite{AL2007}.
Bordenave and Lelarge~\cite{BL2010} studied limiting spectral measures in this framework, and Bordenave, Lelarge, and Salez~\cite{BLS2011} analyzed their atomic mass at zero through a distributional recursion.
Linial and Peled~\cite{LP2016a} and later the second author~\cite{Kan2022} adapted this spectral approach to Betti numbers of random simplicial complexes.

We first express a persistent Betti number as a difference of nullities of an up Laplacian and its compression, up to a term that becomes negligible after normalization.
For a finite simplicial pair $K\subset X$, the compressed up Laplacian is the principal submatrix indexed by the $k$-simplices outside $K$; see Section~\ref{ssec:comp_L}.
This reduces the problem to determining normalized spectral masses at zero.
Our use of this reduction is inspired by the matrix-rank approach of M\'esz\'aros~\cite[Section~3.2]{Me2025}, whose rank asymptotics combine local weak convergence, spectral estimates, and leaf removal.

We then establish local weak convergence in probability of the simplicial pairs $X_n(s_n)\subset X_n(t_n)$, rooted at a uniformly chosen $k$-simplex of $X_n(t_n)$.
The key representation is the equality in law of pairs
\[
\bigl(X_n(s_n)\subset X_n(t_n)\bigr)
\stackrel{\mathrm d}{=}
\bigl(K_n(s_n,t_n)\subset X_n(t_n)\bigr),
\]
where $K_n(s_n,t_n)$ is obtained by independently thinning face labels of $X_n(t_n)$ with success probabilities $p_j(s_n)/p_j(t_n)$ in dimension $j$; see Section~\ref{ssec:LWC_MCP}.
The limit consists of a Poisson Galton--Watson simplicial tree with mean offspring $c$ and its thinning governed by $\bm\theta$; see Sections~\ref{ssec:LWC} and~\ref{ssec:PGW} for the definitions.
Its dependence only on $c$ and $\bm\theta$ explains the universality in Theorem~\ref{thm:LLN_PB}.

The thinning requires additional information about shared faces.
In M\'esz\'aros's modified Linial--Meshulam complex process, a lower-dimensional face appears when the first top-dimensional simplex containing it is added.
In our process, a simplex appears when all the independent weights assigned to its nonempty faces, including itself, are at most the current time.
Thus, the events that neighboring simplices of $X_n(t_n)$ already belong to $X_n(s_n)$ involve shared face weights and are generally dependent.
We record this information through the face types in $\cT_k$.

Local weak convergence gives weak convergence of the averaged rooted spectral measures, but convergence of their masses at zero requires a separate argument.
For finite simplicial pairs, Proposition~\ref{prop:conti_ARSM0} establishes this convergence under a uniform bound on average $k$-simplex degrees.
The key estimate, Lemma~\ref{lem:small_eigen}, bounds the number of small positive eigenvalues using integrality and the trace.
This replaces the uniform spectral bound used in Elek's argument~\cite{El2010} with a condition on average degrees.
Together with the identification of the limiting atomic mass described below, this criterion also gives a direct spectral derivation of the limit theorems for Betti numbers in~\cite{Kan2022,LP2016a}.
This approach provides a simple alternative to the lower-bound arguments based on elementary collapses; see Remark~\ref{rmk:integrality}.

It remains to compute the expected atomic mass at zero on the limiting pair.
The tree structure gives a recursion in terms of descendant branches.
Conditioning on face types yields a system of distributional fixed-point equations that accounts for the shared labels.
Extending the analysis of~\cite{BLS2011,LP2016a}, we identify its spectral solution and show that the expected mass is the maximum of $\Psi_{k,c,\bm\theta}$ over the fixed points of $\Phi_{k,c,\bm\theta}$.
Accounting for the dependence through face types and reducing the recursion to this finite-dimensional optimization problem are the main additional ingredients in identifying $h_k(c,\bm\theta)$; see Sections~\ref{ssec:DFPE} and~\ref{ssec:reduction}.
Moment bounds for simplex counts then give the $L^p$ convergence in Theorem~\ref{thm:LLN_PB}.

For persistence diagrams, the quadrant identity converts the persistent Betti limits into limits of quadrant masses.
Following the random measure argument of Hiraoka, Shirai, and Trinh~\cite{HST2018}, we obtain vague convergence of $\sfD_{k,n}$ and their mean measures.
To obtain weak convergence, we control large death coordinates and short lifetimes.
These estimates combine the Betti-number bound in~\cite[Theorem~3.6]{HK2019} with a deterministic bound on the number of birth--death pairs in a common time interval, obtained by counting incident simplices appearing at distinct times.
Their proofs are given in Appendix~\ref{app:PD_tails}.
They yield convergence of the total masses in $L^1$ and hence weak convergence.

The same estimates control truncation errors for unbounded test functions.
As in~\cite[Section~4.3]{Me2025}, the death-coordinate estimates allow polynomial growth, while the short-lifetime estimate additionally permits the diagonal singularity in Theorem~\ref{thm:LLN_PD_test}.
Convergence of lifetime measures follows by continuous pushforward.
Finally, convergence of total persistence and the criterion of Divol and Lacombe~\cite[Theorem~3.7]{DL2021}, together with the corresponding moment bounds, give convergence in the optimal transport distance.

\subsection*{Organization of the paper}
Section~\ref{sec:prelim} introduces the homological conventions, compressed Laplacians, local weak convergence, and thinned Poisson Galton--Watson simplicial trees.
Section~\ref{sec:RSM_CompL} develops the spectral framework and establishes convergence of normalized nullities of compressed Laplacians.
Section~\ref{sec:mass_zero} computes the expected atomic mass at zero on the limiting trees and derives the finite-dimensional characterization of $h_k(c,\bm\theta)$.
Section~\ref{sec:conv_PB} establishes local weak convergence of the relevant simplicial pairs and proves Theorem~\ref{thm:LLN_PB}.
Section~\ref{sec:conv_PD} proves Theorems~\ref{thm:LLN_PD} and~\ref{thm:LLN_PD_test} and discusses Corollary~\ref{cor:LLN_OT}.
Appendix~\ref{app:PD_tails} proves the tail estimates for persistence diagrams.
The Supplementary Material~\cite{HKsupp} contains the essential self-adjointness arguments needed for the spectral analysis in Section~S1, simplifications of the optimization defining $h_k(c,\bm\theta)$ under additional conditions on $c$ and $\bm\theta$ in Section~S2, and auxiliary proofs for convergence of persistence diagrams in Section~S3.

\section{Preliminaries}
\label{sec:prelim}
In this section, we collect the notation and auxiliary results used throughout the paper.

\subsection{Persistent Betti Numbers and Persistence Diagrams}
\label{ssec:H_PB_PD}
A \emph{simplicial complex} $X$ is a nonempty collection of finite sets that is closed under taking subsets; that is, if $\sg\in X$ and $\rho\subset\sg$, then $\rho\in X$.
For $\sg\in X$, the \emph{dimension} of $\sg$ is defined by $\dim\sg:=|\sg|-1$.
An element $\sg\in X$ with $\dim\sg=k$ is called a \emph{$k$-simplex} in $X$.
Any subset $\rho\subset\sg$ is called a \emph{face} of $\sg$, and $\sg$ is called a \emph{coface} of $\rho$.
If $\rho\subsetneq\sg$, then $\rho$ is called a \emph{proper face} of $\sg$.
We denote by $F_k(X)$ the set of all $k$-simplices in $X$, and set
\[
f_k(X):=|F_k(X)|.
\]
Note that $F_{-1}(X)=\{\emptyset\}$ and $f_{-1}(X)=1$.
For $k\in\N_0$ and $\sg\in F_k(X)$, define the \emph{degree} of $\sg$ in $X$ by $\deg_X(\sg):=|\{\tau\in F_{k+1}(X)\mid\sg\subset\tau\}|$.
The \emph{dimension} of $X$ is the supremum of the dimensions of its simplices.
The set $V(X):=\bigcup_{\sg\in X}\sg$ is called the \emph{vertex set} of $X$.

A sequence $\hat\sg=(v_0,v_1,\ldots,v_k)$ of $k+1$ distinct vertices in $X$ is called an \emph{ordered $k$-simplex} in $X$ if the corresponding unordered set $\sg=\{v_0,v_1,\ldots,v_k\}$ is a $k$-simplex in $X$.
We denote by $\hat F_k(X)$ the set of all ordered $k$-simplices in $X$.
By convention, we set $\hat F_{-1}(X):=\{\emptyset\}$.
Two ordered simplices are said to be \emph{equivalent} if they are transformed into each other by an even permutation.
The equivalence class of an ordered $k$-simplex $\hat\sg=(v_0,v_1,\ldots,v_k)$ is denoted by $\la\sg\ra$ or $\la v_0,v_1,\ldots,v_k\ra$ and is called the \emph{oriented $k$-simplex}.

\paragraph*{Simplicial (co)homology}
Throughout this subsection, we work over a field $\F$ of characteristic zero.
For $k\in\N_0$, the chain group $C_k(X)=C_k(X;\F)$ is the $\F$-vector space consisting of all finite formal linear combinations of oriented $k$-simplices in $X$, subject to the relations
\begin{equation}\label{eq:permutation}
\la v_{\pi(0)},v_{\pi(1)},\ldots,v_{\pi(k)}\ra
=(\sgn\pi)\la v_0,v_1,\ldots,v_k\ra
\end{equation}
for every permutation $\pi$ of $\{0,1,\ldots,k\}$.
By convention, we set $C_{-1}(X):=\F$.
For $k\in\N_0$, the $k$th \emph{boundary operator} $\del_k^X\colon C_k(X)\to C_{k-1}(X)$ is defined as the linear extension of
\[
\del_k^X\la v_0,v_1,\ldots,v_k\ra:=\sum_{i=0}^k(-1)^i\la v_0,\ldots,v_{i-1},v_{i+1},\ldots,v_k\ra
\]
for $\la v_0,v_1,\ldots,v_k\ra\in C_k(X)$.
A straightforward calculation shows that $\del_k^X\circ\del_{k+1}^X=0$; equivalently, $\Ker\del_k\supset \im\del_{k+1}$.
The vector spaces
\[
Z_k(X):=\Ker\del_k^X
\quad\text{and}\quad
B_k(X):=\im\del_{k+1}^X
\]
are called the \emph{$k$-cycle} and \emph{$k$-boundary groups}, respectively.
The $k$th (reduced) \emph{homology group} of $X$ is defined by
\[
H_k(X):=Z_k(X)/B_k(X).
\]
If $X$ is finite, the dimension $\b_k(X):=\dim H_k(X)$ is called the $k$th \emph{Betti number} of $X$.
In this case, $\b_k(X)$ is independent of the choice of the field $\F$ of characteristic zero.

For $k\ge-1$, the (algebraic) dual space $C^k(X)=C^k(X;\F):=\Hom(C_k(X),\F)$ is called the $k$th \emph{cochain group} of $X$.
The $k$th \emph{coboundary operator} $d_k^X\colon C^k(X)\to C^{k+1}(X)$ is defined as the dual operator of $\del_{k+1}^X$.
Similarly, one has $d_k^X\circ d_{k-1}^X=0$.
The \emph{$k$-cocycle group} and \emph{$k$-coboundary group} are defined as
\[
Z^k(X):=\Ker d_k^X
\quad\text{and}\quad
B^k(X):=\im d_{k-1}^X,
\]
respectively; in particular, $Z^k(X)\supset B^k(X)$.
The $k$th (reduced) \emph{cohomology group} of $X$ is defined by
\[
H^k(X):=Z^k(X)/B^k(X).
\]
If $X$ is finite, then $\dim H^k(X)=\dim H_k(X)$, since we work over a field.

\paragraph*{Persistent Betti numbers}
We next review the notion of persistent Betti numbers and derive a useful expression.
Let $K\subset X$ be a simplicial pair.
Let $\iota_K^X$ denote the inclusion from $K$ to $X$.
The inclusion $\iota_K^X$ induces a linear map $\bigl(\iota_K^X\bigr)_*\colon H_k(K)\to H_k(X)$ given by
\[
\bigl(\iota_K^X\bigr)_*\bigl(c+B_k(K)\bigr)
:=c+B_k(X),
\qquad c\in Z_k(K).
\]
We now assume that $X$ is finite.
The $k$th \emph{persistent Betti number} of a simplicial pair $K\subset X$ is defined by
\[
\b_k(K\subset X):=\rank\bigl(\bigl(\iota_K^X\bigr)_*\colon H_k(K)\longrightarrow H_k(X)\bigr).
\]
This number is independent of the choice of the field $\F$ of characteristic zero.
Moreover, if $K=X$, then $\b_k(K\subset X)=\b_k(X)$.
Since
\[
\im\bigl(\iota_K^X\bigr)_*
\cong\frac{H_k(K)}{\Ker\bigl(\iota_K^X\bigr)_*}
=\frac{Z_k(K)/B_k(K)}{(Z_k(K)\cap B_k(X))/B_k(K)}
\cong\frac{Z_k(K)}{Z_k(K)\cap B_k(X)},
\]
we obtain
\begin{equation}\label{eq:PB}
\b_k(K\subset X)
=\dim\frac{Z_k(K)}{Z_k(K)\cap B_k(X)}
=\dim\frac{Z_k(K)}{C_k(K)\cap B_k(X)}.
\end{equation}

\smallskip
For each $k\in\N_0$, let $C_k(X\setminus K)$ be the vector space spanned by the oriented $k$-simplices in $X$ that do not belong to $K$, subject to the relation~\eqref{eq:permutation}.
Then $C_k(X)=C_k(K)\oplus C_k(X\setminus K)$ as vector spaces.
Furthermore, we define the restricted boundary operator
\[
\del_{k+1}^{X\setminus K}\colon C_{k+1}(X\setminus K)\longrightarrow C_k(X\setminus K)
\]
by
\[
\del_{k+1}^{X\setminus K}:=P_{C_k(X\setminus K)}\,\del_{k+1}^X\,J_{C_{k+1}(X\setminus K)},
\]
where $P_{C_k(X\setminus K)}$ denotes the projection from $C_k(X)$ onto $C_k(X\setminus K)$, and $J_{C_{k+1}(X\setminus K)}$ denotes the canonical inclusion from $C_{k+1}(X\setminus K)$ into $C_{k+1}(X)$.
By convention, we omit the composition symbol throughout.
\begin{rmk}
  Under the canonical identifications $C_k(X)/C_k(K)\cong C_k(X\setminus K)$, the restricted boundary operators $(\del_k^{X\setminus K})_k$ correspond to the boundary operators of the relative chain complex $(C_k(X,K),\del_k^{(X,K)})_k$.
  The present formulation, however, is more convenient for the operator-theoretic analysis carried out in the subsequent sections.
\end{rmk}
We also introduce the dual space $C^k(X\setminus K):=\Hom(C_k(X\setminus K),\F)$, and denote by
\[
d_k^{X\setminus K}\colon C^k(X\setminus K)\to C^{k+1}(X\setminus K)
\]
the dual operator of $\del_{k+1}^{X\setminus K}$.
The following lemma provides a useful expression for the persistent Betti numbers in terms of the nullities of the corresponding coboundary operators.
\begin{lem}\label{lem:PB}
  Let $K\subset X$ be a finite simplicial pair and $k\in\N_0$.
  Then
  \[
  \b_k(K\subset X)
  =\nullity\bigl(d_k^X\bigr)-\nullity\bigl(d_k^{X\setminus K}\bigr)-\dim B^k(K).
  \]
\end{lem}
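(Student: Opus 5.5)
The plan is a pure dimension count: rewrite every term in the claimed identity through ranks of boundary operators, then compare with the expression~\eqref{eq:PB}. Everything is finite-dimensional because $X$ is finite. A linear map and its dual have the same rank, so for each coboundary operator I will use $\nullity = \dim(\text{domain}) - \rank$, with the rank computed on the boundary side.

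First I compute the three terms on the right.
\begin{itemize}
\item Since $d_k^X$ is dual to $\del_{k+1}^X$, we get $\nullity(d_k^X)=f_k(X)-\dim B_k(X)$.
\item Since $d_k^{X\setminus K}$ is dual to $\del_{k+1}^{X\setminus K}$, we get $\nullity(d_k^{X\setminus K})=f_k(X)-f_k(K)-\rank\del_{k+1}^{X\setminus K}$.
\item Since $B^k(K)=\im d_{k-1}^K$ and $d_{k-1}^K$ is dual to $\del_k^K$, we get $\dim B^k(K)=\rank\del_k^K=f_k(K)-\dim Z_k(K)$ by rank--nullity.
\end{itemize}
Substituting these, the right-hand side becomes
\[
\dim Z_k(K)-\dim B_k(X)+\rank\del_{k+1}^{X\setminus K}.
\]
By~\eqref{eq:PB}, $\b_k(K\subset X)=\dim Z_k(K)-\dim\bigl(C_k(K)\cap B_k(X)\bigr)$. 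So it suffices to prove
\[
\dim\bigl(C_k(K)\cap B_k(X)\bigr)=\dim B_k(X)-\rank\del_{k+1}^{X\setminus K}.
\]

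For this I apply rank--nullity to the projection $P:=P_{C_k(X\setminus K)}$ restricted to $B_k(X)$. Its kernel is exactly $C_k(K)\cap B_k(X)$, because $\Ker P=C_k(K)$. For its image, use the splitting $C_{k+1}(X)=C_{k+1}(K)\oplus C_{k+1}(X\setminus K)$ to write
\[
B_k(X)=\del_{k+1}^X C_{k+1}(K)+\del_{k+1}^X J\,C_{k+1}(X\setminus K).
\]
Because $K$ is a subcomplex, $\del_{k+1}^X C_{k+1}(K)=B_k(K)\subset C_k(K)$, so $P$ annihilates the first summand. Hence $P\bigl(B_k(X)\bigr)=\im\bigl(P\del_{k+1}^X J\bigr)=\im\del_{k+1}^{X\setminus K}$, and rank--nullity gives the required identity.

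There is no real obstacle here. The only points needing care are using closure of $K$ under faces, so that boundaries of $K$-simplices stay in $C_k(K)$, and invoking rank equality of dual maps in finite dimensions.
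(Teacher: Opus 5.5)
Your proposal is correct and follows essentially the same route as the paper. Both arguments start from~\eqref{eq:PB}, compute $\dim(C_k(K)\cap B_k(X))$ by applying rank--nullity to the projection $P_{C_k(X\setminus K)}$ restricted to $B_k(X)$ (which kills $\del_{k+1}^X C_{k+1}(K)$), and then convert ranks to nullities using duality and $\dim B^k(K)=\dim B_{k-1}(K)$. The only difference is that you reduce to the intersection identity before proving it, whereas the paper proves it first.
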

\begin{proof}
  By~\eqref{eq:PB} and the rank--nullity theorem, we have
  \begin{align*}
  \b_k(K\subset X)
  &=\dim Z_k(K)-\dim(C_k(K)\cap B_k(X))\\
  &=f_k(K)-\dim B_{k-1}(K)-\dim(C_k(K)\cap B_k(X)).
  \end{align*}
  Using the projection $P_{C_k(X\setminus K)}$ onto $C_k(X\setminus K)$, we obtain
  \begin{equation}\label{eq:CK_BX}
  \begin{aligned}
  \dim(C_k(K)\cap B_k(X))
  &=\nullity\bigl(P_{C_k(X\setminus K)}|_{B_k(X)}\bigr)\\
  &=\dim B_k(X)-\rank\bigl(P_{C_k(X\setminus K)}|_{B_k(X)}\bigr)\\
  &=\dim B_k(X)-\rank\bigl(P_{C_k(X\setminus K)}\,\del_{k+1}^X\bigr)\\
  &=\dim B_k(X)-\rank\bigl(\del_{k+1}^{X\setminus K}\bigr).
  \end{aligned}
  \end{equation}
  Here, the second equality follows from the rank--nullity theorem, and the last equality follows from the fact that $P_{C_k(X\setminus K)}\,\del_{k+1}^X\,c=0$ for any $c\in C_{k+1}(K)$.
  Combining the above identities yields
  \begin{equation}\label{eq:rank-rank}
  \b_k(K\subset X)
  =f_k(K)-\dim B_k(X)+\rank\bigl(\del_{k+1}^{X\setminus K}\bigr)-\dim B_{k-1}(K).
  \end{equation}
  Applying the rank--nullity theorem once more,
  \begin{align*}
  \dim B_k(X)
  &=\rank\bigl(d_k^X\bigr)
  =f_k(X)-\nullity\bigl(d_k^X\bigr)
  \shortintertext{and}
  \rank\bigl(\del_{k+1}^{X\setminus K}\bigr)
  &=\rank\bigl(d_k^{X\setminus K}\bigr)
  =\{f_k(X)-f_k(K)\}-\nullity\bigl(d_k^{X\setminus K}\bigr).
  \end{align*}
  Substituting these into~\eqref{eq:rank-rank}, together with $\dim B_{k-1}(K)=\dim B^k(K)$, completes the proof.
\end{proof}

\paragraph*{Persistence diagrams}
Let $\cX=\{X(t)\}_{t\ge0}$ be a finite right-continuous filtration, as defined in Section~\ref{sec:intro}.
For $0\le s\le t$, let $\iota_s^t\colon X(s)\longrightarrow X(t)$ denote the inclusion map.
The collection
\[
H_k(\cX)
:=
\bigl(H_k(X(t)),(\iota_s^t)_*\bigr)_{0\le s\le t<\infty}
\]
is called the \emph{$k$th persistence module} associated with $\cX$.
For $0\le b<d\le\infty$, let $\I_{[b,d)}$ denote the interval persistence module defined by
\[
\I_{[b,d)}(t):=
\begin{cases}
  \F,
  &\text{if $b\le t<d$,}\\
  0,
  &\text{otherwise.}
\end{cases}
\]
For $0\le s\le t<\infty$, the linear map $\I_{[b,d)}(s)\to\I_{[b,d)}(t)$ is defined to be the identity map on $\F$ if $b\le s\le t<d$, and the zero map otherwise.

Since $\cX$ is finite and right-continuous, it is determined by a finite sequence of simplicial complexes together with the corresponding parameter values.
Applying~\cite[Corollary~3.1]{ZC2005} to the corresponding discrete persistence module, we obtain, for each $k\in\N_0$, a finite multiset
\[
\{(b_i,d_i)\}_{i=1}^m,
\qquad 0\le b_i<d_i\le\infty,
\]
such that
\[
H_k(\cX)\cong\bigoplus_{i=1}^m\I_{[b_i,d_i)}
\]
as persistence modules.
The multiset $\{(b_i,d_i)\}_{i=1}^m$ is uniquely determined by $\cX$ and $k$.
The numbers $b_i$ and $d_i$ are called the \emph{birth time} and the \emph{death time} of the corresponding interval, respectively.
Thus, the summand $\I_{[b_i,d_i)}$ records a homology class that is present precisely during the time interval $[b_i,d_i)$.
The pair $(b_i,d_i)$ is called a \emph{birth--death pair}.

On the birth--death plane $\Dl=\{(s,t)\in[0,\infty]^2\mid s<t\}$ introduced in Section~\ref{sec:intro}, we define the \emph{$k$th persistence diagram} of $\cX$ by
\[
\Dgm_k^\cX:=\sum_{i=1}^m\dl_{(b_i,d_i)}.
\]
The uniqueness of the interval decomposition shows that this definition is independent of all choices.
Since $\Dgm_k^\cX$ is a finite counting measure, it belongs to
$\cM_\fin(\Dl)$.
For $0\le s\le t<\infty$, write
\[
\b_k^\cX(s,t)
:=\b_k\bigl(X(s)\subset X(t)\bigr).
\]

The persistence diagram and the persistent Betti numbers are related as follows:
for every $0\le s\le t<\infty$,
\begin{equation}\label{eq:PD-PB}
\Dgm_k^\cX\bigl([0,s]\times(t,\infty]\bigr)
=\b_k^\cX(s,t);
\end{equation}
see~\cite{CEH2007,ELZ2002}.

\begin{rmk}\label{rmk:unique}
  The quadrants $[0,s]\times(t,\infty]$ with $0<s<t<\infty$ form a $\pi$-system generating the Borel $\sg$-algebra of $\Dl$.
  Each such quadrant is relatively compact under the identification $[0,\infty]^2\simeq[0,1]^2$ introduced in Section~\ref{sec:intro}.
  Moreover, the quadrants with rational $s$ and $t$ cover $\Dl$.
  Since every $\sfD\in\cM(\Dl)$ is finite on these quadrants, the uniqueness theorem for measures shows that $\sfD$ is uniquely determined by its values on them.
  In particular, $\Dgm_k^\cX$ is the unique Radon measure on $\Dl$ satisfying~\eqref{eq:PD-PB} for every $0<s<t<\infty$.
  We will use this observation to identify limiting persistence-diagram measures from persistent Betti numbers.
\end{rmk}

\subsection{Laplacians and Their Compressions}
\label{ssec:comp_L}
In this subsection, we review the combinatorial Laplacians and introduce the notion of the \emph{compressed Laplacian} for simplicial pairs.
For the spectral analysis, we now take the coefficient field to be $\C$.
This does not change the relevant ranks for finite simplicial pairs, since their boundary matrices have integer entries and hence the same rank over every field of characteristic zero.
Let $X$ be a simplicial complex.
A $\C$-valued function $\phi$ on $\hat F_k(X)$ is called a \emph{$k$-cochain} of $X$ if it is alternating; that is,
\[
\phi(v_{\pi(0)},v_{\pi(1)},\ldots,v_{\pi(k)})=(\sgn\pi)\,\phi(v_0,v_1,\ldots,v_k)
\]
for every $(v_0,v_1,\ldots,v_k)\in\hat F_k(X)$ and every permutation $\pi$ of $\{0,1,\ldots,k\}$.
We denote by $C^k(X)=C^k(X;\C)$ the $\C$-vector space of all $k$-cochains of $X$, and refer to it as the \emph{$k$-cochain group} of $X$.
Note that $C^{-1}(X)=\C^{\{\emptyset\}}\cong\C$.
This definition of $C^k(X;\C)$ differs from that in Section~\ref{ssec:H_PB_PD}, where $C^k(X;\C)$ is defined as $\Hom(C_k(X;\C),\C)$.
However, the two definitions are canonically isomorphic, and are often identified; we adopt this convention here.

For $k\ge-1$, we consider the \emph{$\ell^2$-cochain group} defined as the Hilbert space
\[
C^k_{(2)}(X):=\left\{\phi\in C^k(X)\relmiddle|\sum_{\sg\in\hat F_k(X)}|\phi(\sg)|^2<\infty\right\},
\]
equipped with the inner product
\[
\la\phi,\phi'\ra_k:=\frac1{(k+1)!}\sum_{\sg\in\hat F_k(X)}\phi(\sg)\overline{\phi'(\sg)}.
\]
For the operator-theoretic constructions below, fix $k\in\N_0$ and assume that $\deg_X(\sg)<\infty$ for every $\sg\in F_k(X)$.
This assumption ensures that $d_k^X$ maps finitely supported $k$-cochains to finitely supported $(k+1)$-cochains.
The \emph{$\ell^2$-coboundary operator} $d_k^X\colon C^k_{(2)}(X)\to C^{k+1}_{(2)}(X)$ is defined on the dense domain
\[
C^k_c(X):=\{\phi\in C^k(X)\mid|\Supp(\phi)|<\infty\}
\]
by the linear extension of
\[
\bigl(d_k^X\phi\bigr)(v_0,\ldots,v_{k+1}):=\sum_{i=0}^{k+1} (-1)^i\phi(v_0,\ldots,v_{i-1},v_{i+1},\ldots,v_{k+1})
\]
for $\phi\in C^k_c(X)$ and $(v_0,\ldots,v_{k+1})\in\hat F_{k+1}(X)$.
Let $\bigl(d_k^X\bigr)^*$ denote the adjoint operator of $d_k^X$ with respect to the inner products $\la\cdot,\cdot\ra_k$ and $\la\cdot,\cdot\ra_{k+1}$.
By the definition of adjoint operators, it is straightforward to verify that $\Dom\bigl(\bigl(d_k^X\bigr)^*\bigr)\supset C^{k+1}_c(X)$, and that for every $\psi\in C^{k+1}_c(X)$ and $\hat\sg=(v_0,v_1,\ldots,v_k)\in\hat F_k(X)$,
\[
\bigl(\bigl(d_k^X\bigr)^*\psi\bigr)(\hat\sg)
=\sum_{\substack{v\in V(X)\colon\\\sg\cup\{v\}\in F_{k+1}(X)}}\psi(v,v_0,v_1,\ldots,v_k).
\]
The $k$th \emph{up Laplacian}~$\Dl_k^\up(X)$ on $C^k_{(2)}(X)$ is then defined by $\Dl_k^\up(X):=\bigl(d_k^X\bigr)^*d_k^X$.
It holds that $\Dom\bigl(\Dl_k^\up(X)\bigr)=C^k_c(X)$, and that for every $\phi\in C^k_c(X)$ and $\hat\sg=(v_0,v_1,\ldots,v_k)\in\hat F_k(X)$,
\begin{equation}\label{eq:L_expression}
\bigl(\Dl_k^\up(X)\phi\bigr)(\hat\sg)
=\deg_X(\sg)\phi(\hat\sg)
-\!\!\!\!\!\!\!\!\sum_{\substack{v\in V(X)\colon\\\sg\cup\{v\}\in F_{k+1}(X)}}\!\!\sum_{i=0}^k(-1)^i\phi(v,v_0,\ldots,v_{i-1},v_{i+1},\ldots,v_k),
\end{equation}
where $\deg_X(\sg):=|\{\tau\in F_{k+1}(X)\mid\tau\supset\sg\}|$ is the \emph{degree} of $\sg$ in $X$.
Consequently, $\Dl_k^\up(X)$ is a densely defined symmetric positive semidefinite operator on $C^k_{(2)}(X)$ with respect to the inner product $\la\cdot,\cdot\ra_k$.

\smallskip
We now consider a subcomplex $K\subset X$ and the closed subspace
\[
C^k_{(2)}(X\setminus K)
:=\{\phi\in C^k_{(2)}(X)\mid\Supp(\phi)\subset\hat F_k(X)\setminus\hat F_k(K)\}.
\]
Then we have the orthogonal decomposition $C^k_{(2)}(X)=C^k_{(2)}(K)\oplus C^k_{(2)}(X\setminus K)$.
We define the restricted $\ell^2$-coboundary operator
\[
d_k^{X\setminus K}\colon C^k_{(2)}(X\setminus K)\longrightarrow C^{k+1}_{(2)}(X\setminus K)
\]
with $\Dom(d_k^{X\setminus K})=C^k_c(X\setminus K):=C^k_{(2)}(X\setminus K)\cap C^k_c(X)$ by
\[
d_k^{X\setminus K}
:=P_{C^{k+1}_{(2)}(X\setminus K)}\,d_k^X\,J_{C^k_{(2)}(X\setminus K)}.
\]
Here, $P_{C^{k+1}_{(2)}(X\setminus K)}$ denotes the orthogonal projection from $C^{k+1}_{(2)}(X)$ onto $C^{k+1}_{(2)}(X\setminus K)$, and $J_{C^k_{(2)}(X\setminus K)}$ denotes the canonical inclusion from $C^k_{(2)}(X\setminus K)$ into $C^k_{(2)}(X)$.
\begin{df}[Compressed Laplacian]
  Define the operator $\Dl_k^\up(X\setminus K)$ on $C^k_{(2)}(X\setminus K)$ by
  \[
  \Dl_k^\up(X\setminus K)
  :=\bigl(d_k^{X\setminus K}\bigr)^*\,d_k^{X\setminus K}
  \]
  with $\Dom\bigl(\Dl_k^\up(X\setminus K)\bigr)=C^k_c(X\setminus K)$, where $\bigl(d_k^{X\setminus K}\bigr)^*$ is the adjoint operator of $d_k^{X\setminus K}$ with respect to the inner products induced on $C^k_{(2)}(X\setminus K)$ and $C^{k+1}_{(2)}(X\setminus K)$.
  The operator $\Dl_k^\up(X\setminus K)$ is a densely defined symmetric positive semidefinite operator, and we call it the $k$th \emph{compressed up Laplacian} of the simplicial pair $K\subset X$.
  If $K$ is the empty complex, i.e., $K=\{\emptyset\}$, then $\Dl_k^\up(X\setminus K)=\Dl_k^\up(X)$.
\end{df}

The following lemma shows that $\Dl_k^\up(X\setminus K)$ is indeed the compression of $\Dl_k^\up(X)$ to the closed subspace $C^k_{(2)}(X\setminus K)$.
\begin{lem}\label{lem:comp_L}
  Let $k\in\N_0$, and let $K\subset X$ be a simplicial pair such that $\deg_X(\sg)<\infty$ for every $\sg\in F_k(X)$.
  The following hold:
  \begin{enumerate}
    \item It holds that
    \[
    \Dl_k^\up(X\setminus K)
    =P_{C^k_{(2)}(X\setminus K)}\,\Dl_k^\up(X)\,J_{C^k_{(2)}(X\setminus K)}.
    \]
    \item If $X$ is finite, then
    \[
    \b_k(K\subset X)
    =\nullity\bigl(\Dl_k^\up(X)\bigr)-\nullity\bigl(\Dl_k^\up(X\setminus K)\bigr)-\dim B^k(K).
    \]
    This is Lemma~\ref{lem:PB} rewritten in terms of the Laplacians.
  \end{enumerate}
\end{lem}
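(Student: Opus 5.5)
For part (1), the plan is to compare the two operators on the common domain $C^k_c(X\setminus K)$ through their quadratic forms and then use polarization. Fix $\phi,\phi'\in C^k_c(X\setminus K)$. Since $d_k^{X\setminus K}\phi$ is finitely supported, it lies in $\Dom((d_k^{X\setminus K})^*)$ by the same computation as for $d_k^X$, and
\[
\la\Dl_k^\up(X\setminus K)\phi,\phi'\ra_k=\la d_k^{X\setminus K}\phi,d_k^{X\setminus K}\phi'\ra_{k+1}=\la P d_k^X J\phi,P d_k^X J\phi'\ra_{k+1},
\]
where $P:=P_{C^{k+1}_{(2)}(X\setminus K)}$ and $J:=J_{C^k_{(2)}(X\setminus K)}$. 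The key observation is that the component of $d_k^XJ\phi$ in $C^{k+1}_{(2)}(K)$ vanishes. Indeed, if $\tau\in F_{k+1}(K)$, then every $k$-face of $\tau$ lies in $K$, so $\phi$ vanishes on all of these faces. Hence $d_k^XJ\phi=Pd_k^XJ\phi$, and the display equals
\[
\la d_k^XJ\phi,d_k^XJ\phi'\ra_{k+1}=\la\Dl_k^\up(X)J\phi,J\phi'\ra_k=\la P_{C^k_{(2)}(X\setminus K)}\Dl_k^\up(X)J\phi,\phi'\ra_k .
\]
Both sides of (1) send $C^k_c(X\setminus K)$ into $C^k_{(2)}(X\setminus K)$. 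Since $\phi'$ ranges over a dense subspace, the two operators agree on their common domain, and both are defined with that domain, which proves (1). The only care needed is the domain bookkeeping: checking that finitely supported cochains are in the domain of the adjoint, which follows from $\deg_X(\sg)<\infty$.

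For part (2), $X$ is finite, so all spaces are finite-dimensional and every operator is a matrix. The nullity of $T^*T$ equals the nullity of $T$, because $\la T^*T\phi,\phi\ra=\|T\phi\|^2$. This gives $\nullity(\Dl_k^\up(X))=\nullity(d_k^X)$ and $\nullity(\Dl_k^\up(X\setminus K))=\nullity(d_k^{X\setminus K})$.

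It remains to identify these $\ell^2$ coboundary operators over $\C$ with the algebraic ones of Lemma~\ref{lem:PB}. Under the canonical identification of alternating functions with $\Hom(C_k,\C)$, the restricted operator $d_k^{X\setminus K}$ defined here via orthogonal projection corresponds to the dual of $\del_{k+1}^{X\setminus K}$. This holds because the orthogonal projection onto $C^{k+1}_{(2)}(X\setminus K)$ is coordinate restriction, which is dual to the inclusion $J_{C_{k+1}(X\setminus K)}$, and likewise for the other factor. Ranks are field-independent for integer matrices. Substituting into Lemma~\ref{lem:PB} then yields (2).

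I expect no real obstacle. The most delicate step is verifying in (1) that $d_k^X$ maps $C^k_{(2)}(X\setminus K)$ into $C^{k+1}_{(2)}(X\setminus K)$, which is exactly the vanishing on $(k+1)$-simplices of $K$ used above. That step relies on $K$ being a subcomplex.
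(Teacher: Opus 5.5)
Your proposal is correct and follows the paper's proof essentially step for step. In (1) you use that $d_k^XJ\phi$ vanishes on the $(k+1)$-simplices of $K$, together with $J^*=P$ and density of $C^k_c(X\setminus K)$ on the common domain; in (2) you use $\Ker(T^*T)=\Ker T$ and the identification of the $\ell^2$ coboundaries with the algebraic ones to reduce to Lemma~\ref{lem:PB}, and the appeal to polarization is unnecessary since you already work with the sesquilinear form directly.
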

\begin{proof}
  (1)~Write
  \[
  P:=P_{C^k_{(2)}(X\setminus K)}
  \quad\text{and}\quad
  J:=J_{C^k_{(2)}(X\setminus K)}.
  \]
  Since $K$ is a subcomplex, $d_k^XJ\phi$ vanishes on every $(k+1)$-simplex of $K$ for each $\phi\in C^k_c(X\setminus K)$.
  Using $J^*=P$, we therefore obtain, for $\phi,\phi'\in C^k_c(X\setminus K)$,
  \begin{align*}
  \bigl\la\Dl_k^\up(X\setminus K)\phi,\phi'\bigr\ra_k
  &=\bigl\la d_k^{X\setminus K}\phi,d_k^{X\setminus K}\phi'\bigr\ra_{k+1}\\
  &=\bigl\la d_k^XJ\phi,d_k^XJ\phi'\bigr\ra_{k+1}\\
  &=\bigl\la P\Dl_k^\up(X)J\phi,\phi'\bigr\ra_k.
  \end{align*}
  Since $C^k_c(X\setminus K)$ is dense in $C^k_{(2)}(X\setminus K)$ and both operators have domain $C^k_c(X\setminus K)$, the desired equality follows.
  
  \smallskip
  (2)~When $X$ is finite, we have $C^k_{(2)}(X)=C^k(X)$ and $C^k_{(2)}(X\setminus K)=C^k(X\setminus K)$ under the canonical identification.
  Moreover, the definitions of $d_k^X$ and $d_k^{X\setminus K}$ in this subsection coincide with those in Section~\ref{ssec:H_PB_PD} with $\F=\C$.
  A straightforward linear algebra calculation yields
  \[
  \Ker\bigl(\Dl_k^\up(X)\bigr)=\Ker\bigl(d_k^X\bigr)
  \quad\text{and}\quad
  \Ker\bigl(\Dl_k^\up(X\setminus K)\bigr)=\Ker\bigl(d_k^{X\setminus K}\bigr).
  \]
  Therefore the conclusion follows from Lemma~\ref{lem:PB}.
\end{proof}

\subsection{Local Weak Convergence for Simplicial Pairs}
\label{ssec:LWC}
In this subsection, we introduce the notion of \emph{local weak convergence} for simplicial pairs.
For a simplicial pair $K\subset X$, we view the subcomplex $K$ as a ``marking'' of $X$.
Within this framework, we refer to $K$ as the \emph{marking complex} and $X$ as the \emph{ambient complex}.
All notions introduced below also apply to the case without marking complexes by simply ignoring the marking complexes, which corresponds to the framework in~\cite{Kan2022}.

A \emph{$k$-rooted simplicial pair} is a simplicial pair $\bm X=(K\subset X)$ together with a distinguished $k$-simplex $\sg\in F_k(X)$, called the \emph{$k$-root}.
We denote it by $(\bm X,\sg)$, or by $(K\subset X,\sg)$ when it is convenient to specify the simplicial complexes explicitly.

Given a $k$-rooted simplicial pair $(K\subset X,\sg)$, we define the sequence $\{(K\subset X,\sg)_l\}_{l=0}^\infty$ of $k$-rooted balls centered at the root inductively as follows.
We first set
\[
(K\subset X,\sg)_0=(2^\sg\,\cap K\subset2^\sg,\sg),
\]
where $2^\sg$ denotes the simplicial complex consisting of $\sg$ and all its faces.
For each $l\ge1$, define the ambient complex of $(K\subset X,\sg)_l$ as the subcomplex of $X$ obtained from the ambient complex of $(K\subset X,\sg)_{l-1}$ by adding every $(k+1)$-simplex $\tau\in F_{k+1}(X)$ that shares at least one $k$-face with it, together with all faces of $\tau$.
The marking complex of $(K\subset X,\sg)_l$ is then defined as the intersection of its ambient complex with $K$.
A simplex $\tau$ in $X$ is said to be at \emph{distance} $l$ from the $k$-root $\sg$, denoted by $\dist_\sg(\tau)=l$, if $\tau$ belongs to the ambient complex of $(K\subset X,\sg)_l$ but not that of $(K\subset X,\sg)_{l-1}$.
Moreover, we define
\[
(K\subset X,\sg)_\infty
:=\bigcup_{l=0}^\infty(K\subset X,\sg)_l,
\]
where the union is taken componentwise over the marking and ambient complexes, respectively.
We call $(K\subset X,\sg)_\infty$ the \emph{connected component} of $\sg$ in the pair $K\subset X$.
Here, connectivity is defined via adjacency of $k$-simplices through common $(k+1)$-cofaces.
We say that a $k$-rooted simplicial pair $(K\subset X,\sg)$ is \emph{connected} if its balls exhaust the entire pair; that is, $(K\subset X,\sg)_\infty=(K\subset X,\sg)$.
In particular, the ambient complex of every connected $k$-rooted simplicial pair has dimension at most $k+1$.

Two $k$-rooted simplicial pairs $(K\subset X,\sg)$ and $(K'\subset X',\sg')$ are said to be \emph{isomorphic}, denoted by
\[
(K\subset X,\sg)\simeq(K'\subset X',\sg'),
\]
if there exists a bijection $h\colon V(X)\to V(X')$ such that, for every
$\tau\subset V(X)$,
\[
\tau\in X\Longleftrightarrow h(\tau)\in X',
\quad \tau\in K\Longleftrightarrow h(\tau)\in K',
\quad\text{and}\quad
h(\sg)=\sg'.
\]
We write $[K\subset X,\sg]$ for the isomorphism class of a $k$-rooted simplicial pair $(K\subset X,\sg)$.
Furthermore, we denote by $[K\subset X,\sg]_l$ the isomorphism class of $(K\subset X,\sg)_l$ for $l\in\N_0\cup\{\infty\}$.
Similarly, we write $[\bm X,\sg]$ and $[\bm X,\sg]_l$ when $\bm X=(K\subset X)$.
\begin{df}[Local convergence]
  Let $\cS_k$ denote the set of all isomorphism classes $[K\subset X,\sg]$ of connected $k$-rooted simplicial pairs satisfying $\deg_X(\sg')<\infty$ for every $\sg'\in F_k(X)$.
  We define $d_{\loc}\colon\cS_k\times\cS_k\to[0,\infty)$ by
  \[
  d_{\loc}([\bm X,\sg],[\bm X',\sg']):=2^{-L},
  \]
  where
  \[
  L:=\sup\{l\in\N_0\mid(\bm X,\sg)_l\simeq(\bm X',\sg')_l\}.
  \]
  By convention, $2^{-\infty}:=0$, and we set $L=0$ if the supremum is taken over the empty set.
  It is straightforward to verify that $d_{\loc}$ is an ultrametric on $\cS_k$; that is, for any $[\bm X_1,\sg_1],[\bm X_2,\sg_2],[\bm X_3,\sg_3]\in\cS_k$,
  \[
  d_{\loc}([\bm X_1,\sg_1],[\bm X_3,\sg_3])
  \le\max\{d_{\loc}([\bm X_1,\sg_1],[\bm X_2,\sg_2]),d_{\loc}([\bm X_2,\sg_2],[\bm X_3,\sg_3])\}.
  \]
  The function $d_{\loc}$ is called the \emph{local metric}, and the induced notion of convergence is referred to as \emph{local convergence}.
\end{df}

In what follows, let $\cP(\cS_k)$ denote the set of all Borel probability measures on $(\cS_k,d_{\loc})$, equipped with the weak topology (i.e.,~the topology of weak convergence).
For a sequence $(\nu_n)_{n=1}^\infty$ in $\cP(\cS_k)$ and $\nu\in\cP(\cS_k)$, we write
\[
\nu_n\xrightarrow{\mathrm{w}}\nu
\quad\text{as $n\to\infty$}
\]
if $\nu_n$ converges weakly to $\nu$.

The following lemma summarizes basic properties of the metric space $(\cS_k,d_{\loc})$.
\begin{lem}\label{lem:Sk_basics}
  The following hold:
  \begin{enumerate}
    \item The space $(\cS_k,d_{\loc})$ is a complete separable metric space $($and hence $\cP(\cS_k)$ is also separable and completely metrizable$)$.
    \item Let $(\nu_n)_{n=1}^\infty$ be a sequence in $\cP(\cS_k)$, and let $\nu\in\cP(\cS_k)$.
    Then $\nu_n\xrightarrow{\mathrm{w}}\nu$ if and only if, for every $[\bm Y,\tau]\in\cS_k$ and $l\in\N$,
    \[
    \lim_{n\to\infty}\nu_n(\{\a\in\cS_k\mid\a_l=[\bm Y,\tau]_l\})
    =\nu(\{\a\in\cS_k\mid\a_l=[\bm Y,\tau]_l\}).
    \]
  \end{enumerate}
\end{lem}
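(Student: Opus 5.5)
The plan is to prove the two parts separately. Part~(1) comes from the finite-ball structure of the elements of $\cS_k$. Part~(2) is a portmanteau argument using the clopen balls of the ultrametric $d_\loc$.

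\emph{Step 1: balls are finite.} For $[\bm X,\sg]\in\cS_k$, I will show by induction on $l$ that $(\bm X,\sg)_l$ is a finite pair. Suppose $(\bm X,\sg)_{l-1}$ is finite. Then it has finitely many $k$-simplices, each of finite degree, so only finitely many $(k+1)$-simplices are added at step $l$. Hence there are only countably many isomorphism classes of finite $k$-rooted pairs. Moreover $d_\loc([\bm X,\sg],[\bm X,\sg]_l)\le2^{-l}$, so every element of $\cS_k$ is the limit of its finite balls. The countable set of classes of finite connected rooted pairs, all of which lie in $\cS_k$, is therefore dense, which gives separability.

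\emph{Step 2: completeness.} Let $(\a^{(m)})$ be a Cauchy sequence. Since $d_\loc$ takes only the values $2^{-L}$, for each $l$ the ball classes $\a^{(m)}_l$ are eventually constant; call the eventual value $\b_l$. These classes are consistent: the ball of radius $l$ of a representative of $\b_{l+1}$ represents $\b_l$, because an isomorphism of $(l+1)$-balls maps $l$-balls to $l$-balls.

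I will choose representatives $(\bm Y_l,\sg)$ inductively so that $(\bm Y_{l+1},\sg)_l=(\bm Y_l,\sg)$ literally. To do this, take any representative of $\b_{l+1}$ and transport it along the isomorphism between its $l$-ball and $\bm Y_l$, renaming the vertices outside that ball by fresh labels. The union $\bm Y=\bigcup_l\bm Y_l$, taken componentwise, is then a connected $k$-rooted pair with $(\bm Y,\sg)_l=\bm Y_l$.

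Every $k$-simplex at distance $l$ has all its $(k+1)$-cofaces already in $\bm Y_{l+1}$, which is finite, so all degrees are finite and $[\bm Y,\sg]\in\cS_k$. Finally $d_\loc(\a^{(m)},[\bm Y,\sg])\to0$. It follows that $\cP(\cS_k)$ is separable and completely metrizable by standard facts, for instance via the Lévy--Prokhorov metric.

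\emph{Step 3: part (2).} The set $B_l(\b):=\{\a\mid\a_l=\b_l\}$ is exactly the closed ball of radius $2^{-l}$ around $\b$. In an ultrametric space such balls are clopen, so their indicators are bounded and continuous, and the forward implication follows immediately.

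For the converse, two such balls are either nested or disjoint, so the family is a $\pi$-system up to the empty set. It is countable by Step~1, and every open set $G$ is a countable disjoint union of such balls (take the maximal balls contained in $G$). For a finite subfamily, $\liminf_n\nu_n(G)\ge\sum_{i\le N}\nu(B_i)$. Letting $N\to\infty$ gives $\liminf_n\nu_n(G)\ge\nu(G)$, and the portmanteau theorem yields $\nu_n\xrightarrow{\mathrm w}\nu$.

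I expect the main obstacle to be Step~2. Choosing the nested representatives requires care with vertex labels, and one must check that a $k$-simplex at distance $l$ really acquires no further cofaces beyond radius $l+1$.
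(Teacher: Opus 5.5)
Your proposal is correct and follows the same route as the paper's sketch. Clopen balls give necessity in (2). Writing each open set as a countable disjoint union of maximal balls, together with the portmanteau theorem, gives sufficiency. For (1), balls stabilize along a Cauchy sequence and compatible representatives of the stabilized balls give completeness, while finite connected rooted pairs form a countable dense set.

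You also supply details the paper leaves to the reader. These include finiteness of balls, the literal nesting of representatives, and finiteness of degrees in the union.
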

Note that the set $\{\a\in\cS_k\mid\a_l=[\bm Y,\tau]_l\}$ is the closed ball of radius $2^{-l}$ centered at $[\bm Y,\tau]$ in $\cS_k$.
Lemma~\ref{lem:Sk_basics}(2) is a direct consequence of standard properties of separable ultrametric spaces.
Indeed, the necessity follows from the fact that every ball of positive radius in an ultrametric space is both open and closed.
The sufficiency follows from the fact that every open set in a separable ultrametric space can be written as a countable union of disjoint balls.
To prove completeness, note that the rooted balls of any fixed radius eventually stabilize along a Cauchy sequence.
Choosing compatible representatives of these stabilized balls and taking their union gives the desired connected rooted pair.
Separability follows because the isomorphism classes of finite connected $k$-rooted simplicial pairs form a countable dense subset.

\smallskip
We use weak convergence in $\cP(\cS_k)$ to define local weak convergence of finite unrooted simplicial pairs.
Given a finite simplicial pair $\bm X=(K\subset X)$ with $\dim X\ge k$, we define $U_k(\bm X)=U_k(K\subset X)\in\cP(\cS_k)$ by
\[
U_k(\bm X):=\frac1{f_k(X)}\sum_{\sg\in F_k(X)}\dl_{[\bm X,\sg]_\infty}.
\]
Here, $\dl_{[\bm X,\sg]_\infty}$ denotes the Dirac measure at $[\bm X,\sg]_\infty\in\cS_k$.
The probability measure $U_k(\bm X)$ on $\cS_k$ can be interpreted as the distribution of the local structure around a uniformly chosen $k$-simplex in $X$.
Using this notation, the local weak convergence of a sequence of finite simplicial pairs is defined as follows. 
\begin{df}[Local weak convergence]
  Let $(\bm X_n)_{n=1}^\infty$ be a sequence of finite simplicial pairs $\bm X_n=(K_n\subset X_n)$ with $\dim X_n\ge k$, and let $\nu\in\cP(\cS_k)$.
  We say that $\bm X_n$ \emph{converges locally weakly} to $\nu$ if $U_k(\bm X_n)\xrightarrow{\mathrm{w}}\nu$ as $n\to\infty$.
  In this case, we write
  \[
  \bm X_n\xrightarrow{\LW}\nu
  \quad\text{as $n\to\infty$.}
  \]
\end{df}

\begin{rmk}
  By Lemma~\ref{lem:Sk_basics}(2), the above definition is equivalent to stating that, for every $k$-rooted simplicial pair $(\bm Y,\tau)$ and $l\in\N$,
  \[
  \lim_{n\to\infty}\frac{|\{\sg\in F_k(X_n)\mid(\bm X_n,\sg)_l\simeq(\bm Y,\tau)_l\}|}{f_k(X_n)}
  =\nu(\{\a\in\cS_k\mid\a_l=[\bm Y,\tau]_l\}).
  \]
\end{rmk}

\subsection{Poisson Galton--Watson Simplicial Trees and Their Thinning}
\label{ssec:PGW}
We conclude the preliminaries section by reviewing a higher-dimensional generalization of the Poisson Galton--Watson tree and introducing its thinned variant, which arises as the local weak limit of the multiparameter complex pair; see Section~\ref{sec:conv_PB}.

Let $k\in\N_0$.
We construct a $k$-rooted simplicial complex $(T,\sg_o)$ as follows.
We begin with the simplicial complex $2^{\sg_o}$ generated by a $k$-simplex $\sg_o$.
For each step $l\in\N_0$, every $k$-simplex $\sg$ at distance $l$ from $\sg_o$ generates $m_\sg\in\N_0$ new vertices $v_1,v_2,\ldots,v_{m_\sg}$, and we add the $(k+1)$-simplices $\sg\cup\{v_i\}$, $i=1,2,\ldots,m_\sg$, together with all their faces, to the complex constructed up to step $l$.
We call $(T,\sg_o)$ a \emph{$k$-rooted simplicial tree}.
When $k$ is understood, we refer to $T$ simply as a \emph{simplicial tree}.\footnote{
There are several notions of simplicial tree, depending on which properties of tree graphs one chooses to emphasize.
Every finite simplicial tree $T$ in the present sense satisfies $f_{k+1}(T)=f_0(T)-(k+1)$~(see, e.g.,~\cite[Lemma~2.3]{KT2025}).
When $k=0$, this reduces to the well-known identity $|E(T)|=|V(T)|-1$ for finite trees.}

When each $m_\sg$ in the above construction is an independent $\Poisson(c)$ random variable, the resulting complex is called the \emph{$k$-rooted Poisson Galton--Watson (PGW) simplicial tree} with parameter $c$~(see~\cite{ALLM2013,Kan2022,LP2016b}).
We denote it by $(\PGW_k(c),\sg_o)$, defined on a probability space $(\Om',\cF',\P')$.

We next introduce the notion of a \emph{thinned $k$-rooted PGW simplicial tree}.
\begin{df}[thinned $k$-rooted PGW simplicial tree]
\label{df:PGW}
  Let $k\in\N_0$, $c>0$, and $\bm\theta=(\theta_j)_{j=0}^{k+1}\in[0,1]^{k+2}$.
  Given a $k$-rooted PGW simplicial tree $(\PGW_k(c),\sg_o)$, assign to each nonempty $j$-simplex $\rho$~($0\le j\le k+1$) an independent random variable
  \[
  \eta_{\bm\theta}(\rho)\sim\Bernoulli(\theta_j),
  \]
  and define the subcomplex
  \[
  \PGW_k(c,\bm\theta)
  :=\{\sg\in\PGW_k(c)\mid\text{$\eta_{\bm\theta}(\rho)=1$ for every nonempty face $\rho\subset\sg$}\}.
  \]
  We call $\PGW_k(c,\bm\theta)$ the \emph{$\bm\theta$-thinned $k$-rooted PGW simplicial tree} with parameter $c$.
  In the two extreme cases, $\PGW_k(c,\bm1)=\PGW_k(c)$ and $\PGW_k(c,\bm0)=\{\emptyset\}$.
\end{df}
For convenience, we write the random simplicial pair
\[
\PGW_k(c,\bm\theta\le\bm1)
:=(\PGW_k(c,\bm\theta)\subset\PGW_k(c)).
\]
Since every $k$-simplex has finitely many offspring cofaces and at most one parent coface, almost surely $\deg_{\PGW_k(c)}(\sg)<\infty$ for every $\sg\in F_k(\PGW_k(c))$.
Note that the equivalence class $[\PGW_k(c,\bm\theta\le\bm1),\sg_o]$ defines an $\cS_k$-valued random variable on $(\Om',\cF',\P')$, and we denote its law by $\nu_k(c,\bm\theta\le\bm1)\in\cP(\cS_k)$.
  
\section{Rooted Spectral Measures of Compressed Laplacians}
\label{sec:RSM_CompL}
In this section, we develop several spectral properties of the compressed up Laplacians defined in Section~\ref{ssec:comp_L}.
These properties play a key role in the analysis of the asymptotics of persistent Betti numbers in Section~\ref{sec:conv_PB}.
In Section~\ref{ssec:RSM}, we introduce the notion of rooted spectral measures for rooted simplicial pairs and show their continuity with respect to the local metric.
In Section~\ref{ssec:conv_ARSM0}, we establish the continuity of the averaged atomic mass at zero along locally weakly convergent sequences of measures arising from finite simplicial pairs, provided that the corresponding average degrees are uniformly bounded.

\subsection{Rooted Spectral Measures}
\label{ssec:RSM}
Let $k\in\N_0$, and let $K\subset X$ be a simplicial pair such that $\deg_X(\sg)<\infty$ for every $\sg\in F_k(X)$.
In what follows, fix an orientation of each $k$-simplex in $X$, and let $(e_\sg)_{\sg\in F_k(X)}$ be the resulting canonical orthonormal basis of $C^k_{(2)}(X)$.
The rooted spectral measures defined below do not depend on this choice.
Furthermore, we use the additional notation
\[
F_k(X\setminus K):=F_k(X)\setminus F_k(K)
\quad\text{and}\quad
f_k(X\setminus K):=|F_k(X\setminus K)|.
\]
To define the spectral measure of the $k$th compressed up Laplacian $\Dl_k^\up(X\setminus K)$, we assume that $\Dl_k^\up(X\setminus K)$ is \emph{essentially self-adjoint}; that is, its closure $\overline{\Dl_k^\up(X\setminus K)}$ is self-adjoint.
Then, for every $\sg\in F_k(X\setminus K)$, there exists a unique Borel probability measure $\mu$ on $\R$ such that
\begin{equation}\label{eq:RSM}
\bigl\la\bigl(\,\overline{\Dl_k^\up(X\setminus K)}-zI\bigr)^{-1}e_\sg,e_\sg\bigr\ra_k
=\int_\R\frac1{\lm-z}\,d\mu(\lm)
\end{equation}
for all $z\in\C_+:=\{z\in\C\mid\Im z>0\}$.
The existence of such a measure is guaranteed by the spectral theorem~(see, e.g.,~\cite[Theorem~VIII.6]{RS1972}) for self-adjoint operators, and its uniqueness follows from the injectivity of the Stieltjes transform:
\[
\cP(\R)\ni\mu\mapsto S_\mu(z):=\int_\R\frac1{\lm-z}\,d\mu(\lm),
\quad z\in\C_+.
\]
We denote the unique probability measure by $\mu_{(K\subset X,\sg)}$ and refer to it as the \emph{rooted spectral measure} associated with the $k$-rooted simplicial pair $(K\subset X,\sg)$.
When $\sg\in K$, we set $\mu_{(K\subset X,\sg)}:=0$, the zero measure on $\R$.

\begin{ex}[Rooted spectral measure in the finite-dimensional case]
  Suppose that
  \[
  1\le f_k(X\setminus K)<\infty.
  \]
  Let $\lm_i$ and $\phi_i$~($i=1,2,\ldots,f_k(X\setminus K)$) be the eigenvalues of $\Dl_k^\up(X\setminus K)$ and a corresponding orthonormal basis of eigenvectors, respectively.
  Then, for each $\sg\in F_k(X\setminus K)$, a straightforward calculation yields
  \[
  \mu_{(K\subset X,\sg)}
  =\sum_{i=1}^{f_k(X\setminus K)}|\la e_\sg,\phi_i\ra_k|^2\dl_{\lm_i}.
  \]
  The \emph{empirical spectral distribution} of $\Dl_k^\up(X\setminus K)$ is defined by
  \[
  \mu_{\Dl_k^\up(X\setminus K)}
  :=\frac1{f_k(X\setminus K)}\sum_{i=1}^{f_k(X\setminus K)}\dl_{\lm_i}.
  \]
  It is the average of the rooted spectral measures:
  \begin{equation}\label{eq:ESD_decomp}
  \begin{aligned}
  \frac1{f_k(X\setminus K)}\sum_{\sg\in F_k(X\setminus K)}\mu_{(K\subset X,\sg)}
  &=\frac1{f_k(X\setminus K)}\sum_{\sg\in F_k(X\setminus K)}\Biggl(\sum_{i=1}^{f_k(X\setminus K)}|\la e_\sg,\phi_i\ra_k|^2\dl_{\lm_i}\Biggr)\\
  &=\frac1{f_k(X\setminus K)}\sum_{i=1}^{f_k(X\setminus K)}\Biggl(\sum_{\sg\in F_k(X\setminus K)}|\la\phi_i,e_\sg\ra_k|^2\Biggr)\dl_{\lm_i}\\
  &=\frac1{f_k(X\setminus K)}\sum_{i=1}^{f_k(X\setminus K)}\dl_{\lm_i}\\
  &=\mu_{\Dl_k^\up(X\setminus K)}.
  \end{aligned}
  \end{equation}
  In particular, evaluation of~\eqref{eq:ESD_decomp} at ${0}$ gives
  \begin{equation}\label{eq:loc_contrib}
  \nullity\bigl(\Dl_k^\up(X\setminus K)\bigr)
  =\sum_{\sg\in F_k(X\setminus K)}\mu_{(K\subset X,\sg)}(\{0\})
  =\sum_{\sg\in F_k(X)}\mu_{(K\subset X,\sg)}(\{0\}).
  \end{equation}
  Thus, $\mu_{(K\subset X,\sg)}(\{0\})$ may be viewed as the local contribution of $\sg$ to the nullity.
\end{ex}

\smallskip
A further useful property of the rooted spectral measures associated with $k$-rooted simplicial pairs is their continuity with respect to the local metric.
Let $\cS_k^{\esa}\subset\cS_k$ denote the set of all equivalence classes $[K\subset X,\sg]\in\cS_k$ such that $\sg\in K$ or $\Dl_k^\up(X\setminus K)$ is essentially self-adjoint.
\begin{lem}\label{lem:esa_Borel}
  The set $\cS_k^{\esa}$ is a Borel subset of $\cS_k$.
\end{lem}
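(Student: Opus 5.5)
The plan is to write $\cS_k^{\esa}$ as the union of two sets and show that each is Borel. The set $A:=\{[K\subset X,\sg]\in\cS_k\mid\sg\in K\}$ depends only on the radius-$0$ ball $[K\subset X,\sg]_0$. By Lemma~\ref{lem:Sk_basics}, balls of positive radius are clopen, so $A$ is a finite union of closed balls of radius $2^{-0}$ and is therefore clopen. It remains to show that
\[
E:=\{[K\subset X,\sg]\in\cS_k\mid\Dl_k^\up(X\setminus K)\text{ is essentially self-adjoint}\}
\]
is Borel.

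For $E$, I will use the standard criterion: a densely defined, symmetric, positive semidefinite operator $T$ is essentially self-adjoint if and only if $\im(T+I)$ is dense, equivalently $\Ker(T^*+I)=\{0\}$. Since $C^k_c(X\setminus K)$ is spanned by the basis vectors $e_\tau$, density of $\im(T+I)$ is equivalent to the following: for every $\tau\in F_k(X\setminus K)$ and every $\eps\in\Q_{>0}$, there exists $\phi\in C^k_c(X\setminus K)$ with $\|(T+I)\phi-e_\tau\|<\eps$.

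To make this measurable, I will localize to balls. Let $\tau$ range over simplices at distance at most $m$ from the root. Let $\phi$ range over cochains with finitely many rational complex coefficients, supported on $k$-simplices of the ball of radius $l$. This is a countable family of choices once the ball is fixed up to isomorphism. For such $\phi$, the vector $(T+I)\phi-e_\tau$ is supported in the ball of radius $l+1$, and its norm is computed from~\eqref{eq:L_expression} together with the compression formula in Lemma~\ref{lem:comp_L}(1). Hence the norm depends only on $[K\subset X,\sg]_{l+1}$ and on the choice of $\tau$ and $\phi$ inside that ball.

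One point needs care: ``for every $\tau$ at distance $\le m$'' must be phrased invariantly. I will handle this by writing
\[
E=\bigcap_{m}\bigcap_{\eps}\bigcup_{l}E_{m,\eps,l},
\]
where $E_{m,\eps,l}$ is the set of classes whose ball of radius $\max\{m,l\}+1$ admits, for every $\tau$ in the ball of radius $m$, a rational $\phi$ supported in the ball of radius $l$ achieving the bound $\eps$. Whether a class lies in $E_{m,\eps,l}$ is determined by a finite ball isomorphism class, so $E_{m,\eps,l}$ is a union of clopen balls, which is open and hence Borel. The identity between $E$ and this countable intersection/union then follows from two facts. First, every $\tau$ lies at finite distance from the root, because the pair is connected. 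Second, any approximant $\phi\in C^k_c$ has finite support, so it lies in some ball of radius $l$, and it can be further approximated by rational coefficients, using continuity of $\phi\mapsto\|(T+I)\phi-e_\tau\|$ on the finite-dimensional space of cochains with that support. For the ordering of quantifiers, one may take a single $l$ for all finitely many $\tau$ at distance $\le m$, since the maximum of finitely many radii is finite. Thus $E$ is $G_{\delta\sigma}$-type, and in particular Borel.

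The main obstacle is checking that the membership condition for $E_{m,\eps,l}$ is a genuine isomorphism invariant of the finite ball. The norm of $(T+I)\phi-e_\tau$ involves the cofaces of simplices in the support, and for simplices at distance $l$ from the root these cofaces may lie at distance $l+1$. This is why the ball of radius $l+1$ is used. I also need the compression to use the marking $K$ only inside that ball, which holds by the definition of the marking on balls. Finally, the finite degree of each $k$-simplex guarantees that each ball is finite, so the sets of rational choices are countable.
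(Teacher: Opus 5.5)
Your proposal is correct and is essentially the paper's proof: you use the same clopen set for $\sg\in K$, the same rational approximation sets whose membership is decided by the ball one radius beyond the support, and the same countable $\bigcap\bigcap\bigcup$ representation. The only differences are cosmetic: you use the positivity criterion with the single point $-1$ (density of $\im(T+I)$) where the paper uses the two conditions at $\pm i$, and you note directly that each approximation set is open. One small slip: with the paper's convention for $d_{\loc}$, a closed ball of radius $2^{-0}=1$ is all of $\cS_k$, so $A$ is not a union of such balls; it is still clopen, because $d_{\loc}<1$ forces equal $0$-balls, which is how the paper argues.
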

Since the proof uses a technical measurability argument, we defer it to Section~S1.1 of the Supplementary Material~\cite{HKsupp}.
Let $\cM_{\le1}(\R)$ denote the space of Borel measures on $\R$ with total mass at most one, equipped with the weak topology.
We define a map
\[
\RSM\colon\cS_k^{\esa}\longrightarrow\cM_{\le1}(\R)
\]
by
\[
\RSM([K\subset X,\sg]):=\mu_{(K\subset X,\sg)},
\qquad[K\subset X,\sg]\in\cS_k^{\esa}.
\]
Recall that $\mu_{(K\subset X,\sg)}=0$ if $\sg\in K$.

\begin{lem}\label{lem:conti_RSM}
  The map $\RSM\colon\cS_k^{\esa}\to\cM_{\le1}(\R)$ is continuous, where $\cS_k^{\esa}$ is equipped with the metric induced from $\cS_k$.
\end{lem}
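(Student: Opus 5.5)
Since $(\cS_k,d_{\loc})$ is a metric space, it suffices to prove sequential continuity. Let $[\bm X_n,\sg_n]\to[\bm X,\sg]$ in $\cS_k^{\esa}$, with $\bm X_n=(K_n\subset X_n)$ and $\bm X=(K\subset X)$. I will show that $\mu_{(\bm X_n,\sg_n)}\to\mu_{(\bm X,\sg)}$ weakly in $\cM_{\le1}(\R)$.

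If $\sg\in K$, then the $0$-balls coincide for large $n$, because $d_{\loc}<1$ forces $(\bm X_n,\sg_n)_0\simeq(\bm X,\sg)_0$. This gives $\sg_n\in K_n$ for large $n$, so both measures vanish. Assume from now on that $\sg\notin K$, and hence $\sg_n\notin K_n$ for large $n$. Then all measures involved are probability measures, and weak convergence is equivalent to pointwise convergence of their Stieltjes transforms on $\C_+$. I will therefore fix $z\in\C_+$ and prove
\[
\bigl\la(\overline{\Dl_n}-z)^{-1}e_{\sg_n},e_{\sg_n}\bigr\ra_k\longrightarrow\bigl\la(\overline{\Dl}-z)^{-1}e_\sg,e_\sg\bigr\ra_k,
\]
where $\Dl_n:=\Dl_k^\up(X_n\setminus K_n)$ and $\Dl:=\Dl_k^\up(X\setminus K)$.

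The plan follows the standard argument for graphs (Bordenave--Lelarge), phrased through strong resolvent convergence on a common Hilbert space.

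\textbf{Common space.} For each $l$, pick $n_l$ such that $(\bm X_n,\sg_n)_{l+1}\simeq(\bm X,\sg)_{l+1}$ for $n\ge n_l$. Through these isomorphisms (which send roots to roots and respect markings), the $k$-simplices of $X_n$ at distance $\le l$ are identified with those of $X$ at distance $\le l$. The orientations transported by the isomorphism are harmless, since rooted spectral measures do not depend on the choice of orientation.

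\textbf{Locality.} For $\phi\in C^k_c(X\setminus K)$ supported within distance $l$, formula \eqref{eq:L_expression} together with Lemma~\ref{lem:comp_L}(1) shows that $\Dl\phi$ depends only on the ball of radius $l+1$. Its support lies within distance $l+1$, and the compression deletes exactly the coordinates in $K$, which are identified correctly because the markings agree. Consequently, under the identification, $\Dl_n\phi=\Dl\phi$ for all sufficiently large $n$.

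\textbf{Embedding.} To put everything on one space, embed each $C^k_{(2)}(X_n\setminus K_n)$ into the direct sum $\mathcal H:=C^k_{(2)}(X\setminus K)\oplus\bigoplus_n C^k_{(2)}(X_n\setminus K_n)$, letting each $\Dl_n$ act block-diagonally: as $\Dl_n$ on the ball part and trivially elsewhere. Concretely, extend $\overline{\Dl_n}$ by $0$ on the orthogonal complement of its own space; this is still self-adjoint. The previous step then gives $\tilde\Dl_n\phi\to\tilde\Dl\phi$ for every $\phi$ in the core $C^k_c(X\setminus K)$.

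\textbf{Conclusion.} Here essential self-adjointness is used: $C^k_c(X\setminus K)$ is a core for $\overline{\Dl}$. By the Trotter--Kato type criterion (Reed--Simon, Theorem VIII.25(a)), convergence on a common core implies strong resolvent convergence. Note that $e_\sg$ is identified with $e_{\sg_n}$, and under the block extension the $\tilde\Dl_n$ resolvent restricted to the summand of $X_n$ equals that of $\overline{\Dl_n}$. Hence
\[
\la(\overline{\Dl_n}-z)^{-1}e_{\sg_n},e_{\sg_n}\ra\to\la(\overline{\Dl}-z)^{-1}e_\sg,e_\sg\ra.
\]

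The main obstacle is making the identification rigorous. The sequence lives on varying spaces, so one must set up partial isometries $U_{n,l}$ from the radius-$l$ ball space of $X$ into $C^k_{(2)}(X_n\setminus K_n)$ compatible with the isomorphisms. Then I would verify directly the resolvent identity estimate
\[
\|(U_n^*(\overline{\Dl_n}-z)^{-1}U_n-(\overline{\Dl}-z)^{-1})\psi\|\to0.
\]
For this, write $\psi=(\overline{\Dl}-z)\phi+\epsilon$ with $\phi$ in the core, which is possible by essential self-adjointness since the range of $(\overline{\Dl}-z)$ restricted to the core is dense. Then bound using $\|(\cdot-z)^{-1}\|\le1/\Im z$. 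This avoids any direct-sum bookkeeping.
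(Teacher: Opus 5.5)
Your last paragraph is essentially the paper's proof. The route you present as the main line, however, contains a step that fails as written.

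\textbf{The embedding step.} In $\mathcal H=C^k_{(2)}(X\setminus K)\oplus\bigoplus_nC^k_{(2)}(X_n\setminus K_n)$ the summands are mutually orthogonal. Suppose $\tilde\Dl_n$ is $L_n:=\overline{\Dl_k^\up(X_n\setminus K_n)}$ on the $n$th summand and $0$ elsewhere, which is what ``extend by $0$ on the orthogonal complement of its own space'' says. Then $\tilde\Dl_n\phi=0$ for every $\phi$ in the first summand. So the core convergence $\tilde\Dl_n\phi\to\tilde\Dl\phi$ fails, and $e_\sg$ cannot be ``identified'' with $e_{\sg_n}$.

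If instead $\tilde\Dl_n$ is meant to act as $\Dl_k^\up(X_n\setminus K_n)$ only on the ball part, it is a truncation of $L_n$. Its resolvent entry at the root is then no longer the Stieltjes transform of $\mu_{(K_n\subset X_n,\sg_n)}$, contrary to your closing remark in that step.

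To apply Reed--Simon VIII.25(a) you need a space in which the two balls literally coincide. For each $n$, fix \emph{one} isomorphism of the $l(n)$-balls, where $l(n)\to\infty$ is the radius of agreement. Glue the ball part of $C^k_{(2)}(X_n\setminus K_n)$ onto that of $C^k_{(2)}(X\setminus K)$, put the remainder in its own orthogonal summand, and set $\tilde L_n:=U_nL_nU_n^*\oplus0$. Equivalently, realize all complexes on a common countable vertex set, which is the usual device in the graph case. Your \textbf{Common space} paragraph chooses isomorphisms separately for each $l$, whereas a single operator $\tilde L_n$ needs one identification per $n$. With this repair the Trotter--Kato route works, but its proof reduces to the same approximation $(L-zI)\phi\approx e_\sg$.

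\textbf{The direct estimate.} This is correct and is the paper's argument. Let $L:=\overline{\Dl_k^\up(X\setminus K)}$. Choose $\phi\in C^k_c(X\setminus K)$ with $\|e_\sg-(L-zI)\phi\|_k\le\eps$; this density is where the core property of $L$ enters. Transport $\phi$ to $\phi_n$. By locality, $e_{\sg_n}-(L_n-zI)\phi_n$ corresponds to $e_\sg-(L-zI)\phi$. Self-adjointness of $L_n$ gives $\|(L_n-zI)^{-1}\|\le1/\Im z$, hence $\|(L_n-zI)^{-1}e_{\sg_n}-\phi_n\|_k\le\eps/\Im z$. Combining this with $\la\phi_n,e_{\sg_n}\ra_k=\la\phi,e_\sg\ra_k$ bounds the difference of the two resolvent entries by $2\eps/\Im z$.

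Only the single vector $e_\sg$ is needed. Strong convergence of $U_n^*(L_n-zI)^{-1}U_n$ on all $\psi$ is more than required. For non-finitely supported $\psi$ it would also need a further truncation, since $U_n$ is defined only on the ball. With the embedding step dropped, your proposal coincides with the paper's proof.
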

\begin{proof}
  Suppose that $[K_n\subset X_n,\sg_n]\to[K\subset X,\sg]$ in $\cS_k^{\esa}$.
  If $\sg\in K$, then $\sg_n\in K_n$ for all sufficiently large $n$, and hence both the limiting measure and all but finitely many measures in the sequence are the zero measure.

  We may therefore assume that $\sg\notin K$; in this case, $\sg_n\notin K_n$ for all sufficiently large $n$.
  Set
  \[
  L_n:=\overline{\Dl_k^\up(X_n\setminus K_n)}
  \quad\text{and}\quad
  L:=\overline{\Dl_k^\up(X\setminus K)}.
  \]
  Fix $z\in\C_+$ and $\eps>0$.
  Since $C_c^k(X\setminus K)$ is a core for $L$, the subspace $(L-zI)C_c^k(X\setminus K)$ is dense in $C_{(2)}^k(X\setminus K)$.
  Hence, there exists $\phi\in C_c^k(X\setminus K)$ such that
  \[
  \bigl\|e_\sg-(L-zI)\phi\bigr\|_k\le\eps.
  \]
  Choose $l\in\N$ such that $\Supp(\phi)$ is contained in the ambient complex of $(K\subset X,\sg)_{l-1}$.
  By local convergence, for all sufficiently large $n$, there exists an isomorphism
  \[
  (K_n\subset X_n,\sg_n)_l\simeq(K\subset X,\sg)_l.
  \]
  Let $\phi_n\in C_c^k(X_n\setminus K_n)$ be the cochain obtained by transporting $\phi$ under this isomorphism.
  By the explicit expression~\eqref{eq:L_expression} together with Lemma~\ref{lem:comp_L}(1), the vectors
  \[
  e_{\sg_n}-(L_n-zI)\phi_n
  \quad\text{and}\quad
  e_\sg-(L-zI)\phi
  \]
  correspond to each other under the same isomorphism.
  Consequently, we have
  \[
  \la\phi_n,e_{\sg_n}\ra_k=\la\phi,e_\sg\ra_k
  \quad\text{and}\quad
  \bigl\|e_{\sg_n}-(L_n-zI)\phi_n\bigr\|_k\le\eps
  \]
  for all sufficiently large $n$.
  Since $L_n$ and $L$ are self-adjoint,
  \[
  \bigl\|(L_n-zI)^{-1}\bigr\|\le\frac1{\Im z}
  \quad\text{and}\quad
  \bigl\|(L-zI)^{-1}\bigr\|\le\frac1{\Im z}.
  \]
  Therefore,
  \begin{align*}
  &\bigl|\bigl\la(L_n-zI)^{-1}e_{\sg_n},e_{\sg_n}\bigr\ra_k-\bigl\la(L-zI)^{-1}e_\sg,e_\sg\bigr\ra_k\bigr|\\
  &=\bigl|\bigl\la(L_n-zI)^{-1}e_{\sg_n}-\phi_n,e_{\sg_n}\bigr\ra_k-\bigl\la(L-zI)^{-1}e_\sg-\phi,e_\sg\bigr\ra_k\bigr|\\
  &\le\bigl\|(L_n-zI)^{-1}e_{\sg_n}-\phi_n\bigr\|_k+\bigl\|(L-zI)^{-1}e_\sg-\phi\bigr\|_k\\
  &\le\frac1{\Im z}\bigl\|e_{\sg_n}-(L_n-zI)\phi_n\bigr\|_k+\frac1{\Im z}\bigl\|e_\sg-(L-zI)\phi\bigr\|_k
  \le\frac{2\eps}{\Im z}
  \end{align*}
  for all sufficiently large $n$.
  Since $\eps>0$ was arbitrary, we obtain
  \[
  \la(L_n-zI)^{-1}e_{\sg_n},e_{\sg_n}\ra_k
  \xrightarrow[n\to\infty]{}\la(L-zI)^{-1}e_\sg,e_\sg\ra_k
  \]
  for every $z\in\C_+$.
  Therefore, by~\eqref{eq:RSM}, the Stieltjes transform of $\mu_{(K_n\subset X_n,\sg_n)}$ converges pointwise to that of $\mu_{(K\subset X,\sg)}$.
  By the continuity theorem for Stieltjes transforms, this is equivalent to weak convergence of $\mu_{(K_n\subset X_n,\sg_n)}$ to $\mu_{(K\subset X,\sg)}$.
\end{proof}

\smallskip
We also consider the rooted spectral measures averaged with respect to probability measures on rooted simplicial pairs.
Since $\cS_k^{\esa}$ is a Borel subset of $\cS_k$ by Lemma~\ref{lem:esa_Borel}, the space $\cP(\cS_k^{\esa})$ can be identified with the subset of $\cP(\cS_k)$ consisting of all probability measures concentrated on $\cS_k^{\esa}$.
Under this identification, the weak topology on $\cP(\cS_k^{\esa})$ agrees with the topology induced from $\cP(\cS_k)$.
We use this identification throughout without further comment.
Define the map
\[
\overline\RSM\colon\cP(\cS_k^{\esa})\longrightarrow\cM_{\le1}(\R)
\]
by
\[
\overline\RSM(\nu)(\cdot)
:=\int_{\cS_k^{\esa}}\mu_{(K\subset X,\sg)}(\cdot)\,d\nu([K\subset X,\sg]),
\qquad\nu\in\cP(\cS_k^{\esa}).
\]
The averaged rooted spectral measure map $\overline\RSM$ is continuous with respect to the weak topology on $\cP(\cS_k^{\esa})$.
Indeed, for every bounded continuous function $g\colon\R\to\R$,
\[
\int_\R g\,d\overline\RSM(\nu)
=\int_{\cS_k^{\esa}}\left(\int_\R g\,d\mu_{(K\subset X,\sg)}\right)d\nu([K\subset X,\sg]),
\]
and the inner integral defines a bounded continuous function on
$\cS_k^{\esa}$ by Lemma~\ref{lem:conti_RSM}.

\subsection{Convergence of Averaged Atomic Masses at Zero}
\label{ssec:conv_ARSM0}
In this subsection, we study the atomic mass at zero of rooted spectral measures and its average with respect to probability measures on rooted simplicial pairs.
Define the maps
\[
\RSM_0\colon\cS_k^{\esa}\longrightarrow[0,1]
\quad\text{and}\quad
\overline\RSM_0\colon\cP(\cS_k^{\esa})\longrightarrow[0,1]
\]
by
\begin{align*}
\RSM_0([K\subset X,\sg])
&:=\mu_{(K\subset X,\sg)}(\{0\}),
\qquad [K\subset X,\sg]\in\cS_k^{\esa},
\shortintertext{and}
\overline\RSM_0(\nu)
&:=\int_{\cS_k^{\esa}}\mu_{(K\subset X,\sg)}(\{0\})\,d\nu([K\subset X,\sg]),
\qquad \nu\in\cP(\cS_k^{\esa}),
\end{align*}
respectively.
In contrast to the two rooted spectral measure maps $\RSM$ and $\overline\RSM$, the corresponding atomic-mass maps $\RSM_0$ and $\overline\RSM_0$ are not necessarily continuous and are in general only upper semicontinuous.
Indeed, this follows from the continuity of $\RSM$ and $\overline\RSM$, together with the upper semicontinuity of the map $\mu\longmapsto\mu(\{0\})$, which follows from the portmanteau lemma, since $\{0\}$ is closed in $\R$.

Nevertheless, the following proposition shows that $\overline\RSM_0$ is continuous along locally weakly convergent sequences of finite simplicial pairs, provided that the corresponding average degrees are uniformly bounded.
\begin{prop}\label{prop:conti_ARSM0}
  Let $(K_n\subset X_n)_{n=1}^\infty$ be a sequence of finite simplicial pairs with $\dim X_n\ge k$, and let $\nu\in\cP(\cS_k^{\esa})$.
  Assume that
  \begin{equation}\label{eq:conti_ARSM0_1}
  (K_n\subset X_n)\xrightarrow{\LW}\nu
  \quad\text{as $n\to\infty$}
  \end{equation}
  and
  \[
  \sup_{n\in\N}\frac1{f_k(X_n)}\sum_{\sg\in F_k(X_n)}\deg_{X_n}(\sg)<\infty.
  \]
  Then
  \[
  \frac{\nullity\bigl(\Dl_k^\up(X_n\setminus K_n)\bigr)}{f_k(X_n)}
  \longrightarrow\overline\RSM_0(\nu)
  \quad\text{as $n\to\infty$.}
  \]
\end{prop}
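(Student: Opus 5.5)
Write $\nu_n:=U_k(K_n\subset X_n)$. For finite pairs every compressed Laplacian is a finite matrix, hence self-adjoint, so $\nu_n\in\cP(\cS_k^{\esa})$. By~\eqref{eq:loc_contrib},
\[
\frac{\nullity\bigl(\Dl_k^\up(X_n\setminus K_n)\bigr)}{f_k(X_n)}=\overline\RSM_0(\nu_n),
\]
because the rooted spectral measure at $\sg$ is computed from the connected component of $\sg$, and the compressed Laplacian is block diagonal over components. The claim is therefore $\overline\RSM_0(\nu_n)\to\overline\RSM_0(\nu)$. I would prove it by matching a $\limsup$ bound with a $\liminf$ bound.

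The upper bound comes from the weak continuity of $\overline\RSM$ established after Lemma~\ref{lem:conti_RSM}, which gives $\overline\RSM(\nu_n)\to\overline\RSM(\nu)$ weakly in $\cM_{\le1}(\R)$. For every $\eps>0$, the portmanteau lemma applied to the closed set $[-\eps,\eps]$ then yields
\[
\limsup_n\overline\RSM(\nu_n)([-\eps,\eps])\le\overline\RSM(\nu)([-\eps,\eps]).
\]
Bounding $\overline\RSM(\nu_n)(\{0\})$ above by the left-hand measure and letting $\eps\downarrow0$ gives
\[
\limsup_n\overline\RSM_0(\nu_n)\le\overline\RSM_0(\nu).
\]

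For the lower bound, write
\[
\overline\RSM_0(\nu_n)\ge\overline\RSM(\nu_n)((-\eps,\eps))-\overline\RSM(\nu_n)((0,\eps)).
\]
The open-set portmanteau inequality gives $\liminf_n\overline\RSM(\nu_n)((-\eps,\eps))\ge\overline\RSM(\nu)((-\eps,\eps))$, and this tends to $\overline\RSM_0(\nu)$ as $\eps\downarrow0$. It therefore suffices to prove the uniform small-eigenvalue estimate
\[
\lim_{\eps\downarrow0}\sup_n\overline\RSM(\nu_n)((0,\eps))=0.
\]

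This estimate is the main obstacle, and it plays the role of Lemma~\ref{lem:small_eigen}. By~\eqref{eq:ESD_decomp}, $\overline\RSM(\nu_n)((0,\eps))$ equals $N_n(\eps)/f_k(X_n)$, where $N_n(\eps)$ counts the eigenvalues of $M:=\Dl_k^\up(X_n\setminus K_n)$ in $(0,\eps)$. The matrix $M$ has integer entries in the basis $(e_\sg)$; it is a principal submatrix of~\eqref{eq:L_expression}, with diagonal entries given by degrees and off-diagonal entries in $\{0,\pm1\}$.

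The plan is to use integrality together with the trace. Let $r=\rank M$ and let $e_r(M)$ be the $r$th elementary symmetric function of its eigenvalues, i.e.\ the sum of the $r\times r$ principal minors. It is a nonzero nonnegative integer, so it is at least $1$, which gives $\prod_{\lm_i>0}\lm_i\ge1$. Taking logarithms,
\[
N_n(\eps)\log(1/\eps)\le\sum_{\lm_i>0}\log^+\lm_i\le\sum_i\lm_i=\tr M\le\sum_{\sg\in F_k(X_n)}\deg_{X_n}(\sg).
\]
Dividing by $f_k(X_n)$ and invoking the average-degree hypothesis gives
\[
\overline\RSM(\nu_n)((0,\eps))\le\frac{C}{\log(1/\eps)}
\]
uniformly in $n$, for $\eps<1$. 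A point to check is that $e_r(M)\ge1$ holds for the compressed matrix; this follows from the symmetry of $M$ and integrality of its entries, so that some $r\times r$ principal minor is a nonzero integer and all of them are nonnegative. Combining this estimate with the upper bound and letting $\eps\downarrow0$ proves the proposition.
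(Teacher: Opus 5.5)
Your proposal is correct and follows the paper's proof essentially step by step. The integrality-plus-trace estimate is exactly Lemma~\ref{lem:small_eigen}, and the sandwich argument is the same. The only cosmetic difference is that you apply the portmanteau inequalities to $[-\eps,\eps]$ and $(-\eps,\eps)$, whereas the paper uses a sequence $\eps_l\downarrow0$ avoiding the atoms of the limiting measure.
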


\begin{rmk}\label{rmk:integrality}
  Proposition~\ref{prop:conti_ARSM0} simplifies the local weak convergence approach to the limit theorems for Betti numbers in~\cite{Kan2022,LP2016a}.
  The proofs there combine spectral upper bounds with lower bounds obtained by elementary collapses.
  In these random settings, the convergence-in-probability version in Corollary~\ref{cor:conti_ARSM0} below, applied with the empty marking complex $\{\emptyset\}$, gives convergence of normalized nullities directly from local weak convergence and control of the average degrees, without a separate argument using elementary collapses.
\end{rmk}

The proof of Proposition~\ref{prop:conti_ARSM0} relies on the fact that the finite compressed up Laplacians are positive semidefinite matrices with integer entries.
Combined with a bound on the corresponding average degrees, this prevents a positive proportion of their nonzero eigenvalues from accumulating at zero.
We first show the required spectral estimate.
\begin{lem}\label{lem:small_eigen}
  Let $K\subset X$ be a finite simplicial pair, and let $\lm_1,\lm_2,\ldots,\lm_{f_k(X\setminus K)}$ be the eigenvalues of $\Dl_k^\up(X\setminus K)$.
  Then, for every $0<\eps<1$,
  \begin{equation}\label{eq:small_eigen}
  \bigl|\bigl\{i=1,2,\ldots,f_k(X\setminus K)\bigm|0<\lm_i\le\eps\bigr\}\bigr|
  \le\frac1{|\log\eps|}\sum_{\sg\in F_k(X)}\deg_X(\sg).
  \end{equation}
\end{lem}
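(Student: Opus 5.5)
The plan is to combine the integrality of the compressed up Laplacian with a trace bound via the AM--GM (or log-concavity) inequality. Write $A:=\Dl_k^\up(X\setminus K)$ as a matrix in the basis $(e_\sg)_{\sg\in F_k(X\setminus K)}$. By Lemma~\ref{lem:comp_L}(1) and~\eqref{eq:L_expression}, $A$ is the principal submatrix of $\Dl_k^\up(X)$ indexed by $F_k(X\setminus K)$. Its diagonal entries are $\deg_X(\sg)$ and its off-diagonal entries lie in $\{0,\pm1\}$, so $A$ is a positive semidefinite integer matrix. Let $N:=\{i\mid\lm_i>0\}$ and $m:=|N|=\rank A$. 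Then the product $\prod_{i\in N}\lm_i$ equals the sum of all $m\times m$ principal minors of $A$, namely the elementary symmetric polynomial $e_m(\lm)$, since the remaining eigenvalues vanish. Each principal minor is an integer and is nonnegative by positive semidefiniteness, and their sum is positive. Hence
\[
\prod_{i\in N}\lm_i\ge1.
\]

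Next I bound the large eigenvalues. Each $\lm_i$ with $\lm_i>\eps$ satisfies $\log\lm_i\le\lm_i-1<\lm_i$, and each $\lm_i\le\eps$ satisfies $\log\lm_i\le\log\eps=-|\log\eps|$. Set $S:=\{i\mid0<\lm_i\le\eps\}$. Taking logarithms of the product bound gives
\[
0\le\sum_{i\in N}\log\lm_i\le-|S|\,|\log\eps|+\sum_{i\in N\setminus S}\log\lm_i.
\]
For the last sum I use $\log\lm\le\lm$ for $\lm>0$; the terms with $\log\lm_i<0$ only help. Therefore $\sum_{i\in N\setminus S}\log\lm_i\le\sum_i\lm_i=\tr A$.

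Finally, $\tr A=\sum_{\sg\in F_k(X\setminus K)}\deg_X(\sg)\le\sum_{\sg\in F_k(X)}\deg_X(\sg)$. Rearranging the inequality above yields~\eqref{eq:small_eigen}.

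The only step needing care is the integrality claim $\prod_{i\in N}\lm_i\ge1$. It requires the identity that the coefficient of $x^{\,f-m}$ in the characteristic polynomial is $\pm e_m$, which equals the sum of the $m\times m$ principal minors, together with nonnegativity of these minors for a positive semidefinite matrix. I also need the integer entries of $A$, which hold because the orientation signs in~\eqref{eq:L_expression} are $\pm1$ and distinct $(k+1)$-cofaces contribute to distinct entries.
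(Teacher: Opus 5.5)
Your proposal is correct and matches the paper's proof step for step. Integrality of the positive semidefinite compressed Laplacian forces $\prod_{\lm_i>0}\lm_i\ge1$: you argue via the sum of principal minors, the paper via the lowest nonzero coefficient of the characteristic polynomial, which is the same fact. Then $\log\lm\le\lm$ together with $\tr\bigl(\Dl_k^\up(X\setminus K)\bigr)=\sum_{\sg\in F_k(X\setminus K)}\deg_X(\sg)$ gives the bound.
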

\begin{proof}
  Fix $0<\eps<1$.
  With respect to the canonical orthonormal basis $(e_\sg)_{\sg\in F_k(X\setminus K)}$, the compressed up Laplacian $\Dl_k^\up(X\setminus K)$ is represented by a symmetric positive semidefinite matrix with integer entries.
  Let $m$ denote the number of positive eigenvalues.
  If $m=0$, then~\eqref{eq:small_eigen} is immediate.
  We may therefore assume that $m\ge1$ and, after relabeling, that $\lm_1,\ldots,\lm_m$ are the positive eigenvalues.
  The product $\prod_{i=1}^m\lm_i$ is the absolute value of the first nonzero coefficient of the characteristic polynomial of $\Dl_k^\up(X\setminus K)$.
  It is therefore a positive integer, and in particular
  \[
  \sum_{i=1}^m\log\lm_i\ge0.
  \]
  Write $s:=|\{i=1,2,\ldots,m\mid0<\lm_i\le\eps\}|$.
  Since $\log x\le x$ for every $x>0$, we obtain
  \[
  0\le\sum_{i=1}^m\log\lm_i
  \le s\log\eps+\sum_{\substack{1\le i\le m\colon\\\lm_i>\eps}}\lm_i
  \le-s|\log\eps|+\tr\bigl(\Dl_k^\up(X\setminus K)\bigr).
  \]
  By Lemma~\ref{lem:comp_L}(1) and~\eqref{eq:L_expression},
  \[
  \tr\bigl(\Dl_k^\up(X\setminus K)\bigr)
  =\sum_{\sg\in F_k(X\setminus K)}\deg_X(\sg)
  \le\sum_{\sg\in F_k(X)}\deg_X(\sg).
  \]
  Combining the above estimates proves~\eqref{eq:small_eigen}.
\end{proof}

We now prove Proposition~\ref{prop:conti_ARSM0} using Lemma~\ref{lem:small_eigen}.
\begin{proof}[Proof of Proposition~\ref{prop:conti_ARSM0}]
  Set
  \[
  \mu_n
  :=\overline\RSM\bigl(U_k(K_n\subset X_n)\bigr)
  =\frac1{f_k(X_n)}\sum_{\sg\in F_k(X_n)}\mu_{(K_n\subset X_n,\sg)}
  \]
  and $\mu:=\overline\RSM(\nu)$.
  By~\eqref{eq:loc_contrib},
  \[
  \mu_n(\{0\})
  =\overline\RSM_0\bigl(U_k(K_n\subset X_n)\bigr)
  =\frac{\nullity\bigl(\Dl_k^\up(X_n\setminus K_n)\bigr)}{f_k(X_n)}.
  \]
  Furthermore, by~\eqref{eq:conti_ARSM0_1} and the continuity of $\overline\RSM$, the measure $\mu_n$ converges weakly to $\mu$.
  Let
  \[
  M:=\sup_{n\in\N}\frac1{f_k(X_n)}\sum_{\sg\in F_k(X_n)}\deg_{X_n}(\sg)<\infty.
  \]
  By Lemma~\ref{lem:small_eigen} together with~\eqref{eq:ESD_decomp},
  \[
  \sup_{n\in\N}\mu_n((0,\eps])\le\frac M{|\log\eps|}
  \]
  for every $0<\eps<1$.
  Choose a sequence $(\eps_l)_{l=1}^\infty$ such that $\eps_l\downarrow0$ and $\mu(\{\eps_l\})=0$ for every $l\in\N$.
  Such a sequence exists since a finite Borel measure has at most countably many atoms.
  Fix $l\in\N$.
  Since all the measures under consideration are supported on $[0,\infty)$, we have
  \[
  \mu_n([0,\eps_l])=\mu_n((-\infty,\eps_l])
  \quad\text{and}\quad
  \mu([0,\eps_l])=\mu((-\infty,\eps_l]).
  \]
  The set $(-\infty,\eps_l]$ is a continuity set of $\mu$, and hence the weak convergence implies that
  \[
  \mu_n([0,\eps_l])\xrightarrow[n\to\infty]{}\mu([0,\eps_l]).
  \]
  Consequently,
  \begin{align*}
  \limsup_{n\to\infty}\mu_n(\{0\})
  &\le\limsup_{n\to\infty}\mu_n([0,\eps_l])
  =\mu([0,\eps_l]),\\
  \liminf_{n\to\infty}\mu_n(\{0\})
  &=\liminf_{n\to\infty}\bigl\{\mu_n([0,\eps_l])-\mu_n((0,\eps_l])\bigr\}
  \ge\mu([0,\eps_l])-\frac M{|\log\eps_l|}.
  \end{align*}
  Letting $l\to\infty$ yields
  \[
  \lim_{n\to\infty}\mu_n(\{0\})
  =\mu(\{0\})
  =\overline\RSM_0(\nu),
  \]
  which completes the proof.
\end{proof}

The following convergence-in-probability statement for random finite simplicial pairs is an immediate consequence of Proposition~\ref{prop:conti_ARSM0}.
Here and below, for any sequence $(K_n\subset X_n)_{n=1}^\infty$ of random finite simplicial pairs satisfying $\dim X_n\ge k$ almost surely and any $\nu\in\cP(\cS_k)$, the notation
\[
(K_n\subset X_n)\xrightarrow{\LW,\,\mathrm{p}}\nu
\]
means that $U_k(K_n\subset X_n)$ converges in probability to $\nu$ with respect to the weak topology on $\cP(\cS_k)$.
\begin{cor}\label{cor:conti_ARSM0}
  Let $(K_n\subset X_n)_{n=1}^\infty$ be a sequence of random finite simplicial pairs with $\dim X_n\ge k$ almost surely, and let $\nu\in\cP(\cS_k^{\esa})$.
  Assume that
  \[
  (K_n\subset X_n)\xrightarrow{\LW,\,\mathrm{p}}\nu
  \]
  and that
  \[
  \frac1{f_k(X_n)}\sum_{\sg\in F_k(X_n)}\deg_{X_n}(\sg)\xrightarrow{\mathrm p}C
  \quad\text{for some $C<\infty$}
  \]
  as $n\to\infty$.
  Then
  \[
  \frac{\nullity\bigl(\Dl_k^\up(X_n\setminus K_n)\bigr)}{f_k(X_n)}
  \xrightarrow{\mathrm p}\overline\RSM_0(\nu)
  \quad\text{as $n\to\infty$.}
  \]
\end{cor}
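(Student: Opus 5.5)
The plan is to reduce Corollary~\ref{cor:conti_ARSM0} to the deterministic Proposition~\ref{prop:conti_ARSM0} through the subsequence characterization of convergence in probability. Write $Z_n:=\nullity(\Dl_k^\up(X_n\setminus K_n))/f_k(X_n)$ and $D_n:=f_k(X_n)^{-1}\sum_{\sg\in F_k(X_n)}\deg_{X_n}(\sg)$. Since $Z_n\in[0,1]$ is real valued, it suffices to show that every subsequence of $(Z_n)$ has a further subsequence converging almost surely to $\overline\RSM_0(\nu)$.

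Fix a subsequence. The hypothesis is that $U_k(K_n\subset X_n)\to\nu$ in probability in $\cP(\cS_k)$. By Lemma~\ref{lem:Sk_basics}(1), $\cP(\cS_k)$ is separable and metrizable, say with metric $d_{\cP}$, so this means $d_{\cP}(U_k(K_n\subset X_n),\nu)\xrightarrow{\mathrm p}0$. We also have $D_n\xrightarrow{\mathrm p}C$. Apply the standard subsequence lemma to the real random variables $d_{\cP}(U_k(K_n\subset X_n),\nu)+|D_n-C|$ to extract a further subsequence along which, almost surely, $U_k(K_n\subset X_n)\xrightarrow{\mathrm w}\nu$ and $D_n\to C$.

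Now fix an $\omega$ in this full-measure event. Along the chosen subsequence, the deterministic pairs $(K_n(\omega)\subset X_n(\omega))$ are finite, have $\dim X_n(\omega)\ge k$, and converge locally weakly to $\nu\in\cP(\cS_k^{\esa})$. Their average degrees converge to $C<\infty$, so they are bounded, although perhaps only from some index onward. Discarding finitely many terms does not affect a limit, so Proposition~\ref{prop:conti_ARSM0} applies to the tail and gives $Z_n(\omega)\to\overline\RSM_0(\nu)$. Hence $Z_n\to\overline\RSM_0(\nu)$ almost surely along the sub-subsequence, which yields the claimed convergence in probability.

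I do not expect a genuine obstacle. The only points needing care are two. First, one must use the metrizability and separability of $\cP(\cS_k)$ from Lemma~\ref{lem:Sk_basics}(1) so that convergence in probability and the almost-sure subsequence extraction make sense. Second, the supremum bound in Proposition~\ref{prop:conti_ARSM0} must hold for every $n$, which is handled by removing the finitely many initial indices at each fixed $\omega$. Measurability of $Z_n$ is automatic, since it is a function of the finitely many random simplices.
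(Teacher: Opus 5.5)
Your proof is correct and follows the paper's argument exactly. You extract a further subsequence along which both the local weak convergence and the convergence of average degrees hold almost surely, apply Proposition~\ref{prop:conti_ARSM0} pathwise, and conclude by the subsequence characterization of convergence in probability. (Discarding the initial terms is harmless but unnecessary: each $D_n(\omega)$ is finite, so the convergent sequence $(D_n(\omega))$ is already bounded.)
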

\begin{proof}
  Let $(n_j)_{j=1}^\infty$ be an arbitrary subsequence.
  By the two convergence assumptions, it has a further subsequence $(n_{j_l})_{l=1}^\infty$ along which both asserted convergences hold almost surely.
  Applying Proposition~\ref{prop:conti_ARSM0}, we obtain
  \[
  \frac{\nullity\bigl(\Dl_k^\up(X_{n_{j_l}}\setminus K_{n_{j_l}})\bigr)}{f_k(X_{n_{j_l}})}
  \xrightarrow{\mathrm{a.s.}}\overline\RSM_0(\nu).
  \]
  The subsequence characterization of convergence in probability completes the proof.
\end{proof}

\section{Atomic Mass at Zero on Simplicial Trees}
\label{sec:mass_zero}
In this section, we compute the expected atomic mass at zero of the rooted spectral measure associated with the random $k$-rooted simplicial pair
\[
(\PGW_k(c,\bm\theta\le\bm1),\sg_o)
=(\PGW_k(c,\bm\theta)\subset\PGW_k(c),\sg_o),
\]
introduced in Section~\ref{ssec:PGW}.
Our main result, Proposition~\ref{prop:E_RSM0-h}, identifies this expectation with the constant $h_k(c,\bm\theta)$ introduced in~\eqref{eq:hk_c_theta}.
Thus, this expectation is determined by maximizing an explicit function over the fixed points of a map on a finite-dimensional cube.
We first give the formulas defining this maximization problem and state the main result.

\subsection{The Limiting Constant and the Main Result}
\label{ssec:hk_main}
Let $k\in\N_0$, $c\ge0$, and $\bm\theta=(\theta_j)_{j=0}^{k+1}\in[0,1]^{k+2}$.
We use the conventions $0/0:=0$, $\binom{0}{0}:=1$, and $0^0:=1$.
For $\ell\in\{k,k+1\}$, recall the type space
\[
\cT_\ell
:=\bigl\{\xi\colon2^{[\ell+1]}\setminus\{\emptyset\}\to\{0,1\}\bigr\}.
\]
A type is an arbitrary zero--one labeling on the nonempty faces of $[\ell+1]$; no downward-closed condition is imposed.
In particular, $|\cT_k|=2^{2^{k+1}-1}$.
For $\xi\in\cT_\ell$, set
\[
\theta^{(\xi)}
:=\prod_{\emptyset\ne\rho\subset[\ell+1]}\theta_{\dim\rho}^{\,\xi(\rho)}(1-\theta_{\dim\rho})^{1-\xi(\rho)}.
\]
Thus, $\theta^{(\xi)}$ is the probability of the labeling $\xi$ when the labels of the faces are independent and each $j$-dimensional face is labeled $1$ with probability $\theta_j$.

For $\zeta\in\cT_{k+1}$ and $j\in[k+2]$, let $\zeta\setminus\{j\}\in\cT_k$ be obtained by restricting $\zeta$ to $2^{[k+2]\setminus\{j\}}\setminus\{\emptyset\}$ and then relabeling the remaining vertices in the order
\[
j+1,\ldots,k+2,1,\ldots,j-1
\]
as $1,\ldots,k+1$, with empty ranges omitted.
In particular, $\zeta\setminus\{k+2\}$ is the usual restriction of $\zeta$ to $[k+1]$.
This cyclic convention is chosen so that cyclic permutations of $[k+2]$ act compatibly on the restricted types; it is used in the proof of Lemma~\ref{lem:uGm-Phiu}.
With this notation, define
\[
\Phi_{k,c,\bm\theta}
=\bigl(\Phi_{k,c,\bm\theta}^{(\xi)}\bigr)_{\xi\in\cT_k}\colon[0,1]^{\cT_k}\longrightarrow[0,1]^{\cT_k}
\]
by
\begin{equation}\label{eq:def_Phi}
\Phi_{k,c,\bm\theta}^{(\xi)}(u)
:=\exp\left(-c\sum_{\substack{\zeta\in\cT_{k+1}\colon\\\zeta\setminus\{k+2\}=\xi}}\frac{\theta^{(\zeta)}}{\theta^{(\xi)}}\prod_{j=1}^{k+1}\bigl(1-u^{(\zeta\setminus\{j\})}\bigr)\right)\cdot\1_{\{\xi\neq1_k\}}
\end{equation}
for every $u=\bigl(u^{(\xi)}\bigr)_{\xi\in\cT_k}\in[0,1]^{\cT_k}$ and $\xi\in\cT_k$.
We also define
\[
\Psi_{k,c,\bm\theta}\colon[0,1]^{\cT_k}\longrightarrow\R
\]
by
\begin{equation}\label{eq:def_Psi}
\begin{aligned}
\Psi_{k,c,\bm\theta}(u)
:=\sum_{\xi\in\cT_k}\theta^{(\xi)}u^{(\xi)}
&+c\sum_{\zeta\in\cT_{k+1}}\theta^{(\zeta)}u^{(\zeta\setminus\{k+2\})}\prod_{j=1}^{k+1}\bigl(1-u^{(\zeta\setminus\{j\})}\bigr)\\
&\qquad-\frac c{k+2}\Biggl(1-\sum_{\zeta\in\cT_{k+1}}\theta^{(\zeta)}\prod_{j=1}^{k+2}\bigl(1-u^{(\zeta\setminus\{j\})}\bigr)\Biggr).
\end{aligned}
\end{equation}
The constant introduced in~\eqref{eq:hk_c_theta} is
\[
h_k(c,\bm\theta)
:=\max\Bigl\{\Psi_{k,c,\bm\theta}(u)\Bigm|u\in[0,1]^{\cT_k},\,u=\Phi_{k,c,\bm\theta}(u)\Bigr\}.
\]
The maximum is attained, since the fixed-point set of $\Phi_{k,c,\bm\theta}$ is nonempty by Brouwer's fixed-point theorem and compact, while $\Psi_{k,c,\bm\theta}$ is continuous.
Cases in which this optimization reduces to a scalar problem or to evaluation at the least fixed point are discussed in Section~S2 of the Supplementary Material~\cite{HKsupp}.
\begin{rmk}\label{rmk:last_theta}
  The map $\Phi_{k,c,\bm\theta}$ and the function $\Psi_{k,c,\bm\theta}$ are independent of $\theta_{k+1}$.
  Indeed, the restrictions $\zeta\setminus\{j\}$ do not involve the label of the $(k+1)$-simplex itself, and summing over its two possible labels eliminates $\theta_{k+1}$.
  Consequently, $h_k(c,\bm\theta)$ depends only on $c$ and $\theta_0,\theta_1,\ldots,\theta_k$.
  We retain $\theta_{k+1}$ to use a consistent notation for types and their probabilities in dimensions $k$ and $k+1$.
\end{rmk}

Before stating the main result, we verify that the rooted spectral measure appearing in it is well-defined.
\begin{lem}\label{lem:esa_PGW}
  Let $k\in\N_0$, $c>0$, and $\bm\theta=(\theta_j)_{j=0}^{k+1}\in[0,1]^{k+2}$.
  Then the $k$th compressed up Laplacian $\Dl_k^\up\bigl(\PGW_k(c)\setminus\PGW_k(c,\bm\theta)\bigr)$ is almost surely essentially self-adjoint.
\end{lem}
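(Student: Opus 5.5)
We prove essential self-adjointness by a deterministic criterion for locally finite simplicial trees, and then check that its hypotheses hold almost surely for $\PGW_k(c)$. Write $T=\PGW_k(c)$, $K=\PGW_k(c,\bm\theta)$, and $L:=\Dl_k^\up(T\setminus K)$. By Lemma~\ref{lem:comp_L}(1) and~\eqref{eq:L_expression}, $L$ is a symmetric, nonnegative, locally finite matrix operator on $\ell^2(F_k(T)\setminus F_k(K))$. Its diagonal entry at $\sg$ is $\deg_T(\sg)$. Its off-diagonal entries are $\pm1$ exactly for pairs of distinct $k$-simplices sharing a $(k+1)$-coface. Since $T$ is a simplicial tree, each $k$-simplex has at most one parent coface and finitely many offspring cofaces almost surely, so every row has finitely many nonzero entries.

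To show essential self-adjointness of the nonnegative symmetric operator $L$, it suffices to show $\Ker(L^*+I)=\{0\}$. So let $\phi\in\Dom(L^*)$ satisfy $L^*\phi=-\phi$. Because $L$ is locally finite, $L^*$ acts by the same matrix formula on $\ell^2$ functions. The equation $L^*\phi=-\phi$ therefore becomes a pointwise linear recursion along the tree.

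We then use a generation-by-generation energy estimate. Let $B_l$ be the set of $k$-simplices in $F_k(T)\setminus F_k(K)$ at distance at most $l$ from $\sg_o$. Pairing the equation with $\phi\1_{B_l}$ and summing by parts over the cofaces, which is the discrete Green formula on the tree, gives
\[
\sum_{\sg\in B_l}|\phi(\sg)|^2+\sum_{\tau\subset B_l}|(d\phi)(\tau)|^2
=\text{boundary term},
\]
where the boundary term involves only the cofaces joining generation $l$ to generation $l+1$. It is bounded by $C\sum_{\sg\text{ in generations }l,l+1}\deg_T(\sg)|\phi(\sg)|^2$, with $C$ depending only on $k$. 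If $\deg_T$ stayed bounded along a sparse set of generations, this term would tend to zero along a subsequence, and the left side would force $\phi=0$.

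Degrees are Poisson and unbounded, so this naive bound does not apply directly. The fallback is a criterion in the spirit of Bordenave--Lelarge--Salez~\cite{BLS2011} for Galton--Watson trees. For $k=0$ the graph case is covered by~\cite{BL2010}. The general strategy is to construct a.s.\ an exhaustion by finite sets $S_m$ whose boundary simplices all have degree at most $D$, obtained by cutting the tree at $k$-simplices of small degree. Such cut-sets exist a.s.\ for each fixed $D$ larger than $c$, since the subtree of simplices with degree exceeding $D$ is a subcritical Galton--Watson process when $D$ is large. With these sets, we run the Green-formula argument with cutoffs $\1_{S_m}$ in place of $\1_{B_l}$; the boundary term is then at most $CD\sum_{\partial S_m}|\phi|^2$, which tends to zero because $\phi\in\ell^2$.

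The main obstacle is this step: verifying that the high-degree subtree is a.s.\ finite in every branch, and that the compression by $K$ does not break the argument. For the first point, the offspring distribution of the high-degree subtree is a thinned Poisson whose mean is below $1$ for large $D$. For the second, the compression only deletes rows and columns and replaces boundary contributions by zeros, so the Green formula still holds with extra nonnegative terms.
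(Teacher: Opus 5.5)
Your proof is correct in outline. Its probabilistic half is the same as the paper's: the paper also exhausts $F_k(T)\setminus F_k(K)$ by finite rooted subtrees whose boundary $k$-simplices have at most $a$ child cofaces. It builds them by adjoining whole clusters of ``exceptional'' simplices, and these clusters are a.s.\ finite because the cluster process has mean offspring $(k+1)\E[M\1_{\{M>a\}}]<1$. That condition, not $D>c$, is the right requirement on your $D$; the offspring law is also not a thinned Poisson, though only its mean matters.

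Where you differ is the deterministic criterion. The paper works at $\pm i$. For $L^*\phi=si\phi$ it uses the antisymmetric current $J(\sg,\sg')=-s\Im\bigl(\phi(\sg)\la Le_\sg,e_{\sg'}\ra_k\overline{\phi(\sg')}\bigr)$, whose divergence at $\sg$ is $|\phi(\sg)|^2$, so $\sum_{S_l}|\phi|^2$ equals a flux through the boundary pairs. This uses only symmetry and off-diagonal entries $\pm1$, not positivity, and for that reason it needs bounded boundary degree.

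You use $L=d^*d\ge0$ at the real point $-1$, which gives the coercive term $\|d(\phi\1_{B_l})\|^2$. This term is worth more than you extracted. On each coface joining distance $l$ to distance $l+1$ one has $d(\phi\1_{B_l})(\tau)=\pm\phi(\rho)$ for its parent face $\rho$. So the degree-weighted part of your boundary term already sits inside $\|d(\phi\1_{B_l})\|^2$ and can be absorbed. Indeed, from $\|\phi\1_{B_l}\|^2+\|d(\phi\1_{B_l})\|^2=-\mathrm{Re}\la d(\phi\1_{B_l^c}),d(\phi\1_{B_l})\ra$ and $|ab|\le\frac12|a|^2+\frac12|b|^2$,
\[
\|\phi\1_{B_l}\|^2
\le\frac12\sum_{\tau}\bigl|d(\phi\1_{B_l^c})(\tau)\bigr|^2
\le\frac{k+1}2\sum_{\sg\colon\dist_{\sg_o}(\sg)=l+1}|\phi(\sg)|^2 .
\]
Here $\tau$ runs over the $(k+1)$-simplices meeting both $B_l$ and its complement. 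The last step uses that every $k$-simplex at distance $l+1$ lies in exactly one such $\tau$, namely its parent coface.

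Letting $l\to\infty$ gives $\phi=0$ with plain balls, deterministically for every locally finite simplicial tree and every marking subcomplex. So on your route the exceptional-cluster exhaustion, which you called the main obstacle, is unnecessary. In exchange, the paper's flux criterion does not use the tree property that outer-boundary simplices have a single coface into the ball, nor positivity. It therefore applies to general complexes and non-positive operators once a bounded-boundary exhaustion exists.
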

The proof follows closely the argument of~\cite[Claim~3.3]{LP2016a}, which treats the case without marking complexes~(see also~\cite{BLS2011} for the graph case).
We therefore defer it to Section~S1.2 of the Supplementary Material~\cite{HKsupp}.

The main result of this section is the following identification, which will be used in the next section.
\begin{prop}\label{prop:E_RSM0-h}
  Let $k\in\N_0$, $c>0$, and $\bm\theta=(\theta_j)_{j=0}^{k+1}\in[0,1]^{k+2}$.
  Then
  \[
  \E'\bigl[\mu_{(\PGW_k(c,\bm\theta\le\bm1),\sg_o)}(\{0\})\bigr]
  =h_k(c,\bm\theta).
  \]
\end{prop}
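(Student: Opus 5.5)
We follow the resolvent-recursion strategy of Bordenave--Lelarge--Salez and Linial--Peled, adapted to the compressed Laplacian and to face types.

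\emph{Recursion.} For $t>0$ we work with the resolvent at $z=-t$. For the root $\sg_o$ we set
\[
Y_t(\sg_o):=\bigl\la(\overline{\Dl_k^\up}+tI)^{-1}e_{\sg_o},e_{\sg_o}\bigr\ra_k ,
\]
where $\Dl_k^\up$ stands for $\Dl_k^\up(\PGW_k(c)\setminus\PGW_k(c,\bm\theta))$, and we put $Y_t(\sg_o):=0$ when $\sg_o\in\PGW_k(c,\bm\theta)$. Then $tY_t(\sg_o)\downarrow\mu_{(\PGW_k(c,\bm\theta\le\bm1),\sg_o)}(\{0\})$ as $t\downarrow0$. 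Essential self-adjointness (Lemma~\ref{lem:esa_PGW}) lets us use the Schur complement along the tree.

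A $k$-simplex $\sg$ is deleted in the compression exactly when all its face labels equal $1$, i.e.\ its type is $1_k$; this explains the factor $\1_{\{\xi\ne1_k\}}$. Cutting at the $(k+1)$-simplices through $\sg$, each child coface $\tau=\sg\cup\{v\}$ contributes its $k+1$ child $k$-faces. These faces hang off independent subtrees, but their types are correlated through the labels of the shared faces of $\tau$. This is exactly why we condition on the type $\zeta\in\cT_{k+1}$ of $\tau$, and the cyclic relabeling $\zeta\setminus\{j\}$ gives a consistent orientation.

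We obtain a recursive distributional equation for the family of conditional laws $(\mathcal{L}^{(\xi)}_t)_{\xi\in\cT_k}$ of $tY_t$ of a descendant given its type. Schematically,
\[
tY_t(\sg)=\frac{t}{t+\sum_{\tau}\bigl(1-\text{compressed contributions of the other faces of }\tau\bigr)^{-1}}.
\]
Along each coface the children are in series, so the recursion is the simplicial version of $Y=1/(t+\sum_\tau(1+\sum_{j} Y_j)^{-1})$; deleted faces contribute $Y_j=0$. The number of child cofaces of type $\zeta$, given parent type $\xi$, is Poisson with mean $c\,\theta^{(\zeta)}/\theta^{(\xi)}$.

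\emph{Limit $t\downarrow0$.} Following~\cite{BLS2011,LP2016a}, we show that as $t\to0$ each child face is, in the limit, either ``zero-carrying'' (mass at $0$ positive) or not. A face of type $\xi$ is zero-free with probability $u^{(\xi)}$. The recursion then forces $u=\Phi_{k,c,\bm\theta}(u)$: the root is zero-free iff no child coface has all of its other $k+1$ faces zero-free. Concretely, we take monotone limits of $t\mapsto tY_t$ and of $\P(tY_t>0)$ to obtain a fixed point.

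\emph{Computing the mass.} We then write the expected mass at zero as $\E[\lim tY_t]$. Using the unimodularity trick of~\cite{LP2016a} (averaging over root and cofaces, equivalently integrating $\la \phi,\Dl\phi\ra$ identities), we express it as $\Psi_{k,c,\bm\theta}(u)$ for the fixed point $u$ that actually arises. The three terms of~\eqref{eq:def_Psi} correspond to the root mass, the root--coface correction, and the coface count normalized by $k+2$.

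\emph{Main obstacle.} The hard step is showing that the value equals the \emph{maximum} over all fixed points. In the multiparameter setting there is no monotone least/greatest structure, and the maximizer need not be extremal. I would first try the argument of~\cite[Lemma~4.5]{LP2016a}: show that $\E[\mu(\{0\})]\ge\Psi(u)$ for every fixed point $u$. For this, construct from any fixed point a (random) solution of the recursive distributional equation and use the variational/lower-semicontinuity characterization of the nullity as a supremum over admissible ``kernel'' configurations. I would then show equality at the fixed point induced by the true recursion, exploiting that $\Psi$'s critical points on the fixed-point set match the recursion.
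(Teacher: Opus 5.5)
Your recursion (resolvent at $-t$, with Poisson numbers of child cofaces of type $\zeta$ having mean $c\,\theta^{(\zeta)}/\theta^{(\xi)}$) matches the paper. So does your plan to show $\E'[X_o]=\Psi_{k,c,\bm\theta}(u^*)$ at the fixed point $u^*$ realized by the tree, by symmetrizing over the $k+2$ faces of a coface. Together these give the upper bound $\E'[X_o]\le h_k(c,\bm\theta)$.

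The genuine gap is the reverse inequality, and the premise you base it on is false. There \emph{is} a monotone extremal structure, and it is exactly what is needed. The map $\Gm_{k,c,\bm\theta}$ on $\cP([0,1])^{\cT_k}$ is order-preserving for the coordinatewise stochastic order, and the conditional laws $\cD_o$ of the atom given the root type form its \emph{largest} fixed point. To see this, truncate the tree at depth $l$ and set every boundary atom to $1$:
\begin{itemize}
\item monotonicity of the recursion gives $X_o\le X_o^{(l)}$;
\item upper semicontinuity of $\RSM_0$ along the local convergence of the truncations gives $\limsup_{l}X_o^{(l)}\le X_o$;
\item the conditional law of $X_o^{(l)}$ is the $l$th iterate of $\Gm_{k,c,\bm\theta}$ from $(\dl_1)_\xi$.
\end{itemize}

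The lower bound then comes from a comparison lemma, not from locating the maximizer. If $u(\cD)=\Phi_{k,c,\bm\theta}(u(\cD))$ and $\Gm_{k,c,\bm\theta}(\cD)\le_{\st}\cD$, then $E_{k,\bm\theta}(\cD)\ge\Psi_{k,c,\bm\theta}(u(\cD))$. This uses the same symmetrization, now with a coupling $\tilde Y\le Y$ of $\Gm_{k,c,\bm\theta}(\cD)$ below $\cD$ in the distinguished slot. For an arbitrary fixed point $u$, the two-point laws $(1-u^{(\xi)})\dl_0+u^{(\xi)}\dl_1$ form such a supersolution. Iterating $\Gm_{k,c,\bm\theta}$ from them preserves $u(\cdot)=u$ and decreases to a fixed point $\cD_\infty\le_{\st}\cD_o$. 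Hence $\E'[X_o]\ge E_{k,\bm\theta}(\cD_\infty)\ge\Psi_{k,c,\bm\theta}(u)$. The inequality must be carried along the iterates, because $\cD\mapsto\cD((0,1])$ is not weakly continuous, so $u(\cD_\infty)$ may differ from $u$.

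Neither a ``variational characterization of the nullity over kernel configurations'' nor ``critical points of $\Psi$ on the fixed-point set'' supplies these ingredients. The fixed-point set need not be connected, so local information says nothing about the global maximum. Nothing in your plan shows that the tree's solution dominates the solutions you construct.

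Two smaller problems.
\begin{itemize}
\item Monotone limits of $\P(tY_t>0)$ carry no information, since $tY_t>0$ for every $t>0$ whenever the root is not deleted. You need a recursion for the atoms themselves, $x_o=\bigl(1+\sum_\tau(\sum_j x_{\tau,j})^{-1}\bigr)^{-1}\1_{\{\sg_o\notin\PGW_k(c,\bm\theta)\}}$ with $1/0=\infty$. Obtain it by letting $t\downarrow0$ in $tY_t=\bigl(1+\sum_\tau(t+\sum_j tY_t^{(\tau,j)})^{-1}\bigr)^{-1}$, where $Y_t^{(\tau,j)}$ is the corresponding quantity for the $j$th child subtree of $\tau$. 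With your real spectral parameter all terms are positive, so this passage is easy.
\item Your labeling is inverted. $u^{(\xi)}$ must be the probability that the atom is \emph{positive}, consistent with $\Phi_{k,c,\bm\theta}^{(1_k)}\equiv0$. The correct rule is that a non-deleted root has a positive atom if and only if every child coface has at least one child $k$-face with a positive atom.
\end{itemize}
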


We next verify the extremal cases in~\eqref{eq:hk_extremal}.
\begin{rmk}
  We verify the two extremal cases in~\eqref{eq:hk_extremal}.
  For $\ell\in\{k,k+1\}$, let $0_\ell$ and $1_\ell$ denote the constant types taking the values $0$ and $1$, respectively.
  When $\bm\theta=\bm0$, we have $\theta^{(\xi)}=\1_{\{\xi=0_k\}}$ for $\xi\in\cT_k$ and $\theta^{(\zeta)}=\1_{\{\zeta=0_{k+1}\}}$ for $\zeta\in\cT_{k+1}$.
  Hence,
  \[
  \Phi_{k,c,\bm0}^{(\xi)}(u)=
  \begin{cases}
  \Phi_{k,c}(u^{(0_k)})
  &\text{if $\xi=0_k$,}\\
  0
  &\text{if $\xi=1_k$,}\\
  1
  &\text{otherwise,}
  \end{cases}
  \quad\text{and}\quad
  \Psi_{k,c,\bm0}(u)=\Psi_{k,c}(u^{(0_k)}).
  \]
  It follows that $h_k(c,\bm0)=h_k(c)$.
  When $\bm\theta=\bm1$, we similarly have
  \[
  \Psi_{k,c,\bm1}(u)=\Psi_{k,c}(u^{(1_k)}).
  \]
  Every fixed point of $\Phi_{k,c,\bm1}$ satisfies $u^{(1_k)}=0$.
  Since $\Psi_{k,c}(0)=0$, this gives $h_k(c,\bm1)=0$.
\end{rmk}

The proof of Proposition~\ref{prop:E_RSM0-h} occupies the remaining three subsections.
In Section~\ref{ssec:recursion}, we derive a deterministic recursion for the atomic mass at zero when the ambient complex is a simplicial tree.
In Section~\ref{ssec:DFPE}, we formulate the corresponding distributional fixed-point equation and identify its spectral solution as the largest fixed point.
Finally, in Section~\ref{ssec:reduction}, we relate this solution to the finite-dimensional maximization defining $h_k(c,\bm\theta)$ and complete the proof.

\subsection{A Recursion for the Atomic Mass at Zero}
\label{ssec:recursion}
The atomic mass at zero links the spectral analysis to persistent Betti numbers. On simplicial trees it satisfies an explicit recursion.
By adapting the resolvent method developed in~\cite{BL2010,BLS2011,LP2016a}, we obtain the following deterministic recursive formula.
\begin{lem}\label{lem:x_recurrence}
  Let $k\in\N_0$, and let $(K_o\subset T_o,\sg_o)$ be a $k$-rooted simplicial pair whose ambient complex $T_o$ is a simplicial tree.
  Let $\tau_1,\tau_2,\dots,\tau_m\in F_{k+1}(T_o)$ be the $(k+1)$-cofaces of $\sg_o$.
  For each $i=1,2,\ldots,m$, let $\sg_{i,1},\sg_{i,2},\dots,\sg_{i,k+1}$ denote the descendant $k$-faces of $\tau_i$, that is, the $k$-faces of $\tau_i$ distinct from $\sg_o$.
  Further, let $(K_{i,j}\subset T_{i,j},\sg_{i,j})$ be the $k$-rooted simplicial pair corresponding to the subtree of $T_o$ rooted at $\sg_{i,j}$.
  Assume that the compressed up Laplacians $\Dl_k^\up(T_o\setminus K_o)$ and $\Dl_k^\up(T_{i,j} \setminus K_{i,j})$ are all essentially self-adjoint, and define
  \[
  x_o:=\mu_{(K_o\subset T_o,\sg_o)}(\{0\})
  \quad\text{and}\quad
  x_{i,j}:=\mu_{(K_{i,j}\subset T_{i,j},\sg_{i,j})}(\{0\})
  \]
  for $i\in[m]$ and $j\in[k+1]$.
  Then
  \begin{equation}\label{eq:x_recurrence}
  x_o=\left(1+\sum_{i=1}^m\Biggl(\sum_{j=1}^{k+1}x_{i,j}\Biggr)^{-1}\right)^{-1}\cdot\1_{\{\sg_o\notin K_o\}}.
  \end{equation}
  Here, we adopt the conventions $1/0=\infty$ and $1/\infty=0$.
\end{lem}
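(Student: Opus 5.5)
The plan is to run the resolvent (Schur complement) method of \cite{BLS2011,LP2016a} on the compressed Laplacian and then recover the atom at zero from the resolvent along the imaginary axis. If $\sg_o\in K_o$, then $x_o=0$ by convention and \eqref{eq:x_recurrence} holds trivially, so I assume $\sg_o\notin K_o$.

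\textbf{Rank-one decomposition.} Write $L:=\overline{\Dl_k^\up(T_o\setminus K_o)}$ and $L_{i,j}:=\overline{\Dl_k^\up(T_{i,j}\setminus K_{i,j})}$. For $z\in\C_+$ set
\[
G_o(z):=\la(L-zI)^{-1}e_{\sg_o},e_{\sg_o}\ra_k,
\qquad
G_{i,j}(z):=\la(L_{i,j}-zI)^{-1}e_{\sg_{i,j}},e_{\sg_{i,j}}\ra_k,
\]
with $G_{i,j}:=0$ when $\sg_{i,j}\in K_{i,j}$. By Lemma~\ref{lem:comp_L}(1) and \eqref{eq:L_expression}, on finitely supported cochains the compressed Laplacian equals $\sum_{\tau}v_\tau v_\tau^*$ over $(k+1)$-simplices $\tau$. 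Here $v_\tau$ is the signed indicator of the $k$-faces of $\tau$ that lie outside $K_o$, which is the projection of $d_k^*$ applied to the unit cochain at $\tau$. Because $T_o$ is a tree, removing the cofaces $\tau_1,\dots,\tau_m$ of $\sg_o$ splits the operator into a direct sum. The summands are a zero block on $e_{\sg_o}$ and the operators $L_{i,j}$ on the subtrees. The subtree degrees omit the parent coface $\tau_i$, which matches the definition of $(K_{i,j}\subset T_{i,j},\sg_{i,j})$. Thus $L$ is this direct sum plus $\sum_{i=1}^m v_{\tau_i}v_{\tau_i}^*$.

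\textbf{Resolvent identity.} The next step is to prove, for every $z\in\C_+$,
\[
G_o(z)=\Biggl(-z+\sum_{i=1}^m\Bigl(1+\sum_{j=1}^{k+1}G_{i,j}(z)\Bigr)^{-1}\Biggr)^{-1}.
\]
Formally this is a Schur complement, and inside each coface the signs cancel after a suitable choice of orientations. To make it rigorous for unbounded operators, I would construct an explicit candidate $\psi$. It takes a prescribed value at $\sg_o$ and equals a multiple of $(L_{i,j}-zI)^{-1}e_{\sg_{i,j}}$ on each subtree, with the multiples fixed by the equations at $\sg_o$ and at the $\sg_{i,j}$. I would then check that $\psi\in\Dom(L)$ and $(L-zI)\psi=e_{\sg_o}$. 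Domain membership follows by approximating each subtree piece in the graph norm by elements of the core $C^k_c$, and the identity holds because $L$ acts locally. Self-adjointness gives uniqueness, so $\psi=(L-zI)^{-1}e_{\sg_o}$. The denominators do not vanish because $\Im G_{i,j}(z)>0$ whenever $G_{i,j}\ne0$. I expect this domain and core bookkeeping to be the main technical obstacle.

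\textbf{Passage to the atom at zero.} By dominated convergence, $-\mathrm{i}t\,G(\mathrm{i}t)\to\mu(\{0\})$ as $t\downarrow0$ for every such spectral measure. Multiplying the identity by $-\mathrm{i}t$ gives
\[
-\mathrm{i}t\,G_o(\mathrm{i}t)=\Bigl(1+\sum_i A_i(t)\Bigr)^{-1},
\qquad
A_i(t):=\bigl(-\mathrm{i}t+(-\mathrm{i}t)\textstyle\sum_jG_{i,j}(\mathrm{i}t)\bigr)^{-1}.
\]
If $\sum_jx_{i,j}>0$, the denominator of $A_i(t)$ tends to $\sum_jx_{i,j}$, so $A_i(t)\to(\sum_jx_{i,j})^{-1}$. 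If $\sum_jx_{i,j}=0$, the denominator tends to $0$. I will show $|A_i(t)|\to\infty$, using that $\Re A_i(t)\ge0$ from the positivity of the imaginary parts, so that $1+\sum_iA_i(t)$ is uniformly away from $0$. Then $-\mathrm{i}t\,G_o(\mathrm{i}t)\to0$, matching the conventions $1/0=\infty$ and $1/\infty=0$. Taking $t\downarrow0$ yields \eqref{eq:x_recurrence}.
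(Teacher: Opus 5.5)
Your proposal is correct and follows essentially the paper's route. The paper also writes $L_o=L_f+\tilde L$ with the bounded coface part $L_f=\sum_i v_{\tau_i}v_{\tau_i}^*$ and derives the same Schur-complement identity. It does so via the second resolvent identity $R_oL_f\tilde R=\tilde R-R_o$, which makes your domain bookkeeping automatic because $\Dom(L_o)=\Dom(\tilde L)$. It then lets the spectral parameter tend to $0$ along the imaginary axis, as you do.

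One small fix is needed in the degenerate case $\sum_j x_{i_0,j}=0$. There you need $|1+\sum_iA_i(t)|\to\infty$, not merely that it stays away from $0$, and $\Re A_i(t)\ge0$ alone does not exclude cancellation. Use also $\Im A_i(t)>0$, which follows from $\Re G_{i,j}(\mathrm{i}t)\ge0$ because the spectral measures are supported on $[0,\infty)$. Then all $A_i(t)$ lie in the closed first quadrant, so $|1+\sum_iA_i(t)|\ge|A_{i_0}(t)|\to\infty$.
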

The proof follows the approach in~\cite{BL2010,BLS2011} for graphs and in~\cite{LP2016a} for simplicial complexes.
Our setting, however, involves the additional structure of the marking complex $K_o$, leading to a different recursion.
We therefore include an adapted proof for completeness.
\begin{proof}
  If $\sg_o\in F_k(K_o)$, then~\eqref{eq:x_recurrence} holds trivially, since $\mu_{(K_o\subset T_o,\sg_o)}$ is the zero measure in this case.
  Hence we assume that $\sg_o\notin K_o$.
  Set
  \[
  T_f:=2^{\sg_o}\cup\bigcup_{i=1}^m2^{\tau_i},
  \qquad K_f:=T_f\cap K_o.
  \]
  We write
  \[
  L_o:=\overline{\Dl_k^\up(T_o\setminus K_o)},
  \quad L_f:=\Dl_k^\up(T_f\setminus K_f),
  \text{ and }
  L_{i,j}:=\overline{\Dl_k^\up(T_{i,j}\setminus K_{i,j})}
  \]
  for each $i\in[m]$ and $j\in[k+1]$.
  Note that all operators can be viewed as acting on $C^k_{(2)}(T_o)$ by extending them by zero on the orthogonal complements.
  Then it follows that
  \[
  L_o=L_f+\tilde L
  \quad\text{with }\tilde L:=\bigoplus_{i,j}L_{i,j}
  \]
  by the simplicial tree structure.
  For $s>0$, let $R_o$ and $\tilde R$ denote the resolvents of $L_o$ and $\tilde L$ at $-is\in\C$, respectively:
  \[
  R_o:=(L_o+isI)^{-1}
  \quad\text{and}\quad
  \tilde R:=(\tilde L+isI)^{-1}.
  \]
  For each $(i,j)$, also set
  \[
  R_{i,j}:=(L_{i,j}+isI)^{-1}.
  \]
  Then each descendant subspace is invariant under $\tilde R$, and the restriction of $\tilde R$ to the subspace corresponding to $T_{i,j}$ equals $R_{i,j}$.

  Below, we write $e_o:=e_{\sg_o}$, $e_{i,j}:=e_{\sg_{i,j}}$, and $\1_{i,j}:=\1_{\{\sg_{i,j}\in F_k(T_{i,j}\setminus K_{i,j})\}}$ for simplicity.
  By the second resolvent identity, we have
  \[
  R_oL_f\tilde R=\tilde R-R_o.
  \]
  We evaluate both sides of
  \[
  \la(R_oL_f\tilde R)e_o,e_o\ra_k
  =\la(\tilde R-R_o)e_o,e_o\ra_k.
  \]
  Since $\tilde Re_o=e_o/(is)$, the explicit expression~\eqref{eq:L_expression}, together with Lemma~\ref{lem:comp_L}(1), implies that~(after possibly flipping the sign of each $e_{i,j}$)
  \[
  \la(R_oL_f\tilde R)e_o,e_o\ra_k
  =\frac1{is}\la R_oL_f\,e_o,e_o\ra_k
  =\frac1{is}\Biggl(m\la R_o e_o,e_o\ra_k+\sum_{i=1}^m\sum_{j=1}^{k+1}\la R_o e_{i,j},e_o\ra_k\,\1_{i,j}\Biggr).
  \]
  On the other hand,
  \[
  \la(\tilde R-R_o)e_o,e_o\ra_k
  =\frac1{is}-\la R_o e_o,e_o\ra_k.
  \]
  Combining the above equalities, we obtain
  \begin{equation}\label{eq:e_o-e_o}
  is\la R_o e_o,e_o\ra_k+\sum_{i=1}^m\Biggl(\la R_o e_o,e_o\ra_k+\sum_{j=1}^{k+1}\la R_o e_{i,j},e_o\ra_k\,\1_{i,j}\Biggr)
  =1.
  \end{equation}
  We next evaluate both sides of
  \[
  \la(R_oL_f\tilde R)e_{i,j},e_o\ra_k
  =\la(\tilde R-R_o)e_{i,j},e_o\ra_k
  \]
  for a fixed $(i,j)$.
  Since $\la\tilde Re_{i,j},e_o\ra_k=0$ and $\la\tilde Re_{i,j},e_{i',j'}\ra_k=0$ for $(i',j')\neq(i,j)$ by the definition of $\tilde R$, the explicit expression~\eqref{eq:L_expression}, together with Lemma~\ref{lem:comp_L}(1), again yields
  \begin{align*}
  \la(R_oL_f\tilde R)e_{i,j},e_o\ra_k
  &=\la\tilde R e_{i,j},e_{i,j}\ra_k\cdot
  \la R_oL_f\,e_{i,j},e_o\ra_k\\
  &=\la\tilde R e_{i,j},e_{i,j}\ra_k\cdot
  \Biggl(\la R_o e_o,e_o\ra_k+\sum_{j'=1}^{k+1}\la R_o e_{i,j'},e_o\ra_k\,\1_{i,j'}\Biggr)\,\1_{i,j}.
  \end{align*}
  On the other hand,
  \[
  \la(\tilde R-R_o)e_{i,j},e_o\ra_k
  =-\la R_o e_{i,j},e_o\ra_k
  =-\la R_o e_{i,j},e_o\ra_k\,\1_{i,j}.
  \]
  The last equality follows from the fact that if $\sg_{i,j}\in K_{i,j}$, then $R_o e_{i,j}=e_{i,j}/(is)$.
  Combining the above equalities and taking the sum over $1\le j\le k+1$, we have
  \[
  \Biggl(\sum_{j=1}^{k+1}\la\tilde R e_{i,j},e_{i,j}\ra_k\,\1_{i,j}\Biggr)
  \Biggl(\la R_o e_o,e_o\ra_k+\sum_{j'=1}^{k+1}\la R_o e_{i,j'},e_o\ra_k\,\1_{i,j'}\Biggr)
  =-\sum_{j=1}^{k+1}\la R_o e_{i,j},e_o\ra_k\,\1_{i,j},
  \]
  which implies that
  \begin{equation}\label{eq:e_ij-e_o}
  \sum_{j=1}^{k+1}\la R_o e_{i,j},e_o\ra_k\,\1_{i,j}
  =-\la R_o e_o,e_o\ra_k\cdot
  \frac{\sum_{j=1}^{k+1}\la\tilde R e_{i,j},e_{i,j}\ra_k\,\1_{i,j}}{1+\sum_{j=1}^{k+1}\la\tilde R e_{i,j},e_{i,j}\ra_k\,\1_{i,j}}.
  \end{equation}
  As will be seen below, the denominator in~\eqref{eq:e_ij-e_o} is nonzero.
  Substituting~\eqref{eq:e_ij-e_o} into~\eqref{eq:e_o-e_o}, we obtain
  \begin{equation}\label{eq:R_recurrence}
  \la R_o e_o,e_o\ra_k\cdot
  \Biggl(is+\sum_{i=1}^m
  \frac1{1+\sum_{j=1}^{k+1}\la\tilde R e_{i,j},e_{i,j}\ra_k\,\1_{i,j}}\Biggr)
  =1.
  \end{equation}
  
  Finally, we derive the recursive formula~\eqref{eq:x_recurrence} from the resolvent recursion~\eqref{eq:R_recurrence} by letting $s\downarrow0$ appropriately.
  Define
  \[
  h_{i,j}(s)
  :=is\la\tilde R e_{i,j},e_{i,j}\ra_k\,\1_{i,j}
  =is\la R_{i,j}e_{i,j},e_{i,j}\ra_k\,\1_{i,j}.
  \]
  Using~\eqref{eq:RSM}, which also holds at $z=-is$ by the spectral theorem, we have
  \begin{align*}
  h_{i,j}(s)
  &=is\int_\R\frac1{\lm+is}\,d\mu_{(K_{i,j}\subset T_{i,j},\sg_{i,j})}(\lm)\\
  &=\int_\R\frac{s^2}{\lm^2+s^2}\,d\mu_{(K_{i,j}\subset T_{i,j},\sg_{i,j})}(\lm)
  +i\int_\R\frac{\lm s}{\lm^2+s^2}\,d\mu_{(K_{i,j}\subset T_{i,j},\sg_{i,j})}(\lm)\\
  &\xrightarrow[s\downarrow0]{}\mu_{(K_{i,j}\subset T_{i,j},\sg_{i,j})}(\{0\})=x_{i,j},
  \end{align*}
  where the convergence follows from the dominated convergence theorem.
  Similarly, we write $h_o(s):=is\la R_o e_o,e_o\ra_k$.
  (Recall that we assume that $\sg_o\notin K_o$.)
  The same argument shows that
  \[
  h_o(s)\xrightarrow[s\downarrow0]{}\,\mu_{(K_o\subset T_o,\sg_o)}(\{0\})=x_o.
  \]  
  Therefore, by rewriting~\eqref{eq:R_recurrence}, we obtain
  \[
  h_o(s)\cdot
  \Biggl(1+\sum_{i=1}^m\frac1{is+\sum_{j=1}^{k+1}h_{i,j}(s)}\Biggr)
  =1.
  \]
  The preceding representation and the nonnegativity of $L_{i,j}$ show that both the real and imaginary parts of $h_{i,j}(s)$ are nonnegative.
  Hence, each denominator
  \[
  is+\sum_{j=1}^{k+1}h_{i,j}(s)
  =is\left(1+\sum_{j=1}^{k+1}\la\tilde R e_{i,j},e_{i,j}\ra_k\,\1_{i,j}\right)
  \]
  is nonzero, which also verifies that the denominator in~\eqref{eq:e_ij-e_o} is nonzero.

  If $\sum_jx_{i,j}=0$ for some $i$, then $is+\sum_jh_{i,j}(s)$ tends to zero through the first quadrant.
  Its reciprocal therefore tends to infinity while remaining in the fourth quadrant.
  Since all the other reciprocal terms also lie in the fourth quadrant, no cancellation is possible, and the preceding identity gives $h_o(s)\to0$.
  Otherwise, we may pass directly to the limit in that identity.
  In either case, letting $s\downarrow0$ yields~\eqref{eq:x_recurrence}, with the conventions $1/0=\infty$ and $1/\infty=0$.
\end{proof}

\subsection{Distributional Fixed-Point Equation}
\label{ssec:DFPE}
We apply the recursion in Lemma~\ref{lem:x_recurrence} to the random simplicial pair $\PGW_k(c,\bm\theta\le\bm1)$ to obtain a distributional fixed-point equation.
We then show that the conditional laws of the atomic mass at zero, indexed by the root type, form its largest fixed point.

Fix $k\in\N_0$, $c>0$, and $\bm\theta=(\theta_j)_{j=0}^{k+1}\in[0,1]^{k+2}$.
We identify the $k$-root $\sg_o$ with $[k+1]$.
For each $\xi\in\cT_k$, define the event
\[
A^{(\xi)}
:=\{\eta_{\bm\theta}(\rho)=\xi(\rho)\text{ for every nonempty face $\rho\subset[k+1]$}\}.
\]
Here, $\{\eta_{\bm\theta}(\rho)\}_\rho$ is the family of Bernoulli random variables introduced in Definition~\ref{df:PGW}.
Then $\P'(A^{(\xi)})=\theta^{(\xi)}$.
Fix $\xi\in\cT_k$.
On the event $A^{(\xi)}$, each $(k+1)$-simplex incident to the root corresponds to a type $\zeta\in\cT_{k+1}$ satisfying $\zeta\setminus\{k+2\}=\xi$.
There are
\[
H:=2^{2^{k+1}}
\]
such extensions, since an extension is determined by its values on the $2^{k+1}$ faces containing the vertex $k+2$, each of which may be labeled either $0$ or $1$.
We enumerate these extensions as
\[
\zeta_1^{(\xi)},\zeta_2^{(\xi)},\ldots,\zeta_H^{(\xi)},
\]
and set
\[
c_h^{(\xi)}
:=c\,\frac{\theta^{(\zeta_h^{(\xi)})}}{\theta^{(\xi)}},
\qquad h=1,2,\ldots,H.
\]
Here, we again use the convention $0/0=0$ from Section~\ref{ssec:hk_main}.
Whenever $\theta^{(\xi)}>0$, conditional on $A^{(\xi)}$, the numbers of incident $(k+1)$-simplices of the respective types are independent Poisson random variables with parameters $c_h^{(\xi)}$.
Each extension $\zeta_h^{(\xi)}$ gives rise to $k+1$ descendant $k$-faces of types
\[
\zeta_h^{(\xi)}\setminus\{j\}\in\cT_k,
\qquad j=1,2,\ldots,k+1.
\]

We equip $\cP([0,1])^{\cT_k}$ with the product topology induced by the weak topology on $\cP([0,1])$.
For $\mu,\mu'\in\cP([0,1])$, we say that $\mu'$ \emph{stochastically dominates} $\mu$, and write $\mu\le_{\st}\mu'$, if
\[
\mu((x,1])\le\mu'((x,1])
\qquad\text{for every $x\in[0,1]$.}
\]
We further equip $\cP([0,1])^{\cT_k}$ with the coordinatewise stochastic order, defined by
\[
(\cD^{(\xi)})_{\xi\in\cT_k}
\le_{\st}
(\cD'^{(\xi)})_{\xi\in\cT_k}
\]
if and only if $\cD^{(\xi)}\le_{\st}\cD'^{(\xi)}$ for every $\xi\in\cT_k$.
Define
\[
\Gm_{k,c,\bm\theta}
=\bigl(\Gm_{k,c,\bm\theta}^{(\xi)}\bigr)_{\xi\in\cT_k}\colon\cP([0,1])^{\cT_k}\longrightarrow\cP([0,1])^{\cT_k}
\]
by
\[
\Gm_{k,c,\bm\theta}^{(\xi)}(\cD)
:=\Law\left(\Biggl(1+\sum_{h=1}^H\sum_{i=1}^{M_h^{(\xi)}}\biggl(\sum_{j=1}^{k+1}X_{h,i,j}^{(\xi)}\biggr)^{-1}\Biggr)^{-1}\!\cdot\1_{\{\xi\neq1_k\}}\right)
\]
for every $\cD=\bigl(\cD^{(\xi)}\bigr)_{\xi\in\cT_k}\in\cP([0,1])^{\cT_k}$, where
\[
M_h^{(\xi)}\sim\Poisson(c_h^{(\xi)})
\quad\text{and}\quad
X_{h,i,j}^{(\xi)}\sim\cD^{(\zeta_h^{(\xi)}\setminus\{j\})}
\]
are independent random variables.
The map $\Gm_{k,c,\bm\theta}$ is order-preserving.
Moreover, $\Gm_{k,c,\bm\theta}$ is continuous from above: if $\cD_l\downarrow\cD$, then
\[
\Gm_{k,c,\bm\theta}(\cD_l)\downarrow \Gm_{k,c,\bm\theta}(\cD)
\]
in the product weak topology.
Both properties follow from the standard monotone coupling obtained from quantile functions.

Since $(\dl_1)_{\xi\in\cT_k}$ is the largest element in the coordinatewise
stochastic order, the iterates form a stochastically decreasing sequence.
Compactness of $\cP([0,1])^{\cT_k}$ gives subsequential limits, and stochastic monotonicity implies that all such limits coincide.
Hence the iterates converge in the product weak topology.
Set
\[
\cD_{\max}
:=\lim_{l\to\infty}\Gm_{k,c,\bm\theta}^l\bigl((\dl_1)_{\xi\in\cT_k}\bigr).
\]
By continuity from above, $\cD_{\max}$ is a fixed point of $\Gm_{k,c,\bm\theta}$.
It is the largest fixed point, since any fixed point $\cD$ satisfies
\[
\cD
=\Gm_{k,c,\bm\theta}^l(\cD)
\le_{\st}\Gm_{k,c,\bm\theta}^l\bigl((\dl_1)_{\xi\in\cT_k}\bigr)
\]
for every $l\in\N$.

For each $\xi\in\cT_k$, let $\cD_o^{(\xi)}$ denote the conditional law of
\[
X_o:=\mu_{(\PGW_k(c,\bm\theta\le\bm1),\sg_o)}(\{0\})
\]
given $A^{(\xi)}$, and write $\cD_o=(\cD_o^{(\xi)})_{\xi\in\cT_k}$.
If $\theta^{(\xi)}=0$, the event $A^{(\xi)}$ is null and the conditional law is not intrinsically defined.
In this case, we set
\begin{equation}\label{eq:Do_convention}
\cD_o^{(\xi)}
:=
\begin{cases}
  \dl_0
    &\text{if $\xi=1_k$,}\\
  \dl_1
    &\text{if $\xi\neq1_k$.}
\end{cases}
\end{equation}
Then Lemma~\ref{lem:x_recurrence} together with the recursive structure of $\PGW_k(c,\bm\theta\le\bm1)$ implies that $\cD_o$ satisfies the \emph{distributional fixed-point equation}
\begin{equation}\label{eq:DFPE}
\cD_o=\Gm_{k,c,\bm\theta}(\cD_o).
\end{equation}
Note that the convention~\eqref{eq:Do_convention} is consistent with~\eqref{eq:DFPE}, since $\theta^{(\xi)}=0$ implies that $c_h^{(\xi)}=0$ for every $h$, and hence the $\xi$-coordinate of $\Gm_{k,c,\bm\theta}(\cD_o)$ is $\dl_0$ if $\xi=1_k$ and $\dl_1$ otherwise.

\begin{lem}\label{lem:Do-Dmax}
  It holds that
  \[
  \cD_o=\cD_{\max}.
  \]
  In particular, $\cD_o$ is the largest fixed point of $\Gm_{k,c,\bm\theta}$.
\end{lem}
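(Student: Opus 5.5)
Since $\cD_o$ is a fixed point of $\Gm_{k,c,\bm\theta}$ by~\eqref{eq:DFPE} and $\cD_{\max}$ is the largest one, the inequality $\cD_o\le_{\st}\cD_{\max}$ is automatic. The whole task is the reverse inequality $\cD_{\max}\le_{\st}\cD_o$. I would prove it by truncating the tree at finite depth, following~\cite{BLS2011,LP2016a}.

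For $l\in\N_0$, let $(\PGW_k(c,\bm\theta\le\bm1),\sg_o)_l$ be the ball of radius $l$. I would modify it by declaring every $k$-simplex at distance exactly $l$ to be a "free" leaf: it is not marked and has no further cofaces, so its rooted spectral measure is $\dl_0$ and its atomic mass at zero is $1$. Marked ones keep mass $0$, in line with the factor $\1_{\{\xi\neq1_k\}}$. Let $X_o^{(l)}$ be the atomic mass at zero at the root of this finite pair. Lemma~\ref{lem:x_recurrence} applies to finite trees, where essential self-adjointness is trivial. Iterating it level by level then shows that the conditional laws of $X_o^{(l)}$ given $A^{(\xi)}$ are exactly $\Gm_{k,c,\bm\theta}^l((\dl_1)_{\xi})$, the free leaves providing the initial law $\dl_1$. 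Here I use that, conditional on the root type, the subtrees at the descendant faces are independent copies with the types $\zeta_h^{(\xi)}\setminus\{j\}$, and the cyclic relabeling convention makes the types match. Hence $\Law(X_o^{(l)}\mid A^{(\xi)})\to\cD_{\max}^{(\xi)}$ as $l\to\infty$.

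The main step is then to show $\E'[X_o^{(l)}\mid A^{(\xi)}]\to\E'[X_o\mid A^{(\xi)}]$, or at least $\limsup_l\E'[X_o^{(l)}]\le\E'[X_o]$ coordinatewise. Together with $\cD_o\le_{\st}\cD_{\max}$, this means the stochastically ordered laws have equal means, and therefore they coincide.

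I expect this step to be the main obstacle. The truncated pairs converge locally to the full pair almost surely: the free leaves at depth $l$ differ from the true subtrees only beyond radius $l$. By Lemma~\ref{lem:conti_RSM} and Lemma~\ref{lem:esa_PGW}, the rooted spectral measures then converge weakly. However, the mass at zero is only upper semicontinuous, and upper semicontinuity gives $\limsup_l X_o^{(l)}\le X_o$ pathwise. By dominated convergence (everything lies in $[0,1]$), this yields $\limsup_l\E'[X_o^{(l)}\mid A^{(\xi)}]\le\E'[X_o\mid A^{(\xi)}]$. This is precisely the inequality needed, so the semicontinuity works in our favour.

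One point needs checking carefully. The recursion for the truncated tree must be applied with free leaves modelled correctly. A $k$-simplex with no cofaces has $\mu=\dl_0$ when it is unmarked, and the zero measure when it is marked. The type-$1_k$ coordinate must also be forced to $\dl_0$, and this is handled by the indicator in $\Gm$.

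Finally, for the null types with $\theta^{(\xi)}=0$, both sides equal the convention~\eqref{eq:Do_convention}. Since $c_h^{(\xi)}=0$ for such $\xi$, the iterates of $\Gm_{k,c,\bm\theta}$ give $\dl_1$ (or $\dl_0$ for $\xi=1_k$) at every step, so no separate argument is needed there.
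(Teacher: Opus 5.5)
Your argument is correct and uses the same two ingredients as the paper. The first is the depth-$l$ truncation in which all boundary $k$-simplices become unmarked leaves; this is the paper's $K^{(l)}\subset T^{(l)}$, whose root masses have conditional laws $\Gm_{k,c,\bm\theta}^l((\dl_1)_\xi)$. The second is upper semicontinuity of $\RSM_0$ along the almost sure local convergence of the truncations, which gives $\limsup_l X_o^{(l)}\le X_o$.

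The difference lies in the reverse bound. The paper proves $X_o\le X_o^{(l)}$ pathwise. It iterates~\eqref{eq:x_recurrence} from depth $l$ to the root, using that the recursion is nondecreasing in each descendant mass. Hence $X_o^{(l)}\to X_o$ almost surely, and $\cD_o=\lim_l\Gm_{k,c,\bm\theta}^l((\dl_1)_\xi)=\cD_{\max}$ is read off directly; \eqref{eq:DFPE} is needed only for the ``in particular''.

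You instead take $\cD_o\le_{\st}\cD_{\max}$ from~\eqref{eq:DFPE} and maximality. You use semicontinuity only in expectation; the correct tool there is reverse Fatou with the bound $1$, not dominated convergence. You then conclude because stochastically ordered laws on $[0,1]$ with equal means coincide. This avoids the pathwise comparison, but it presupposes~\eqref{eq:DFPE} and yields only equality of laws rather than almost sure convergence of $X_o^{(l)}$.

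One clarification concerns your remark on marked leaves. To obtain exactly $\Gm_{k,c,\bm\theta}^l((\dl_1)_\xi)$, the depth-$l$ simplices of type $1_k$ must also be unmarked, as in the paper. If they kept mass $0$, the laws would lie between $\Gm_{k,c,\bm\theta}^{l+1}((\dl_1)_\xi)$ and $\Gm_{k,c,\bm\theta}^l((\dl_1)_\xi)$, so the limit is unchanged.
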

\begin{proof}
  Set
  \[
  T:=\PGW_k(c)
  \quad\text{and}\quad
  K:=\PGW_k(c,\bm\theta).
  \]
  By Lemma~\ref{lem:esa_PGW} and countability, the compressed up Laplacians associated with $(K\subset T,\sg_o)$ and all its descendant subtrees are almost surely essentially self-adjoint.
  
  For $l\in\N$, let
  \[
  T^{(l)}
  :=\{\sg\in T\mid\dist_{\sg_o}(\sg)\le l\}
  \]
  denote the finite truncation of $T$ at distance $l$, and define its set of boundary $k$-simplices by
  \[
  \del_kT^{(l)}
  :=\{\sg\in F_k\bigl(T^{(l)}\bigr)\mid\dist_{\sg_o}(\sg)=l\}.
  \]
  Define
  \[
  K^{(l)}
  :=\bigl\{\tau\in K\cap T^{(l)}\bigm|\tau\not\supset\sg\text{ for every $\sg\in\del_kT^{(l)}$}
  \bigr\}.
  \]
  Thus, $K^{(l)}$ is obtained from $K\cap T^{(l)}$ by removing every boundary $k$-simplex at distance $l$ and all its $(k+1)$-cofaces.
  Consequently, no boundary $k$-simplex belongs to $K^{(l)}$, and its atomic mass at zero is $1$ when it is regarded as the root of its descendant subtree in $T^{(l)}$.
  Define
  \[
  X_o^{(l)}
  :=\mu_{(K^{(l)}\subset T^{(l)},\sg_o)}(\{0\}).
  \]
  Applying~\eqref{eq:x_recurrence} from the boundary toward the root shows that $X_o^{(l)}$ is obtained by imposing the boundary value $1$ on every $k$-simplex in $\del_kT^{(l)}$.
  The corresponding atomic masses in the original descendant subtrees belong to $[0,1]$, and the right-hand side of~\eqref{eq:x_recurrence} is nondecreasing in each descendant atomic mass.
  Replacing these boundary values by $1$ and iterating the recursion toward the root therefore yields
  \begin{equation}\label{eq:Xo_upper}
  X_o\le X_o^{(l)}
  \end{equation}
  for every $l\in\N$.

  On the other hand, the rooted finite truncations $[K^{(l)}\subset T^{(l)},\sg_o]$ converge almost surely to $[K\subset T,\sg_o]$ as $l\to\infty$.
  The upper semicontinuity of $\RSM_0$ therefore implies that
  \begin{equation}\label{eq:Xo_lower}
  \limsup_{l\to\infty}X_o^{(l)}
  \le X_o.
  \end{equation}
  Combining~\eqref{eq:Xo_upper} and~\eqref{eq:Xo_lower}, we obtain
  \[
  X_o^{(l)}\xrightarrow{\mathrm{a.s.}}X_o
  \quad\text{as $l\to\infty$.}
  \]

  Let $\xi\in\cT_k$ satisfy $\theta^{(\xi)}>0$.
  Conditional on $A^{(\xi)}$, the law of $X_o^{(l)}$ is
  \[
  \Bigl(
  \Gm_{k,c,\bm\theta}^l
  \bigl((\dl_1)_{\zeta\in\cT_k}\bigr)
  \Bigr)^{(\xi)}.
  \]
  The conditional laws converge weakly by the almost sure convergence above.
  By the definition of $\cD_{\max}$, we obtain
  \[
  \cD_o^{(\xi)}
  =\lim_{l\to\infty}\Bigl(\Gm_{k,c,\bm\theta}^l\bigl((\dl_1)_{\zeta\in\cT_k}\bigr)\Bigr)^{(\xi)}
  =\cD_{\max}^{(\xi)}.
  \]
  If $\theta^{(\xi)}=0$, then $c_h^{(\xi)}=0$ for every $h$, and the conventions defining $\cD_o^{(\xi)}$ and $\Gm_{k,c,\bm\theta}$ directly give
  \[
  \cD_o^{(\xi)}=\cD_{\max}^{(\xi)}.
  \]
  Hence, $\cD_o=\cD_{\max}$.
\end{proof}

\subsection{Reduction to the Finite-Dimensional Fixed-Point Problem}
\label{ssec:reduction}
By Lemma~\ref{lem:Do-Dmax}, the conditional laws of the atomic mass at zero form the largest fixed point $\cD_{\max}$ of $\Gm_{k,c,\bm\theta}$.
To prove Proposition~\ref{prop:E_RSM0-h}, it remains to show that the corresponding expected atomic mass equals the maximum defining $h_k(c,\bm\theta)$.
We do this by relating the distributional fixed-point equation to the map $\Phi_{k,c,\bm\theta}$ and the function $\Psi_{k,c,\bm\theta}$.
More precisely, we pass from the conditional laws $\cD_o^{(\xi)}$ of $X_o$ to their positivity probabilities $\P(X_o>0\mid A^{(\xi)})=\cD_o^{(\xi)}((0,1])$, which form a finite-dimensional vector.

For $\cD=\bigl(\cD^{(\xi)}\bigr)_{\xi\in\cT_k}\in\cP([0,1])^{\cT_k}$, define
\begin{align*}
u(\cD)
&:=\bigl(u^{(\xi)}(\cD)\bigr)_{\xi\in\cT_k}\in[0,1]^{\cT_k},
\qquad u^{(\xi)}(\cD):=\cD^{(\xi)}((0,1]),
\shortintertext{and}
E_{k,\bm\theta}(\cD)
&:=\sum_{\xi\in\cT_k}\theta^{(\xi)}\int_{[0,1]}x\,d\cD^{(\xi)}(x).
\end{align*}
The following lemma relates these quantities to $\Phi_{k,c,\bm\theta}$ and $\Psi_{k,c,\bm\theta}$.

\begin{lem}\label{lem:uGm-Phiu}
  For every $\cD\in\cP([0,1])^{\cT_k}$,
  \begin{equation}\label{eq:uTheta-Phi}
  u(\Gm_{k,c,\bm\theta}(\cD))
  =\Phi_{k,c,\bm\theta}(u(\cD)).
  \end{equation}
  Moreover, if
  \[
  u(\cD)=\Phi_{k,c,\bm\theta}(u(\cD))
  \quad\text{and}\quad
  \Gm_{k,c,\bm\theta}(\cD)\le_{\st}\cD,
  \]
  then
  \begin{equation}\label{eq:DFPE-comparison}
  E_{k,\bm\theta}(\cD)
  \ge\Psi_{k,c,\bm\theta}(u(\cD)).
  \end{equation}
  If $\Gm_{k,c,\bm\theta}(\cD)=\cD$, then equality holds in~\eqref{eq:DFPE-comparison}.
\end{lem}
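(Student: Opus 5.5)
The plan has three parts: the identity \eqref{eq:uTheta-Phi} by a direct Poisson computation, the inequality \eqref{eq:DFPE-comparison} by a unimodular mass-transport identity on the tree, and the equality case as a by-product of the same calculation.

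\textbf{The identity \eqref{eq:uTheta-Phi}.} Fix $\xi\neq1_k$; for $\xi=1_k$ both sides vanish. In the definition of $\Gm^{(\xi)}_{k,c,\bm\theta}(\cD)$, the output is positive if and only if every inner sum $\sum_{j}X^{(\xi)}_{h,i,j}$ is positive. This holds because a zero inner sum makes its reciprocal infinite, while a positive sum gives a finite reciprocal. A given $(h,i)$ branch has all $X_{h,i,j}=0$ with probability $\prod_{j=1}^{k+1}(1-u^{(\zeta_h^{(\xi)}\setminus\{j\})})$. By Poisson thinning, the number of such bad branches of type $h$ is Poisson with mean $c_h^{(\xi)}\prod_j(1-u^{(\zeta_h^{(\xi)}\setminus\{j\})})$, independently in $h$. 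The probability of no bad branch is then the exponential in \eqref{eq:def_Phi}, because $c_h^{(\xi)}=c\,\theta^{(\zeta_h)}/\theta^{(\xi)}$.

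\textbf{The inequality \eqref{eq:DFPE-comparison}.} I will follow the scheme of \cite[Lemma~4.5]{LP2016a} and \cite{BLS2011}. Write $Y:=\Gm(\cD)$-distributed variables as $Y^{(\xi)}=\bigl(1+\sum_{h,i}S_{h,i}^{-1}\bigr)^{-1}$, where $S_{h,i}=\sum_jX_{h,i,j}$. The key algebraic identity is
\[
Y=1-\sum_{h,i}Y\,S_{h,i}^{-1}.
\]
On the event that all $S_{h,i}>0$, each term satisfies $Y S_{h,i}^{-1}=\bigl(S_{h,i}(1+\sum_{h',i'}S_{h',i'}^{-1})\bigr)^{-1}$. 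I will split off the branch $(h,i)$ and write this as $\bigl(S_{h,i}(1+R_{h,i})+1\bigr)^{-1}$, where $R_{h,i}$ is the sum over the other branches. Then I average over $\xi$ with weights $\theta^{(\xi)}$, and by the Mecke (Poisson size-biasing) formula the sum over branches becomes
\[
c\sum_{\zeta\in\cT_{k+1}}\theta^{(\zeta)}\,\E\Bigl[\bigl(S_\zeta(1+R)+1\bigr)^{-1}\Bigr].
\]
Here $R$ has the law of the sum of reciprocals for a root of type $\zeta\setminus\{k+2\}$.

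Next I will interpret $(1+R)^{-1}$ as a fresh copy $X'$ whose law is $\Gm(\cD)^{(\zeta\setminus\{k+2\})}$, which is stochastically at most $\cD^{(\zeta\setminus\{k+2\})}$. The resulting term $\E\bigl[X'/(X'+S_\zeta)\bigr]$ is symmetric in the $k+2$ faces once all of them are drawn from $\cD$. The quantity $\frac{X_{k+2}}{\sum_{j=1}^{k+2}X_j}$ is increasing in $X_{k+2}$ and decreasing in the others. The hypothesis $\Gm(\cD)\le_{\st}\cD$ therefore gives a one-sided comparison: replacing $X'$ by an $X_{k+2}\sim\cD$ moves the term in the direction needed for $E(\cD)\ge$. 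The cyclic relabeling convention makes the $k+2$ positions exchangeable in law, since it is compatible with cyclic permutations. Symmetrizing over these positions, the expectation becomes
\[
\frac1{k+2}\,\P\Bigl(\text{at least one }X_j>0\Bigr)=\frac1{k+2}\Bigl(1-\prod_{j=1}^{k+2}(1-u^{(\zeta\setminus\{j\})})\Bigr).
\]
This is valid provided the events $\{X_j=0\}$ carry the right interpretation. Terms where exactly the $k+2$-th face is positive and all the other faces are zero contribute $1$ instead of the symmetric share. These terms are what produce $c\sum_\zeta\theta^{(\zeta)}u^{(\zeta\setminus\{k+2\})}\prod_{j\le k+1}(1-u^{(\zeta\setminus\{j\})})$, correcting the overcount of the symmetric term.

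Finally, $\E[Y]$ is compared with $\E_\cD[X]$ via $\Gm(\cD)\le_{\st}\cD$. The constant term $\sum_\xi\theta^{(\xi)}u^{(\xi)}$ comes from $\P(Y>0)=u$, which holds because $u$ is a fixed point and by \eqref{eq:uTheta-Phi}. Collecting the three contributions gives $\Psi_{k,c,\bm\theta}(u(\cD))$ as a lower bound for $E_{k,\bm\theta}(\cD)$.

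\textbf{Equality case and main obstacle.} When $\Gm(\cD)=\cD$, every stochastic comparison above is an equality in law, so equality holds in \eqref{eq:DFPE-comparison}. The main obstacle is the bookkeeping of the symmetrization. I must check that the direction of monotonicity matches the inequality, and that the degenerate cases (some $S=0$, $\xi=1_k$, and $\theta^{(\xi)}=0$ handled via $0/0=0$) produce exactly the two $c$-terms in \eqref{eq:def_Psi}. I would first verify the whole computation in the scalar case $\bm\theta=\bm0$ against $\Psi_{k,c}$.
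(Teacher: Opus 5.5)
Your proposal is correct and takes essentially the same route as the paper. It uses the Poisson computation for \eqref{eq:uTheta-Phi}, the Mecke (size-biasing) identity that turns each branch term into $X'/(S+X')$ with $X'\sim\Gm(\cD)$ independent of $S$, the stochastic replacement of $X'$ by a $\cD$-distributed copy, and cyclic symmetrization over the $k+2$ faces. The only difference is bookkeeping. You symmetrize the ratio without the indicator $\1_{\{S>0\}}$ (which the Mecke step actually produces) and then subtract $\P(\text{only the $(k+2)$th face positive})$. The paper keeps the indicator, symmetrizes to $\P(\text{at least two positive})$, and expands. Both give the same expression for $I(\cD)$.
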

\begin{proof}
  We first prove~\eqref{eq:uTheta-Phi}, then derive the expectation identity~\eqref{eq:X_o-xi}, and finally use cyclic symmetrization to obtain~\eqref{eq:DFPE-comparison}.
  Let $\cD=\bigl(\cD^{(\xi)}\bigr)_{\xi\in\cT_k}\in\cP([0,1])^{\cT_k}$, and let
  \[
  M_h^{(\xi)}\sim\Poisson(c_h^{(\xi)})
  \quad\text{and}\quad
  X_{h,i,j}^{(\xi)}\sim\cD^{(\zeta_h^{(\xi)}\setminus\{j\})}
  \]
  be independent random variables for $h\in[H]$, $i\in\N$, and $j\in[k+1]$.
  We also set
  \[
  S_{h,i}^{(\xi)}
  :=\sum_{j=1}^{k+1}X_{h,i,j}^{(\xi)},
  \qquad h\in[H],\quad i\in\N.
  \]
  Then $\Gm_{k,c,\bm\theta}^{(\xi)}(\cD)$ is the law of
  \[
  X^{(\xi)}:=\Biggl(1+\sum_{h=1}^H\sum_{i=1}^{M_h^{(\xi)}}\Bigl(S_{h,i}^{(\xi)}\Bigr)^{-1}\Biggr)^{-1}\cdot\1_{\bigl\{S_{h,i}^{(\xi)}>0,\, \forall h\in[H],\,\forall i\in[M_h^{(\xi)}]\bigr\}}\cdot\1_{\{\xi\neq1_k\}}.
  \]
  Therefore,
  \begin{align*}
  u^{(\xi)}(\Gm_{k,c,\bm\theta}(\cD))
  &=\P\Bigl(S_{h,i}^{(\xi)}>0,\,\forall h\in[H],\,\forall i\in\bigl[M_h^{(\xi)}\bigr]\Bigr)\cdot\1_{\{\xi\neq1_k\}}\\
  &=\prod_{h=1}^H\P\Bigl(S_{h,i}^{(\xi)}>0,\,\forall i\in\bigl[M_h^{(\xi)}\bigr]\Bigr)\cdot\1_{\{\xi\neq1_k\}}\\
  &=\prod_{h=1}^H\sum_{m=0}^\infty\frac{\bigl(c_h^{(\xi)}\bigr)^m}{m!}\exp\bigl(-c_h^{(\xi)}\bigr)\Biggl\{1-\prod_{j=1}^{k+1}\Bigl(1-u^{(\zeta_h^{(\xi)}\setminus\{j\})}(\cD)\Bigr)\Biggr\}^m\cdot\1_{\{\xi\neq1_k\}}\\
  &=\prod_{h=1}^H\exp\Biggl(-c_h^{(\xi)}\prod_{j=1}^{k+1}\Bigl(1-u^{(\zeta_h^{(\xi)}\setminus\{j\})}(\cD)\Bigr)\Biggr)\cdot\1_{\{\xi\neq1_k\}}\\
  &=\exp\Biggl(-\sum_{h=1}^H c_h^{(\xi)}\prod_{j=1}^{k+1}\Bigl(1-u^{(\zeta_h^{(\xi)}\setminus\{j\})}(\cD)\Bigr)\Biggr)\cdot\1_{\{\xi\neq1_k\}}\\
  &=\Phi_{k,c,\bm\theta}^{(\xi)}(u(\cD)),
  \end{align*}
  which proves~\eqref{eq:uTheta-Phi}.

  We now assume the hypotheses of the second assertion.
  By~\eqref{eq:uTheta-Phi} and the assumption $u(\cD)=\Phi_{k,c,\bm\theta}(u(\cD))$, we have
  \[
  \P\bigl(X^{(\xi)}>0\bigr)
  =u^{(\xi)}(\Gm_{k,c,\bm\theta}(\cD))
  =\Phi_{k,c,\bm\theta}^{(\xi)}(u(\cD))
  =u^{(\xi)}(\cD)
  \]
  for every $\xi\in\cT_k$.
  For each $h\in[H]$, let $S_h^{(\xi)}$, independent of $X^{(\xi)}$, have the same law as $S_{h,1}^{(\xi)}$.
  Then, for $\xi\neq1_k$,
  \begin{equation}\label{eq:X_o-xi}
  \begin{aligned}
  &\E[X^{(\xi)}]\\
  &=\E\left[\left(1-\frac{\sum_{h=1}^H\sum_{i=1}^{M_h^{(\xi)}}\Bigl(S_{h,i}^{(\xi)}\Bigr)^{-1}}{1+\sum_{h=1}^H\sum_{i=1}^{M_h^{(\xi)}}\Bigl(S_{h,i}^{(\xi)}\Bigr)^{-1}}\right)\cdot\1_{\bigl\{S_{h,i}^{(\xi)}>0,\,\forall h\in[H],\,\forall i\in[M_h^{(\xi)}]\bigr\}}\right]\\
  &=\P\bigl(X^{(\xi)}>0\bigr)\\*
  &\;-\sum_{h=1}^H\E\left[\sum_{i=1}^{M_h^{(\xi)}}\frac{\Bigl(S_{h,i}^{(\xi)}\Bigr)^{-1}\cdot\1_{\{S_{h,i}^{(\xi)}>0\}}\cdot\1_{\{S_{h,i'}^{(\xi)}>0,\,\forall i'\in[M_h^{(\xi)}]\setminus\{i\}\}}\cdot\1_{\{S_{h',i'}^{(\xi)}>0,\,\forall h'\neq h,\,\forall i'\in[M_{h'}^{(\xi)}]\}}}{1+\Bigl(S_{h,i}^{(\xi)}\Bigr)^{-1}+\sum_{i'\in[M_h^{(\xi)}]\setminus\{i\}}\Bigl(S_{h,i'}^{(\xi)}\Bigr)^{-1}+\sum_{h'\neq h}\sum_{i'=1}^{M_{h'}^{(\xi)}}\Bigl(S_{h',i'}^{(\xi)}\Bigr)^{-1}}\right]\\
  &=u^{(\xi)}(\cD)-\sum_{h=1}^H\sum_{m=0}^\infty\frac{\bigl(c_h^{(\xi)}\bigr)^m}{m!}\exp\bigl(-c_h^{(\xi)}\bigr)\\*
  &\qquad\qquad\cdot m\,\E\left[\frac{\Bigl(S_h^{(\xi)}\Bigr)^{-1}\cdot\1_{\{S_h^{(\xi)}>0\}}\cdot\1_{\{S_{h,i'}^{(\xi)}>0,\,i'\in[m-1]\}}\cdot\1_{\{S_{h',i'}^{(\xi)}>0,\,\forall h'\neq h,\,\forall i'\in[M_{h'}^{(\xi)}]\}}}{1+\Bigl(S_h^{(\xi)}\Bigr)^{-1}+\sum_{i'=1}^{m-1}\Bigl(S_{h,i'}^{(\xi)}\Bigr)^{-1}+\sum_{h'\neq h}\sum_{i'=1}^{M_{h'}^{(\xi)}}\Bigl(S_{h',i'}^{(\xi)}\Bigr)^{-1}}\right]\\
  &=u^{(\xi)}(\cD)-\sum_{h=1}^H c_h^{(\xi)}\E\left[\frac{\Bigl(S_h^{(\xi)}\Bigr)^{-1}\cdot\1_{\{S_h^{(\xi)}>0\}}\cdot\1_{\bigl\{S_{h',i'}^{(\xi)}>0,\,\forall h'\in[H],\,\forall i'\in[M_{h'}^{(\xi)}]\bigr\}}}{1+\Bigl(S_h^{(\xi)}\Bigr)^{-1}+\sum_{h'=1}^H\sum_{i'=1}^{M_{h'}^{(\xi)}}\Bigl(S_{h',i'}^{(\xi)}\Bigr)^{-1}}\right]\\
  &=u^{(\xi)}(\cD)-\sum_{h=1}^H c_h^{(\xi)}\,\E\left[\frac{X^{(\xi)}}{S_h^{(\xi)}+X^{(\xi)}}\cdot\1_{\{S_h^{(\xi)}>0,\,X^{(\xi)}>0\}}\right].
  \end{aligned}
  \end{equation}
  The last identity follows by multiplying both the numerator and the denominator by
  \[
  S_h^{(\xi)}\Biggm/\Biggl(1+\sum_{h'=1}^H\sum_{i'=1}^{M_{h'}^{(\xi)}}\Bigl(S_{h',i'}^{(\xi)}\Bigr)^{-1}\Biggr).
  \]
  Note that~\eqref{eq:X_o-xi} also holds for $\xi=1_k$, since $u^{(1_k)}=0$ and $X^{(1_k)}=0$ almost surely.

  For each $\zeta\in\cT_{k+1}$, let
  \[
  Y_j^{(\zeta)}
  \sim\cD^{(\zeta\setminus\{j\})},
  \qquad j=1,2,\ldots,k+1,
  \]
  be independent random variables.
  Independently of these variables, choose a pair
  \[
  \bigl(\tilde Y_{k+2}^{(\zeta)},Y_{k+2}^{(\zeta)}\bigr)
  \]
  such that
  \[
  \tilde Y_{k+2}^{(\zeta)}\sim\Gm_{k,c,\bm\theta}^{(\zeta\setminus\{k+2\})}(\cD),
  \quad Y_{k+2}^{(\zeta)}\sim\cD^{(\zeta\setminus\{k+2\})},
  \quad\text{and}\quad
  \tilde Y_{k+2}^{(\zeta)}\le Y_{k+2}^{(\zeta)}
  \quad\text{a.s.}
  \]
  Such a pair exists by a standard coupling argument using quantile functions under the stochastic domination $\Gm_{k,c,\bm\theta}(\cD)\le_{\st}\cD$.
  Since $\theta^{(\xi)}c_h^{(\xi)}=c\,\theta^{(\zeta_h^{(\xi)})}$,
  \begin{equation}\label{eq:EGm-YtildeY}
  \begin{aligned}
  E_{k,\bm\theta}(\cD)
  &\ge E_{k,\bm\theta}(\Gm_{k,c,\bm\theta}(\cD))\\
  &=\sum_{\xi\in\cT_k}\theta^{(\xi)}\E\bigl[X^{(\xi)}\bigr]\\
  &=\sum_{\xi\in\cT_k}\theta^{(\xi)}u^{(\xi)}(\cD)
  -c\sum_{\xi\in\cT_k}\sum_{h=1}^H\theta^{(\zeta_h^{(\xi)})}\,\E\left[\frac{X^{(\xi)}}{S_h^{(\xi)}+X^{(\xi)}}\cdot\1_{\{S_h^{(\xi)}>0,\,X^{(\xi)}>0\}}\right]\\
  &=\sum_{\xi\in\cT_k}\theta^{(\xi)}u^{(\xi)}(\cD)
  -c\sum_{\zeta\in\cT_{k+1}}\theta^{(\zeta)}\,\E\left[\frac{\tilde Y_{k+2}^{(\zeta)}}{\sum_{j=1}^{k+1}Y_j^{(\zeta)}+\tilde Y_{k+2}^{(\zeta)}}\cdot\1_{\bigl\{\sum_{j=1}^{k+1}Y_j^{(\zeta)}>0,\,\tilde Y_{k+2}^{(\zeta)}>0\bigr\}}\right]\\
  &\ge\sum_{\xi\in\cT_k}\theta^{(\xi)}u^{(\xi)}(\cD)
  -c\sum_{\zeta\in\cT_{k+1}}\theta^{(\zeta)}\,\E\left[\frac{Y_{k+2}^{(\zeta)}}{\sum_{j=1}^{k+2}Y_j^{(\zeta)}}\cdot\1_{\bigl\{\sum_{j=1}^{k+1}Y_j^{(\zeta)}>0,\,Y_{k+2}^{(\zeta)}>0\bigr\}}\right].
  \end{aligned}
  \end{equation}
  For the last inequality in~\eqref{eq:EGm-YtildeY}, we used $\widetilde Y_{k+2}^{(\zeta)}\le Y_{k+2}^{(\zeta)}$.
  Set
  \[
  I(\cD)
  :=\sum_{\zeta\in\cT_{k+1}}\theta^{(\zeta)}\E\left[\frac{Y_{k+2}^{(\zeta)}}{\sum_{j=1}^{k+2}Y_j^{(\zeta)}}\cdot\1_{\bigl\{\sum_{j=1}^{k+1}Y_j^{(\zeta)}>0,\,Y_{k+2}^{(\zeta)}>0\bigr\}}\right].
  \]
  By the cyclic definition of $\zeta\setminus\{j\}$, cyclic relabeling of $[k+2]$ cyclically permutes the variables $Y_1^{(\zeta)},\ldots,Y_{k+2}^{(\zeta)}$ in the sum over $\zeta$, while preserving $\theta^{(\zeta)}$.
  Hence, every distinguished index gives the same contribution, and averaging over $j$ yields
  \begin{align*}
  I(\cD)
  &=\frac1{k+2}\sum_{j=1}^{k+2}\sum_{\zeta\in\cT_{k+1}}\theta^{(\zeta)}\E\left[\frac{Y_j^{(\zeta)}}{\sum_{j'=1}^{k+2}Y_{j'}^{(\zeta)}}\cdot\1_{\bigl\{Y_j^{(\zeta)}>0,\,\sum_{j'\neq j}Y_{j'}^{(\zeta)}>0\bigr\}}\right]\\
  &=\frac1{k+2}\sum_{\zeta\in\cT_{k+1}}\theta^{(\zeta)}\P\Bigl(\bigl|\bigl\{j\in[k+2]\bigm|Y_j^{(\zeta)}>0\bigr\}\bigr|\ge2\Bigr).
  \end{align*}
  Indeed,
  \[
  \sum_{j=1}^{k+2}\frac{Y_j^{(\zeta)}}{\sum_{j'=1}^{k+2}Y_{j'}^{(\zeta)}}\cdot\1_{\bigl\{Y_j^{(\zeta)}>0,\,\sum_{j'\neq j}Y_{j'}^{(\zeta)}>0\bigr\}}
  \]
  equals $1$ if at least two of the variables are positive and equals $0$ otherwise.
  By independence, $I(\cD)$ equals
  \begin{align*}
  &\frac1{k+2}\sum_{\zeta\in\cT_{k+1}}\theta^{(\zeta)}\Biggl(1-\prod_{j=1}^{k+2}\Bigl(1-u^{(\zeta\setminus\{j\})}(\cD)\Bigr)-\sum_{j=1}^{k+2}u^{(\zeta\setminus\{j\})}(\cD)\prod_{j'\neq j}\Bigl(1-u^{(\zeta\setminus\{j'\})}(\cD)\Bigr)\Biggr)\\
  &=\frac1{k+2}\Biggl(1-\sum_{\zeta\in\cT_{k+1}}\theta^{(\zeta)}\prod_{j=1}^{k+2}\Bigl(1-u^{(\zeta\setminus\{j\})}(\cD)\Bigr)\Biggr)\\*
  &\qquad\qquad-\sum_{\zeta\in\cT_{k+1}}\theta^{(\zeta)}u^{(\zeta\setminus\{k+2\})}(\cD)\prod_{j=1}^{k+1}\Bigl(1-u^{(\zeta\setminus\{j\})}(\cD)\Bigr).
  \end{align*}
  Here, by the same cyclic invariance, the quantity
  \[
  \sum_{\zeta\in\cT_{k+1}}\theta^{(\zeta)}u^{(\zeta\setminus\{j\})}(\cD)\prod_{j'\neq j}\Bigl(1-u^{(\zeta\setminus\{j'\})}(\cD)\Bigr)
  \]
  is independent of $j\in[k+2]$.
  Taking $j=k+2$ and using
  \[
  \sum_{\zeta\in\cT_{k+1}}\theta^{(\zeta)}=1
  \]
  gives the last equality.
  
  Substituting this expression into~\eqref{eq:EGm-YtildeY} gives
  \[
  E_{k,\bm\theta}(\cD)\ge\Psi_{k,c,\bm\theta}(u(\cD)),
  \]
  proving~\eqref{eq:DFPE-comparison}.

  Finally, if $\Gm_{k,c,\bm\theta}(\cD)=\cD$, the coupling above may be chosen so that
  \[
  \tilde Y_{k+2}^{(\zeta)}
  =Y_{k+2}^{(\zeta)}
  \quad\text{a.s.}
  \]
  Hence, both inequalities in~\eqref{eq:EGm-YtildeY} are equalities, and therefore equality also holds in~\eqref{eq:DFPE-comparison}.
\end{proof}

We now prove Proposition~\ref{prop:E_RSM0-h} by combining Lemmas~\ref{lem:Do-Dmax} and~\ref{lem:uGm-Phiu}.
\begin{proof}[Proof of Proposition~\ref{prop:E_RSM0-h}]
  Let $u=\bigl(u^{(\xi)}\bigr)_{\xi\in\cT_k}\in[0,1]^{\cT_k}$ be a fixed point of $\Phi_{k,c,\bm\theta}$ at which the maximum in~\eqref{eq:hk_c_theta} is attained, and define $\cD(u)\in\cP([0,1])^{\cT_k}$ by
  \[
  \cD(u)^{(\xi)}
  :=\bigl(1-u^{(\xi)}\bigr)\dl_0+u^{(\xi)}\dl_1.
  \]
  By~\eqref{eq:uTheta-Phi},
  \[
  u\bigl(\Gm_{k,c,\bm\theta}(\cD(u))\bigr)
  =\Phi_{k,c,\bm\theta}(u)=u.
  \]
  Every probability measure on $[0,1]$ having mass $u^{(\xi)}$ on $(0,1]$ is stochastically dominated by $(1-u^{(\xi)})\dl_0+u^{(\xi)}\dl_1$.
  Therefore,
  \[
  \Gm_{k,c,\bm\theta}(\cD(u))\le_{\st}\cD(u).
  \]
  Since $\Gm_{k,c,\bm\theta}$ is order-preserving, we have, by the same monotonicity and compactness argument,
  \[
  \Gm_{k,c,\bm\theta}^l(\cD(u))
  \downarrow\cD_\infty
  \quad\text{as $l\to\infty$,}
  \]
  for some $\cD_\infty\in\cP([0,1])^{\cT_k}$.
  By continuity from above,
  \[
  \Gm_{k,c,\bm\theta}(\cD_\infty)=\cD_\infty.
  \]
  Moreover,~\eqref{eq:uTheta-Phi} and the identity $u=\Phi_{k,c,\bm\theta}(u)$ imply inductively that $u\bigl(\Gm_{k,c,\bm\theta}^l(\cD(u))\bigr)=u$ for every $l\in\N_0$.
  Applying Lemma~\ref{lem:uGm-Phiu} to $\Gm_{k,c,\bm\theta}^l(\cD(u))$, we obtain
  \[
  E_{k,\bm\theta}\bigl(\Gm_{k,c,\bm\theta}^l(\cD(u))\bigr)
  \ge\Psi_{k,c,\bm\theta}(u)
  \]
  for every $l\in\N_0$.
  Since $E_{k,\bm\theta}$ is continuous with respect to the product weak topology, letting $l\to\infty$ yields
  \[
  E_{k,\bm\theta}(\cD_\infty)
  \ge\Psi_{k,c,\bm\theta}(u).
  \]
  Since $\cD_\infty\le_{\st}\cD_{\max}$ and $\Psi_{k,c,\bm\theta}(u)=h_k(c,\bm\theta)$, we obtain
  \begin{equation}\label{eq:DFPE-lower-h}
  E_{k,\bm\theta}(\cD_{\max})
  \ge E_{k,\bm\theta}(\cD_\infty)
  \ge h_k(c,\bm\theta).
  \end{equation}

  Conversely, since $\cD_{\max}$ is a fixed point of $\Gm_{k,c,\bm\theta}$,~\eqref{eq:uTheta-Phi} gives
  \[
  u(\cD_{\max})=\Phi_{k,c,\bm\theta}(u(\cD_{\max})).
  \]
  The equality assertion in Lemma~\ref{lem:uGm-Phiu} therefore yields
  \[
  E_{k,\bm\theta}(\cD_{\max})
  =\Psi_{k,c,\bm\theta}(u(\cD_{\max}))
  \le h_k(c,\bm\theta).
  \]
  Together with~\eqref{eq:DFPE-lower-h}, this proves
  \[
  E_{k,\bm\theta}(\cD_{\max})=h_k(c,\bm\theta).
  \]
  
  Finally, Lemma~\ref{lem:Do-Dmax} gives
  \begin{align*}
  \E'\bigl[\mu_{(\PGW_k(c,\bm\theta\le\bm1),\sg_o)}(\{0\})\bigr]
  &=\sum_{\xi\in\cT_k}\theta^{(\xi)}\int_{[0,1]}x\,d\cD_o^{(\xi)}(x)\\
  &=E_{k,\bm\theta}(\cD_o)
  =E_{k,\bm\theta}(\cD_{\max})
  =h_k(c,\bm\theta),
  \end{align*}
  which completes the proof.
\end{proof}

\section{Convergence of Persistent Betti Numbers}
\label{sec:conv_PB}
In this section, we prove Theorem~\ref{thm:LLN_PB} by combining local weak convergence of the multiparameter complex pairs with the spectral results established in Sections~\ref{sec:RSM_CompL} and~\ref{sec:mass_zero}.
We first state the required local weak convergence and then prove Theorem~\ref{thm:LLN_PB}, deferring the proof of the local weak convergence statement to Section~\ref{ssec:LWC_MCP}.

Recall that $\nu_k(c,\bm\theta\le\bm1)\in\cP(\cS_k)$ denotes the law under $\P'$ of the $\cS_k$-valued random variable
\[
[\PGW_k(c,\bm\theta\le\bm1),\sg_o]
=[\PGW_k(c,\bm\theta)\subset\PGW_k(c),\sg_o].
\]
By Lemma~\ref{lem:esa_PGW}, this measure is concentrated on $\cS_k^{\esa}$ and may therefore also be regarded as an element of $\cP(\cS_k^{\esa})$.
We continue to use $\xrightarrow{\LW,\,\mathrm{p}}$ for local weak convergence in probability.

\subsection{Proof of Theorem~\ref{thm:LLN_PB}}
We first state the required local weak convergence of the multiparameter complex pairs.
\begin{prop}\label{prop:LWC_MCP}
  Let $k\in\N_0$ and $c>0$, and let $\cX_n=\{X_n(t)\}_{t\ge0}$ be a multiparameter complex process on $[n]$ associated with a sequence $\bm p=(p_j)_{j=0}^\infty$ of distribution functions on $[0,\infty]$.
  Assume that the parameters $0\le s_n\le t_n<\infty$ satisfy
  \begin{equation}\label{eq:criticality}
  \lim_{n\to\infty}\tbinom n{k+1}q_k(t_n)=\infty
  \quad\text{and}\quad
  \lim_{n\to\infty}nr_k(t_n)=c,
  \end{equation}
  and that the limit
  \begin{equation}\label{eq:regularity}
  \theta_j:=\lim_{n\to\infty}\frac{p_j(s_n)}{p_j(t_n)}\in[0,1]
  \end{equation}
  exists for all $0\le j\le k+1$.
  Set $\bm\theta:=(\theta_j)_{j=0}^{k+1}$.
  Then, conditional on $\{\dim X_n(t_n)\ge k\}$,
  \begin{align}
  \bigl(\{\emptyset\}\subset X_n(t_n)\bigr)
  &\xrightarrow{\LW,\,\mathrm{p}}\nu_k(c,\bm0\le\bm1)
  \label{eq:LWC_empty-t}
  \shortintertext{and}
  \bigl(X_n(s_n)\subset X_n(t_n)\bigr)
  &\xrightarrow{\LW,\,\mathrm{p}}\nu_k(c,\bm\theta\le\bm1)
  \label{eq:LWC_s-t}
  \end{align}
  as $n\to\infty$.
\end{prop}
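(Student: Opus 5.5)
The plan is to reduce \eqref{eq:LWC_s-t} to a statement about the single complex $X_n(t_n)$ decorated with independent face labels, and then use a first- and second-moment computation for counts of rooted balls. \eqref{eq:LWC_empty-t} is the special case $\bm\theta=\bm0$, so it suffices to treat \eqref{eq:LWC_s-t}.

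\emph{Step 1: thinning.} Write $w_\rho\le t$ as the event $\{w_\rho\le t_n\}$ followed by an independent coin with success probability $p_j(s_n)/p_j(t_n)$ for $j=\dim\rho$. Conditional on $\{w_\rho\le t_n\}$, the event $\{w_\rho\le s_n\}$ has exactly this probability, independently over $\rho$. Hence
\[
\bigl(X_n(s_n)\subset X_n(t_n)\bigr)\stackrel{\mathrm d}{=}\bigl(K_n\subset X_n(t_n)\bigr),
\]
where $K_n$ consists of the simplices of $X_n(t_n)$ all of whose nonempty faces carry label $1$. For the rooted pair, only faces of dimension at most $k+1$ matter, so only $\theta_0,\ldots,\theta_{k+1}$ enter. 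By \eqref{eq:regularity}, the label probabilities converge to $\bm\theta$.

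\emph{Step 2: first moment.} By Lemma~\ref{lem:Sk_basics}(2) and the Remark after the definition of local weak convergence, it suffices to show the following. For every finite rooted pair $(\bm Y,\tau)$ with $l\in\N$, let $N_n$ be the number of $\sg\in F_k(X_n(t_n))$ with $(K_n\subset X_n(t_n),\sg)_l\simeq(\bm Y,\tau)_l$. We need
\[
N_n/f_k(X_n(t_n))\xrightarrow{\mathrm p}\nu_k(c,\bm\theta\le\bm1)(\{\a\mid\a_l=[\bm Y,\tau]_l\}).
\]
Two inputs are needed. First, $f_k(X_n(t_n))/\binom n{k+1}q_k(t_n)\to1$ in probability; this follows from a variance bound, since pairs of $k$-simplices sharing a face contribute lower order thanks to $\binom n{k+1}q_k(t_n)\to\infty$. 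Second, we compute $\E N_n$ by summing over embeddings of a ball. Conditional on $\sg$ being present, each of the $n-k-1$ potential cofaces is present with probability $r_k(t_n)\approx c/n$, giving $\Poisson(c)$ offspring in the limit. The new vertices and faces of each coface appear with the stated probabilities, and their labels are independent Bernoulli variables with parameters tending to $\theta_j$. This reproduces the offspring law used in Definition~\ref{df:PGW}, including the dependence through shared lower faces, exactly as encoded by the type space. Non-tree configurations within distance $l$ must be shown to have vanishing expected proportion. Such a configuration requires an extra coincidence of vertices, which costs a factor $O(1/n)$ relative to the number of extra $(k+1)$-simplices. Here I would bound $q_{k+1}$-type probabilities by $r_k$ and use the multiplicativity $r_k=q_{k+1}/q_k$ together with monotonicity of the $p_j$.

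\emph{Step 3: second moment (main obstacle).} I will show $\mathrm{Var}(N_n)=o\bigl((\E N_n)^2+(\binom n{k+1}q_k)^2\bigr)$ by splitting pairs of roots $(\sg,\sg')$ into two groups. If their $2l$-neighbourhoods share no vertex, the indicators are nearly independent; they are not exactly independent because both neighbourhoods depend on the same lower skeleton. If the neighbourhoods overlap, the pairs are few. The hard part is that in this model simplices share lower-dimensional faces, so two disjoint balls are still correlated through faces of dimension less than $k$ that they may share. My first attempt is to condition on the sets of vertices used and compare with independent copies, showing the correction is $1+o(1)$ uniformly. The pairs sharing a face of dimension $j<k$ I will count separately: there are $O(n^{2(k+1)-j-1})$ such pairs, and I will show their contribution, weighted by $q$-probabilities, is $o((\binom n{k+1}q_k)^2)$ using $\binom n{k+1}q_k\to\infty$. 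If the unconditional bound is delicate, I would restrict to the high-probability event that each $k$-simplex has a bounded-size $l$-ball. Finally, I combine the two steps via Chebyshev's inequality, condition on $\{\dim X_n(t_n)\ge k\}$ (which has probability tending to one), and pass from the countably many balls to convergence in $\cP(\cS_k)$, completing the proof.
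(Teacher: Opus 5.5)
Your outline (thinning, first and second moments of ball counts normalized by $\binom n{k+1}q_k(t_n)$, conversion to $f_k(X_n(t_n))$, and a countable family of balls) is the same as the paper's. The difference is that the paper never reproves the ambient local structure. It imports one-root and two-root local convergence of $X_n(t_n)$ from \cite[Propositions~21 and~22]{Kan2022}, together with the breadth-first estimate that the ambient $l$-balls around two disjoint roots are vertex-disjoint with probability tending to one. The thinning then enters only in two places: through the exact factorization $\pi_{\bm\theta_n}(L_l,Y_l,\tau)\,\P_\sg\bigl((X_n(t_n),\sg)_l\simeq(Y,\tau)_l\bigr)$, where $\pi_{\bm\theta_n}$ is the probability that labelling a given ambient ball produces the given marking, and through independence of the labels on the disjointness event.

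You instead propose to redo the ambient part, and the mechanism you give for tree-likeness fails. In this model a vertex coincidence does not cost $O(1/n)$, because a cycle-closing $(k+1)$-simplex is not penalized by $r_k$. Take $\tau_i=\sg\cup\{v_i\}$, $\tau_{i'}=\sg\cup\{v_{i'}\}$, and $\sg_{i,j}=(\sg\setminus\{u_j\})\cup\{v_i\}$. The only faces of $\sg_{i,j}\cup\{v_{i'}\}$ not already present are those containing $\{v_i,v_{i'}\}$. Its conditional probability is therefore $\prod_{d=1}^{k+1}p_d^{\binom k{d-1}}=r_k/r_{k-1}\ge r_k$, so the net cost of the coincidence is $O(r_k/r_{k-1})$, not $O(1/n)$. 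Likewise, in Step~3 a simplex $\sg''\cup\{w\}$ straddling two vertex-disjoint balls has conditional probability $r_k/p_0$, not $r_k$. This, rather than shared lower faces (vertex-disjoint balls share none), is the source of both the residual dependence and the overlaps.

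So tree-likeness, near-independence, and the smallness of overlapping root-disjoint pairs all need $nr_{k-1}(t_n)\to\infty$. This also gives $np_0(t_n)\to\infty$, since $r_{k-1}\le r_{-1}=p_0$. It cannot follow from $nr_k(t_n)\to c$, $q_{k+1}=q_kr_k$ and monotonicity alone. Take $p_0(t_n)\sim c/n$ and $p_j\equiv1$ for $j\ge1$. Then $nr_k(t_n)\to c$, yet $X_n(t_n)$ is the full simplex on its vertex set; given $\sg\in X_n(t_n)$ there are asymptotically $\Poisson(c)$ further vertices, so the local limit is not a tree. What excludes this is the first condition in \eqref{eq:criticality}, which you use only for $\mathrm{Var}\,f_k$ and for roots sharing a face. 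Indeed, suppose $nr_{k-1}(t_n)$ stayed bounded along a subsequence. Then $r_k/r_{k-1}$ would be bounded below, hence so would $p_1(t_n),\ldots,p_{k+1}(t_n)$. This forces $p_0(t_n)=O(1/n)$ and $n^{k+1}q_k(t_n)\le(np_0(t_n))^{k+1}=O(1)$, a contradiction. With this input, or by citing \cite{Kan2022} as the paper does, your moment computations go through.

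A minor point: \eqref{eq:LWC_empty-t} is not literally the case $\bm\theta=\bm0$ of \eqref{eq:LWC_s-t}, since no choice of $s_n$ need produce $\bm\theta=\bm0$ (e.g.\ if $p_0(0)=1$). It is the case of your Step~1 construction with all label probabilities zero, which is how the paper treats it.
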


To combine Proposition~\ref{prop:LWC_MCP} with the results of the preceding sections, we also use the following face-number estimates.
\begin{lem}[{\cite[Lemmas~20 and~23]{Kan2022}}]\label{lem:f_j}
  Let $k\in\N_0$, and let $t_n$ be a nonnegative parameter such that
  \[
  \lim_{n\to\infty}\tbinom n{k+1}q_k(t_n)=\infty.
  \]
  Then the following hold for every $p\in[1,\infty)$.
  \begin{enumerate}
  \item For each $0\le j\le k$,
  \[
  \lim_{n\to\infty}n^{j+1}q_j(t_n)=\infty
  \quad\text{and}\quad
  \frac{f_j(X_n(t_n))}{\binom n{j+1}q_j(t_n)}
  \xrightarrow{L^p}1
  \]
  as $n\to\infty$.
  In particular,
  \[
  \lim_{n\to\infty}\P(\dim X_n(t_n)\ge k)=1.
  \]
  \item If $\lim_{n\to\infty}nr_k(t_n)=c$ for some constant $c>0$, then, for every $j\ge-1$,
  \[
  \frac{f_j(X_n(t_n))}{\binom n{k+1}q_k(t_n)}
  \xrightarrow{L^p}
  \begin{cases}
    1
    &\text{if $j=k$,}\\
    c/(k+2)
    &\text{if $j=k+1$,}\\
    0
    &\text{otherwise}
  \end{cases}
  \]
  as $n\to\infty$.
  \end{enumerate}
\end{lem}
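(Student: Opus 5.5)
The plan is a moment computation for the face counts $f_j(X_n(t_n))$, using only the product formulas~\eqref{eq:qkt-rkt}. Two exponent comparisons between the $q$'s and the $r$'s drive everything. Throughout, write $q_j=q_j(t_n)$ and $r_j=r_j(t_n)$.

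\emph{Step 1 (part (1), divergence).} I would first show that $q_k\le q_j^{(k+1)/(j+1)}$ for $0\le j\le k$. Comparing the exponent of $p_i$ for $i\le j$, this reduces to
\[
\binom{k+1}{i+1}\ge\frac{k+1}{j+1}\binom{j+1}{i+1},
\]
which is equivalent to $\binom{k}{i}\ge\binom{j}{i}$. For $j<i\le k$ the factor $p_i^{\binom{k+1}{i+1}}\le1$ only helps. Hence
\[
n^{k+1}q_k\le\bigl(n^{j+1}q_j\bigr)^{(k+1)/(j+1)},
\]
and the hypothesis gives $n^{j+1}q_j\to\infty$.

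\emph{Step 2 (part (1), concentration).} Note that $\E f_j=\binom n{j+1}q_j$. Two $j$-simplices whose vertex sets share exactly $m\ge1$ vertices are present together with probability $q_j^2/q_{m-1}$, since the shared faces are counted once. Summing over such pairs gives
\[
\frac{\operatorname{Var}f_j}{(\E f_j)^2}\le C\sum_{m=1}^{j+1}\frac1{n^mq_{m-1}}.
\]
Each term tends to $0$ by Step~1 applied with $m-1\le j$; the case $m=1$ uses $n^1q_0\to\infty$. For $L^p$, I would bound $\E[(f_j/\E f_j)^{2p}]$ uniformly by expanding into $2p$-tuples of simplices. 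I would group the tuples by the intersection pattern of their vertex sets and build the union one simplex at a time. Each simplex overlapping the earlier union in $m\ge1$ vertices costs a factor at most $C/(n^mq_{m-1})$ relative to independence, which is bounded. This gives uniform integrability, and convergence in probability then yields $L^p$. Finally, $\P(\dim X_n\ge k)\ge\P(f_k>0)\to1$.

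\emph{Step 3 (part (2)).} The case $j=k$ is part (1). The case $j=-1$ is trivial because $f_{-1}=1$ and $\binom n{k+1}q_k\to\infty$. For $j=k+1$,
\[
\frac{\E f_{k+1}}{\binom n{k+1}q_k}=\frac{\binom n{k+2}}{\binom n{k+1}}\,r_k\longrightarrow\frac c{k+2}.
\]
The variance bound of Step~2 now has extra overlap terms. The terms with $m\le k+1$ are $1/(n^mq_{m-1})$, which tend to $0$ as before. The term with $m=k+2$ is
\[
\frac1{n^{k+2}q_{k+1}}=\frac1{n^{k+1}q_k\cdot nr_k}\to0.
\]

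\emph{Step 4 (part (2), $j\ge k+2$; the main obstacle).} This is the step I expect to be hardest: the crude bound $r_{i+1}\le r_i$ only gives a bounded ratio $O(c^{j-k})$, not one tending to zero. The fix I would use is the sharper inequality $r_{i+1}\le r_i^{(i+2)/(i+1)}$. It follows from comparing exponents, using $\binom{i+2}{l}=\binom{i+1}{l}\frac{i+2}{i+2-l}\ge\binom{i+1}{l}\frac{i+2}{i+1}$ for $l\le i+1$. Consequently $nr_{k+1}=O(n^{-1/(k+1)})\to0$, and $nr_i\to0$ for all $i\ge k+1$ by monotonicity. Then
\[
\frac{\E f_j}{\binom n{k+1}q_k}\le C\prod_{i=k}^{j-1}nr_i\longrightarrow0.
\]
Since $f_j\ge0$, this gives $L^1$ convergence to $0$. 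For $p>1$, I would use the same $p$-fold moment expansion as in Step~2. Every overlap factor is bounded because the $q$'s at levels $\le k$ are large, and there are only finitely many patterns. This gives
\[
\E\Bigl[\Bigl(f_j\big/\tbinom n{k+1}q_k\Bigr)^{p}\Bigr]\le C\,\frac{\E f_j}{\binom n{k+1}q_k}+o(1),
\]
which closes the argument.
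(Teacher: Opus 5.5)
\textbf{There is a genuine gap in Step 4, and a missing case in part (2).} The paper does not prove this lemma; it imports it from \cite[Lemmas~20 and~23]{Kan2022}, so your argument has to stand on its own.

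Steps~1--3 are sound. The exponent comparison in Step~1 is correct. The overlap bound $q_j^2/q_{m-1}$ is correct. The sequential moment bound also works, because the faces a new simplex shares with the earlier union form a subcomplex of the simplex on the shared vertices, so the conditional probability is at most $q_j/q_{m-1}$.

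\textbf{Step 4 rests on a false inequality.} The comparison $\binom{i+2}{l}\ge\binom{i+1}{l}\frac{i+2}{i+1}$ fails at $l=0$. The vertex parameter $p_0$ enters every $r_i$ with exponent exactly $1$. So $r_{i+1}\le r_i^{(i+2)/(i+1)}$ is false: take $p_0<1$ and $p_l=1$ for $l\ge1$, so that $r_i=p_0$ for all $i$.

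Worse, your derivation of $nr_{k+1}\to0$ uses only $nr_k\to c$, and that cannot succeed. With $p_0=c/n$ and $p_l=1$ for $l\ge1$, one has $nr_i=c$ for every $i$, and $\E f_{k+2}/\binom n{k+1}q_k\to c^2/((k+2)(k+3))\ne0$. This example is excluded only because $\binom n{k+1}q_k$ stays bounded there. So the divergence hypothesis must enter Step~4, and your argument never uses it.

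The missing idea is that the $p_0$-exponent cancels exactly in
\[
\frac{r_k^{k+1}}{q_k}=\prod_{l=1}^{k+1}p_l^{e_l},
\qquad
e_l=(k+1)\binom{k+1}{l}-\binom{k+1}{l+1}=\binom{k+1}{l}\frac{(k+2)l}{l+1}>0.
\]
Hence $(nr_k)^{k+1}/(n^{k+1}q_k)\to0$ forces $\prod_{l=1}^{k+1}p_l^{e_l}\to0$. Now $r_{k+1}/r_k=\prod_{l=1}^{k+2}p_l^{\binom{k+1}{l-1}}$ contains each $p_l$ with $1\le l\le k+1$ to a positive power. It is therefore bounded by a positive power of $\prod_{l=1}^{k+1}p_l^{e_l}$, which gives $nr_{k+1}\to0$. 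From there, $nr_i\le nr_{k+1}$ for $i\ge k+1$, and your first-moment bound goes through.

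\textbf{The cases $0\le j\le k-1$ are missing.} Part~(2) is claimed for every $j\ge-1$, but you never treat this range. By part~(1), it reduces to showing
\[
\frac{\binom n{j+1}q_j}{\binom n{k+1}q_k}\asymp\prod_{i=j}^{k-1}(nr_i)^{-1}\longrightarrow0.
\]
Monotonicity $r_i\ge r_k$ only gives $\prod_{i=j}^{k-1}nr_i\gtrsim c^{k-j}$, which is not enough. You need $nr_{k-1}\to\infty$. This follows from the same identity, because $r_k/r_{k-1}=\prod_{l=1}^{k+1}p_l^{\binom{k}{l-1}}\to0$.

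\textbf{A smaller point on the $p$-th moment bound in Step~4.} Measure the overlap factors relative to $N=\binom n{k+1}q_k$, not relative to $\E f_j$. Relative to $\E f_j$, a simplex meeting the earlier union in $m\ge k+2$ vertices costs $1/(n^mq_{m-1})$, and this need not be bounded. Relative to $N$, the cost is at most $C\prod_{i=m-1}^{j-1}(nr_i)/N$ for $m\ge k+1$. This is small because $nr_i$ is bounded for $i\ge k$ and $N\to\infty$, not because the $q$'s at levels $\le k$ are large.
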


We are now in a position to complete the proof of Theorem~\ref{thm:LLN_PB}.
\begin{proof}[Proof of Theorem~\ref{thm:LLN_PB}]
  Since each $(k+1)$-simplex has exactly $k+2$ distinct $k$-faces, a standard double-counting argument yields
  \[
  (k+2)f_{k+1}(X_n(t_n))
  =\sum_{\sg\in F_k(X_n(t_n))}\deg_{X_n(t_n)}(\sg).
  \]
  Therefore, by Lemma~\ref{lem:f_j}, conditional on $\{\dim X_n(t_n)\ge k\}$,
  \begin{align*}
  \frac1{f_k(X_n(t_n))}\sum_{\sg\in F_k(X_n(t_n))}\deg_{X_n(t_n)}(\sg)
  &=(k+2)\frac{f_{k+1}(X_n(t_n))}{f_k(X_n(t_n))}\\
  &=(k+2)\frac{f_{k+1}(X_n(t_n))}{\binom n{k+1}q_k(t_n)}
  \frac{\binom n{k+1}q_k(t_n)}{f_k(X_n(t_n))}
  \xrightarrow{\mathrm p}c.
  \end{align*}
  The conditioning does not affect this convergence, since $\P(\dim X_n(t_n)\ge k)\to1$ by Lemma~\ref{lem:f_j}(1).

  For simplicity, write
  \[
  N_k(K,X_n(t_n))
  :=\nullity\bigl(\Dl_k^\up(X_n(t_n)\setminus K)\bigr)
  \]
  for $K=X_n(s_n)$ or $\{\emptyset\}$.
  By Lemma~\ref{lem:comp_L}(2),
  \begin{equation}\label{eq:bNNB}
  \b_k\bigl(X_n(s_n)\subset X_n(t_n)\bigr)
  =N_k\bigl(\{\emptyset\},X_n(t_n)\bigr)
  -N_k\bigl(X_n(s_n),X_n(t_n)\bigr)
  -\dim B^k(X_n(s_n)).
  \end{equation}
  On the event $\{\dim X_n(t_n)\ge k\}$,
  \begin{equation}\label{eq:NXX-h}
  \begin{aligned}
  &\biggl|\frac{N_k\bigl(X_n(s_n),X_n(t_n)\bigr)}{\binom n{k+1}q_k(t_n)}-h_k(c,\bm\theta)\biggr|\\
  &\quad\le\biggl|\frac{N_k\bigl(X_n(s_n),X_n(t_n)\bigr)}{\binom n{k+1}q_k(t_n)}-\frac{N_k\bigl(X_n(s_n),X_n(t_n)\bigr)}{f_k(X_n(t_n))}\biggr|+\biggl|\frac{N_k\bigl(X_n(s_n),X_n(t_n)\bigr)}{f_k(X_n(t_n))}-h_k(c,\bm\theta)\biggr|\\
  &\quad\le\biggl|\frac{f_k(X_n(t_n))}{\binom n{k+1}q_k(t_n)}-1\biggr|+\biggl|\frac{N_k\bigl(X_n(s_n),X_n(t_n)\bigr)}{f_k(X_n(t_n))}-h_k(c,\bm\theta)\biggr|.
  \end{aligned}
  \end{equation}
  The last inequality uses the elementary bound
  \[
  N_k\bigl(X_n(s_n),X_n(t_n)\bigr)
  \le f_k(X_n(t_n)\setminus X_n(s_n))
  \le f_k(X_n(t_n)).
  \]
  
  The first term on the right-hand side of~\eqref{eq:NXX-h} converges in probability to zero by Lemma~\ref{lem:f_j}(1).
  Furthermore, conditional on $\{\dim X_n(t_n)\ge k\}$, Proposition~\ref{prop:LWC_MCP}, together with Corollary~\ref{cor:conti_ARSM0}, yields
  \[
  \frac{N_k\bigl(X_n(s_n),X_n(t_n)\bigr)}{f_k(X_n(t_n))}
  \xrightarrow{\mathrm p}
  \overline\RSM_0\bigl(\nu_k(c,\bm\theta\le\bm1)\bigr).
  \]
  By Proposition~\ref{prop:E_RSM0-h}, the limit on the right-hand side equals $h_k(c,\bm\theta)$.
  Consequently, under the same conditioning, the second term on the right-hand side of~\eqref{eq:NXX-h} also converges in probability to zero.
  Since
  \[
  \lim_{n\to\infty}\P(\dim X_n(t_n)\ge k)=1,
  \]
  it follows that
  \begin{equation}\label{eq:NXXh}
  \frac{N_k\bigl(X_n(s_n),X_n(t_n)\bigr)}{\binom n{k+1}q_k(t_n)}
  \xrightarrow{\mathrm p}h_k(c,\bm\theta).
  \end{equation}
  The same argument, applied to the empty marking complex, yields
  \begin{equation}\label{eq:NeXh}
  \frac{N_k\bigl(\{\emptyset\},X_n(t_n)\bigr)}{\binom n{k+1}q_k(t_n)}
  \xrightarrow{\mathrm p}h_k(c,\bm0)=h_k(c).
  \end{equation}
  Furthermore, by Lemma~\ref{lem:f_j}(2),
  \begin{equation}\label{eq:Bff0}
  \frac{\dim B^k(X_n(s_n))}{\binom n{k+1}q_k(t_n)}
  \le\frac{f_{k-1}(X_n(s_n))}{\binom n{k+1}q_k(t_n)}
  \le\frac{f_{k-1}(X_n(t_n))}{\binom n{k+1}q_k(t_n)}
  \xrightarrow[n\to\infty]{\mathrm p}0.
  \end{equation}
  Combining~\eqref{eq:bNNB},~\eqref{eq:NXXh},~\eqref{eq:NeXh}, and~\eqref{eq:Bff0}, we conclude that
  \[
  \frac{\b_k\bigl(X_n(s_n)\subset X_n(t_n)\bigr)}{\binom n{k+1}q_k(t_n)}
  \xrightarrow{\mathrm p}h_k(c)-h_k(c,\bm\theta)
  \quad\text{as $n\to\infty$.}
  \]

  Finally, fix $1\le p<q<\infty$ and set
  \[
  Y_n
  :=\frac{\b_k\bigl(X_n(s_n)\subset X_n(t_n)\bigr)}{\binom n{k+1}q_k(t_n)}
  \quad\text{and}\quad
  y:=h_k(c)-h_k(c,\bm\theta).
  \]
  By Lemma~\ref{lem:f_j}(1) and the bound
  \[
  0\le Y_n
  \le\frac{f_k(X_n(t_n))}{\binom n{k+1}q_k(t_n)},
  \]
  we have
  \[
  \sup_{n\in\N}\E[|Y_n-y|^q]<\infty.
  \]
  Since $q>p$, the sequence $(|Y_n-y|^p)_{n=1}^\infty$ is uniformly integrable.
  The convergence $Y_n\xrightarrow{\mathrm p}y$ therefore implies $Y_n\xrightarrow{L^p}y$.
\end{proof}

The remainder of this section is devoted to the proof of Proposition~\ref{prop:LWC_MCP}.

\subsection{Local Weak Convergence of Multiparameter Complex Pairs}
\label{ssec:LWC_MCP}
In this subsection, we prove Proposition~\ref{prop:LWC_MCP}.
The proof follows the approach of~\cite[Section~4]{Kan2022}, where local weak convergence is established for multiparameter random complexes without marking complexes.
We first realize the marking complex of the multiparameter complex pair as a thinned subcomplex of the ambient complex.

Let $0\le s_n\le t_n<\infty$ be parameters as in Proposition~\ref{prop:LWC_MCP}.
Given $X_n(t_n)$, assign to each nonempty $j$-simplex $\rho$~($0\le j\le n-1$) an independent random variable
\[
\eta_n(\rho)\sim\Bernoulli\biggl(\frac{p_j(s_n)}{p_j(t_n)}\biggr),
\]
and define a subcomplex
\[
K_n(s_n,t_n):=\{\sg\in X_n(t_n)\mid\text{$\eta_n(\rho)=1$ for every nonempty face $\rho\subset\sg$}\}.
\]
The random simplicial pairs $K_n(s_n,t_n)\subset X_n(t_n)$ and $X_n(s_n)\subset X_n(t_n)$ have the same law.
Indeed, conditional on $X_n(t_n)=Y$, the indicators $\1_{\{w_\rho\le s_n\}}$, $\rho\in Y\setminus\{\emptyset\}$, are independent Bernoulli random variables with respective success probabilities $p_{\dim(\rho)}(s_n)/p_{\dim(\rho)}(t_n)$.
A simplex $\sg\in Y$ belongs to $X_n(s_n)$ if and only if all indicators corresponding to its nonempty faces equal $1$.
Thus, we may work with $K_n(s_n,t_n)$ in place of $X_n(s_n)$.

We first prove the following one-root and two-root local convergence statements.
\begin{lem}\label{lem:LC_MCP}
  Let $k\in\N_0$ and $c>0$.
  Let $s_n$ and $t_n$ be as in Proposition~\ref{prop:LWC_MCP}, and set $\bm\theta:=(\theta_j)_{j=0}^{k+1}\in[0,1]^{k+2}$.
  The following hold:
  \begin{enumerate}
  \item Let $\sg$ be a $k$-simplex in $K(n)$.
  Then, conditional on $\{\sg\in X_n(t_n)\}$, the $\cS_k$-valued random variable
  \[
  [K_n(s_n,t_n)\subset X_n(t_n),\sg]_\infty
  \]
  converges in distribution to $[\PGW_k(c,\bm\theta\le\bm1),\sg_o]$ as $n\to\infty$.
  \item Let $\sg,\sg'$ be two disjoint $k$-simplices in $K(n)$.
  Then, conditional on $\{\sg,\sg'\in X_n(t_n)\}$, the $(\cS_k\times\cS_k)$-valued random variable
  \[
  \bigl([K_n(s_n,t_n)\subset X_n(t_n),\sg]_\infty,
  [K_n(s_n,t_n)\subset X_n(t_n),\sg']_\infty\bigr)
  \]
  converges in distribution to $\bigl([\PGW_k(c,\bm\theta\le\bm1),\sg_o],
  [\PGW'_k(c,\bm\theta\le\bm1),\sg'_o]\bigr)$ as $n\to\infty$.
  Here, $[\PGW'_k(c,\bm\theta\le\bm1),\sg'_o]$ is an independent copy of $[\PGW_k(c,\bm\theta\le\bm1),\sg_o]$.
  \end{enumerate}
\end{lem}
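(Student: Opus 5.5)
By Lemma~\ref{lem:Sk_basics}(2), convergence in distribution of $\cS_k$-valued random variables reduces to convergence of the probabilities of the events $\{\a_l=[\bm Y,\tau]_l\}$ for each fixed $l\in\N$ and each finite rooted pair $(\bm Y,\tau)$. For part~(2), the analogous reduction uses products of two such events. The plan is therefore to show that, for each fixed radius $l$, the ball $(K_n(s_n,t_n)\subset X_n(t_n),\sg)_l$ converges in law to the corresponding ball of $\PGW_k(c,\bm\theta\le\bm1)$. We follow the exploration argument of~\cite[Section~4]{Kan2022}, which handles the ambient complex alone, and add the thinning labels $\eta_n(\rho)$.

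\textbf{Step 1 (ambient ball).} Conditional on $\sg\in X_n(t_n)$, we explore the ambient complex breadth-first from the root. A $k$-simplex $\sg'$ already reached has candidate cofaces $\sg'\cup\{v\}$ for each unexplored vertex $v$. Given the already revealed faces, such a coface is present with probability $r_k(t_n)$ times correction factors. These factors come only from faces of $\sg'\cup\{v\}$ containing $v$ whose weights were already revealed, which happens only when $v$ was visited before. Because $nr_k(t_n)\to c$, the offspring count of $\sg'$ is asymptotically $\Poisson(c)$.

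We then show that, with probability tending to one, the $l$-ball is a simplicial tree. The bad events are cycles and extra cofaces among explored vertices, i.e. cofaces not accounted for by the tree structure. Their expected number is bounded by $O(1/n)$ times polynomially many factors of $nr_k(t_n)$. For the vertices and lower faces we use $n^{j+1}q_j(t_n)\to\infty$ from Lemma~\ref{lem:f_j}(1). Most of this step can be quoted from~\cite[Lemma~21]{Kan2022}, which establishes $[X_n(t_n),\sg]_\infty\to[\PGW_k(c),\sg_o]$ in distribution.

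\textbf{Step 2 (labels).} We condition on the ambient $l$-ball being a tree isomorphic to a fixed finite tree $T$. The marks $\eta_n(\rho)$ are then independent of $X_n(t_n)$ by construction, and are Bernoulli with parameters $p_j(s_n)/p_j(t_n)\to\theta_j$. Every nonempty face of $T$ is a distinct simplex of $K(n)$, and faces shared between neighbouring simplices are the same simplex, so they carry a single label. This gives exactly the dependence structure of Definition~\ref{df:PGW}. Hence the law of the marking complex $K_n\cap T$ converges to that of the $\bm\theta$-thinning of $T$. Since there are finitely many faces, the probability of each labelling converges. Combining this with Step~1 proves~(1).

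\textbf{Step 3 (two roots).} For part~(2), the two roots are disjoint. We explore the two $l$-balls jointly and show that, with high probability, they are vertex-disjoint apart from lower faces that are uniformly unlikely to be shared. More precisely, we show they share no vertex: the expected number of coincidences is $O(1/n)$ by the same counting as in Step~1. On this event the revealed weights, and hence the labels, are disjoint families. Asymptotic independence then follows from the Poisson approximation applied to each exploration separately.

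\textbf{Main obstacle.} The hard part is the error bound in Step~1 when $p_j(t_n)$ may tend to zero for lower dimensions $j$. Conditioning on the presence of explored faces changes the coface probabilities. The correct factor must be shown to be $r_k(t_n)$ up to $1+o(1)$, and the extra-coface events must be controlled uniformly. We would first try to quote the estimates of~\cite[Section~4]{Kan2022} verbatim. Their bounds depend only on ambient weights, and the independent thinning does not interact with them.
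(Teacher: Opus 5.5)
Your proposal follows the paper's proof: conditional on the ambient $l$-ball the marking is an independent thinning whose pattern probability is a polynomial in the ratios $p_j(s_n)/p_j(t_n)\to\theta_j$, the ambient one- and two-root convergence is quoted from \cite[Propositions~21 and~22]{Kan2022}, and in part~(2) vertex-disjointness of the two ambient balls (probability tending to one by the breadth-first argument of \cite[Section~4]{Kan2022}) makes the two label families independent, up to an error of twice the probability of non-disjointness. One minor correction: your $O(1/n)$ rate for coincidences is too strong when some $p_j(t_n)\to0$. For instance, given $\sg,\sg'\in X_n(t_n)$ and $v\in\sg'$, the coface $\sg\cup\{v\}$ is present with conditional probability $r_k(t_n)/p_0(t_n)\sim c/(np_0(t_n))$, which tends to zero because $nq_0(t_n)\to\infty$ but need not be $O(1/n)$; still, $o(1)$ is all the argument needs.
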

\begin{proof}
  Write
  \[
  \P_\sg(\,\cdot\,)
  :=\P\bigl(\,\cdot\mid\sg\in X_n(t_n)\bigr)
  \quad\text{and}\quad
  \P_{\sg,\sg'}(\,\cdot\,)
  :=\P\bigl(\,\cdot\mid\sg,\sg'\in X_n(t_n)\bigr).
  \]
  We also set
  \[
  \theta_{n,j}:=\frac{p_j(s_n)}{p_j(t_n)},
  \qquad 0\le j\le k+1,
  \]
  and write $\bm\theta_n:=(\theta_{n,j})_{j=0}^{k+1}$.

  For a finite $k$-rooted simplicial pair $(L\subset Y,\tau)$ with $[L\subset Y,\tau]\in\cS_k$, define its thinning probability as follows.
  For a map $\eta\colon Y\setminus\{\emptyset\}\to\{0,1\}$, define
  \[
  L_\eta(Y)
  :=\{\sg\in Y\mid\eta(\rho)=1\text{ for every nonempty face $\rho\subset\sg$}\}.
  \]
  Let $\cA(L,Y,\tau)$ denote the set of all such maps $\eta$ for which
  \[
  (L_\eta(Y)\subset Y,\tau)\simeq(L\subset Y,\tau).
  \]
  For $\bm\theta'=(\theta'_j)_{j=0}^{k+1}\in[0,1]^{k+2}$, define
  \[
  \pi_{\bm\theta'}(L,Y,\tau)
  :=\sum_{\eta\in\cA(L,Y,\tau)}\prod_{\rho\in Y\setminus\{\emptyset\}}(\theta'_{\dim\rho})^{\eta(\rho)}
  (1-\theta'_{\dim\rho})^{1-\eta(\rho)}.
  \]
  Thus, $\pi_{\bm\theta'}(L,Y,\tau)$ is the probability that the $\bm\theta'$-thinning of $Y$ produces a rooted simplicial pair isomorphic to $(L\subset Y,\tau)$.
  In particular, it is a polynomial in $\bm\theta'$.

  \smallskip
  (1)~Fix $[\bm Y,\tau]=[L\subset Y,\tau]\in\cS_k$ and $l\in\N$.
  By Lemma~\ref{lem:Sk_basics}(2), it suffices to prove that
  \[
  \lim_{n\to\infty}\P_\sg\bigl((K_n(s_n,t_n)\subset X_n(t_n),\sg)_l\simeq(\bm Y,\tau)_l\bigr)
  =\P'\bigl((\PGW_k(c,\bm\theta\le\bm1),\sg_o)_l\simeq(\bm Y,\tau)_l\bigr).
  \]
  Let $L_l$ and $Y_l$ denote the marking complex and the ambient complex of $(L\subset Y,\tau)_l$, respectively.
  Conditional on the ambient rooted ball of $(K_n(s_n,t_n)\subset X_n(t_n),\sg)$, its marking complex is obtained by the independent $\bm\theta_n$-thinning.
  Therefore,
  \begin{equation}\label{eq:KX-LY}
  \begin{aligned}
  &\P_\sg\bigl((K_n(s_n,t_n)\subset X_n(t_n),\sg)_l\simeq(\bm Y,\tau)_l\bigr)\\
  &\quad=\pi_{\bm\theta_n}(L_l,Y_l,\tau)\,\P_\sg\bigl((X_n(t_n),\sg)_l\simeq(Y,\tau)_l\bigr).
  \end{aligned}
  \end{equation}
  The local convergence of the ambient multiparameter random complexes established in~\cite[Proposition~21]{Kan2022} gives
  \[
  \P_\sg\bigl(
  (X_n(t_n),\sg)_l\simeq(Y,\tau)_l
  \bigr)
  \longrightarrow
  \P'\bigl(
  (\PGW_k(c),\sg_o)_l\simeq(Y,\tau)_l
  \bigr).
  \]
  Moreover,~\eqref{eq:regularity} gives $\bm\theta_n\to\bm\theta$, and hence
  \[
  \pi_{\bm\theta_n}(L_l,Y_l,\tau)
  \longrightarrow\pi_{\bm\theta}(L_l,Y_l,\tau)
  \quad\text{as $n\to\infty$.}
  \]
  It follows from~\eqref{eq:KX-LY} that
  \begin{align*}
  &\lim_{n\to\infty}\P_\sg\bigl(
  (K_n(s_n,t_n)\subset X_n(t_n),\sg)_l\simeq(\bm Y,\tau)_l\bigr)\\
  &\quad=\pi_{\bm\theta}(L_l,Y_l,\tau)\,\P'\bigl((\PGW_k(c),\sg_o)_l\simeq(Y,\tau)_l\bigr)\\
  &\quad=\P'\bigl((\PGW_k(c,\bm\theta\le\bm1),\sg_o)_l\simeq(\bm Y,\tau)_l\bigr).
  \end{align*}
  The last equality follows from the definition of $\PGW_k(c,\bm\theta)$ as the $\bm\theta$-thinning of $\PGW_k(c)$.
  This proves~(1).

  \smallskip
  (2)~Equip $\cS_k\times\cS_k$ with the $\ell^\infty$-product metric induced by $d_{\loc}$.
  Then $\cS_k\times\cS_k$ is again a separable ultrametric space.
  It therefore suffices to prove that, for every fixed $[\bm Y,\tau]=[L\subset Y,\tau]\in\cS_k$, $[\bm Y',\tau']=[L'\subset Y',\tau']\in\cS_k$, and $l,l'\in\N$,
  \begin{align*}
  &\lim_{n\to\infty}\P_{\sg,\sg'}\bigl((K_n(s_n,t_n)\subset X_n(t_n),\sg)_l\simeq(\bm Y,\tau)_l,\,(K_n(s_n,t_n)\subset X_n(t_n),\sg')_{l'}\simeq(\bm Y',\tau')_{l'}\bigr)\\
  &=\P'\bigl((\PGW_k(c,\bm\theta\le\bm1),\sg_o)_l\simeq(\bm Y,\tau)_l\bigr)\cdot\P'\bigl((\PGW'_k(c,\bm\theta\le\bm1),\sg'_o)_{l'}\simeq(\bm Y',\tau')_{l'}\bigr).
  \end{align*}
  Let $L_l,Y_l,L'_{l'}$, and $Y'_{l'}$ denote the corresponding marking and ambient complexes.

  Let $D_n$ be the event that the two ambient rooted balls $(X_n(t_n),\sg)_l$ and $(X_n(t_n),\sg')_{l'}$ are vertex-disjoint.
  The breadth-first traversal argument used in the proof of~\cite[Section~4]{Kan2022} also shows that
  \[
  \P_{\sg,\sg'}(D_n^c)\longrightarrow0
  \quad\text{as $n\to\infty$.}
  \]
  Moreover, the joint local convergence of the ambient multiparameter random complexes at two disjoint $k$-simplices established in~\cite[Proposition~22]{Kan2022} gives
  \begin{equation}\label{eq:prop_22}
  \begin{aligned}
  &\P_{\sg,\sg'}\bigl((X_n(t_n),\sg)_l\simeq(Y,\tau)_l,\,(X_n(t_n),\sg')_{l'}\simeq(Y',\tau')_{l'}\bigr)\\
  &\quad\longrightarrow\P'\bigl((\PGW_k(c),\sg_o)_l\simeq(Y,\tau)_l\bigr)\cdot\P'\bigl((\PGW'_k(c),\sg'_o)_{l'}\simeq(Y',\tau')_{l'}\bigr).
  \end{aligned}
  \end{equation}
  Set
  \begin{align*}
  A_n
  &:=\bigl\{(K_n(s_n,t_n)\subset X_n(t_n),\sg)_l\simeq(\bm Y,\tau)_l,(K_n(s_n,t_n)\subset X_n(t_n),\sg')_{l'}\simeq(\bm Y',\tau')_{l'}\bigr\}
  \shortintertext{and}
  B_n
  &:=\bigl\{(X_n(t_n),\sg)_l\simeq(Y,\tau)_l,(X_n(t_n),\sg')_{l'}\simeq(Y',\tau')_{l'}\bigr\}.
  \end{align*}
  On $D_n$, the two families of Bernoulli variables used for the thinning are disjoint and hence independent.
  Therefore,
  \[
  \P_{\sg,\sg'}(A_n\cap D_n)
  =\pi_{\bm\theta_n}(L_l,Y_l,\tau)\,\pi_{\bm\theta_n}(L'_{l'},Y'_{l'},\tau')\,\P_{\sg,\sg'}(B_n\cap D_n).
  \]
  Consequently,
  \[
  \bigl|\P_{\sg,\sg'}(A_n)-\pi_{\bm\theta_n}(L_l,Y_l,\tau)\,\pi_{\bm\theta_n}(L'_{l'},Y'_{l'},\tau')\,\P_{\sg,\sg'}(B_n)\bigr|
  \le2\P_{\sg,\sg'}(D_n^c)
  \longrightarrow0.
  \]
  By~\eqref{eq:prop_22} and the convergence $\bm\theta_n\to\bm\theta$, it follows that
  \begin{align*}
  \lim_{n\to\infty}\P_{\sg,\sg'}(A_n)
  &=\pi_{\bm\theta}(L_l,Y_l,\tau)\,\P'\bigl(
  (\PGW_k(c),\sg_o)_l\simeq(Y,\tau)_l\bigr)\\
  &\qquad\cdot\pi_{\bm\theta}(L'_{l'},Y'_{l'},\tau')\,\P'\bigl((\PGW'_k(c),\sg'_o)_{l'}\simeq(Y',\tau')_{l'}\bigr)\\
  &=\P'\bigl((\PGW_k(c,\bm\theta\le\bm1),\sg_o)_l
  \simeq(\bm Y,\tau)_l\bigr)\\
  &\qquad\cdot\P'\bigl((\PGW'_k(c,\bm\theta\le\bm1),\sg'_o)_{l'}\simeq(\bm Y',\tau')_{l'}\bigr).
  \end{align*}
  This proves the joint convergence in~(2).
\end{proof}

\begin{proof}[Proof of Proposition~\ref{prop:LWC_MCP}]
  As noted above, we may replace $X_n(s_n)$ with $K_n(s_n,t_n)$ without loss of generality.
  For simplicity, set
  \[
  \bm X_n:=\bigl(K_n(s_n,t_n)\subset X_n(t_n)\bigr).
  \]
  We show that, conditional on $\{\dim X_n(t_n)\ge k\}$,
  \[
  \nu_{k,n}:=U_k\bigl(\bm X_n\bigr)
  =\frac1{f_k(X_n(t_n))}\sum_{\sg\in F_k(X_n(t_n))}\dl_{[\bm X_n,\sg]_\infty}
  \]
  converges in probability to $\nu_k(c,\bm\theta\le\bm1)$ in $\cP(\cS_k)$ as $n\to\infty$.

  The proof is the same second-moment argument as in the proof of~\cite[Theorem~13]{Kan2022}.
  Since the counting part of that argument is unaffected by the marking complex, we record only the main steps and indicate where Lemma~\ref{lem:LC_MCP} is used.
  
  Let $g\colon\cS_k\to\R$ be bounded and continuous, and define
  \[
  \tilde\nu_{k,n}(g)
  :=\frac1{\binom n{k+1}q_k(t_n)}\sum_{\sg\in F_k(K(n))}\1_{\{\sg\in X_n(t_n)\}}g\bigl([\bm X_n,\sg]_\infty\bigr).
  \]
  By the homogeneity of the model, for any $\sg\in F_k(K(n))$,
  \[
  \E\bigl[\tilde\nu_{k,n}(g)\bigr]
  =\E\bigl[g\bigl([K_n(s_n,t_n)\subset X_n(t_n),\sg]_\infty\bigr)\bigm|\sg\in X_n(t_n)\bigr],
  \]
  which converges to $\E'\bigl[g\bigl([\PGW_k(c,\bm\theta\le\bm1),\sg_o]\bigr)\bigr]$ as $n\to\infty$ by Lemma~\ref{lem:LC_MCP}(1).
  
  We next consider the second moment.
  Expanding the square of $\tilde\nu_{k,n}(g)$, we decompose the resulting double sum over ordered pairs of $k$-simplices according to whether they are disjoint or intersect.
  The contribution from disjoint pairs is
  \begin{align*}
  &\frac1{\tbinom n{k+1}^2q_k(t_n)^2}\sum_{\substack{\sg,\sg'\in F_k(K(n))\colon\\\sg\cap\sg'=\emptyset}}\E\bigl[\1_{\{\sg,\sg'\in X_n(t_n)\}}g\bigl([\bm X_n,\sg]_\infty\bigr)g\bigl([\bm X_n,\sg']_\infty\bigr)\bigr]\\
  &=\frac{\binom{n-k-1}{k+1}}{\binom n{k+1}}\E\bigl[g\bigl([\bm X_n,\sg]_\infty\bigr)g\bigl([\bm X_n,\sg']_\infty\bigr)\bigm|\sg,\sg'\in X_n(t_n)\bigr],
  \end{align*}
  where $\sg$ and $\sg'$ are any two disjoint $k$-simplices.
  Hence, Lemma~\ref{lem:LC_MCP}(2) shows that this contribution converges to $\E'\bigl[g\bigl([\PGW_k(c,\bm\theta\le\bm1),\sg_o]\bigr)\bigr]^2$ as $n\to\infty$.

  If two $k$-simplices have exactly $j$ common vertices, then the number of such ordered pairs is
  \[
  \binom n{k+1}\binom{k+1}j\binom{n-k-1}{k+1-j},
  \]
  and their joint probability of belonging to $X_n(t_n)$ is
  \[
  \frac{q_k(t_n)^2}{q_{j-1}(t_n)}.
  \]
  Consequently, the absolute value of the contribution from intersecting pairs is at most
  \[
  \frac{\|g\|_\infty^2}{\tbinom n{k+1}}\sum_{j=1}^{k+1}\binom{k+1}j\binom{n-k-1}{k+1-j}\frac1{q_{j-1}(t_n)},
  \]
  which converges to zero under~\eqref{eq:criticality} by Lemma~\ref{lem:f_j}(1).
  It follows that
  \[
  \E\bigl[\tilde\nu_{k,n}(g)^2\bigr]
  \longrightarrow\E'\bigl[g\bigl([\PGW_k(c,\bm\theta\le\bm1),\sg_o]\bigr)\bigr]^2.
  \]
  Together with the convergence of the first moment, this yields
  \[
  \tilde\nu_{k,n}(g)
  \xrightarrow[n\to\infty]{\mathrm p}\E'\bigl[g\bigl([\PGW_k(c,\bm\theta\le\bm1),\sg_o]\bigr)\bigr].
  \]

  On the event $\{\dim X_n(t_n)\ge k\}$, we have
  \[
  \int_{\cS_k}g\,d\nu_{k,n}
  =\frac{\tilde\nu_{k,n}(g)}{f_k(X_n(t_n))/\bigl(\tbinom n{k+1}q_k(t_n)\bigr)}.
  \]
  By Lemma~\ref{lem:f_j}(1), it follows that, conditional on $\{\dim X_n(t_n)\ge k\}$,
  \[
  \int_{\cS_k}g\,d\nu_{k,n}
  \xrightarrow{\mathrm p}\E'\bigl[g\bigl([\PGW_k(c,\bm\theta\le\bm1),\sg_o]\bigr)\bigr].
  \]
  Since $\cS_k$ is a complete separable metric space, a standard countable convergence-determining argument yields~\eqref{eq:LWC_s-t}.

  The convergence~\eqref{eq:LWC_empty-t} follows from the same argument with the marking complex identically empty, using~\cite[Propositions~21 and~22]{Kan2022} in place of Lemma~\ref{lem:LC_MCP}.
\end{proof}

\section{Convergence of Persistence Diagrams}
\label{sec:conv_PD}
In this section, we prove Theorems~\ref{thm:LLN_PD} and~\ref{thm:LLN_PD_test} and discuss Corollary~\ref{cor:LLN_OT}.
We first establish vague convergence and identify the limiting measure through its quadrant masses.
We then use tail estimates to prove convergence of the total masses and obtain weak convergence.
A truncation argument yields convergence for unbounded test functions.
Finally, we combine the convergence of persistence diagrams and their total persistence to obtain convergence in the optimal transport distance.
Throughout this section, fix $k\in\N_0$ and use reduced homology.

\subsection{Vague Convergence of Persistence Diagrams}
Our first step is the following vague convergence result.
\begin{prop}\label{prop:vague_PD}
  Assume that Conditions~\textup{\ref{cond:C1}}--\textup{\ref{cond:C3}} hold.
  Then there exists a unique deterministic Radon measure $\sfD_k\in\cM(\Dl)$ satisfying
  \begin{equation}\label{eq:vague_PD}
  \sfD_k\bigl([0,x]\times(y,\infty]\bigr)=\hat\b_k(x,y),
  \qquad 0<x<y<\infty.
  \end{equation}
  Moreover,
  \[
  \sfD_{k,n}\xrightarrow{\mathrm p}\sfD_k
  \quad\text{and}\quad
  \E[\sfD_{k,n}]\longrightarrow\sfD_k
  \quad\text{as $n\to\infty$}
  \]
  in $\cM(\Dl)$ equipped with the vague topology.
\end{prop}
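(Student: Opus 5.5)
The plan is to identify the limiting quadrant masses first, then build the measure from them, then upgrade quadrant convergence to vague convergence by the random-measure argument of Hiraoka--Shirai--Trinh.

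\emph{Step 1: quadrant limits.} Fix $0<x<y<\infty$ and set $s_n:=r_k^\rightarrow(x/n)$ and $t_n:=r_k^\rightarrow(y/n)$. By~\ref{cond:C2}, $nr_k(t_n)=y$ for all large $n$. Lemma~\ref{lem:theta_Theta} together with~\ref{cond:C1} gives
\[
\tbinom n{k+1}q_k(t_n)\sim\Theta_k(y)\tbinom n{k+1}q_k\bigl(r_k^\rightarrow(1/n)\bigr)\to\infty,
\]
and~\ref{cond:C3} supplies $\bm\theta=\bm\theta_k(x,y)$. Theorem~\ref{thm:LLN_PB}, the identity~\eqref{eq:PD-PB} and this asymptotic ratio then give
\[
\sfD_{k,n}\bigl([0,x]\times(y,\infty]\bigr)\xrightarrow{L^1}\hat\b_k(x,y).
\]
In particular, the expectations also converge.

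\emph{Step 2: construction of $\sfD_k$.} First I would show tightness on compacts. Every relatively compact subset of $\Dl$ lies in some quadrant $[0,x]\times(y,\infty]$ with $0<x<y$. Since $\E[\sfD_{k,n}]$ of such a quadrant converges, the family $\{\E[\sfD_{k,n}]\}$ is vaguely relatively compact by Helly/Prokhorov for Radon measures on the locally compact space $\Dl$.

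Next, take any vague subsequential limit $\sfD$ of $\E[\sfD_{k,n}]$. The function $\hat\b_k$ is continuous, and the quadrant masses are monotone in $(x,y)$. Sandwiching closed and open quadrants between continuous compactly supported functions (upper semicontinuity for compact sets, lower for open sets) shows that $\sfD([0,x]\times(y,\infty])=\hat\b_k(x,y)$ for all $0<x<y$. Remark~\ref{rmk:unique} then gives uniqueness, so the whole mean sequence converges vaguely to a single $\sfD_k$. This proves existence and uniqueness of $\sfD_k$ together with the mean convergence.

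\emph{Step 3: convergence in probability.} The vague topology on $\cM(\Dl)$ is metrizable, and convergence there is determined by a countable family. Concretely, I would take the rectangles $R=(x_1,x_2]\times(y_1,y_2]$ with rational corners, written as inclusion--exclusion of four quadrants, and with boundaries $\sfD_k$-null. This nullness holds automatically because lines carry no mass, by continuity of $\hat\b_k$.

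For such $R$, Step 1 gives $\sfD_{k,n}(R)\to\sfD_k(R)$ in probability. Each $f\in C_c(\Dl)$ can be uniformly approximated by step functions on such rectangles supported in a fixed compact set $Q$. The error is controlled by $\|f-g\|_\infty\,\sfD_{k,n}(Q)$, and $\sfD_{k,n}(Q)$ is bounded in probability by Step 1. This gives $\int f\,d\sfD_{k,n}\to\int f\,d\sfD_k$ in probability for every $f$. A diagonal subsequence argument over a countable dense family of $f$ then yields vague convergence in probability.

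\emph{Main obstacle.} The delicate step is the identification of the limit's quadrant masses in Step 2. Closed quadrants are not continuity sets a priori, so the argument must exploit the continuity of $\hat\b_k$ in both variables from Lemmas~\ref{lem:theta_Theta} and~\ref{lem:hk_conti} to exclude boundary mass. A second point needing care is treating the regions near the birth axis $x=0$ and at $y=\infty$: relative compactness in $\Dl$ must be checked under the identification $[0,\infty]^2\simeq[0,1]^2$.
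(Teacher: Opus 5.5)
Your proposal is correct and follows the paper's proof. Step 1 obtains the $L^1$ quadrant convergence from Theorem~\ref{thm:LLN_PB} with $c=y$ and $\bm\theta=\bm\theta_k(x,y)$, and Steps 2--3 reconstruct the content of the Hiraoka--Shirai--Trinh lifting (Lemma~\ref{lem:lifting}), including the same sandwich of closed and open quadrants and the continuity of $\hat\b_k$ used to identify $\sfD_k([0,x]\times(y,\infty])=\hat\b_k(x,y)$.

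Two small fixes are needed. First, a compact subset of $\Dl$ lies in a finite union of quadrants $[0,x]\times(y,\infty]$ with $x<y$, not in a single one; for example, $\{(0,1),(5,10)\}$ is not contained in any single such quadrant. A finite union is all your tightness step needs. Second, your approximating rectangles must satisfy $x_2<y_1$, so that the inclusion--exclusion only involves quadrants with $x<y$. They must also include the shapes $[0,x_2]\times(y_1,y_2]$ and $(x_1,x_2]\times(y_1,\infty]$, so that they cover supports of $f\in C_c(\Dl)$ that meet the birth axis or $y=\infty$.
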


We first record some consequences of Conditions~\textup{\ref{cond:C2}} and~\textup{\ref{cond:C3}}.
\begin{lem}\label{lem:theta_Theta}
  Under Conditions~\textup{\ref{cond:C2}} and~\textup{\ref{cond:C3}}, the following hold.
  \begin{enumerate}
    \item For every $0\le u<r_k(t_0)$,
    \[
    r_k\bigl(r_k^\rightarrow(u)\bigr)=u.
    \]
    \item Each $\theta_{k,j}$ is positive and continuous on $\Dl\cap(0,\infty)^2$.
    \item The function $\Theta_k$ is well-defined, positive, continuous, and nondecreasing on $(0,\infty)$, with $\Theta_k(1)=1$.
  \end{enumerate}
\end{lem}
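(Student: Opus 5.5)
The plan is to treat the three assertions in order, since (2) and (3) build on (1).

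For (1), fix $0\le u<r_k(t_0)$. Because $r_k$ is continuous on $[0,t_0]$, with $r_k(0)=0$ and $r_k(t_0)>u$, the set $\{t\mid r_k(t)>u\}$ is nonempty and contains $t_0$. Hence $t^*:=r_k^\rightarrow(u)\le t_0$. Since $r_k$ is nondecreasing (a product of distribution functions), the set $\{r_k>u\}$ is an up-ray $(t^*,\infty]$ or $[t^*,\infty]$. Continuity at $t^*\in[0,t_0]$ then gives two inequalities:
\begin{itemize}
\item $r_k(t^*)\ge u$, by letting $t\downarrow t^*$ inside the set (right-continuity suffices);
\item $r_k(t^*)\le u$, since points $t<t^*$ satisfy $r_k(t)\le u$ and left-continuity at $t^*$ passes this to the limit. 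If $t^*=0$, use $r_k(0)=0\le u$ instead.
\end{itemize}
Hence $r_k(r_k^\rightarrow(u))=u$.

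For (2), positivity comes first. The $p_j$ are nondecreasing and $r_k^\rightarrow$ is nondecreasing, so each ratio lies in $(0,1]$ once the denominator is positive. Positivity of the denominator holds because $r_k(r_k^\rightarrow(y/n))=y/n>0$ for large $n$ by (1), and $r_k=\prod_{j\le k+1}p_j^{\binom{k+1}j}$. To see the limit is positive, write $s_n=r_k^\rightarrow(x/n)$ and $t_n=r_k^\rightarrow(y/n)$. By (1),
\[
\frac{x}{y}=\frac{r_k(s_n)}{r_k(t_n)}=\prod_{j=0}^{k+1}\Bigl(\frac{p_j(s_n)}{p_j(t_n)}\Bigr)^{\binom{k+1}j}.
\]
Every factor is at most $1$ and every exponent is at least $1$, so each ratio is at least $x/y$. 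Passing to the limit gives $\theta_{k,j}(x,y)\ge x/y>0$.

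For continuity, two facts are available:
\begin{itemize}
\item multiplicativity, $\theta_{k,j}(x,z)=\theta_{k,j}(x,y)\,\theta_{k,j}(y,z)$ for $x<y<z$, obtained by taking limits of telescoped ratios;
\item monotonicity: $\theta_{k,j}$ is nondecreasing in $x$ and nonincreasing in $y$.
\end{itemize}
Combined with the sandwich $x/y\le\theta_{k,j}(x,y)\le1$, these give $\theta_{k,j}(x,x')\to1$ as $x'\downarrow x$ or as $x\uparrow x'$. Multiplicativity then turns this into joint continuity. Extending $\theta$ to the diagonal by $1$ makes the argument uniform.

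For (3), since $q_k\le1$ we can write
\[
\frac{q_k(r_k^\rightarrow(y/n))}{q_k(r_k^\rightarrow(1/n))}=\prod_{j\le k}\Bigl(\frac{p_j(r_k^\rightarrow(y/n))}{p_j(r_k^\rightarrow(1/n))}\Bigr)^{\binom{k+1}{j+1}}.
\]
This ratio converges to $\prod_j\theta_{k,j}(1,y)^{-\binom{k+1}{j+1}}$ when $y>1$, to $\prod_j\theta_{k,j}(y,1)^{\binom{k+1}{j+1}}$ when $y<1$, and equals $1$ when $y=1$. The required well-definedness, positivity and continuity then follow from (2), and monotonicity follows because $q_k$ and $r_k^\rightarrow$ are nondecreasing.

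I expect the main obstacle to be continuity in (2), which rests on the multiplicative identity together with the sandwich bound; it should be checked carefully that no $0/0$ conventions intervene, which positivity ensures for large $n$.
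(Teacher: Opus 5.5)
Your proposal is correct and follows essentially the same route as the paper: the product identity for $r_k$ gives a sandwich bound on each ratio (the paper uses the sharper lower bound $(x/y)^{1/\binom{k+1}{j}}$, but your $x/y$ suffices), continuity comes from telescoping logarithms against that bound, and $\Theta_k$ is written as a product of powers of $\theta_{k,j}(y,1)$ or $\theta_{k,j}(1,y)^{-1}$, with continuity at $y=1$ again supplied by the sandwich. Your part (1) simply spells out what the paper states follows immediately from Condition~(C2).
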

The proof is given in Section~S3.1 of the Supplementary Material~\cite{HKsupp}.
Under Condition~\ref{cond:C2}, the limit for $j=k+1$ in Condition~\ref{cond:C3} follows from those for $0\le j\le k$ by the product formula for $r_k$.
We include it in Condition~\ref{cond:C3} to keep the formulation uniform across dimensions.

We also need the following continuity property of the limiting constant.
\begin{lem}\label{lem:hk_conti}
  Let $k\in\N_0$.
  For any $c,c'\ge0$ and $\bm\theta=(\theta_j)_{j=0}^{k+1},\bm\theta'=(\theta'_j)_{j=0}^{k+1}\in[0,1]^{k+2}$,
  \begin{equation}\label{eq:hk_conti}
  \bigl|h_k(c,\bm\theta)-h_k(c',\bm\theta')\bigr|\le\frac{|c-c'|}{k+2}+\sum_{j=0}^k\binom{k+1}{j+1}|\theta_j-\theta_j'|.
  \end{equation}
  In particular, $(c,\bm\theta)\mapsto h_k(c,\bm\theta)$ is continuous.
\end{lem}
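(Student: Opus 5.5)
The plan is to realize $h_k(c,\bm\theta)$ as the limit of normalized nullities of compressed Laplacians of explicit finite simplicial pairs. I would then compare two parameter values by coupling the finite pairs and using elementary rank-perturbation bounds. For the finite model, take the complete $(k-1)$-skeleton and all $k$-simplices on $[n]$, so $p_j\equiv1$ for $j\le k$. Add each $(k+1)$-simplex independently with probability $c/n$ (so $p_{k+1}=c/n$), and take $p_j=0$ for $j\ge k+2$. Call the result $X_n^c$. Then $\binom n{k+1}q_k=\binom n{k+1}\to\infty$ and $nr_k=c$, and $f_k(X_n^c)=\binom n{k+1}$ deterministically. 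Let $K_n^{\bm\theta}\subset X_n^c$ be the $\bm\theta$-thinning of $X_n^c$: every nonempty $j$-face carries an independent $\Bernoulli(\theta_j)$ label. This is exactly the situation treated in the proof of Proposition~\ref{prop:LWC_MCP} through $K_n(s_n,t_n)$ with $p_j(s_n)/p_j(t_n)=\theta_j$; that construction only used these ratios. Hence, for $c>0$, Proposition~\ref{prop:LWC_MCP}, Corollary~\ref{cor:conti_ARSM0} (the average degree tends to $c$) and Proposition~\ref{prop:E_RSM0-h} give
\[
\frac{\nullity\bigl(\Dl_k^\up(X_n^c\setminus K_n^{\bm\theta})\bigr)}{\binom n{k+1}}\xrightarrow{\mathrm p}h_k(c,\bm\theta).
\]
For $c=0$ I would verify this directly. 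In that case $X_n^0$ has no $(k+1)$-simplices, so the nullity is $f_k(X_n^0\setminus K_n^{\bm\theta})$. By the law of large numbers this normalized count tends to $1-\prod_{j=0}^k\theta_j^{\binom{k+1}{j+1}}=1-\theta^{(1_k)}$. From~\eqref{eq:def_Phi} with $c=0$, the unique fixed point is $u^{(\xi)}=\1_{\{\xi\ne1_k\}}$, and $\Psi_{k,0,\bm\theta}$ evaluated there equals $1-\theta^{(1_k)}$.

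Next come two deterministic perturbation facts for a finite pair $K\subset X$, writing $N(K\subset X):=\nullity(\Dl_k^\up(X\setminus K))=\nullity(d_k^{X\setminus K})$. \emph{(a)} Adding one $(k+1)$-simplex to $X$, whether or not it lies in $K$, changes $N$ by at most $1$. This holds because $d_k^{X\setminus K}$ either gains one output coordinate or stays the same, so its rank changes by at most one while its domain is fixed. \emph{(b)} Changing whether a single $k$-simplex belongs to $K$ changes $N$ by at most $1$, with $X$ fixed and $K$ still a subcomplex. The reason is that the compressed Laplacian is a principal submatrix of $\Dl_k^\up(X)$ by Lemma~\ref{lem:comp_L}(1), and deleting one row and the matching column of a symmetric matrix changes the nullity by at most one. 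Since $K$ changes in dimensions $\le k+1$ together, I would apply (b) once for each $k$-simplex whose membership changes. The $(k+1)$-simplices of $X$ outside $C^{k+1}_{(2)}(X\setminus K)$ are those lying in $K$. Their membership in $K$ is determined by their $k$-faces together with their own label, and one must check it does not affect $N$ beyond the $k$-level changes. I believe this holds because a $(k+1)$-simplex with all $k$-faces in $K$ contributes a zero row, but it needs care.

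For the coupling, take $c\le c'$, realize $X_n^{c'}=X_n^c\cup\{\text{extra }(k+1)\text{-simplices}\}$ by independent sprinkling, and label each $j$-face by coupled Bernoulli pairs that differ with probability $|\theta_j-\theta'_j|$. By (a), the nullity changes by at most the number of extra $(k+1)$-simplices. That number has mean about $\binom n{k+2}(c'-c)/n\sim\binom n{k+1}(c'-c)/(k+2)$. By (b), the nullity changes by at most the number of $k$-simplices one of whose $\binom{k+1}{j+1}$ faces of dimension $j$ has differing labels. By a union bound, that number has mean at most $\binom n{k+1}\sum_{j=0}^k\binom{k+1}{j+1}|\theta_j-\theta'_j|$. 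Both counts concentrate by Chebyshev's inequality. Dividing by $\binom n{k+1}$ and passing to the limit in probability gives~\eqref{eq:hk_conti}. I expect the main obstacle to be justifying cleanly that membership changes of $(k+1)$-simplices in $K$ contribute nothing beyond (a) and (b). If that fails, I would instead bound all $(k+1)$-level changes by the $(k+1)$-simplices having a face with differing labels. Such changes involve a $k$-face with differing labels, except those caused by the top label itself. Since $h_k$ does not depend on $\theta_{k+1}$ (Remark~\ref{rmk:last_theta}), one may set $\theta_{k+1}=\theta'_{k+1}=1$, which removes those top-label changes.
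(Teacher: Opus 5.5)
Your proposal is correct and follows essentially the paper's own proof: the same coupling of a $(k+1)$-Linial--Meshulam complex with $p=c/n$ and a thinned $k$-skeleton, the same one-row/one-column nullity perturbation with the union bound over faces, identification of the limits through the nullity law of large numbers, and a separate direct computation for $c=0$. The differences are cosmetic. The paper puts no $(k+1)$-simplices into $K_n(\bm\theta)$, effectively setting $\theta_{k+1}=0$ via Remark~\ref{rmk:last_theta}, and realizes the pair as an honest multiparameter complex process (Bernoulli weights in dimensions $\le k$, uniform weights in dimension $k+1$) so that the proven nullity convergence applies verbatim, whereas you re-run the proof of Proposition~\ref{prop:LWC_MCP}. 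It also compares expectations rather than invoking Chebyshev. Your worry about $(k+1)$-simplices of $K$ is moot anyway: by Lemma~\ref{lem:comp_L}(1), the compressed Laplacian depends on $K$ only through $F_k(K)$.
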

The estimate follows from a common coupling and the fact that adding or deleting one row or column changes nullity by at most one.
The proof is given in Section~S3.2 of the Supplementary Material~\cite{HKsupp}.

We use the following criterion for passing from convergence of persistent Betti numbers to vague convergence of persistence diagrams.
We state it in a form adapted to the present setting, including the identification of the limiting quadrant masses when their limits are continuous.

\begin{lem}[cf.~{\cite[Proposition~3.4]{HST2018}; see also~\cite[Theorem~18]{KHMT2024}}]\label{lem:lifting}
  Let $\{\sfD_n\}_{n\in\N}$ be a sequence of $\cM(\Dl)$-valued random variables, and write
  \[
  Q(x,y):=[0,x]\times(y,\infty],
  \qquad 0<x<y<\infty.
  \]
  Suppose that $\E[\sfD_n]\in\cM(\Dl)$ for every $n\in\N$ and that the limit
  \[
  c_{x,y}:=\lim_{n\to\infty}\E[\sfD_n(Q(x,y))]
  \]
  exists in $[0,\infty)$ for every $0<x<y<\infty$.
  Then there exists a unique deterministic Radon measure $\sfD\in\cM(\Dl)$ such that $\E[\sfD_n]$ converges vaguely to $\sfD$ as $n\to\infty$.

  Suppose further that, for every $0<x<y<\infty$,
  \[
  \sfD_n(Q(x,y))\longrightarrow c_{x,y}
  \quad\text{as $n\to\infty$}
  \]
  almost surely.
  Then $\sfD_n$ converges vaguely to $\sfD$ almost surely.
  The same assertion holds with convergence in probability in place of almost sure convergence.

  Moreover, if $(x,y)\mapsto c_{x,y}$ is continuous on $\{(x,y)\mid0<x<y<\infty\}$, then
  \[
  \sfD(Q(x,y))=c_{x,y}
  \]
  for every $0<x<y<\infty$.
\end{lem}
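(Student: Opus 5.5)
The plan is the standard random-measure argument of Hiraoka--Shirai--Trinh, carried out on the compactified plane. Identify $\Dl$ with a subset of $[0,1]^2$ as in Section~\ref{sec:intro}. Let $\mathcal Q$ be the family of quadrants $Q(x,y)$ with $0<x<y<\infty$.

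\emph{Step 1 (the mean measures).} The hypothesis $\E[\sfD_n]\in\cM(\Dl)$ makes $m_n:=\E[\sfD_n]$ a sequence of Radon measures. Every compact $C\subset\Dl$ is covered by finitely many quadrants $Q(x,y)$, because compactness keeps $C$ away from the diagonal and each point $(s,t)$ lies in $Q(x,y)$ for some $x\ge s$ and $y<t$, with $x<y$. The masses $m_n(Q(x,y))$ converge, so they are bounded in $n$, and hence $\sup_n m_n(C)<\infty$.

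By vague relative compactness, every subsequence has a further subsequence converging vaguely to some $\sfD'$. To show that all such limits coincide, I use a sandwich. For $x<x'<y'<y$ the closure of $Q(x,y)$ in $\Dl$ is contained in $[0,x]\times[y,\infty]$, and this set is in turn contained in the interior of $Q(x',y')$. Applying the portmanteau inequalities for vague limits on relatively compact sets gives
\[
\limsup m_n(Q(x,y))\le \sfD'(\overline{Q(x,y)})\le\sfD'(Q(x',y'))\le\liminf m_n(\text{int}\,Q(x'',y''))
\]
for a slightly larger quadrant $Q(x'',y'')$.

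From this, $\sfD'(Q(x,y))$ is determined by the limits $c_{x',y'}$ through monotone limits: take $x'\downarrow x$ and $y'\uparrow y$, using right-continuity in $x$ and left-continuity in $y$ of the set functions, which follow from monotone convergence of the sets. Concretely, $\sfD'(Q(x,y))=\lim_{x'\downarrow x,\,y'\uparrow y}\lim_n m_n(Q(x',y'))$, and the right-hand side does not depend on the subsequence. Remark~\ref{rmk:unique} then gives uniqueness of $\sfD'$, so the full sequence $m_n$ converges vaguely to a single $\sfD$.

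\emph{Step 2 (almost sure and in-probability convergence).} Fix a countable dense set of rational pairs $(x,y)$ and the corresponding quadrants. Almost surely, $\sfD_n(Q(x,y))\to c_{x,y}$ for all rational pairs simultaneously. On this event, the same argument as in Step 1 applies pathwise. The bounded rational quadrant masses give vague relative compactness. The sandwich shows that any subsequential vague limit $\sfD''$ satisfies
\[
\sfD''(Q(x,y))=\lim_{x'\downarrow x,\,y'\uparrow y}c_{x',y'},
\]
with the limit taken along rationals. By Step 1 applied to $\sfD$, the right-hand side equals $\sfD(Q(x,y))$. Uniqueness via Remark~\ref{rmk:unique} then yields $\sfD''=\sfD$.

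For convergence in probability, I use the subsequence criterion. Given any subsequence, a diagonal argument over countably many rational quadrants extracts a further subsequence along which the rational quadrant masses converge almost surely. The almost sure case then gives vague convergence along that further subsequence. Since the vague topology on $\cM(\Dl)$ is metrizable (Polish), this suffices.

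\emph{Step 3 (identification under continuity).} Suppose $(x,y)\mapsto c_{x,y}$ is continuous. Then $\lim_{x'\downarrow x,\,y'\uparrow y}c_{x',y'}=c_{x,y}$, so $\sfD(Q(x,y))=c_{x,y}$.

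\emph{Main obstacle.} The delicate step is the sandwich in Step 1. Quadrants are neither open nor closed in $\Dl$, and near $x=0$ or $y=\infty$ one must check that the closures stay relatively compact in the compactified topology. This works because $Q(x,y)$ stays at distance at least $y-x$ from the diagonal. The boundary lines $\{x\}\times\cdot$ and $\cdot\times\{y\}$ can carry $\sfD$-mass, so exact equality cannot be read off directly. Handling this is precisely what the monotone limits over $x'\downarrow x$, $y'\uparrow y$ are for.
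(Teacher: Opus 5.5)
Your architecture (uniform bounds on compacts, vague relative compactness, a portmanteau sandwich to identify subsequential limits, uniqueness through quadrants, rational quadrants for the pathwise and in-probability cases) is sound. It is also more self-contained than the paper, which defers existence and the almost sure part to \cite[Proposition~3.4 and Corollary~A.3]{HST2018} and uses a countable family of $\sfD$-continuity quadrants for the in-probability case.

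However, the identification step on which Steps~1--3 all rest is wrong as written. As $x'\downarrow x$ and $y'\uparrow y$, the sets $Q(x',y')=[0,x']\times(y',\infty]$ decrease to $[0,x]\times[y,\infty]=\overline{Q(x,y)}$, not to $Q(x,y)$. The function $y\mapsto\sfD'(Q(x,y))$ is right-continuous, not left-continuous in general. Along your limit, both $Q(x',y')^\circ$ and $\overline{Q(x',y')}$ decrease to $\overline{Q(x,y)}$. The portmanteau bounds $\sfD'(Q(x',y')^\circ)\le c_{x',y'}\le\sfD'(\overline{Q(x',y')})$ therefore show that your limit equals $\sfD'(\overline{Q(x,y)})$. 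This exceeds $\sfD'(Q(x,y))$ by $\sfD'([0,x]\times\{y\})$. For instance, $\sfD_n\equiv\dl_{(0,1)}$ gives $c_{x,y}=\1_{\{y<1\}}$, so $\lim_{x'\downarrow1/2,\,y'\uparrow1}c_{x',y'}=1$ while $\sfD(Q(1/2,1))=0$. Thus the horizontal boundary segment you flagged as the main obstacle is exactly what your limit fails to remove. The identity you feed into Remark~\ref{rmk:unique} is false, and Step~3 only yields $\sfD(\overline{Q(x,y)})=c_{x,y}$.

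The repair is short. Let $x'\downarrow x$ and $y'\downarrow y$ strictly. The sets $Q(x',y')$ are then not monotone. Still, $Q(x',y')^\circ=[0,x')\times(y',\infty]$ and $\overline{Q(x',y')}=[0,x']\times[y',\infty]$ both converge pointwise to $Q(x,y)$. They also eventually lie in the compact set $[0,x_0]\times[y,\infty]$ for any fixed $x_0\in(x,y)$. Dominated convergence and the same two-sided bound then give $\sfD'(Q(x,y))=\lim_{x'\downarrow x,\,y'\downarrow y}c_{x',y'}$ for every subsequential or pathwise limit $\sfD'$. With this formula, Steps~1--3 go through as you wrote them; in Step~3, continuity of $c$ makes the limit equal to $c_{x,y}$.

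Alternatively, keep your limit and use it to identify $\sfD'$ on closed quadrants, which also form a generating $\pi$-system. In Step~3, add the interior bound obtained from $x'\uparrow x$, $y'\downarrow y$. This is the paper's sandwich $c_{x-\eps,y+\eps}\le\sfD(Q(x,y)^\circ)\le\sfD(\overline{Q(x,y)})\le c_{x+\eps,y-\eps}$.

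A minor point: to extract a finite subcover of a compact set you need $x>s$ strictly, so that $(s,t)$ lies in the open set $[0,x)\times(y,\infty]$.
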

The proof is given in Section~S3.3 of the Supplementary Material~\cite{HKsupp}.

We now prove the vague convergence result by applying Lemma~\ref{lem:lifting}.
\begin{proof}[Proof of Proposition~\ref{prop:vague_PD}]
  Set $Q(x,y):=[0,x]\times(y,\infty]$ for $0<x<y<\infty$.
  The quadrant identity~\eqref{eq:PD-PB} and Theorem~\ref{thm:LLN_PB} give
  \begin{equation}\label{eq:Pi_kn-Q}
  \sfD_{k,n}(Q(x,y))
  =\frac{\b_k\bigl(\tilde X_n(x)\subset\tilde X_n(y)\bigr)}{\binom n{k+1}q_k\bigl(r_k^\rightarrow(1/n)\bigr)}
  \xrightarrow{L^1}\hat\b_k(x,y).
  \end{equation}
  Indeed, set $s_n:=r_k^\rightarrow(x/n)$ and $t_n:=r_k^\rightarrow(y/n)$.
  By Lemma~\ref{lem:theta_Theta}(1), we have $nr_k(t_n)=y$ for all sufficiently large $n$.
  Moreover, $q_k(t_n)/q_k(r_k^\rightarrow(1/n))\to\Theta_k(y)>0$ by Lemma~\ref{lem:theta_Theta}(3).
  Thus, Condition~\ref{cond:C1} implies $\binom n{k+1}q_k(t_n)\to\infty$, and Condition~\ref{cond:C3} supplies the required thinning limits.
  Hence, Theorem~\ref{thm:LLN_PB} applies with $c=y$ and $\bm\theta=\bm\theta_k(x,y)$.
  Together with the convergence of the above ratio, this yields~\eqref{eq:Pi_kn-Q}.
  Since the mean measures $\E[\sfD_{k,n}]$ are finite and $\hat\b_k$ is continuous by Lemmas~\ref{lem:theta_Theta} and~\ref{lem:hk_conti}, Lemma~\ref{lem:lifting} applied to~\eqref{eq:Pi_kn-Q} yields the claimed vague convergence and the quadrant identity~\eqref{eq:vague_PD}.
  The uniqueness of the measure satisfying this identity follows from Remark~\ref{rmk:unique}.
\end{proof}

\subsection{Total Masses and Weak Convergence}
The following estimates control the expected mass at large death times and near the diagonal.
They are used here to prove convergence of the total masses and later to handle unbounded test functions.
To avoid interrupting the convergence argument, we defer their proof to Appendix~\ref{app:PD_tails}.

\begin{lem}\label{lem:tails}
  Assume that Conditions~\textup{\ref{cond:C1}}--\textup{\ref{cond:C3}} hold.
  There exist $n_0\in\N$ and constants $C_m>0$~$(m\in\N)$ and $C>0$ such that
  \begin{equation}\label{eq:d_tail}
  \sup_{n\ge n_0}\E\bigl[\sfD_{k,n}(\{(x,y)\in\Dl\mid y>R\})\bigr]\le C_mR^{-m}
  \end{equation}
  for every $m\in\N$ and $R\ge1$, and
  \begin{equation}\label{eq:diagonal_tail}
  \sup_{n\ge n_0}\E\bigl[\sfD_{k,n}(\{(x,y)\in\Dl_\fin\mid y\le R,\,y-x\le h\})\bigr]\le Ch(1+R)^{k+1}
  \end{equation}
  for every $R\ge1$ and $0<h\le1$.
\end{lem}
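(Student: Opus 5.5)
We prove the two estimates separately, both by reducing expected diagram masses to expected counts of simplices or Betti numbers at deterministic times, and then using the face-number asymptotics of Lemma~\ref{lem:f_j} together with the Betti-number upper bound of~\cite[Theorem~3.6]{HK2019}.

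\emph{Large death coordinates~\eqref{eq:d_tail}.} Every birth--death pair with death coordinate $y>R$ and finite death lies in the quadrant $[0,R]\times(R,\infty]$. By~\eqref{eq:PD-PB}, the mass of this quadrant is $\b_k(\tilde X_n(R)\subset\tilde X_n(R))=\b_k(\tilde X_n(R))$. So it suffices to bound $\E[\b_k(\tilde X_n(R))]/\bigl(\binom n{k+1}q_k(r_k^\rightarrow(1/n))\bigr)$ by $C_mR^{-m}$, uniformly in $n\ge n_0$.

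\begin{itemize}
\item For $R$ beyond the range where $r_k^\rightarrow(R/n)\le t_0$, the complex $\tilde X_n(R)$ is in a supercritical regime. There I would invoke~\cite[Theorem~3.6]{HK2019}, which in this regime bounds $\E[\b_k]$ by roughly $\binom n{k+1}q_k(t)\,e^{-c\,nr_k(t)}$ plus lower-order terms. With $nr_k=R$, this gives exponential decay in $R$.
\item The decay must still beat the growth of the ratio $q_k(r_k^\rightarrow(R/n))/q_k(r_k^\rightarrow(1/n))$. This growth is controlled polynomially in $R$: from the product formulas~\eqref{eq:qkt-rkt}, every exponent in $q_k$ is at most $k+1$ times the corresponding exponent in $r_k$. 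Hence $q_k(t)/q_k(t')\le (r_k(t)/r_k(t'))^{k+1}$ whenever $t\ge t'$.
\item For very large times, where $\tilde X_n(R)$ is nearly complete, the bound is trivial because $\b_k$ vanishes or is of smaller order.
\end{itemize}

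I expect this uniformity in $n$ over all ranges of $R$, including $R$ comparable to $n$, to be the main technical obstacle. The approach would be to split into the cases $r_k^\rightarrow(R/n)\le t_0$ and the complementary case.

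\emph{Short lifetimes~\eqref{eq:diagonal_tail}.} I would use a deterministic counting bound. Every birth--death pair $(b,d)$ of a finite filtration corresponds to a birth $k$-simplex appearing at time $b$ and a killing $(k+1)$-simplex appearing at time $d$. Pairs with $y\le R$ and $y-x\le h$ can then be covered by intervals $[ih,(i+2)h]$ for $i<R/h$. The number of pairs dying in a window $(a,a+2h]$ is at most the number of $(k+1)$-simplices whose appearance time (in the $x$-clock) falls in that window.

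Its expectation is $\binom n{k+2}\bigl(q_{k+1}(r_k^\rightarrow((a+2h)/n))-q_{k+1}(r_k^\rightarrow(a/n))\bigr)$. After normalization this is comparable to the increment of $\Theta_k(y)\,y/(k+2)$ over the window. Lemma~\ref{lem:theta_Theta}(1) converts the $r_k$-increment exactly into $2h/n$, and the $q_k$-factor is bounded by $(1+R)^{k}$ via the same exponent comparison.

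The dangerous term is the increment of $q_{k+1}$ coming from the lower-dimensional factors $p_j$. It is bounded as follows: $q_{k+1}=q_kr_k$, and $q_k(t)/q_k(t')\le(r_k(t)/r_k(t'))^{k+1}$. This makes the relative increment of $q_{k+1}$ at most $(k+2)$ times the relative increment of $r_k$, that is, of order $h/a$ per window. Summing the resulting increments over the $R/h$ windows, weighted appropriately, and handling the first windows near $a=0$ separately, gives $Ch(1+R)^{k+1}$.
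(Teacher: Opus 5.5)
Your proposal has a genuine gap in each half. Both gaps come from the same missing ingredient: a bound on pairs that are both born and killed inside a short time window.

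\textbf{The death tail.} Your first step for \eqref{eq:d_tail} is false, because a pair with $y>R$ need not have $x\le R$. Since $x<y$, we have $\{y>R\}=\{x>R\}\sqcup\bigl([0,R]\times(R,\infty]\bigr)$, and \eqref{eq:PD-PB} identifies only the second piece with the normalized $\b_k(\tilde X_n(R))$. Your treatment of that piece is essentially the paper's. The paper uses $\E[\b_k(X_n(t))]\le\binom n{k+1}q_k(t)\min\{1,C_{k,\ell}(nr_k(t))^{-\ell}\}$ from~\cite{HK2019} with arbitrary $\ell$, together with $\Theta_{k,n}(x)\le(1\vee x)^{k+1}$ for $x\le2n\dl$, where $\Theta_{k,n}(x):=q_k(r_k^\rightarrow(x/n))/q_k(r_k^\rightarrow(1/n))$.

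The pairs born after $R$, however, are the main difficulty, and you do not address them. In general they cannot be bounded by counting $k$-simplices appearing after $R$: after normalization that count is of order $1/q_k(r_k^\rightarrow(1/n))$, which may diverge. The paper splits births in $(R,n\dl]$ into dyadic blocks $(L,2L]$ and subdivides each block into $\lceil L^{k+m+2}\rceil$ tiny intervals $(x_{i-1},x_i]$. A pair born in such an interval either is alive at $x_i$, which is controlled by $\b_k(\tilde X_n(x_i))$ with large $\ell$, or dies by $x_i$. The second case needs Lemma~\ref{lem:bars_in_st}: the number of pairs with $s<b<d\le t$ is at most the number of incident pairs $\sg\subset\tau$ whose appearance times satisfy $s<\kappa(\sg)<\kappa(\tau)\le t$. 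This is proved by block-triangularity of the boundary matrix and rank subadditivity. It is not automatic, since the simplices paired by persistence need not be incident. The weights on faces of $\sg$ are independent of those on faces of $\tau$ not contained in $\sg$, which gives the \emph{product} bound $\E\bigl[\sfD_{k,n}(\{x'<x<y\le y'\})\bigr]\le(y'-x')\bigl(\Theta_{k,n}(y')-\Theta_{k,n}(x')\bigr)$. Births after $n\dl$ need yet another argument: a comparison with the original filtration, and conditioning on the weight of the simplex whose appearance causes the birth.

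\textbf{The diagonal tail.} Your counting for \eqref{eq:diagonal_tail} cannot produce a factor $h$. You bound the pairs in a window by the $(k+1)$-simplices appearing in $(a,a+2h]$. After normalization this is the increment of $\frac{n-k-1}{n(k+2)}\,y\,\Theta_{k,n}(y)$ over the window, which is of order $h\,\Theta_{k,n}(a)$; your relative-increment estimate gives exactly this and nothing better. Summing over the roughly $R/h$ overlapping windows telescopes to about twice the normalized $\E[f_{k+1}(\tilde X_n(R+2h))]$. That quantity is of order one and does not vanish as $h\downarrow0$, because a count of deaths sees only the death coordinate, not the distance to the diagonal.

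What you need is a per-window bound that is a product of two small increments, one for the appearance of $\sg$ and one for that of its coface. Lemma~\ref{lem:bars_in_st} supplies this:
\[
\E\bigl[\sfD_{k,n}(\{ih<x<y\le(i+2)h\})\bigr]\le2h\bigl(\Theta_{k,n}((i+2)h)-\Theta_{k,n}(ih)\bigr).
\]
For $R\le n\dl$ this telescopes to at most $4h\,\Theta_{k,n}(R+2h)\le Ch(1+R)^{k+1}$.

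There is also a secondary problem. The identity $r_k(r_k^\rightarrow(x/n))=x/n$ holds only for $x<nr_k(t_0)$, and $r_k$ may jump beyond $t_0$. Consequently deaths with $y>n\dl$ (relevant when $R>n\dl$) need separate treatment. The paper works with left limits $r_k(\cdot-)$ and the comparison $dq_k\le\frac{4(k+1)}{\dl}\,dr_k$ near and beyond $r_k^\rightarrow(\dl)$. This shows that an incident pair appears at distinct times within $2h$ of each other with probability $O(h/n)$.
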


We now use Lemma~\ref{lem:tails} to prove convergence of the total masses, which, together with vague convergence, yields weak convergence.
\begin{prop}\label{prop:total_mass}
  Assume that Conditions~\textup{\ref{cond:C1}}--\textup{\ref{cond:C3}} hold.
  Then $\sfD_k(\Dl)<\infty$ and
  \[
  \sfD_{k,n}(\Dl)\xrightarrow{L^1}\sfD_k(\Dl)
  \quad\text{as $n\to\infty$.}
  \]
\end{prop}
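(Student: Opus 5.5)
The plan is to split $\Dl$ into a compact core and two tail regions, and to control the tails uniformly in $n$ using Lemma~\ref{lem:tails}. Fix $R\ge1$ and $0<h\le1$. Write $\Dl$ as the disjoint union of three pieces: $A_{R,h}:=\{(x,y)\in\Dl_\fin\mid y\le R,\,y-x>h\}$, the death tail $\{y>R\}$, and the diagonal strip $\{y\le R,\,y-x\le h\}$. The closure of $A_{R,h}$ in $[0,\infty]^2\simeq[0,1]^2$ stays a positive distance from the diagonal. Hence $A_{R,h}$ is relatively compact in $\Dl$.

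For the core, choose $\phi\in C_c(\Dl)$ with $0\le\phi\le1$ and $\phi=1$ on $A_{R,h}$. Take $\phi$ supported in $A_{2R,h/2}$. Proposition~\ref{prop:vague_PD} gives $\int\phi\,d\sfD_{k,n}\to\int\phi\,d\sfD_k$ in probability. It also gives $\E[\int\phi\,d\sfD_{k,n}]\to\int\phi\,d\sfD_k$. Since $\phi\ge0$, convergence in probability together with convergence of the means (Scheffé-type argument) gives $L^1$ convergence. Equivalently, one can argue by uniform integrability: $\int\phi\,d\sfD_{k,n}\le f_k(\tilde X_n(2R))/\binom n{k+1}q_k(r_k^\rightarrow(1/n))$, whose second moment is bounded by Lemma~\ref{lem:f_j} and Lemma~\ref{lem:theta_Theta}(3). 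The bound holds because each birth--death pair with birth at most $2R$ requires a $k$-cycle born by then, so the count is at most $f_k$.

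The error terms are as follows. Lemma~\ref{lem:tails} gives
\[
\sup_{n\ge n_0}\E\bigl[\sfD_{k,n}(\Dl)-\textstyle\int\phi\,d\sfD_{k,n}\bigr]\le C_1R^{-1}+Ch(1+R)^{k+1}.
\]
This holds because $1-\phi$ is supported in the union of the two tail regions. For the limit measure, apply the portmanteau/vague lower semicontinuity on open relatively compact sets inside each tail region. This gives $\sfD_k(\{y>R\})\le\liminf\E[\sfD_{k,n}(\{y>R\})]\le C_1R^{-1}$, after exhausting by compactly supported functions. The strip similarly has $\sfD_k$-mass at most $Ch(1+R)^{k+1}$. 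Summing, $\sfD_k(\Dl)\le\sup_n\E[\sfD_{k,n}(\Dl)]+\ldots<\infty$. More directly, $\sfD_k(\Dl)\le\int\phi\,d\sfD_k+C_1R^{-1}+Ch(1+R)^{k+1}$ with everything finite.

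Putting these together yields
\[
\limsup_{n}\E\bigl|\sfD_{k,n}(\Dl)-\sfD_k(\Dl)\bigr|\le 2\bigl(C_1R^{-1}+Ch(1+R)^{k+1}\bigr).
\]
We first let $h\downarrow0$ and then $R\to\infty$, which proves the claim.

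The main obstacle is bounding the limit measure's tail masses through vague convergence. The regions $\{y>R\}$ and the diagonal strip are not relatively compact, so lower semicontinuity must be applied via increasing sequences of $C_c$ functions. This is combined with monotone convergence for $\sfD_k$ and Fatou applied to the mean measures $\E[\sfD_{k,n}]$, which converge vaguely by Proposition~\ref{prop:vague_PD}.
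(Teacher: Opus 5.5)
Your proof is correct and uses the same ingredients as the paper's proof: Lemma~\ref{lem:tails} for uniform tightness, the vague convergence in probability and of the mean measures from Proposition~\ref{prop:vague_PD}, and a Scheff\'e-type positive-part argument for nonnegative variables. The difference is only organizational: you apply the Scheff\'e step to the truncated integral $\int\phi\,d\sfD_{k,n}$ and bound the tails directly in $L^1$, whereas the paper first deduces $\E[\sfD_{k,n}(\Dl)]\to\sfD_k(\Dl)$ from tightness and then applies the positive-part identity to the total masses. One small correction: for $\sfD_k$, lower semicontinuity needs an open set and the diagonal strip is not open, so apply it to the open set $\Dl\setminus\overline{A_{R,h}}$ instead.
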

\begin{proof}
    For $R\ge1$ and $0<h\le1$, the set
  \[
  K_{R,h}:=\{(x,y)\in\Dl\mid y\le R,\,y-x\ge h\}
  \]
  is compact in $\Dl$.
  Lemma~\ref{lem:tails} gives
  \[
  \sup_{n\ge n_0}\E[\sfD_{k,n}(\Dl\setminus K_{R,h})]
  \le C_1R^{-1}+Ch(1+R)^{k+1}.
  \]
  Taking $R$ large and then $h$ small proves uniform tightness of the mean measures.
  Together with their vague convergence from Proposition~\ref{prop:vague_PD}, this implies that $\sfD_k(\Dl)<\infty$ and
  \begin{equation}\label{eq:E_total_mass}
  \lim_{n\to\infty}\E[\sfD_{k,n}(\Dl)]=\sfD_k(\Dl).
  \end{equation}

  For any $\eps>0$, choose $\varphi\in C_c(\Dl)$ with $0\le\varphi\le1$ such that
  \[
  \sfD_k(\Dl)-\int_\Dl\varphi\,d\sfD_k<\eps.
  \]
  Since $\sfD_{k,n}(\Dl)\ge\int_\Dl\varphi\,d\sfD_{k,n}$, we have
  \[
  \sfD_k(\Dl)-\sfD_{k,n}(\Dl)
  \le\eps+\left|\int_\Dl\varphi\,d\sfD_{k,n}-\int_\Dl\varphi\,d\sfD_k\right|.
  \]
  Vague convergence in probability and the arbitrariness of $\eps$ therefore imply that the positive part
  \[
  \bigl(\sfD_k(\Dl)-\sfD_{k,n}(\Dl)\bigr)_+\xrightarrow{\mathrm p}0.
  \]
  These variables are bounded by the finite constant $\sfD_k(\Dl)$, so their expectations also converge to zero.
  Combining this with~\eqref{eq:E_total_mass}, we obtain
  \[
  \E[|\sfD_{k,n}(\Dl)-\sfD_k(\Dl)|]=\E[\sfD_{k,n}(\Dl)]-\sfD_k(\Dl)+2\E[(\sfD_k(\Dl)-\sfD_{k,n}(\Dl))_+]\longrightarrow0,
  \]
  which completes the proof.
\end{proof}

We can now conclude the proof of the weak convergence theorem.
\begin{proof}[Proof of Theorem~\ref{thm:LLN_PD}]
  Existence and uniqueness follow from Proposition~\ref{prop:vague_PD}, and finiteness follows from Proposition~\ref{prop:total_mass}.
  Every subsequence has a further subsequence along which both vague convergence and convergence of the total masses hold almost surely.
  Vague convergence together with convergence of total masses implies weak convergence along this further subsequence.
  The subsequence characterization of convergence in probability therefore gives weak convergence in probability.

  The mean measures converge vaguely by Proposition~\ref{prop:vague_PD}, and their total masses converge by Proposition~\ref{prop:total_mass}.
  Hence $\E[\sfD_{k,n}]$ converges weakly to $\sfD_k$.
\end{proof}

\subsection{Convergence for Unbounded Test Functions}
\label{ssec:LLN_test}
We combine the tail estimates in Lemma~\ref{lem:tails} with a truncation argument, as in~\cite[Section~4.3]{Me2025}.
The death-time estimate controls polynomial growth in the death coordinate, while the estimate near the diagonal controls the singularity allowed in~\eqref{eq:PD_test_growth}.

\begin{proof}[Proof of Theorem~\ref{thm:LLN_PD_test}]
  By Theorem~\ref{thm:LLN_PD}, the mean measures $\E[\sfD_{k,n}]$ converge weakly to $\sfD_k$.
  Applying the Portmanteau theorem to the open set $\{(x,y)\in\Dl\mid y>R\}$ and using~\eqref{eq:d_tail}, we obtain
  \[
  \sfD_k(\{(x,y)\in\Dl\mid y>R\})\le C_1R^{-1},
  \qquad R\ge1.
  \]
  Letting $R\to\infty$ gives $\sfD_k(\Dl\setminus\Dl_\fin)=0$.

  Choose $\eta\in(\gm,1)$ and set
  \[
  g(x,y):=(1+y)^{\a+1}\bigl\{1+(y-x)^{-\eta}\bigr\},
  \qquad (x,y)\in\Dl_\fin.
  \]
  We claim that
  \begin{equation}\label{eq:g_Pi_kn_Pi_k}
  \sup_{n\ge n_0}\E\left[\int_{\Dl_\fin}g\,d\sfD_{k,n}\right]<\infty
  \quad\text{and}\quad
  \int_{\Dl_\fin}g\,d\sfD_k<\infty.
  \end{equation}
  We first derive the conclusion assuming these bounds.
  Below, $C>0$ denotes a constant independent of $n$, $R$, and $h$ that may change from line to line.

  Since~\eqref{eq:PD_test_growth} implies $|f|\le Cg$, the bounds in~\eqref{eq:g_Pi_kn_Pi_k} ensure that $\int_{\Dl_\fin}|f|\,d\sfD_k<\infty$ and that $\int_{\Dl_\fin}|f|\,d\sfD_{k,n}$ is an integrable random variable for every $n\ge n_0$.
  Fix $R\ge1$ and $0<h\le1$, and choose a continuous function $\chi_{R,h}\colon\Dl\to[0,1]$ equal to $1$ on $\{y\le R,\,y-x\ge h\}$ and supported in $\{y\le2R,\,y-x\ge h/2\}$.
  Define $f_{R,h}:=f\chi_{R,h}$ on $\Dl_\fin$ and extend it by zero to $\Dl$.
  Then $f_{R,h}\in C_c(\Dl)$, so its integrals converge in probability by Theorem~\ref{thm:LLN_PD}.
  These integrals are uniformly integrable, since their absolute values are bounded by $\|f_{R,h}\|_\infty\sfD_{k,n}(\Dl)$ and the total masses converge in $L^1$ by Proposition~\ref{prop:total_mass}.
  Hence,
  \[
  \int_\Dl f_{R,h}\,d\sfD_{k,n}\xrightarrow{L^1}\int_\Dl f_{R,h}\,d\sfD_k.
  \]
  Moreover,~\eqref{eq:PD_test_growth} and the choice of $g$ give
  \[
  |f-f_{R,h}|
  \le |f|\bigl(\1_{\{y>R\}}+\1_{\{y-x<h\}}\bigr)
  \le C\bigl(R^{-1}+h^{\eta-\gm}\bigr)g.
  \]
  Here, on $\{y>R\}$ we use $(1+y)^{-1}\le R^{-1}$, while on $\{y-x<h\}$ we use $(y-x)^{-\gm}\le h^{\eta-\gm}(y-x)^{-\eta}$, together with $h\le1$ and $\eta>\gm$.
  The triangle inequality gives
  \begin{align*}
  &\E\left[\left|\int_{\Dl_\fin}f\,d\sfD_{k,n}-\int_{\Dl_\fin}f\,d\sfD_k\right|\right]\\
  &\quad\le\E\left[\int_{\Dl_\fin}|f-f_{R,h}|\,d\sfD_{k,n}\right]
  +\E\left[\left|\int_\Dl f_{R,h}\,d\sfD_{k,n}-\int_\Dl f_{R,h}\,d\sfD_k\right|\right]\\
  &\qquad+\int_{\Dl_\fin}|f-f_{R,h}|\,d\sfD_k.
  \end{align*}
  The middle term tends to zero by the preceding $L^1$ convergence.
  The first and last terms satisfy
  \begin{align*}
  &\E\left[\int_{\Dl_\fin}|f-f_{R,h}|\,d\sfD_{k,n}\right]
  +\int_{\Dl_\fin}|f-f_{R,h}|\,d\sfD_k\\
  &\quad\le C\bigl(R^{-1}+h^{\eta-\gm}\bigr)
  \left(\E\left[\int_{\Dl_\fin}g\,d\sfD_{k,n}\right]+\int_{\Dl_\fin}g\,d\sfD_k\right)\\
  &\quad\le C\bigl(R^{-1}+h^{\eta-\gm}\bigr),
  \end{align*}
  where the last inequality follows from~\eqref{eq:g_Pi_kn_Pi_k}.
  Consequently,
  \[
  \limsup_{n\to\infty}\E\left[\left|\int_{\Dl_\fin}f\,d\sfD_{k,n}-\int_{\Dl_\fin}f\,d\sfD_k\right|\right]
  \le C\bigl(R^{-1}+h^{\eta-\gm}\bigr).
  \]
  Letting $R\to\infty$ and $h\downarrow0$ proves~\eqref{eq:LLN_PD_test}, assuming~\eqref{eq:g_Pi_kn_Pi_k}.

  It remains to prove~\eqref{eq:g_Pi_kn_Pi_k}.
  We combine the death-tail and diagonal-tail bounds by taking the minimum of the two estimates on each dyadic death-time block.
  Choose $m\in\N$ such that $m(1-\eta)>\a+1+\eta(k+1)$, and partition $\Dl_\fin$ into
  \[
  \Dl_j:=\{(x,y)\in\Dl\mid2^j-1\le y<2^{j+1}-1\},
  \qquad j\in\N_0.
  \]
  For $j\ge2$, apply~\eqref{eq:d_tail} using $\Dl_j\subset\{y>2^{j-1}\}$.
  For $j=0,1$, use $\E[\sfD_{k,n}(\Dl_j)]\le\E[\sfD_{k,n}(\Dl)]$, whose right-hand side is uniformly bounded by Proposition~\ref{prop:total_mass}.
  Together with~\eqref{eq:diagonal_tail} applied with $R=2^{j+1}-1$, these estimates give, after increasing $C_m$,
  \[
  \E[\sfD_{k,n}(\Dl_j)]\le C_m2^{-jm}
  \quad\text{and}\quad
  \E[\sfD_{k,n}(\Dl_j\cap\{y-x\le h\})]\le C_m\min\{2^{j(k+1)}h,2^{-jm}\}
  \]
  for all $j\in\N_0$, $n\ge n_0$, and $0<h\le1$.

  Using $z^{-\eta}=\eta\int_0^\infty h^{-\eta-1}\1_{\{h\ge z\}}\,dh$ for $z>0$, Tonelli's theorem gives
  \begin{align*}
  &\E\left[\int_{\Dl_j}\bigl\{1+(y-x)^{-\eta}\bigr\}\,d\sfD_{k,n}(x,y)\right]\\
  &\quad=\E[\sfD_{k,n}(\Dl_j)]
  +\eta\int_0^\infty h^{-\eta-1}\E[\sfD_{k,n}(\Dl_j\cap\{y-x\le h\})]\,dh\\
  &\quad\le2C_m2^{-jm}
  +C_m\eta\int_0^1h^{-\eta-1}\min\{2^{j(k+1)}h,2^{-jm}\}\,dh.
  \end{align*}
  To bound the integral over $[1,\infty)$, we use $\E[\sfD_{k,n}(\Dl_j\cap\{y-x\le h\})]\le\E[\sfD_{k,n}(\Dl_j)]$ and $\eta\int_1^\infty h^{-\eta-1}\,dh=1$.
  Together with the initial term $\E[\sfD_{k,n}(\Dl_j)]$, this contributes at most $2C_m2^{-jm}$.

  To evaluate the remaining integral, set $h_j:=2^{-j(m+k+1)}$, so that $2^{j(k+1)}h_j=2^{-jm}$.
  Splitting the integral at $h_j$, we obtain
  \begin{align*}
  &\eta\int_0^1h^{-\eta-1}\min\{2^{j(k+1)}h,2^{-jm}\}\,dh\\
  &\quad=\eta2^{j(k+1)}\int_0^{h_j}h^{-\eta}\,dh
  +\eta2^{-jm}\int_{h_j}^1h^{-\eta-1}\,dh\\
  &\quad=\frac{\eta}{1-\eta}2^{j(k+1)}h_j^{1-\eta}
  +2^{-jm}(h_j^{-\eta}-1)\\
  &\quad=\frac1{1-\eta}2^{j\{\eta(k+1)-m(1-\eta)\}}-2^{-jm}\\
  &\quad\le\frac1{1-\eta}2^{j\{\eta(k+1)-m(1-\eta)\}}.
  \end{align*}
  Since $2^{-jm}\le2^{j\{\eta(k+1)-m(1-\eta)\}}$, it follows that
  \[
  \E\left[\int_{\Dl_j}\bigl\{1+(y-x)^{-\eta}\bigr\}\,d\sfD_{k,n}(x,y)\right]
  \le\left(2+\frac1{1-\eta}\right)C_m2^{j\{\eta(k+1)-m(1-\eta)\}}.
  \]
  Multiplying by the bound $(1+y)^{\a+1}\le2^{(j+1)(\a+1)}$ on $\Dl_j$ and summing over $j$ yields
  \[
  \sup_{n\ge n_0}\E\left[\int_{\Dl_\fin}g\,d\sfD_{k,n}\right]
  \le C\sum_{j=0}^\infty2^{j\{\a+1+\eta(k+1)-m(1-\eta)\}}
  <\infty
  \]
  by the choice of $m$.

  Finally, extending $g$ by zero to $\Dl\setminus\Dl_\fin$ gives a nonnegative lower semicontinuous function.
  The weak convergence of the mean measures and the Portmanteau theorem therefore imply
  \[
  \int_{\Dl_\fin}g\,d\sfD_k
  \le\liminf_{n\to\infty}\E\left[\int_{\Dl_\fin}g\,d\sfD_{k,n}\right]
  <\infty.
  \]
  This proves~\eqref{eq:g_Pi_kn_Pi_k} and completes the proof.
\end{proof}

We now combine the convergence of persistence diagrams with the convergence of total persistence, obtained from Theorem~\ref{thm:LLN_PD_test} in Corollary~\ref{cor:LLN_Pers}, to prove convergence in the optimal transport distance.
The proof is given in Section~S3.4 of the Supplementary Material~\cite{HKsupp}.

\begin{appendix}
\section{Tail Estimates for Persistence Diagrams}
\label{app:PD_tails}
Throughout this appendix, fix $k\in\N_0$.
We first establish a deterministic estimate for use in the proof of Lemma~\ref{lem:tails}.

For a finite right-continuous filtration $\cX=\{X(t)\}_{t\ge0}$, we define the \emph{appearance time} of a simplex $\sg$ by
\[
\kappa^\cX(\sg):=\inf\{t\ge0\mid\sg\in X(t)\},
\]
with $\inf\emptyset:=\infty$.
For $0\le s<t<\infty$, set
\[
\Dl_{s,t}:=\{(b,d)\in\Dl\mid s<b<d\le t\}.
\]
The following lemma bounds the number of birth--death pairs in $\Dl_{s,t}$ by the number of incident pairs of simplices appearing at distinct times in $(s,t]$.
\begin{lem}\label{lem:bars_in_st}
  Let $\cX=\{X(t)\}_{t\ge0}$ be a finite right-continuous filtration.
  Then, for every $0\le s<t<\infty$,
  \begin{equation}\label{eq:bars_in_st}
  \Dgm_k^\cX(\Dl_{s,t})
  \le\bigl|\{(\sg,\tau)\mid\sg\subset\tau,\,\dim\sg=k,\,\dim\tau=k+1,\,s<\kappa^\cX(\sg)<\kappa^\cX(\tau)\le t\}\bigr|.
  \end{equation}
\end{lem}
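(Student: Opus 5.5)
We plan to use the standard pairing of persistence with a simplexwise filtration. Refine $\cX$ to a simplexwise filtration by ordering the simplices according to appearance time, breaking ties by dimension and then arbitrarily. Ties are harmless, since faces never appear after their cofaces and distinct simplices are added one at a time. The standard persistence algorithm (matrix reduction, or equivalently the pairing theorem of Edelsbrunner--Letscher--Zomorodian) then gives a matching between positive $k$-simplices and negative $(k+1)$-simplices. Each finite interval of $H_k$ of the refined filtration corresponds to a pair $(\sg,\tau)$ with $\dim\sg=k$ and $\dim\tau=k+1$. The birth is $\kappa^\cX(\sg)$ and the death is $\kappa^\cX(\tau)$.

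Collapsing the refined indices back to real times recovers the intervals of $\cX$. Pairs with $\kappa^\cX(\sg)=\kappa^\cX(\tau)$ give zero-length intervals and are discarded. The remaining pairs give exactly the multiset $\{(b_i,d_i)\}$ with $b_i<d_i<\infty$. This follows from uniqueness of the interval decomposition, since the persistence module of $\cX$ is the restriction of the refined module to the indices where each time value is last attained. Consequently, $\Dgm_k^\cX(\Dl_{s,t})$ equals the number of matched pairs $(\sg,\tau)$ with $s<\kappa^\cX(\sg)<\kappa^\cX(\tau)\le t$.

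The main obstacle is the following point. The paired $\tau$ need not contain $\sg$: the pivot of the reduced column of $\tau$ is some $k$-simplex $\sg$, but only in the reduced boundary. The fix is to choose a different injection into incident pairs. For each counted interval, let $\tau$ be its killing $(k+1)$-simplex. The reduced column of $\tau$ is $\partial\tau$ plus boundaries of earlier $(k+1)$-simplices. Its pivot $\sg$ is the latest $k$-simplex in its support, and this column is a cycle born at $\sg$.

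We claim that among the $k$-faces of $\tau$ there is one, $\sg'$, with $\kappa^\cX(\sg')\ge\kappa^\cX(\sg)>s$ and $\kappa^\cX(\sg')<\kappa^\cX(\tau)$. Suppose instead that every face of $\tau$ appeared no later than time $\kappa^\cX(\sg)$, strictly before $\sg$ in the order. Then $\partial\tau$ would be a cycle supported before $\sg$, so the class killed by $\tau$ would be born no later than $\kappa^\cX$ of that support. This is more delicate, so a cleaner route is to work with the elder-rule/cohomological pairing. In that pairing, each negative $(k+1)$-simplex $\tau$ kills the youngest class among those represented by cycles homologous to $\partial\tau$. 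The birth time is then the maximum appearance time over the faces of $\tau$, after taking the minimal representative. Concretely, $\partial\tau$ itself is a cycle homologous to the killed class. By the elder rule, the birth time is at most $\max_{\sg'\subset\tau}\kappa^\cX(\sg')$, and at least $\kappa^\cX(\sg)>s$. Hence the latest $k$-face $\sg'$ of $\tau$ satisfies $s<\kappa^\cX(\sg')$. Moreover, $\kappa^\cX(\sg')<\kappa^\cX(\tau)$, because otherwise $\kappa^\cX(\sg')=\kappa^\cX(\tau)$, which would force the birth time to equal $\kappa^\cX(\tau)$ and give zero length.

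The map $\tau\mapsto(\sg',\tau)$ is injective because each negative $\tau$ kills only one interval. This proves~\eqref{eq:bars_in_st}.
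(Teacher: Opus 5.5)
There is a genuine gap at the step ``$\kappa^\cX(\sg')<\kappa^\cX(\tau)$, because otherwise the birth time would equal $\kappa^\cX(\tau)$''. Your elder-rule argument gives only $b\le\kappa^\cX(\sg')$ for the latest $k$-face $\sg'$ of the killing simplex $\tau$. The equality $\kappa^\cX(\sg')=\kappa^\cX(\tau)$ is perfectly compatible with $b<\kappa^\cX(\tau)$.

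Worse, with arbitrary tie-breaking the killing simplex may have no $k$-face appearing in $(s,\kappa^\cX(\tau))$ at all. Take $k=1$ and the following filtration:
\begin{itemize}
\item at time $0$: the vertices $a,b,c,d$ and the edges $ab,bc,cd$;
\item at time $1$: the edge $ad$;
\item at time $2$: the edge $ac$ and the triangles $abc$ and $acd$.
\end{itemize}
Then $\Dgm_1^\cX=\dl_{(1,2)}$, which lies in $\Dl_{0,2}$.

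Suppose your refinement places $acd$ before $abc$, and orient by $a<b<c<d$. The column of $acd$ has pivot $ac$, which gives a zero-length pair. The column of $abc$ then reduces to $\partial(abc)+\partial(acd)=ab+bc+cd-ad$, with pivot $ad$, so $abc$ kills $[1,2)$. The faces of $abc$ are $ab$ and $bc$ (time $0$) and $ac$ (time $2$). So neither your choice $\sg'=ac$ nor any other face of $\tau$ gives a pair counted on the right-hand side of~\eqref{eq:bars_in_st} with $s=0$, $t=2$. The inequality still holds in this example, via the pair $(ad,acd)$, but not through the killing simplex.

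So your remark that ties are harmless is exactly where the argument breaks. When several simplices appear at the same time, the killing simplex of an interval is not canonical. It need not be incident to any $k$-simplex appearing strictly between $s$ and the death time.

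To repair this, you need a count that does not attach each interval to its killing simplex. The paper uses ranks. Let $A$ be the submatrix of the boundary matrix whose rows and columns are the $k$- and $(k+1)$-simplices appearing in $(s,t]$, ordered by appearance time. Let $A_{u_i,u_i}$ be the diagonal blocks of simplices appearing at the same time $u_i$. Then
\begin{equation*}
\Dgm_k^\cX(\Dl_{s,t})=\rank A-\sum_i\rank A_{u_i,u_i}.
\end{equation*}
Since $A$ is block upper triangular with block-diagonal part $D$, subadditivity gives $\rank A-\rank D\le\rank(A-D)$. Finally, $\rank(A-D)$ is at most the number of nonzero entries of $A-D$. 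These entries are precisely the incident pairs with $s<\kappa^\cX(\sg)<\kappa^\cX(\tau)\le t$.
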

\begin{proof}
  If no simplex appears in $(s,t]$, the claim is immediate.
  Let $A$ be the submatrix of the boundary matrix of $X(t)$ whose rows and columns correspond to the $k$-simplices and $(k+1)$-simplices appearing in $(s,t]$, respectively, ordered by appearance time.
  Since $A$ represents $\del_{k+1}^{X(t)\setminus X(s)}$, applying~\eqref{eq:PB} and~\eqref{eq:CK_BX} to $X(s)\subset X(t)$ gives
  \begin{align*}
  \Dgm_k^\cX([0,s]\times(s,t])
  &=\b_k^\cX(s,s)-\b_k^\cX(s,t)\\
  &=\dim B_k(X(t))-\dim B_k(X(s))-\rank A.
  \end{align*}

  Let $u_1<u_2<\cdots<u_m$ be the appearance times in $(s,t]$.
  For each such time $u$, write $X(u-):=\bigcup_{v<u}X(v)$ and let $A_{u,u}$ be the block corresponding to rows and columns appearing at $u$.
  Since $A_{u,u}$ represents $\del_{k+1}^{X(u)\setminus X(u-)}$, the same identities applied to $X(u-)\subset X(u)$ yield
  \begin{align*}
  \Dgm_k^\cX([0,u)\times\{u\})
  &=\b_k(X(u-))-\b_k\bigl(X(u-)\subset X(u)\bigr)\\
  &=\dim B_k(X(u))-\dim B_k(X(u-))-\rank A_{u,u}.
  \end{align*}
  Summing this identity over the appearance times and subtracting the preceding expression for $\Dgm_k^\cX([0,s]\times(s,t])$, we obtain
  \[
  \Dgm_k^\cX(\Dl_{s,t})
  =\rank A-\sum_{i=1}^m\rank A_{u_i,u_i}.
  \]

  Since every face appears no later than the simplex containing it, $A$ has the block form
  \[
  A=
  \begin{pmatrix}
    A_{u_1,u_1} & * & \cdots & *\\
    0 & A_{u_2,u_2} & \cdots & *\\
    \vdots & \ddots & \ddots & \vdots\\
    0 & \cdots & 0 & A_{u_m,u_m}
  \end{pmatrix}.
  \]
  Set $D:=\operatorname{diag}(A_{u_1,u_1},\ldots,A_{u_m,u_m})$.
  The subadditivity of rank gives
  \[
  \rank A-\sum_{i=1}^m\rank A_{u_i,u_i}
  =\rank A-\rank D
  \le\rank(A-D).
  \]
  The last rank is clearly at most the number of nonzero entries of $A-D$, which correspond precisely to the incident pairs counted on the right-hand side of~\eqref{eq:bars_in_st}.
\end{proof}

We now apply this estimate to the time-changed multiparameter complex process.
\begin{proof}[Proof of Lemma~\ref{lem:tails}]
  Fix $\dl>0$ with $2\dl<r_k(t_0)$.
  By Condition~\textup{\ref{cond:C1}}, choose $n_0$ sufficiently large that $n\dl\ge8$ and the normalization $\tbinom n{k+1}q_k(r_k^\rightarrow(1/n))$ is at least $1$ for every $n\ge n_0$.
  All constants in the estimates below are independent of $n\ge n_0$.
  
  \emph{Preliminary estimates.}
  For $0<x\le2n\dl$, set
  \[
  \Theta_{k,n}(x):=\frac{q_k\bigl(r_k^\rightarrow(x/n)\bigr)}{q_k\bigl(r_k^\rightarrow(1/n)\bigr)}.
  \]
  For $x\le1$, monotonicity gives $\Theta_{k,n}(x)\le1$.

  For $x\ge1$, each ratio $p_j(r_k^\rightarrow(x/n))/p_j(r_k^\rightarrow(1/n))$ is at least $1$.
  The product formulas~\eqref{eq:qkt-rkt} and the inequalities $\binom{k+1}{j+1}\le(k+1)\binom{k+1}j$ therefore give
  \[
  \Theta_{k,n}(x)
  \le\prod_{j=0}^{k+1}
  \left(\frac{p_j(r_k^\rightarrow(x/n))}{p_j(r_k^\rightarrow(1/n))}\right)^{(k+1)\binom{k+1}j}
  =\left(\frac{r_k(r_k^\rightarrow(x/n))}{r_k(r_k^\rightarrow(1/n))}\right)^{k+1}
  =x^{k+1},
  \]
  where the last equality follows from Lemma~\ref{lem:theta_Theta}(1).
  Hence
  \begin{equation}\label{eq:Theta_1vx}
  \Theta_{k,n}(x)\le(1\vee x)^{k+1},
  \qquad 0<x\le2n\dl.
  \end{equation}

  By~\cite[Theorem~3.6]{HK2019}, for every $\ell\in\N$ there exists $C_{k,\ell}>0$, depending only on $k$ and $\ell$, such that
  \begin{equation}\label{eq:HK_bound}
  \E[\b_k(X_n(t))]
  \le\binom n{k+1}q_k(t)\min\bigl\{1,C_{k,\ell}(nr_k(t))^{-\ell}\bigr\},
  \qquad t\ge0.
  \end{equation}
  Here, we use the convention $0^{-\ell}:=\infty$.
  Together with~\eqref{eq:Theta_1vx} and Lemma~\ref{lem:theta_Theta}(1), this yields
  \begin{equation}\label{eq:Betti_bound}
  \frac{\E[\b_k(\tilde X_n(x))]}{\binom n{k+1}q_k\bigl(r_k^\rightarrow(1/n)\bigr)}
  \le C_{k,\ell} x^{k+1-\ell},
  \qquad 1\le x\le2n\dl.
  \end{equation}

  For a fixed incident pair $\sg\subset\tau$ of dimensions $k$ and $k+1$, define
  \[
  U_\sg:=\max_{\emptyset\ne\rho\subset\sg}w_\rho,
  \qquad
  W_{\sg,\tau}:=\max_{\substack{\emptyset\ne\rho\subset\tau\colon\\\rho\not\subset\sg}}w_\rho.
  \]
  Here, $w_\rho$ is the random weight appearing in~\eqref{eq:def_MCP}.
  These variables are independent, with distribution functions $q_k$ and $r_k$, respectively, and the appearance times of $\sg$ and $\tau$ in $\cX_n$ are $U_\sg$ and $\max\{U_\sg,W_{\sg,\tau}\}$.
  For $0<x<y\le2n\dl$, put $s_n=r_k^\rightarrow(x/n)$ and $t_n=r_k^\rightarrow(y/n)$.
  Distinct appearance times in $(x,y]$ in $\tilde\cX_n$ require $s_n<U_\sg<W_{\sg,\tau}\le t_n$, an event of probability at most
  \[
  \bigl(q_k(t_n)-q_k(s_n)\bigr)\bigl(r_k(t_n)-r_k(s_n)\bigr).
  \]
  Since there are $(k+2)\binom n{k+2}$ incident pairs and $r_k(t_n)-r_k(s_n)=(y-x)/n$, Lemma~\ref{lem:bars_in_st} gives
  \begin{equation}\label{eq:bars_in_xy}
  \E[\sfD_{k,n}(\Dl_{x,y})]
  \le(y-x)\bigl(\Theta_{k,n}(y)-\Theta_{k,n}(x)\bigr),
  \qquad 0<x<y\le2n\dl,
  \end{equation}
  where we used $(k+2)\binom n{k+2}/(n\binom n{k+1})\le1$.
    
  \smallskip
  \emph{Proof of~\eqref{eq:d_tail}.}
  Fix $m\in\N$ and take $\ell=2k+2m+3$.
  We claim that there exists a constant $C_m>0$, independent of $n\ge n_0$, such that
  \begin{equation}\label{eq:b_tail}
  \E[\sfD_{k,n}(\{(x,y)\in\Dl\mid x>R\})]
  \le C_mR^{-m}
  \quad\text{for every $1\le R\le n\dl$.}
  \end{equation}
  We first deduce~\eqref{eq:d_tail} from this estimate.
  For $1\le R\le n\dl$, the quadrant identity~\eqref{eq:PD-PB} and~\eqref{eq:Betti_bound} give
  \begin{align*}
  \E[\sfD_{k,n}(\{(x,y)\in\Dl\mid y>R\})]
  &=\E[\sfD_{k,n}(\{(x,y)\in\Dl\mid x>R\})]
  +\frac{\E[\b_k(\tilde X_n(R))]}{\binom n{k+1}q_k\bigl(r_k^\rightarrow(1/n)\bigr)}\\
  &\le C_mR^{-m}+C_{k,\ell}R^{k+1-\ell}.
  \end{align*}
  Since $k+1-\ell=-k-2m-2\le-m$ and $R\ge1$, this proves~\eqref{eq:d_tail} for $1\le R\le n\dl$ after adjusting $C_m$.
  For $n\dl<R<n$, monotonicity and the estimate just obtained at $R=n\dl$ yield
  \[
  \E[\sfD_{k,n}(\{(x,y)\in\Dl\mid y>R\})]
  \le C_m(n\dl)^{-m}
  \le C_m\dl^{-m}R^{-m}.
  \]
  Since $\dl$ is fixed, the factor $\dl^{-m}$ can be absorbed into $C_m$.
  For $R\ge n$, the mass is zero because $\tilde X_n(n)=K(n)$ and we use reduced homology.
  Thus,~\eqref{eq:d_tail} follows from~\eqref{eq:b_tail}.

  It remains to prove~\eqref{eq:b_tail}.
  Fix $1\le R\le n\dl$.
  We control points born in $(R,n\dl]$ and after $n\dl$ separately.
  
  We first control points born in $(R,n\dl]$.
  For any $1\le L<n\dl$, partition $[L,\min\{2L,n\dl\}]$ into $N=\lceil L^{k+m+2}\rceil$ equal intervals with endpoints $x_0<\cdots<x_N$.
  Each interval has length at most $L^{-k-m-1}$.
  A birth--death pair with birth time in $(x_{i-1},x_i]$ either lies in $\Dl_{x_{i-1},x_i}$ or is still alive at $x_i$ and hence counted by $\b_k(\tilde X_n(x_i))$.
  Thus,~\eqref{eq:Theta_1vx}--\eqref{eq:bars_in_xy} give
  \begin{align*}
  &\E[\sfD_{k,n}(\{(x,y)\in\Dl\mid L<x\le\min\{2L,n\dl\}\})]\\
  &\quad\le\sum_{i=1}^N\Bigl\{(x_i-x_{i-1})\bigl(\Theta_{k,n}(x_i)-\Theta_{k,n}(x_{i-1})\bigr)+C_{k,\ell}x_i^{k+1-\ell}\Bigr\}\\
  &\quad\le L^{-k-m-1}\bigl(\Theta_{k,n}(x_N)-\Theta_{k,n}(x_0)\bigr)+C_{k,\ell}NL^{k+1-\ell}\\
  &\quad\le(2^{k+1}+2C_{k,\ell})L^{-m}.
  \end{align*}
  Here, we used $x_N\le2L$, $N\le2L^{k+m+2}$, and $\ell=2k+2m+3$.
  For $1\le R\le n\dl$, the intervals $(2^aR,\min\{2^{a+1}R,n\dl\}]$, indexed by $a\in\N_0$ satisfying $2^aR<n\dl$, form a disjoint partition of $(R,n\dl]$.
  Applying the preceding estimate with $L=2^aR$ and summing over these indices, we obtain
  \begin{equation}\label{eq:b_tail_early}
  \begin{aligned}
  \E[\sfD_{k,n}(\{(x,y)\in\Dl\mid R<x\le n\dl\})]
  &\le(2^{k+1}+2C_{k,\ell})R^{-m}\sum_{a=0}^{\infty}2^{-am}\\
  &\le2(2^{k+1}+2C_{k,\ell})R^{-m}.
  \end{aligned}
  \end{equation}
  This controls points born in $(R,n\dl]$.
  
  We next control points born after $n\dl$ by comparing them with those in the original filtration:
  \begin{equation}\label{eq:Pi_kn-Dgm}
  \sfD_{k,n}(\{(x,y)\in\Dl\mid x>n\dl\})
  \le\frac{\Dgm_k^{\cX_n}(\{(b,d)\in\Dl\mid b>r_k^\rightarrow(\dl)\})}{\binom n{k+1}q_k\bigl(r_k^\rightarrow(1/n)\bigr)}.
  \end{equation}
  Indeed, applying the time change to the interval decomposition replaces each interval $[b,d)$ by
  \[
  \{x\ge0\mid b\le r_k^\rightarrow(x/n)<d\}.
  \]
  Since $r_k^\rightarrow$ is nondecreasing, whenever this set is nonempty, its left endpoint, and hence the birth coordinate of the resulting point, is
  \[
  \inf\{x\ge0\mid b\le r_k^\rightarrow(x/n)\}.
  \]
  If $b\le r_k^\rightarrow(\dl)$, this coordinate is at most $n\dl$.
  Thus, only original pairs born after $r_k^\rightarrow(\dl)$ can contribute points born after $n\dl$, proving the inequality.

  We estimate the numerator of~\eqref{eq:Pi_kn-Dgm} using the independent simplex weights $\{w_\rho\}$ defining the original filtration $\cX_n$.
  Every original birth time greater than $r_k^\rightarrow(\dl)$ is the appearance time of a $k$-simplex and hence equals a finite weight $w_\rho$ of some simplex $\rho$ of dimension at most $k$.
  All classes with positive lifetime born at that time are alive in $X_n(w_\rho)$, so their number is at most $\b_k(X_n(w_\rho))$.
  Consequently,
  \[
  \E[\Dgm_k^{\cX_n}(\{(b,d)\in\Dl\mid b>r_k^\rightarrow(\dl)\})]
  \le\sum_{j=0}^k\ \sum_{\rho\in F_j(K(n))}\E\bigl[\1_{\{r_k^\rightarrow(\dl)<w_\rho<\infty\}}\b_k(X_n(w_\rho))\bigr].
  \]
  Fix $0\le j\le k$ and a $j$-simplex $\rho\in K(n)$.
  Conditioning first on all weights other than $w_\rho$ and then integrating with respect to its distribution, we obtain
  \begin{align*}
  \E\bigl[\1_{\{r_k^\rightarrow(\dl)<w_\rho<\infty\}}\b_k(X_n(w_\rho))\bigr]
  &=\int_{(r_k^\rightarrow(\dl),\infty)}\frac{\E[\b_k(X_n(t))\1_{\{w_\rho\le t\}}]}{p_j(t)}\,dp_j(t)\\
  &\le\int_{(r_k^\rightarrow(\dl),\infty)}\frac{\E[\b_k(X_n(t))]}{p_j(t)}\,dp_j(t)\\
  &\le C_{k,\ell}n^{k+1-\ell}\int_{(r_k^\rightarrow(\dl),\infty)}\frac{q_k(t)}{p_j(t)r_k(t)^\ell}\,dp_j(t)\\
  &\le\frac{C_{k,\ell}}{p_j(r_k^\rightarrow(\dl))\dl^\ell}\,n^{k+1-\ell}.
  \end{align*}
  Here, $dp_j$ denotes the Lebesgue--Stieltjes probability measure associated with $p_j$.
  For the first equality, note that $X_n(t)$ depends on $w_\rho$ only through $\1_{\{w_\rho\le t\}}$, which equals $1$ at $t=w_\rho$.
  The penultimate inequality follows from~\eqref{eq:HK_bound} and $\binom n{k+1}\le n^{k+1}$, while the last uses $q_k(t)\le1$, $r_k(t)\ge\dl$, and $p_j(t)\ge p_j(r_k^\rightarrow(\dl))>0$ on the integration domain.

  Combining the above estimates yields
  \[
  \E[\Dgm_k^{\cX_n}(\{(b,d)\in\Dl\mid b>r_k^\rightarrow(\dl)\})]
  \le C_mn^{2k+2-\ell}
  \]
  for some constant $C_m>0$ independent of $n$ and $R$.
  Since $\ell=2k+2m+3$ and $\binom n{k+1}q_k(r_k^\rightarrow(1/n))\ge1$ by the choice of $n_0$, it follows from~\eqref{eq:Pi_kn-Dgm} that
  \[
  \E[\sfD_{k,n}(\{(x,y)\in\Dl\mid x>n\dl\})]
  \le C_mn^{-2m-1}
  \le C_mR^{-m},
  \]
  where we used $R\le n\dl<n$.
  Together with~\eqref{eq:b_tail_early}, this proves~\eqref{eq:b_tail} after increasing $C_m$ if necessary.

  \smallskip
  \emph{Proof of~\eqref{eq:diagonal_tail}.}
  Fix $R\ge1$ and $0<h\le1$.
  Among the points satisfying $y\le R$ and $y-x\le h$, we consider those with $y\le n\dl$ and those with $y>n\dl$ separately.
  The latter case occurs only when $R>n\dl$.
  
  We first consider the former case and set $S:=R\wedge n\dl$.
  Every point with $y\le S$ and $y-x\le h$ either has $y\le2h$ or belongs to $\Dl_{ih,(i+2)h}$ for some $i\in\N$ with $ih<S$.
  To control the first set of points, observe that the number of birth--death pairs with death time at most $\eps$ is bounded by the number of $(k+1)$-simplices in $\tilde X_n(\eps)$.
  Thus, for $0<\eps\le2$,
  \begin{equation}\label{eq:small_death}
  \begin{aligned}
  \E[\sfD_{k,n}(\{(x,y)\in\Dl\mid y\le\eps\})]
  &\le\frac{\E[f_{k+1}(\tilde X_n(\eps))]}{\binom n{k+1}q_k(r_k^\rightarrow(1/n))}\\
  &=\frac{\binom n{k+2}q_{k+1}(r_k^\rightarrow(\eps/n))}{\binom n{k+1}q_k(r_k^\rightarrow(1/n))}\\
  &=\frac{n-k-1}{n(k+2)}\,\eps\Theta_{k,n}(\eps)
  \le\frac{\eps}{k+2}(1\vee\eps)^{k+1}.
  \end{aligned}
  \end{equation}
  Here, we used $q_{k+1}=q_kr_k$, $r_k(r_k^\rightarrow(\eps/n))=\eps/n$, and~\eqref{eq:Theta_1vx}.
  Since $S+2h\le n\dl+2\le2n\dl$, applying~\eqref{eq:small_death} with $\eps=2h$ and~\eqref{eq:bars_in_xy} to the remaining sets of points gives
  \begin{equation}\label{eq:below_S}
  \begin{aligned}
  \E[\sfD_{k,n}(\{(x,y)\in\Dl\mid y\le S,\,y-x\le h\})]
  &\le Ch+2h\sum_{\substack{i\in\N\colon\\ih<S}}
  \bigl(\Theta_{k,n}((i+2)h)-\Theta_{k,n}(ih)\bigr)\\
  &\le Ch+4h\Theta_{k,n}(S+2h)\\
  &\le Ch(1+R)^{k+1}.
  \end{aligned}
  \end{equation}
  Here, $C>0$ denotes a constant independent of $n$, $R$, and $h$.
  The second inequality follows by telescoping, and the last follows from~\eqref{eq:Theta_1vx} and $S+2h\le R+2$.
  
  We next consider the latter case: points satisfying $n\dl<y\le R$ and $y-x\le h$.
  Cover these points by the sets $\Dl_{ih,(i+2)h}$ with $i\in\N$ and $(i+2)h>n\dl$, and apply Lemma~\ref{lem:bars_in_st}.
  Each incident pair is counted at most twice.
  Every counted pair has distinct appearance times at most $2h$ apart, with the earlier time greater than $n\dl-2h$.
  We claim that, for every incident pair $\sg\subset\tau$ with $\dim\sg=k$ and $\dim\tau=k+1$,
  \begin{equation}\label{eq:prob_incident}
  \P\left(
  0<\kappa^{\tilde\cX_n}(\tau)-\kappa^{\tilde\cX_n}(\sg)\le2h,\,
  \kappa^{\tilde\cX_n}(\sg)>n\dl-2h
  \right)
  \le C\frac hn,
  \end{equation}
  with $C>0$ independent of $n$, $h$, and the pair.

  Once~\eqref{eq:prob_incident} is proved, summing over the $(k+2)\binom n{k+2}$ incident pairs, each counted at most twice, gives
  \[
  \E[\sfD_{k,n}(\{(x,y)\in\Dl\mid n\dl<y\le R,\,y-x\le h\})]
  \le Chn^{k+1}
  \le ChR^{k+1}.
  \]
  Here, we used $\binom n{k+1}q_k(r_k^\rightarrow(1/n))\ge1$ by the choice of $n_0$ and $R>n\dl$, increasing $C$ if necessary.
  Together with~\eqref{eq:below_S}, this establishes~\eqref{eq:diagonal_tail}.

  It remains to prove~\eqref{eq:prob_incident}.
  We first express the appearance times in terms of the original weights, allowing possible jumps of $r_k$.
  For $t\in(0,\infty)$, write $r_k(t-):=\lim_{s\uparrow t}r_k(s)$, and set $r_k(0-):=0$ and $r_k(\infty-):=\lim_{t\to\infty}r_k(t)$.
  Since
  \[
  t\le r_k^\rightarrow(x/n)
  \quad\Longleftrightarrow\quad
  r_k(t-)\le x/n
  \]
  for every $t\in[0,\infty]$ and $x\ge0$, we obtain
  \begin{equation}\label{eq:relate_kappa}
  \kappa^{\tilde\cX_n}(\sg)=nr_k(\kappa^{\cX_n}(\sg)-)
  \end{equation}
  for every $\sg\in K(n)$.
  The left limit is essential here because $r_k$ need not be continuous away from the origin.
  
  Fix an incident pair $\sg\subset\tau$, and recall that $U_\sg$ and $W_{\sg,\tau}$ are independent, with distribution functions $q_k$ and $r_k$, respectively.
  On the event in~\eqref{eq:prob_incident}, the distinct appearance times imply $U_\sg<W_{\sg,\tau}$.
  By~\eqref{eq:relate_kappa}, this event therefore implies
  \[
  0<r_k(W_{\sg,\tau}-)-r_k(U_\sg-)\le\frac{2h}n,
  \qquad
  r_k(U_\sg-)>\dl-\frac{2h}n.
  \]
  In particular,
  \[
  r_k(W_{\sg,\tau}-)-\frac{2h}n
  >r_k(U_\sg-)-\frac{2h}n
  >\dl-\frac{4h}n
  \ge\frac\dl2,
  \]
  since $n\dl\ge8$ and $h\le1$.

  We estimate the probability in~\eqref{eq:prob_incident} by comparing the Lebesgue--Stieltjes measures associated with $q_k$ and $r_k$.
  For $r_k^\rightarrow(\dl/4)<s<t\le\infty$, we have $r_k(t)\ge r_k(s)\ge\dl/4$, so each ratio $p_j(s)/p_j(t)$ lies in $(0,1]$.
  Using again the product formulas~\eqref{eq:qkt-rkt} and $\binom{k+1}{j+1}\le(k+1)\binom{k+1}j$, we obtain
  \[
  \frac{q_k(s)}{q_k(t)}
  \ge\prod_{j=0}^{k+1}\left(\frac{p_j(s)}{p_j(t)}\right)^{(k+1)\binom{k+1}j}
  =\left(\frac{r_k(s)}{r_k(t)}\right)^{k+1}.
  \]
  Consequently,
  \begin{align*}
  q_k(t)-q_k(s)
  =q_k(t)\left(1-\frac{q_k(s)}{q_k(t)}\right)
  &\le1-\left(\frac{r_k(s)}{r_k(t)}\right)^{k+1}\\
  &\le(k+1)\frac{r_k(t)-r_k(s)}{r_k(t)}
  \le\frac{4(k+1)}\dl\bigl(r_k(t)-r_k(s)\bigr).
  \end{align*}
  Here, we used $q_k(t)\le1$ and $1-u^{k+1}\le(k+1)(1-u)$ for $u\in[0,1]$.
  Therefore, the corresponding Lebesgue--Stieltjes measures satisfy
  \begin{equation}\label{eq:LS_qk-rk}
  dq_k\le\frac{4(k+1)}\dl\,dr_k
  \quad\text{on $(r_k^\rightarrow(\dl/4),\infty]$.}
  \end{equation}

  We now fix a value $t$ of $W_{\sg,\tau}$ satisfying
  \[
  r_k(t-)-\frac{2h}n\ge\frac\dl2.
  \]
  When $W_{\sg,\tau}=t$, the event in~\eqref{eq:prob_incident} requires
  \[
  U_\sg<t,
  \qquad
  r_k(t-)-\frac{2h}n\le r_k(U_\sg-)<r_k(t-).
  \]
  In particular, $U_\sg>r_k^\rightarrow(\dl/4)$, since otherwise monotonicity would give
  \[
  r_k(U_\sg-)\le r_k(r_k^\rightarrow(\dl/4))=\dl/4.
  \]
  Set
  \[
  I_t:=\left\{s\in[0,t)\relmiddle|r_k(t-)-\frac{2h}n\le r_k(s-)<r_k(t-)\right\}.
  \]
  Thus, applying~\eqref{eq:LS_qk-rk} on $I_t$ yields
  \begin{equation}\label{eq:prob_given_W}
  \begin{aligned}
  \P\biggl(U_\sg<t,\,r_k(t-)-\frac{2h}n\le r_k(U_\sg-)<r_k(t-)\biggr)
  &=\int_{I_t}dq_k(s)\\
  &\le\frac{4(k+1)}\dl\int_{I_t}dr_k(s)\\
  &\le\frac{8(k+1)}\dl\frac hn.
  \end{aligned}
  \end{equation}
  To justify the last inequality, note that $I_t$ is an interval, since $s\mapsto r_k(s-)$ is nondecreasing.
  If $I_t\ne\emptyset$, then
  \[
  \int_{I_t}dr_k(s)
  =\sup_{\substack{s_1,s_2\in I_t\colon\\s_1\le s_2}}
  \bigl(r_k(s_2)-r_k(s_1-)\bigr)
  \le r_k(t-)-\left(r_k(t-)-\frac{2h}n\right)
  =\frac{2h}n.
  \]
  The equality follows by exhausting $I_t$ by closed intervals contained in it, and the inequality uses $r_k(s_1-)\ge r_k(t-)-2h/n$ and $r_k(s_2)\le r_k(t-)$.
  The bound is immediate if $I_t=\emptyset$.

  Values of $W_{\sg,\tau}$ with $r_k(W_{\sg,\tau}-)-2h/n<\dl/2$ cannot contribute to the event in~\eqref{eq:prob_incident}.
  By independence, integrating~\eqref{eq:prob_given_W} with respect to the distribution of $W_{\sg,\tau}$ therefore proves~\eqref{eq:prob_incident}.
\end{proof}

\end{appendix}

\begin{funding}
MH is partially supported by JSPS KAKENHI Grant Number 25K07056.
SK is partially supported by EPSRC Research Grant EP/Y008642/1.
\end{funding}

\begin{supplement}
\stitle{Supplement to ``Limit Theorems for Persistent Homology of Multiparameter Complex Processes''}
\sdescription{Essential self-adjointness, simplifications of the limiting constant, and auxiliary proofs for convergence of persistence diagrams.}
\end{supplement}

\bibliographystyle{spr-ims-number}
\bibliography{references}

@article {AL2007,
    AUTHOR = {Aldous, David and Lyons, Russell},
     TITLE = {Processes on unimodular random networks},
   JOURNAL = {Electron. J. Probab.},
  FJOURNAL = {Electronic Journal of Probability},
    VOLUME = {12},
      YEAR = {2007},
     PAGES = {1454--1508},
      NOTE = {No. 54},
      ISSN = {1083-6489},
   MRCLASS = {60C05 (05C80 60G50)},
  MRNUMBER = {2354165},
MRREVIEWER = {Jean-Fran\c cois\ Delmas},
       DOI = {10.1214/EJP.v12-463},
       URL = {https://doi.org/10.1214/EJP.v12-463},
}

@incollection {AS2004,
    AUTHOR = {Aldous, David and Steele, J. Michael},
     TITLE = {The objective method: probabilistic combinatorial optimization
              and local weak convergence},
 BOOKTITLE = {Probability on discrete structures},
    SERIES = {Encyclopaedia Math. Sci.},
    VOLUME = {110},
     PAGES = {1--72},
 PUBLISHER = {Springer, Berlin},
      YEAR = {2004},
      ISBN = {3-540-00845-4},
   MRCLASS = {60C05 (05C80 60F05)},
  MRNUMBER = {2023650},
MRREVIEWER = {Ralph\ Neininger},
       DOI = {10.1007/978-3-662-09444-0\_1},
}

@article {ALLM2013,
    AUTHOR = {Aronshtam, Lior and Linial, Nathan and {\polishL}uczak, Tomasz and
              Meshulam, Roy},
     TITLE = {Collapsibility and vanishing of top homology in random
              simplicial complexes},
   JOURNAL = {Discrete Comput. Geom.},
  FJOURNAL = {Discrete \& Computational Geometry. An International Journal
              of Mathematics and Computer Science},
    VOLUME = {49},
      YEAR = {2013},
    NUMBER = {2},
     PAGES = {317--334},
      ISSN = {0179-5376,1432-0444},
   MRCLASS = {55U10 (05C80 60D05)},
  MRNUMBER = {3017914},
MRREVIEWER = {Michael\ S.\ Farber},
       DOI = {10.1007/s00454-012-9483-8},
       URL = {https://doi.org/10.1007/s00454-012-9483-8},
}

@article {BS2001,
    AUTHOR = {Benjamini, Itai and Schramm, Oded},
     TITLE = {Recurrence of distributional limits of finite planar graphs},
   JOURNAL = {Electron. J. Probab.},
  FJOURNAL = {Electronic Journal of Probability},
    VOLUME = {6},
      YEAR = {2001},
      NOTE = {No. 23, 13 pp.},
      ISSN = {1083-6489},
   MRCLASS = {82B41 (05C80 52C26 60G50)},
  MRNUMBER = {1873300},
MRREVIEWER = {Olle\ H\"aggstr\"om},
       DOI = {10.1214/EJP.v6-96},
       URL = {https://doi.org/10.1214/EJP.v6-96},
}

@article {BL2010,
    AUTHOR = {Bordenave, Charles and Lelarge, Marc},
     TITLE = {Resolvent of large random graphs},
   JOURNAL = {Random Structures Algorithms},
  FJOURNAL = {Random Structures \& Algorithms},
    VOLUME = {37},
      YEAR = {2010},
    NUMBER = {3},
     PAGES = {332--352},
      ISSN = {1042-9832,1098-2418},
   MRCLASS = {60F99 (05C80 60B20 60J80)},
  MRNUMBER = {2724665},
       DOI = {10.1002/rsa.20313},
       URL = {https://doi.org/10.1002/rsa.20313},
}

@article {BLS2011,
    AUTHOR = {Bordenave, Charles and Lelarge, Marc and Salez, Justin},
     TITLE = {The rank of diluted random graphs},
   JOURNAL = {Ann. Probab.},
  FJOURNAL = {The Annals of Probability},
    VOLUME = {39},
      YEAR = {2011},
    NUMBER = {3},
     PAGES = {1097--1121},
      ISSN = {0091-1798,2168-894X},
   MRCLASS = {05C80 (05C50 15B52 47A10 60B20)},
  MRNUMBER = {2789584},
MRREVIEWER = {Marianna\ Bolla},
       DOI = {10.1214/10-AOP567},
       URL = {https://doi.org/10.1214/10-AOP567},
}

@article {CEH2007,
    AUTHOR = {Cohen-Steiner, David and Edelsbrunner, Herbert and Harer,
              John},
     TITLE = {Stability of persistence diagrams},
   JOURNAL = {Discrete Comput. Geom.},
  FJOURNAL = {Discrete \& Computational Geometry. An International Journal
              of Mathematics and Computer Science},
    VOLUME = {37},
      YEAR = {2007},
    NUMBER = {1},
     PAGES = {103--120},
      ISSN = {0179-5376,1432-0444},
   MRCLASS = {68U05 (55N05)},
  MRNUMBER = {2279866},
       DOI = {10.1007/s00454-006-1276-5},
       URL = {https://doi.org/10.1007/s00454-006-1276-5},
}

@article {CF2016a,
    AUTHOR = {Costa, A. and Farber, M.},
     TITLE = {Large random simplicial complexes, {I}},
   JOURNAL = {J. Topol. Anal.},
  FJOURNAL = {Journal of Topology and Analysis},
    VOLUME = {8},
      YEAR = {2016},
    NUMBER = {3},
     PAGES = {399--429},
      ISSN = {1793-5253,1793-7167},
   MRCLASS = {55U10 (05C80 57M20 60D05)},
  MRNUMBER = {3509567},
MRREVIEWER = {Dirk\ Sch\"utz},
       DOI = {10.1142/S179352531650014X},
       URL = {https://doi.org/10.1142/S179352531650014X},
}

@incollection {CF2016b,
    AUTHOR = {Costa, Armindo and Farber, Michael},
     TITLE = {Random simplicial complexes},
 BOOKTITLE = {Configuration spaces},
    SERIES = {Springer INdAM Ser.},
    VOLUME = {14},
     PAGES = {129--153},
 PUBLISHER = {Springer, [Cham]},
      YEAR = {2016},
      ISBN = {978-3-319-31579-9; 978-3-319-31580-5},
   MRCLASS = {57Q05 (60D05)},
  MRNUMBER = {3615731},
MRREVIEWER = {D.\ Yogeshwaran},
}

@article {DL2021,
    AUTHOR = {Divol, Vincent and Lacombe, Th\'{e}o},
     TITLE = {Understanding the topology and the geometry of the space of
              persistence diagrams via optimal partial transport},
   JOURNAL = {J. Appl. Comput. Topol.},
  FJOURNAL = {Journal of Applied and Computational Topology},
    VOLUME = {5},
      YEAR = {2021},
    NUMBER = {1},
     PAGES = {1--53},
      ISSN = {2367-1726,2367-1734},
   MRCLASS = {62R40 (49Q22 55N31)},
  MRNUMBER = {4224153},
       DOI = {10.1007/s41468-020-00061-z},
       URL = {https://doi.org/10.1007/s41468-020-00061-z},
}

@article {ELZ2002,
    AUTHOR = {Edelsbrunner, Herbert and Letscher, David and Zomorodian,
              Afra},
     TITLE = {Topological persistence and simplification},
      NOTE = {Discrete and computational geometry and graph drawing
              (Columbia, SC, 2001)},
   JOURNAL = {Discrete Comput. Geom.},
  FJOURNAL = {Discrete \& Computational Geometry. An International Journal
              of Mathematics and Computer Science},
    VOLUME = {28},
      YEAR = {2002},
    NUMBER = {4},
     PAGES = {511--533},
      ISSN = {0179-5376,1432-0444},
   MRCLASS = {52B55 (65D18)},
  MRNUMBER = {1949898},
MRREVIEWER = {H.\ W.\ Guggenheimer},
       DOI = {10.1007/s00454-002-2885-2},
       URL = {https://doi.org/10.1007/s00454-002-2885-2},
}

@incollection {El2010,
    AUTHOR = {Elek, G\'abor},
     TITLE = {Betti numbers are testable},
 BOOKTITLE = {Fete of combinatorics and computer science},
    SERIES = {Bolyai Soc. Math. Stud.},
    VOLUME = {20},
     PAGES = {139--149},
 PUBLISHER = {J\'anos Bolyai Math. Soc., Budapest},
      YEAR = {2010},
      ISBN = {978-963-9453-12-8; 978-3-642-13579-8},
   MRCLASS = {05C80 (05E45)},
  MRNUMBER = {2797963},
       DOI = {10.1007/978-3-642-13580-4\_6},
}

@article {FG2010,
    AUTHOR = {Figalli, Alessio and Gigli, Nicola},
     TITLE = {A new transportation distance between non-negative measures,
              with applications to gradients flows with {D}irichlet boundary
              conditions},
   JOURNAL = {J. Math. Pures Appl. (9)},
  FJOURNAL = {Journal de Math\'{e}matiques Pures et Appliqu\'{e}es.
              Neuvi\`eme S\'{e}rie},
    VOLUME = {94},
      YEAR = {2010},
    NUMBER = {2},
     PAGES = {107--130},
      ISSN = {0021-7824,1776-3371},
   MRCLASS = {49Q20 (35K10)},
  MRNUMBER = {2665414},
       DOI = {10.1016/j.matpur.2009.11.005},
       URL = {https://doi.org/10.1016/j.matpur.2009.11.005},
}

@article {FMT2023,
    AUTHOR = {Fraiman, Nicolas and Mukherjee, Sayan and Thoppe, Gugan},
     TITLE = {The shadow knows: empirical distributions of minimum spanning
              acycles and persistence diagrams of random complexes},
   JOURNAL = {Discrete Anal.},
  FJOURNAL = {Discrete Analysis},
      YEAR = {2023},
      NOTE = {Paper No. 2, 18 pp.},
      ISSN = {2397-3129},
   MRCLASS = {60C05 (05C80 60G57)},
  MRNUMBER = {MR4583993},
MRREVIEWER = {Bogdan\ S.\ Chlebus},
       DOI = {10.19086/da.73323},
       URL = {https://doi.org/10.19086/da.73323},
}

@article {HK2019,
    AUTHOR = {Hino, Masanori and Kanazawa, Shu},
     TITLE = {Asymptotic behavior of lifetime sums for random simplicial
              complex processes},
   JOURNAL = {J. Math. Soc. Japan},
  FJOURNAL = {Journal of the Mathematical Society of Japan},
    VOLUME = {71},
      YEAR = {2019},
    NUMBER = {3},
     PAGES = {765--804},
      ISSN = {0025-5645,1881-1167},
   MRCLASS = {60D05 (05C80 05E45 55U10 60C05)},
  MRNUMBER = {3984242},
       DOI = {10.2969/jmsj/79777977},
       URL = {https://doi.org/10.2969/jmsj/79777977},
}

@article {HS2017,
    AUTHOR = {Hiraoka, Yasuaki and Shirai, Tomoyuki},
     TITLE = {Minimum spanning acycle and lifetime of persistent homology in
              the {L}inial-{M}eshulam process},
   JOURNAL = {Random Structures Algorithms},
  FJOURNAL = {Random Structures \& Algorithms},
    VOLUME = {51},
      YEAR = {2017},
    NUMBER = {2},
     PAGES = {315--340},
      ISSN = {1042-9832,1098-2418},
   MRCLASS = {55N35 (05C80)},
  MRNUMBER = {3683365},
MRREVIEWER = {Michael\ Robinson},
       DOI = {10.1002/rsa.20718},
       URL = {https://doi.org/10.1002/rsa.20718},
}

@article {HST2018,
    AUTHOR = {Hiraoka, Yasuaki and Shirai, Tomoyuki and Trinh, Khanh Duy},
     TITLE = {Limit theorems for persistence diagrams},
   JOURNAL = {Ann. Appl. Probab.},
  FJOURNAL = {The Annals of Applied Probability},
    VOLUME = {28},
      YEAR = {2018},
    NUMBER = {5},
     PAGES = {2740--2780},
      ISSN = {1050-5164,2168-8737},
   MRCLASS = {60G55 (55N20 60B10)},
  MRNUMBER = {3847972},
       DOI = {10.1214/17-AAP1371},
       URL = {https://doi.org/10.1214/17-AAP1371},
}

@article {Kah2009,
    AUTHOR = {Kahle, Matthew},
     TITLE = {Topology of random clique complexes},
   JOURNAL = {Discrete Math.},
  FJOURNAL = {Discrete Mathematics},
    VOLUME = {309},
      YEAR = {2009},
    NUMBER = {6},
     PAGES = {1658--1671},
      ISSN = {0012-365X,1872-681X},
   MRCLASS = {05C80 (05C10 58E05)},
  MRNUMBER = {2510573},
MRREVIEWER = {Klas\ Markstr\"{o}m},
       DOI = {10.1016/j.disc.2008.02.037},
       URL = {https://doi.org/10.1016/j.disc.2008.02.037},
}

@article {Kah2014,
    AUTHOR = {Kahle, Matthew},
     TITLE = {Sharp vanishing thresholds for cohomology of random flag
              complexes},
   JOURNAL = {Ann. of Math. (2)},
  FJOURNAL = {Annals of Mathematics. Second Series},
    VOLUME = {179},
      YEAR = {2014},
    NUMBER = {3},
     PAGES = {1085--1107},
      ISSN = {0003-486X,1939-8980},
   MRCLASS = {05C80 (05C10 05E45 20F65 60D05)},
  MRNUMBER = {3171759},
MRREVIEWER = {Dirk\ Sch\"{u}tz},
       DOI = {10.4007/annals.2014.179.3.5},
       URL = {https://doi.org/10.4007/annals.2014.179.3.5},
}

@article {KHMT2024,
    AUTHOR = {Kanazawa, Shu and Hiraoka, Yasuaki and Miyanaga, Jun and
              Tsunoda, Kenkichi},
     TITLE = {Large deviation principle for persistence diagrams of random
              cubical filtrations},
   JOURNAL = {J. Appl. Comput. Topol.},
  FJOURNAL = {Journal of Applied and Computational Topology},
    VOLUME = {8},
      YEAR = {2024},
    NUMBER = {6},
     PAGES = {1649--1700},
      ISSN = {2367-1726,2367-1734},
   MRCLASS = {55N31 (60D05 60F10)},
  MRNUMBER = {4814516},
MRREVIEWER = {Christian\ Hirsch},
       DOI = {10.1007/s41468-023-00161-6},
       URL = {https://doi.org/10.1007/s41468-023-00161-6},
}

@article {Kan2022,
    AUTHOR = {Kanazawa, Shu},
     TITLE = {Law of large numbers for {B}etti numbers of homogeneous and
              spatially independent random simplicial complexes},
   JOURNAL = {Random Structures Algorithms},
  FJOURNAL = {Random Structures \& Algorithms},
    VOLUME = {60},
      YEAR = {2022},
    NUMBER = {1},
     PAGES = {68--105},
      ISSN = {1042-9832,1098-2418},
   MRCLASS = {05E45 (05C80 55U10 60C05 60F25)},
  MRNUMBER = {4340474},
       DOI = {10.1002/rsa.21015},
       URL = {https://doi.org/10.1002/rsa.21015},
}

@article {KT2025,
    AUTHOR = {Kanazawa, Shu and Trinh, Khanh Duy},
     TITLE = {Central limit theorem for linear eigenvalue statistics of the
              adjacency matrices of random simplicial complexes},
   JOURNAL = {Random Structures Algorithms},
  FJOURNAL = {Random Structures \& Algorithms},
    VOLUME = {66},
      YEAR = {2025},
    NUMBER = {1},
      NOTE = {Paper No. e21257, 42 pp.},
      ISSN = {1042-9832,1098-2418},
   MRCLASS = {05C50 (05C80 60B20 60C05 60F05)},
  MRNUMBER = {4827405},
MRREVIEWER = {Yilun\ Shang},
       DOI = {10.1002/rsa.21257},
       URL = {https://doi.org/10.1002/rsa.21257},
}

@article {LM2006,
    AUTHOR = {Linial, Nathan and Meshulam, Roy},
     TITLE = {Homological connectivity of random 2-complexes},
   JOURNAL = {Combinatorica},
  FJOURNAL = {Combinatorica. An International Journal on Combinatorics and
              the Theory of Computing},
    VOLUME = {26},
      YEAR = {2006},
    NUMBER = {4},
     PAGES = {475--487},
      ISSN = {0209-9683},
   MRCLASS = {55M99 (05C80 55U35)},
  MRNUMBER = {2260850},
MRREVIEWER = {Igor Rivin},
       DOI = {10.1007/s00493-006-0027-9},
       URL = {https://doi.org/10.1007/s00493-006-0027-9},
}

@article {LP2016a,
    AUTHOR = {Linial, Nathan and Peled, Yuval},
     TITLE = {On the phase transition in random simplicial complexes},
   JOURNAL = {Ann. of Math. (2)},
  FJOURNAL = {Annals of Mathematics. Second Series},
    VOLUME = {184},
      YEAR = {2016},
    NUMBER = {3},
     PAGES = {745--773},
      ISSN = {0003-486X,1939-8980},
   MRCLASS = {57M15 (05C80 05C82 05D40 60C05)},
  MRNUMBER = {3549622},
MRREVIEWER = {Michael\ S.\ Farber},
       DOI = {10.4007/annals.2016.184.3.3},
       URL = {https://doi.org/10.4007/annals.2016.184.3.3},
}

@incollection {LP2016b,
    AUTHOR = {Linial, Nathan and Peled, Yuval},
     TITLE = {Random simplicial complexes: around the phase transition},
 BOOKTITLE = {A journey through discrete mathematics},
     PAGES = {543--570},
 PUBLISHER = {Springer, Cham},
      YEAR = {2017},
      ISBN = {978-3-319-44478-9; 978-3-319-44479-6},
   MRCLASS = {05C80 (05D40 60C05 60D05)},
  MRNUMBER = {3726613},
MRREVIEWER = {Yilun\ Shang},
}

@article {MW2009,
    AUTHOR = {Meshulam, R. and Wallach, N.},
     TITLE = {Homological connectivity of random {$k$}-dimensional
              complexes},
   JOURNAL = {Random Structures Algorithms},
  FJOURNAL = {Random Structures \& Algorithms},
    VOLUME = {34},
      YEAR = {2009},
    NUMBER = {3},
     PAGES = {408--417},
      ISSN = {1042-9832,1098-2418},
   MRCLASS = {60B15 (05C80 60E10 60E15)},
  MRNUMBER = {2504405},
       DOI = {10.1002/rsa.20238},
       URL = {https://doi.org/10.1002/rsa.20238},
}

@misc{Me2025,
  author        = {M\'esz\'aros, Andr\'as},
  title         = {The persistent homology of the {L}inial--{M}eshulam process},
  year          = {2025},
  eprint        = {2511.18338},
  archivePrefix = {arXiv},
  primaryClass  = {math.PR},
  note   = {arXiv:2511.18338}
}

@book {RS1972,
    AUTHOR = {Reed, Michael and Simon, Barry},
     TITLE = {Methods of modern mathematical physics. {I}. {F}unctional
              analysis},
 PUBLISHER = {Academic Press, New York-London},
      YEAR = {1972},
     PAGES = {xvii+325},
   MRCLASS = {47-02 (81.47)},
  MRNUMBER = {493419},
MRREVIEWER = {P.\ R.\ Chernoff},
}

@article {STY2020,
    AUTHOR = {Skraba, Primoz and Thoppe, Gugan and Yogeshwaran, D.},
     TITLE = {Randomly weighted {$d$}-complexes: minimal spanning acycles
              and persistence diagrams},
   JOURNAL = {Electron. J. Combin.},
  FJOURNAL = {Electronic Journal of Combinatorics},
    VOLUME = {27},
      YEAR = {2020},
    NUMBER = {2},
      NOTE = {Paper No. 2.11, 47 pp.},
      ISSN = {1077-8926},
   MRCLASS = {60C05 (05C80 05E45 60B99 60G70)},
  MRNUMBER = {4245066},
       DOI = {10.37236/8679},
       URL = {https://doi.org/10.37236/8679},
}

@article {ZC2005,
    AUTHOR = {Zomorodian, Afra and Carlsson, Gunnar},
     TITLE = {Computing persistent homology},
   JOURNAL = {Discrete Comput. Geom.},
  FJOURNAL = {Discrete \& Computational Geometry. An International Journal
              of Mathematics and Computer Science},
    VOLUME = {33},
      YEAR = {2005},
    NUMBER = {2},
     PAGES = {249--274},
      ISSN = {0179-5376,1432-0444},
   MRCLASS = {55N99 (55-04 55U10)},
  MRNUMBER = {2121296},
MRREVIEWER = {Donald\ M.\ Davis},
       DOI = {10.1007/s00454-004-1146-y},
       URL = {https://doi.org/10.1007/s00454-004-1146-y},
}

@misc{HKsupp,
    AUTHOR = {Hino, Masanori and Kanazawa, Shu},
     TITLE = {Supplement to ``{Limit Theorems for Persistent Homology of Multiparameter Complex Processes}''},
      YEAR = {2026},
}

\end{document}


\begin{frontmatter}
\title{Supplement to ``Limit Theorems for Persistent Homology of Multiparameter Complex Processes''}
\runtitle{Supplement: Persistent Homology of Multiparameter Complex Processes}

\begin{aug}
\author[A]{\fnms{Masanori}~\snm{Hino}\ead[label=e1]{hino@math.kyoto-u.ac.jp}\orcid{0009-0004-8835-7115}}
\author[B]{\fnms{Shu}~\snm{Kanazawa}\ead[label=e2]{s.kanazawa@qmul.ac.uk}\orcid{0000-0002-3368-1629}}
\address[A]{Department of Mathematics, Graduate School of Science, Kyoto University\printead[presep={,\ }]{e1}}
\address[B]{School of Mathematical Sciences, Queen Mary University of London\printead[presep={,\ }]{e2}}
\end{aug}
\end{frontmatter}

This supplement contains the essential self-adjointness arguments in Section~S1, simplifications of the limiting constant in Section~S2, and auxiliary proofs for convergence of persistence diagrams in Section~S3.
Notation and references to Sections~1--6 and Appendix~A, and to the results and equations therein, are as in the main paper.

\renewcommand{\thesection}{S\arabic{section}}

\section{Essential Self-adjointness}
\label{sec:esa}
In this section, we prove Lemmas~3.2 and~4.2.

\subsection{Proof of Lemma~3.2}
\label{ssec:esa_Borel}
\begin{proof}[Proof of Lemma~3.2]
  Let
  \[
  \cS'_k
  :=\bigl\{[K\subset X,\sg]\in\cS_k\bigm|\sg\in K\bigr\}.
  \]
  Then $\cS'_k$ is both open and closed.
  Indeed, whether an element $\a=[K\subset X,\sg]\in\cS_k$ belongs to $\cS'_k$ is determined by $\a_0=[K\subset X,\sg]_0$.
  Since $d_{\loc}(\a,\a')<1$ implies $\a_0=\a'_0$, both $\cS'_k$ and its complement are open in $\cS_k$.
  
  Set
  \[
  \Q(i):=\{q+ir\mid q,r\in\Q\}.
  \]
  For $\a=[K\subset X,\sg]\in\cS_k\setminus\cS'_k$ and $l\in\N_0$, let $X_l(\a)$ denote the ambient complex of $(K\subset X,\sg)_l$, and set
  \[
  F_{k,l}(\a):=F_k(X_l(\a))\cap F_k(X\setminus K)
  \]
  and
  \[
  C^k_{l,\Q(i)}(\a)
  :=\Span_{\Q(i)}\bigl\{e_{\sg'}\bigm|\sg'\in F_{k,l}(\a)\bigr\}.
  \]
  
  For $s\in\{-1,1\}$, $l,m\in\N_0$ with $l\le m$, and $n\in\N$, define $E_{l,m,n}^s$ as the set of all
  $\a=[K\subset X,\sg]\in\cS_k\setminus\cS'_k$ such that, for every $\sg'\in F_{k,l}(\a)$, there exists $\phi\in C^k_{m,\Q(i)}(\a)$ satisfying
  \[
  \bigl\|e_{\sg'}-(\Dl_k^\up(X\setminus K)+siI)\phi\bigr\|_k
  <\frac1n.
  \]

  We first verify that $E_{l,m,n}^s$ is Borel in $\cS_k$.
  Indeed, by the explicit expression~(2.6) and Lemma~2.5(1), the defining condition of $E_{l,m,n}^s$ is determined by $[K\subset X,\sg]_{m+1}$.
  Hence, $E_{l,m,n}^s$ is a union of sets of the form
  \[
  \bigl\{\a'\in\cS_k\bigm|\a'_{m+1}=[\bm Y,\tau]_{m+1}\bigr\},
  \qquad[\bm Y,\tau]\in\cS_k.
  \]
  Since there are only countably many isomorphism classes of finite rooted simplicial pairs, this union is countable.
  Each such set is a closed ball of radius $2^{-(m+1)}$ centered at $[\bm Y,\tau]$.
  Therefore, $E_{l,m,n}^s$ is Borel.

  Recall that a densely defined symmetric operator $A$ on a Hilbert space $\cH$ is essentially self-adjoint if and only if
  \[
  \overline{\Ran(A+iI)}
  =\overline{\Ran(A-iI)}
  =\cH.
  \]
  Since $(e_{\sg'})_{\sg'\in F_k(X\setminus K)}$ is an orthonormal basis of $C^k_{(2)}(X\setminus K)$, this is equivalent to requiring that every $e_{\sg'}$ can be approximated by cochains of the form $(\Dl_k^\up(X\setminus K)+siI)\phi$, with $\phi\in C_c^k(X\setminus K)$, for both $s\in\{-1,1\}$.

  Every finitely supported cochain is supported on some rooted ball.
  Moreover, $C^k_{m,\Q(i)}(\a)$ is dense in the finite-dimensional space of cochains supported on $F_{k,m}(\a)$, and therefore cochains belonging to these sets suffice for the above approximations.
  Consequently,
  \[
  \cS_k^{\esa}
  =\cS'_k\sqcup\bigcap_{s\in\{-1,1\}}\bigcap_{l\in\N_0}\bigcap_{n\in\N}\bigcup_{m\ge l}E_{l,m,n}^s.
  \]
  The right-hand side is Borel, proving that $\cS_k^{\esa}$ is a Borel subset of $\cS_k$.
\end{proof}

\subsection{Proof of Lemma~4.2}
\label{ssec:esa_PGW}
Let $k\in\N_0$, and let $K\subset X$ be a simplicial pair such that $\deg_X(\sg)<\infty$ for every $\sg\in F_k(X)$.
For a finite subset $S\subset F_k(X)\setminus F_k(K)$, define its \emph{boundary} by
\[
\del S
:=\bigl\{\{\sg,\sg'\}\bigm|\sg\in S,\,\sg'\in\bigl(F_k(X)\setminus F_k(K)\bigr)\setminus S,\,\sg\cup\sg'\in F_{k+1}(X)\bigr\}.
\]
Its \emph{maximum boundary degree} is defined by
\[
d_{\max}(\del S)
:=\sup_{\rho\in F_k(X)\setminus F_k(K)}\bigl|\bigl\{e\in\del S\bigm|\rho\in e\bigr\}\bigr|.
\]

The following deterministic criterion for simplicial pairs is adapted from the essential self-adjointness arguments in~\cite[Proposition~3]{BLS2011} and~\cite[Appendix~A]{LP2016a}.
\begin{lem}\label{lem:suffi_esa}
  Let $k\in\N_0$, and let $K\subset X$ be a simplicial pair such that $\deg_X(\sg)<\infty$ for every $\sg\in F_k(X)$.
  Suppose that there exists an increasing sequence $(S_l)_{l\in\N}$ of finite subsets of $F_k(X)\setminus F_k(K)$ such that
  \[
  \bigcup_{l\in\N}S_l=F_k(X)\setminus F_k(K)
  \quad\text{and}\quad
  \sup_{l\in\N}d_{\max}\bigl(\del S_l\bigr)<\infty.
  \]
  Then $\Dl_k^\up(X\setminus K)$ is essentially self-adjoint.
\end{lem}

\begin{proof}
  Set $L:=\Dl_k^\up(X\setminus K)$, and denote by $L^*$ its adjoint.
  By the standard criterion for essential self-adjointness~\cite[Theorem~VIII.3]{RS1972}, it suffices to show that
  \begin{equation}\label{eq:deficiency}
  \Ker(L^*-si I)=\{0\},
  \qquad s\in\{-1,1\}.
  \end{equation}
  Fix $s\in\{-1,1\}$ and suppose that
  \[
  \phi\in\Dom(L^*),
  \qquad
  L^*\phi=si\phi.
  \]
  Fix an orientation of each $k$-simplex in $X$, and let $(e_\sg)_{\sg\in F_k(X)\setminus F_k(K)}$ be the resulting canonical orthonormal basis of $C_{(2)}^k(X\setminus K)$.

  For $\sg,\sg'\in F_k(X)\setminus F_k(K)$, define the \emph{flow from $\sg$ to $\sg'$} by
  \[
  J(\sg,\sg')
  :=-s\Im\Bigl(\phi(\sg)\la Le_\sg,e_{\sg'}\ra_k\overline{\phi(\sg')}\Bigr)
  =-s\Im\bigl\la L(\phi(\sg)e_\sg),\phi(\sg')e_{\sg'}\bigr\ra_k.
  \]
  By Lemma~2.5(1) and~(2.6), for every $\sg,\sg'\in F_k(X)\setminus F_k(K)$,
  \[
  \la Le_\sg,e_{\sg'}\ra_k
  =\la Le_{\sg'},e_\sg\ra_k\in\R.
  \]
  Hence, $J(\sg,\sg')=-J(\sg',\sg)$.
  In particular, $J(\sg,\sg)=0$.
  Moreover, for every $\sg\in F_k(X)\setminus F_k(K)$,
  \begin{align*}
  \sum_{\substack{\sg'\in F_k(X)\setminus F_k(K)\colon\\\sg'\ne\sg}}J(\sg,\sg')
  &=-s\Im\left(\sum_{\sg'\in F_k(X)\setminus F_k(K)}\bigl\la L\bigl(\phi(\sg)e_\sg\bigr),\phi(\sg')e_{\sg'}\bigr\ra_k\right)\\
  &=-s\Im\bigl\la L(\phi(\sg)e_\sg),\phi\bigr\ra_k\\
  &=-s\Im\bigl\la\phi(\sg)e_\sg,L^*\phi\bigr\ra_k\\
  &=-s\Im\bigl\la\phi(\sg)e_\sg,si\phi\bigr\ra_k
  =|\phi(\sg)|^2.
  \end{align*}
  Summing the preceding flow identity over $\sg\in S_l$ and using the antisymmetry of $J$, we obtain
  \begin{align*}
  \sum_{\sg\in S_l}|\phi(\sg)|^2
  &=\sum_{\substack{\sg,\sg'\in S_l\\\sg\ne\sg'}}J(\sg,\sg')+\sum_{\substack{\sg\in S_l\\\sg'\in(F_k(X)\setminus F_k(K))\setminus S_l}}J(\sg,\sg')\\
  &=\sum_{\substack{\sg\in S_l\\\sg'\in(F_k(X)\setminus F_k(K))\setminus S_l}}J(\sg,\sg')\\
  &=\sum_{\substack{\{\sg,\sg'\}\in\del S_l\\\sg\in S_l}}J(\sg,\sg').
  \end{align*}
  Here, the second equality follows from the antisymmetry $J(\sg,\sg')=-J(\sg',\sg)$, and the last equality follows because $J(\sg,\sg')\neq0$ only when $\sg\cup\sg'\in F_{k+1}(X)$.
  
  For each $l\in\N$, define the inner and outer vertex boundaries of $S_l$ by
  \[
  \del^-S_l
  :=\bigl\{\sg\in S_l\bigm|
  \text{$\{\sg,\sg'\}\in\del S_l$ for some $\sg'$}\bigr\}
  \]
  and
  \[
  \del^+S_l
  :=\bigl\{\sg'\notin S_l\bigm|\text{$\{\sg,\sg'\}\in\del S_l$ for some $\sg$}\bigr\}.
  \]
  Since $|\la Le_{\sg'},e_\sg\ra_k|=1$ on $\del S_l$, the Cauchy--Schwarz inequality and the assumption $D:=\sup_{l\in\N}d_{\max}\bigl(\del S_l\bigr)<\infty$ yield
  \begin{align*}
  \sum_{\sg\in S_l}|\phi(\sg)|^2
  &\le\sum_{\substack{\{\sg,\sg'\}\in\del S_l\\\sg\in S_l}}|\phi(\sg)|\,|\phi(\sg')|\\
  &\le D\left(
    \sum_{\sg\in\del^-S_l}|\phi(\sg)|^2
  \right)^{1/2}
  \left(
    \sum_{\sg'\in\del^+S_l}|\phi(\sg')|^2
  \right)^{1/2}.
  \end{align*}

  Since $\deg_X(\sg)<\infty$ for every $\sg\in F_k(X)$, each $k$-simplex has only finitely many adjacent $k$-simplices.
  Together with the fact that $(S_l)_{l\in\N}$ exhausts $F_k(X)\setminus F_k(K)$, this implies that every finite subset $A\subset F_k(X)\setminus F_k(K)$ is disjoint from both $\del^-S_l$ and $\del^+S_l$ for all sufficiently large $l$.
  Indeed, once $S_l$ contains $A$ and all its adjacent $k$-simplices in $F_k(X)\setminus F_k(K)$, no element of $A$ lies on either boundary.
  Since $\phi\in C_{(2)}^k(X\setminus K)$, it follows that
  \[
  \sum_{\sg\in\del^-S_l}|\phi(\sg)|^2\longrightarrow0
  \quad\text{and}\quad
  \sum_{\sg\in\del^+S_l}|\phi(\sg)|^2\longrightarrow0.
  \]
  On the other hand,
  \[
  \sum_{\sg\in S_l}|\phi(\sg)|^2\longrightarrow\|\phi\|_k^2.
  \]
  Consequently, $\|\phi\|_k=0$, which proves~\eqref{eq:deficiency}.
\end{proof}

\begin{proof}[Proof of Lemma~4.2]
  Set
  \[
  T:=\PGW_k(c)
  \quad\text{and}\quad
  K:=\PGW_k(c,\bm\theta).
  \]
  We verify almost surely the deterministic condition in Lemma~\ref{lem:suffi_esa}, following the argument in~\cite[Appendix~A]{LP2016a}.

  Let $M$ be a $\Poisson(c)$ random variable.
  Choose $a\in\N$ sufficiently large that
  \[
  (k+1)\E\bigl[M\1_{\{M>a\}}\bigr]<1.
  \]
  For each $\sg\in F_k(T)$, let $m_\sg$ denote the number of $(k+1)$-simplices generated by $\sg$, and call $\sg$ \emph{exceptional} if $m_\sg>a$.
  Conditional on $\sg$ being exceptional, the expected number of exceptional child $k$-simplices of $\sg$ is
  \[
  \E[(k+1)M\mid M>a]\,\P(M>a)
  =(k+1)\E\bigl[M\1_{\{M>a\}}\bigr]<1.
  \]
  Thus, the Galton--Watson process obtained by successively following exceptional child $k$-simplices is subcritical.
  Consequently, the exceptional descendant cluster starting from any fixed $k$-simplex is almost surely finite.
  Since $T$ is almost surely countable, this holds simultaneously for every $k$-simplex.

  Starting from the root, at each stage we expand every boundary $k$-simplex and, whenever this produces an exceptional child $k$-simplex, adjoin its entire exceptional descendant cluster.
  Repeating this procedure yields an increasing sequence $(T_l)_{l\in\N}$ of finite $k$-rooted subtrees such that $T_l\uparrow T$ and every boundary $k$-simplex of $T_l$ is nonexceptional.
  Set
  \[
  S_l:=F_k(T_l)\setminus F_k(K),
  \qquad l\in\N.
  \]
  Then $(S_l)_{l\in\N}$ is an increasing sequence of finite subsets exhausting $F_k(T)\setminus F_k(K)$, and the rooted tree structure gives
  \[
  d_{\max}(\del S_l)\le(k+1)a
  \]
  for every $l\in\N$.
  Therefore, Lemma~\ref{lem:suffi_esa} shows that $\Dl_k^\up(T\setminus K)$ is essentially self-adjoint.
\end{proof}

\section{Simplifications of the Limiting Constant}
\label{app:simpler_hk}
We give two simplifications of the finite-dimensional optimization defining $h_k(c,\bm\theta)$.
The first reduces it to a scalar problem under additional conditions on $c$ or $\bm\theta$.
The second shows that the least fixed point attains the maximum when $c$ exceeds a threshold depending only on $k$.
We use the notation introduced in Section~4.1.

\paragraph*{A scalar reduction}
Under the conditions below, all but at most one coordinate of a fixed point of $\Phi_{k,c,\bm\theta}$ are determined explicitly.
The remaining fixed-point equation and the value of $\Psi_{k,c,\bm\theta}$ at its solutions then reduce the optimization defining $h_k(c,\bm\theta)$ to a scalar problem.
\begin{prop}\label{prop:hk_scalar}
  Let $k\in\N_0$, $c\ge0$, and $\bm\theta=(\theta_j)_{j=0}^{k+1}\in[0,1]^{k+2}$.
  Suppose that
  \[
  c\le1
  \quad\text{or}\quad
  \theta_j=1\text{ for every $0\le j<k$.}
  \]
  When $k=0$, the second condition is vacuous, so no restriction on $c$ or $\bm\theta$ is required.
  Set
  \[
  Q:=\prod_{j=0}^{k-1}\theta_j^{\binom{k+1}{j+1}},
  \quad
  \lm:=c\prod_{j=0}^{k-1}\theta_j^{\binom{k+1}j},
  \quad\text{and}\quad
  \eta:=1-\theta_k,
  \]
  where empty products are understood to be $1$, and define
  \[
  \psi(v):=\eta v(1-\log v)+\frac{\lm}{k+2}(1-\eta v)^{k+2},
  \qquad v\in(0,1].
  \]
  Let $v_{\min}$ and $v_{\max}$ denote the smallest and largest solutions in $(0,1]$ of
  \begin{equation}\label{eq:scalar_fixed}
  v=\exp\bigl(-\lm(1-\eta v)^{k+1}\bigr).
  \end{equation}
  Then
  \begin{equation}\label{eq:hk_scalar}
  h_k(c,\bm\theta)
  =1-Q-\frac c{k+2}+Q\max\{\psi(v_{\min}),\psi(v_{\max})\}.
  \end{equation}
  Moreover, if $c\le1$, then $\Phi_{k,c,\bm\theta}$ has a unique fixed point and~\eqref{eq:scalar_fixed} has a unique solution.
  With $v^*:=v_{\min}=v_{\max}$,
  \begin{equation}\label{eq:hk_small_c}
  h_k(c)-h_k(c,\bm\theta)=Q\{1-\psi(v^*)\}.
  \end{equation}
\end{prop}
\begin{proof}
  Let $\xi'\in\cT_k$ be the type that equals $1$ on every nonempty proper face of $[k+1]$ and equals $0$ on $[k+1]$ itself.
  Let $u=\bigl(u^{(\xi)}\bigr)_{\xi\in\cT_k}\in[0,1]^{\cT_k}$ be a fixed point of $\Phi_{k,c,\bm\theta}$.
  The first step is to show that all its coordinates except $u^{(\xi')}$ are determined by
  \begin{equation}\label{eq:simple_fix}
  u^{(1_k)}=0
  \quad\text{and}\quad
  u^{(\xi)}=1\text{ for every }\xi\notin\{1_k,\xi'\}.
  \end{equation}
  The first equality follows directly from~(4.1).
  The same definition gives $u^{(\xi)}=1$ whenever $\theta^{(\xi)}=0$ and $\xi\ne1_k$.

  Suppose first that $c\le1$.
  It remains to show that $u^{(\xi)}=1$ for every $\xi\notin\{1_k,\xi'\}$ with $\theta^{(\xi)}>0$.
  These are precisely the types with $\theta^{(\xi)}>0$ having a nonempty proper face labeled $0$.
  If there are no such types, there is nothing to prove.
  Otherwise, set
  \[
  m:=\min\bigl\{u^{(\xi)}\bigm|\theta^{(\xi)}>0,\ \xi\notin\{1_k,\xi'\}\bigr\}.
  \]
  Since every coordinate lies in $[0,1]$, the desired conclusion follows once $m=1$ is proved.

  Choose $\xi$ with $u^{(\xi)}=m$ and a nonempty proper face $\rho\subsetneq[k+1]$ with $\xi(\rho)=0$.
  We bound each product appearing in~(4.1) for this coordinate.
  Terms with $\theta^{(\zeta)}=0$ do not contribute to the fixed-point equation.
  It therefore suffices to consider $\zeta\in\cT_{k+1}$ satisfying $\zeta\setminus\{k+2\}=\xi$ and $\theta^{(\zeta)}>0$.
  Choose $j\in[k+1]\setminus\rho$.
  The other $k$-face $[k+2]\setminus\{j\}$ also contains $\rho$ as a proper face labeled $0$.
  Its type satisfies $\theta^{(\zeta\setminus\{j\})}>0$, so its coordinate is among those defining $m$.
  Hence,
  \[
  \prod_{i=1}^{k+1}\bigl(1-u^{(\zeta\setminus\{i\})}\bigr)
  \le1-u^{(\zeta\setminus\{j\})}
  \le1-m.
  \]
  Since
  \[
  \sum_{\substack{\zeta\in\cT_{k+1}\colon\\\zeta\setminus\{k+2\}=\xi}}
  \frac{\theta^{(\zeta)}}{\theta^{(\xi)}}=1,
  \]
  the fixed-point equation gives
  \[
  m=u^{(\xi)}
  =\Phi_{k,c,\bm\theta}^{(\xi)}(u)
  \ge\exp\bigl(-c(1-m)\bigr).
  \]
  If $m<1$, then $c\le1$ implies
  \[
  \exp\bigl(-c(1-m)\bigr)\ge\exp\bigl(-(1-m)\bigr)>m,
  \]
  a contradiction.
  Thus $m=1$, proving that every coordinate other than $u^{(1_k)}$ and $u^{(\xi')}$ equals $1$.
  Since $u^{(1_k)}=0$, only $u^{(\xi')}$ can remain undetermined.
  
    If instead $\theta_j=1$ for every $0\le j<k$, then $\theta^{(\xi)}=0$ for every $\xi\notin\{1_k,\xi'\}$, so~\eqref{eq:simple_fix} follows directly from~(4.1).
  Thus, under either assumption, only the coordinate $u^{(\xi')}$ remains to be determined.
  We first determine this coordinate and then evaluate $\Psi_{k,c,\bm\theta}$ at the resulting fixed points.

  Set $v:=u^{(\xi')}$.
  If $\theta^{(\xi')}=Q\eta=0$, then~(4.1) gives $v=1$.
  In this case, all coordinates of the fixed point are determined.
  Suppose now that $\theta^{(\xi')}=Q\eta>0$.
  We derive the scalar fixed-point equation for $v$.
  Let $\zeta\in\cT_{k+1}$ satisfy $\zeta\setminus\{k+2\}=\xi'$ and $\theta^{(\zeta)}>0$.
  A nonzero product in the fixed-point equation requires every face containing the new vertex $k+2$ and having dimension less than $k$ to be labeled $1$.
  Indeed, any such face labeled $0$ is contained in a $k$-face whose type is neither $1_k$ nor $\xi'$.
  By~\eqref{eq:simple_fix}, its coordinate is $1$, making the product zero.

  For each $0\le j<k$, there are $\binom{k+1}j$ $j$-faces containing the new vertex.
  Requiring their labels to equal $1$ contributes
  \[
  \prod_{j=0}^{k-1}\theta_j^{\binom{k+1}j}
  \]
  to the weight $\theta^{(\zeta)}/\theta^{(\xi')}$.
  With these labels fixed, each of the other $k$-faces has type $1_k$ or $\xi'$ according as its own label is $1$ or $0$.
  Summing over these two labels gives
  \[
  \theta_k(1-0)+(1-\theta_k)(1-v)=1-\eta v.
  \]
  There are $k+1$ such $k$-faces, and the weights are products of the contributions from individual labels.
  Hence,
  \[
  c\sum_{\substack{\zeta\in\cT_{k+1}\colon\\\zeta\setminus\{k+2\}=\xi'}}
  \frac{\theta^{(\zeta)}}{\theta^{(\xi')}}
  \prod_{i=1}^{k+1}\bigl(1-u^{(\zeta\setminus\{i\})}\bigr)
  =\lm(1-\eta v)^{k+1}.
  \]
  Here, summing over the label of $[k+2]$ itself contributes $\theta_{k+1}+(1-\theta_{k+1})=1$, since this label does not affect the restricted types.
  The fixed-point equation for $\xi'$ therefore becomes
  \[
  v=\exp\bigl(-\lm(1-\eta v)^{k+1}\bigr),
  \]
  which is~\eqref{eq:scalar_fixed}.
  Conversely, every solution of~\eqref{eq:scalar_fixed} gives a fixed point by setting $u^{(\xi')}=v$ and assigning the other coordinates according to~\eqref{eq:simple_fix}.

  We next evaluate the three sums in~(4.2).
  By the definition of $Q$, the types having a nonempty proper face labeled $0$ have total weight $1-Q$.
  Their corresponding coordinates equal $1$ by~\eqref{eq:simple_fix}.
  Since $u^{(1_k)}=0$ and $\theta^{(\xi')}=Q\eta$,
  \[
  \sum_{\xi\in\cT_k}\theta^{(\xi)}u^{(\xi)}
  =1-Q+Q(1-\eta)\cdot0+Q\eta u^{(\xi')}
  =1-Q+Q\eta v.
  \]

  In each of the remaining sums, a nonzero contribution requires every nonempty face of $[k+2]$ of dimension less than $k$ to be labeled $1$.
  Indeed, any such face labeled $0$ is contained in a $k$-face other than $[k+1]$, whose coordinate equals $1$ by~\eqref{eq:simple_fix}, making the product zero.
  The contribution from these labels, multiplied by $c$, is
  \[
  c\prod_{j=0}^{k-1}\theta_j^{\binom{k+2}{j+1}}=Q\lm,
  \]
  by $\binom{k+2}{j+1}=\binom{k+1}{j+1}+\binom{k+1}j$.
  With these labels fixed, summing over the label of a $k$-face replaces its factor $u$ by $\theta_k\cdot0+(1-\theta_k)v=\eta v$, and its factor $1-u$ by $1-\eta v$, as above.
  The second sum contains one factor $u$ and $k+1$ factors $1-u$, whereas the third contains $k+2$ factors $1-u$.
  Consequently,
  \begin{align*}
  c\sum_{\zeta\in\cT_{k+1}}\theta^{(\zeta)}u^{(\zeta\setminus\{k+2\})}\prod_{i=1}^{k+1}\bigl(1-u^{(\zeta\setminus\{i\})}\bigr)
  &=Q\lm\eta v(1-\eta v)^{k+1},\\
  c\sum_{\zeta\in\cT_{k+1}}\theta^{(\zeta)}\prod_{i=1}^{k+2}\bigl(1-u^{(\zeta\setminus\{i\})}\bigr)
  &=Q\lm(1-\eta v)^{k+2}.
  \end{align*}
  Substituting these identities into~(4.2) gives
  \[
  \Psi_{k,c,\bm\theta}(u)
  =1-Q+Q\eta v+Q\lm\eta v(1-\eta v)^{k+1}-\frac c{k+2}+\frac{Q\lm}{k+2}(1-\eta v)^{k+2}.
  \]
  If $Q\eta=0$, this expression reduces to
  \[
  h_k(c,\bm\theta)=1-Q-\frac c{k+2}+\frac{Q\lm}{k+2}.
  \]
  If $Q=0$, the product $Q\psi$ vanishes; if $\eta=0$, then $\psi(v)=\lm/(k+2)$ for every $v$.  Thus, $Q\psi$ is constant with value $Q\lm/(k+2)$.
  Thus,~\eqref{eq:hk_scalar} holds in this case.
  If $Q\eta>0$, using $-\log v=\lm(1-\eta v)^{k+1}$ gives
  \[
  \Psi_{k,c,\bm\theta}(u)=1-Q-\frac c{k+2}+Q\psi(v).
  \]
  Thus, maximizing $\Psi_{k,c,\bm\theta}$ over the fixed points of $\Phi_{k,c,\bm\theta}$ reduces to maximizing $\psi$ over the solutions of~\eqref{eq:scalar_fixed}.
  
  To obtain~\eqref{eq:hk_scalar}, it remains to show that this maximum is attained at the smallest or largest solution.
  Set
  \[
  J(v):=\log v+\lm(1-\eta v)^{k+1},
  \qquad v\in(0,1].
  \]
  Its zeros are precisely the solutions of~\eqref{eq:scalar_fixed}, and
  \[
  \psi'(v)=-\eta J(v),
  \qquad
  J'(v)=\frac{1-\lm(k+1)\eta v(1-\eta v)^k}{v}.
  \]
  If $\lm=0$, then $J(v)=\log v$, while if $\eta=0$, then $J(v)=\log v+\lm$.
  In either case, $J$ has a unique zero, so the assertion follows.

  Suppose that $\lm\eta>0$.
  The equation $J'(v)=0$ is equivalent to
  \begin{equation}\label{eq:dJ_dv_0}
  v(1-\eta v)^k=\frac1{\lm(k+1)\eta}.
  \end{equation}
  For $k\ge1$, the derivative of the left-hand side is
  \[
  (1-\eta v)^{k-1}\bigl(1-(k+1)\eta v\bigr).
  \]
  Since $1-\eta v>0$ on $(0,1)$, this derivative is positive before $1/((k+1)\eta)$ and negative after it.
  Thus, the left-hand side of~\eqref{eq:dJ_dv_0} is strictly increasing and then possibly strictly decreasing on $(0,1)$, and it takes any prescribed value at most twice.
  For $k=0$, the left-hand side is simply $v$.
  Consequently, $J'$ has at most two zeros in $(0,1)$, so Rolle's theorem implies that $J$ has at most three distinct zeros in $(0,1]$.

  If $J$ has at most two zeros, every zero is either $v_{\min}$ or $v_{\max}$, and there is nothing further to prove.
  Suppose that $J$ has three distinct zeros $v_1<v_2<v_3$.
  By Rolle's theorem, $J'$ has a zero in each of $(v_1,v_2)$ and $(v_2,v_3)$ and no other zeros in $(0,1)$.
  In particular, $J'(v_1)$ and $J'(v_2)$ are nonzero, so $J$ changes sign at both $v_1$ and $v_2$.
  Since $\lim_{v\downarrow0}J(v)=-\infty$, it follows that
  \[
  J(v)>0\text{ on }(v_1,v_2),
  \qquad
  J(v)<0\text{ on }(v_2,v_3).
  \]
  The identity $\psi'=-\eta J$ therefore shows that $\psi$ decreases on $(v_1,v_2)$ and increases on $(v_2,v_3)$.
  Hence, its value at $v_2$ is smaller than its value at either outer zero.
  The maximum is attained at $v_{\min}$ or $v_{\max}$, proving~\eqref{eq:hk_scalar}.

  Finally, suppose that $c\le1$.
  The binomial expansion gives
  \[
  (k+1)z(1-z)^k\le1-(1-z)^{k+1}<1,
  \qquad 0\le z<1.
  \]
  Since $\lm\le c\le1$, taking $z=\eta v$ in the expression for $J'$ yields $J'(v)>0$ for $0<v<1$.
  Together with $\lim_{v\downarrow0}J(v)=-\infty$ and $J(1)=\lm(1-\eta)^{k+1}\ge0$, this shows that~\eqref{eq:scalar_fixed} has a unique solution $v^*$.
  All coordinates other than $u^{(\xi')}$ are determined by~\eqref{eq:simple_fix}.
  The remaining coordinate equals $1$ if $Q\eta=0$ and $v^*$ if $Q\eta>0$.
  Hence, $\Phi_{k,c,\bm\theta}$ also has a unique fixed point.
  When $\bm\theta=\bm0$, we have $\eta=1$ and $v^*=1$, so $\psi(v^*)=1$.
  Applying~\eqref{eq:hk_scalar} and using $h_k(c,\bm0)=h_k(c)$ gives
  \[
  h_k(c)=1-\frac c{k+2}.
  \]
  Subtracting~\eqref{eq:hk_scalar} from this identity yields~\eqref{eq:hk_small_c}.
\end{proof}

\begin{rmk}\label{rmk:flag_scalar}
  For the random flag complex process in degree one, we have $\bm\theta_1(x,y)=(1,\sqrt{x/y},1)$.
  Proposition~\ref{prop:hk_scalar} applies with $Q=1$, $\lm=y$, and $\eta=1-\sqrt{x/y}$.
  At any solution in $(0,1]$ of $v=\exp(-y(1-\eta v)^2)$, the identity $-\log v=y(1-\eta v)^2$ gives
  \begin{align*}
  -\frac y3+\psi(v)
  &=-\frac y3+\eta v+y\eta v(1-\eta v)^2+\frac y3(1-\eta v)^3\\
  &=\eta v-y\eta^2v^2+\frac{2y}{3}\eta^3v^3.
  \end{align*}
  Taking the maximum over the smallest and largest solutions proves~(1.15).
\end{rmk}

\paragraph*{Evaluation at the least fixed point}
We next give conditions under which the maximum defining $h_k(c,\bm\theta)$ is attained at the least fixed point of $\Phi_{k,c,\bm\theta}$.
Since this map is coordinatewise nondecreasing and continuous, its iterates starting from $\bm0$ converge to its least fixed point in coordinatewise order.
Write
\[
u_{\min}=\bigl(u_{\min}^{(\xi)}\bigr)_{\xi\in\cT_k}
:=\lim_{\ell\to\infty}\Phi_{k,c,\bm\theta}^{\,\ell}(\bm0).
\]
For $k\ge1$, let $a_k\in(0,1)$ be the unique solution of
\[
(k+2)(1-a_k)+\bigl(1+(k+1)a_k\bigr)\log a_k=0,
\]
and set

\[
c_1^*:=1,
\qquad
c_{k+1}^*:=\frac{-\log a_k}{(1-a_k)^{k+1}},
\qquad k\ge1.
\]
The proof of the following proposition adapts the scalar comparison of Linial and Peled~\cite[Lemma~4.5 and Appendix~B]{LP2016a} to the present system of fixed-point equations.

\begin{prop}\label{prop:hk_least}
  Let $k\in\N_0$, $c\ge0$, and $\bm\theta\in[0,1]^{k+2}$.
  If $c\le1$ or $c\ge c_{k+1}^*$, then
  \begin{equation}\label{eq:hk_least}
  h_k(c,\bm\theta)=\Psi_{k,c,\bm\theta}(u_{\min}).
  \end{equation}
  In particular, this identity holds for every $c\ge0$ when $k=0$.
\end{prop}
\begin{proof}
  The case $c\le1$ follows from the uniqueness assertion in Proposition~\ref{prop:hk_scalar}.
  For the remaining cases, we compare $\Psi_{k,c,\bm\theta}(u_{\min})$ with $\Psi_{k,c,\bm\theta}(u)$ for an arbitrary fixed point $u$.

  We first rewrite the objective function in a form suitable for this comparison.
  At any fixed point $u$, we have $u^{(1_k)}=0$ and $u^{(\xi)}>0$ for every $\xi\ne1_k$.
  Taking logarithms in~(4.1) and multiplying by $\theta^{(\xi)}$ gives
  \begin{equation}\label{eq:theta_logu}
  c\sum_{\substack{\zeta\in\cT_{k+1}\colon\\\zeta\setminus\{k+2\}=\xi}}\theta^{(\zeta)}\prod_{j=1}^{k+1}\bigl(1-u^{(\zeta\setminus\{j\})}\bigr)
  =-\theta^{(\xi)}\log u^{(\xi)},
  \qquad \xi\ne1_k.
  \end{equation}
  This identity also holds when $\theta^{(\xi)}=0$, since every extension of $\xi$ then has weight zero.
  Multiplying~\eqref{eq:theta_logu} by $u^{(\xi)}$ and summing over $\xi\ne1_k$ yields
  \[
  c\sum_{\zeta\in\cT_{k+1}}\theta^{(\zeta)}u^{(\zeta\setminus\{k+2\})}\prod_{j=1}^{k+1}\bigl(1-u^{(\zeta\setminus\{j\})}\bigr)
  =-\sum_{\substack{\xi\in\cT_k\colon\\\xi\ne1_k}}\theta^{(\xi)}u^{(\xi)}\log u^{(\xi)}.
  \]
  Here, terms whose root type is $1_k$ vanish because $u^{(1_k)}=0$.

  For the sum inside the parentheses in~(4.2), first sum separately over the types $\zeta$ with each fixed restriction $\zeta\setminus\{k+2\}=\xi$.
  Then
  \begin{align*}
  &c\sum_{\zeta\in\cT_{k+1}}\theta^{(\zeta)}\prod_{j=1}^{k+2}\bigl(1-u^{(\zeta\setminus\{j\})}\bigr)\\
  &\quad=c\sum_{\xi\in\cT_k}\sum_{\substack{\zeta\in\cT_{k+1}\colon\\\zeta\setminus\{k+2\}=\xi}}\theta^{(\zeta)}\prod_{j=1}^{k+2}\bigl(1-u^{(\zeta\setminus\{j\})}\bigr)\\
  &\quad=\sum_{\xi\in\cT_k}(1-u^{(\xi)})
  \left(c\sum_{\substack{\zeta\in\cT_{k+1}\colon\\\zeta\setminus\{k+2\}=\xi}}\theta^{(\zeta)}\prod_{j=1}^{k+1}\bigl(1-u^{(\zeta\setminus\{j\})}\bigr)\right)\\
  &\quad=-\sum_{\substack{\xi\in\cT_k\colon\\\xi\ne1_k}}\theta^{(\xi)}(1-u^{(\xi)})\log u^{(\xi)}
  +c\sum_{\substack{\zeta\in\cT_{k+1}\colon\\\zeta\setminus\{k+2\}=1_k}}\theta^{(\zeta)}\prod_{j=1}^{k+1}\bigl(1-u^{(\zeta\setminus\{j\})}\bigr).
  \end{align*}
  The second equality extracts the factor with $j=k+2$, which equals $1-u^{(\xi)}$.
  The last equality uses~\eqref{eq:theta_logu} for $\xi\ne1_k$ and $u^{(1_k)}=0$ for the remaining term.
  Substituting these identities into~(4.2), we obtain
  \begin{equation}\label{eq:Psi_compare}
  \begin{aligned}
  \Psi_{k,c,\bm\theta}(u)
  &=\sum_{\substack{\xi\in\cT_k\colon\\\xi\ne1_k}}
  \theta^{(\xi)}\left\{u^{(\xi)}-u^{(\xi)}\log u^{(\xi)}-\frac{1-u^{(\xi)}}{k+2}\log u^{(\xi)}\right\}-\frac c{k+2}\\*
  &\quad+\frac c{k+2}\sum_{\substack{\zeta\in\cT_{k+1}\colon\\\zeta\setminus\{k+2\}=1_k}}\theta^{(\zeta)}\prod_{j=1}^{k+1}\bigl(1-u^{(\zeta\setminus\{j\})}\bigr)\\
  &=\sum_{\substack{\xi\in\cT_k\colon\\\xi\ne1_k}}\theta^{(\xi)}g_k(u^{(\xi)})-\frac c{k+2}+\frac c{k+2}\sum_{\substack{\zeta\in\cT_{k+1}\colon\\\zeta\setminus\{k+2\}=1_k}}\theta^{(\zeta)}\prod_{j=1}^{k+1}\bigl(1-u^{(\zeta\setminus\{j\})}\bigr),
  \end{aligned}
  \end{equation}
  where
  \[
  g_k(v):=v-v\log v-\frac{1-v}{k+2}\log v
  =v-\frac{1+(k+1)v}{k+2}\log v,
  \qquad v\in(0,1].
  \]
  The last sum in~\eqref{eq:Psi_compare} is nonincreasing in each coordinate and is therefore largest at $u_{\min}$ among all fixed points.
  It remains to compare the values of $g_k$ in the first sum.

  Direct differentiation gives
  \[
  g_k'(v)=\frac{-(k+1)v\log v-(1-v)}{(k+2)v},
  \qquad
  g_k''(v)=\frac{1-(k+1)v}{(k+2)v^2}.
  \]
  For $k=0$, we have $g_0''(v)>0$ on $(0,1)$ and $g_0'(1)=0$, so $g_0$ is decreasing on $(0,1]$.
  Since $u_{\min}\le u$,~\eqref{eq:Psi_compare} gives
  \[
  \Psi_{0,c,\bm\theta}(u_{\min})\ge\Psi_{0,c,\bm\theta}(u)
  \]
  for every fixed point $u$, proving the assertion for $k=0$.

  Suppose that $k\ge1$.
  The expression for $g_k''$ shows that $g_k'$ is strictly increasing on $(0,1/(k+1))$ and strictly decreasing on $(1/(k+1),1)$.
  Since $\lim_{v\downarrow0}g_k'(v)=-\infty$ and $g_k'(1)=0$, the derivative changes sign exactly once in $(0,1)$, from negative to positive.
  Together with $g_k(1)=1$ and $\lim_{v\downarrow0}g_k(v)=\infty$, this shows that $g_k(v)=1$ has a unique solution in $(0,1)$.
  This solution is precisely $a_k$, since the equation $g_k(v)=1$ is equivalent to its defining equation.
  Moreover, $g_k$ is decreasing on $(0,a_k]$ and satisfies $g_k(v)\le1$ on $[a_k,1]$.
  Consequently,
  \begin{equation}\label{eq:gk_compare}
  g_k(s)\ge g_k(t),
  \qquad 0<s\le a_k,\quad s\le t\le1.
  \end{equation}
  Since $u_{\min}\le u$, this inequality gives $g_k(u_{\min}^{(\xi)})\ge g_k(u^{(\xi)})$ whenever $u_{\min}^{(\xi)}\le a_k$.
  In view of~\eqref{eq:Psi_compare}, it therefore suffices to prove that, when $c\ge c_{k+1}^*$,
  \begin{equation}\label{eq:umin_ak_bound}
  u_{\min}^{(\xi)}\le a_k
  \quad\text{for $\theta^{(\xi)}>0$ and $\xi\ne1_k$.}
  \end{equation}

  We establish this bound by showing that iteration from $\bm0$ preserves the corresponding coordinate bounds.
  Let $w\in[0,1]^{\cT_k}$ satisfy $w^{(\xi)}\le a_k$ whenever $\theta^{(\xi)}>0$.
  If $\theta^{(\zeta)}>0$, then $\theta^{(\zeta\setminus\{j\})}>0$ for every $j$, and hence
  \[
  \prod_{j=1}^{k+1}\bigl(1-w^{(\zeta\setminus\{j\})}\bigr)\ge(1-a_k)^{k+1}.
  \]
  For $\theta^{(\xi)}>0$, the identity
  \[
  \sum_{\substack{\zeta\in\cT_{k+1}\colon\\\zeta\setminus\{k+2\}=\xi}}\frac{\theta^{(\zeta)}}{\theta^{(\xi)}}=1
  \]
  therefore gives
  \[
  \Phi_{k,c,\bm\theta}^{(\xi)}(w)
  \le\exp\bigl(-c(1-a_k)^{k+1}\bigr)
  \le a_k,
  \qquad \xi\ne1_k.
  \]
  The last inequality follows from $c\ge c_{k+1}^*$ and the definition of $c_{k+1}^*$.
  Also, $\Phi_{k,c,\bm\theta}^{(1_k)}(w)=0$.
  Thus, $\Phi_{k,c,\bm\theta}$ preserves these bounds.
  Iterating from $\bm0$ and passing to the limit proves~\eqref{eq:umin_ak_bound}.

  Finally, for any fixed point $u$,~\eqref{eq:gk_compare} and~\eqref{eq:umin_ak_bound} imply
  \[
  g_k(u_{\min}^{(\xi)})\ge g_k(u^{(\xi)}),
  \qquad \theta^{(\xi)}>0,\quad \xi\ne1_k.
  \]
  Terms with $\theta^{(\xi)}=0$ do not contribute to the first sum in~\eqref{eq:Psi_compare}, and the last sum is nonincreasing in each coordinate.
  Hence,
  \[
  \Psi_{k,c,\bm\theta}(u_{\min})\ge\Psi_{k,c,\bm\theta}(u).
  \]
  Taking the maximum over all fixed points $u$ proves~\eqref{eq:hk_least}.
\end{proof}

\begin{rmk}
  In degree $0$, the condition on $\theta_j$ for $j<k$ in Proposition~\ref{prop:hk_scalar} is vacuous, and $Q=1$, $\lm=c$, and $\eta=1-\theta_0$.
  That proposition gives
  \[
  h_0(c,\bm\theta)=-\frac c2+\max\{\psi(v_{\min}),\psi(v_{\max})\},
  \]
  where
  \[
  \psi(v)=(1-\theta_0)v(1-\log v)+\frac c2\{1-(1-\theta_0)v\}^2
  \]
  and $v_{\min}$ and $v_{\max}$ are the smallest and largest solutions in $(0,1]$ of
  \[
  v=\exp\bigl(-c\{1-(1-\theta_0)v\}\bigr).
  \]
  If $\theta_0<1$, the smallest scalar solution corresponds to the least fixed point of $\Phi_{0,c,\bm\theta}$.
  Proposition~\ref{prop:hk_least} with $k=0$ therefore shows that the maximum is attained at $v_{\min}$.
  Using $-\log v=c\{1-(1-\theta_0)v\}$, we obtain
  \begin{align*}
  h_0(c,\bm\theta)
  &=-\frac c2+(1-\theta_0)v_{\min}(1-\log v_{\min})+\frac c2\{1-(1-\theta_0)v_{\min}\}^2\\
  &=(1-\theta_0)v_{\min}-\frac c2(1-\theta_0)^2v_{\min}^2.
  \end{align*}
  The same formula holds when $\theta_0=1$, since both sides are zero.
\end{rmk}

\section{Auxiliary Proofs for Convergence of Persistence Diagrams}
\label{supp:auxiliary_PD}

\subsection{Proof of Lemma~6.2}
\begin{proof}
  (1)~This identity follows immediately from Condition~\textup{(C2)}.
  
  (2)~For $0<x<y<\infty$ and all sufficiently large $n$, the product formula~(1.5) for $r_k$ gives
  \[
  \prod_{j=0}^{k+1}\left(\frac{p_j(r_k^\rightarrow(x/n))}{p_j(r_k^\rightarrow(y/n))}\right)^{\binom{k+1}j}=\frac xy.
  \]
  Every factor is positive and at most one, and therefore
  \begin{equation}\label{eq:Dgm_profile_bound}
  \left(\frac xy\right)^{1/\binom{k+1}j}\le\frac{p_j(r_k^\rightarrow(x/n))}{p_j(r_k^\rightarrow(y/n))}\le1.
  \end{equation}
  Taking $n\to\infty$ proves positivity of $\theta_{k,j}$.
  The same product identity shows that the limit for $j=k+1$ exists whenever those for $0\le j\le k$ do, with
  \[
  \theta_{k,k+1}(x,y)
  =\frac{x/y}{\prod_{j=0}^k\theta_{k,j}(x,y)^{\binom{k+1}j}}.
  \]

  To prove continuity, let $0<x<y<\infty$ and $0<x'<y'<\infty$.
  By~\eqref{eq:Dgm_profile_bound}~(applied to the pairs $(x,x')$ and $(y,y')$ with the smaller argument first) and the triangle inequality,
  \begin{align*}
  \left|\log\frac{p_j(r_k^\rightarrow(x/n))}{p_j(r_k^\rightarrow(y/n))}-\log\frac{p_j(r_k^\rightarrow(x'/n))}{p_j(r_k^\rightarrow(y'/n))}\right|
  &=\left|\log\frac{p_j(r_k^\rightarrow(x/n))}{p_j(r_k^\rightarrow(x'/n))}-\log\frac{p_j(r_k^\rightarrow(y/n))}{p_j(r_k^\rightarrow(y'/n))}\right|\\
  &\le\frac{|\log(x/x')|+|\log(y/y')|}{\binom{k+1}j}
  \end{align*}
  for all sufficiently large $n$.
  Passing to the limit proves continuity of $\theta_{k,j}$.
  
  (3)~The product formula~(1.5) for $q_k$ gives
  \[
  \Theta_k(y)=
  \begin{cases}
    \displaystyle\prod_{j=0}^k\theta_{k,j}(y,1)^{\binom{k+1}{j+1}},&0<y<1,\\[2mm]
    1,&y=1,\\[2mm]
    \displaystyle\prod_{j=0}^k\theta_{k,j}(1,y)^{-\binom{k+1}{j+1}},&y>1.
  \end{cases}
  \]
  Existence and positivity follow immediately.
  Continuity away from $1$ follows from that of $\theta_{k,j}$, and continuity at $1$ follows from~\eqref{eq:Dgm_profile_bound}.
  Monotonicity follows from that of $q_k$ and $r_k^\rightarrow$.
\end{proof}

\subsection{Proof of Lemma~6.3}
\begin{proof}
  We first compare the nullities under a common coupling and then identify the limits of their normalized expectations.
  For all sufficiently large $n$, let $(v_\tau)$ be independent uniform random variables on $[0,1]$, indexed by the $(k+1)$-simplices of $K(n)$.
  Let $Y_n(c)$ consist of the complete $k$-skeleton on $[n]$ together with those $(k+1)$-simplices $\tau$ satisfying $v_\tau\le c/n$.
  Thus, $Y_n(c)$ is a $(k+1)$-Linial--Meshulam complex with parameter $p=c/n$.
  Independently of $(v_\tau)$, let $(u_\rho)$ be independent uniform random variables on $[0,1]$, indexed by the nonempty simplices of dimension at most $k$, and set
  \[
  K_n(\bm\theta)
  :=\{\sg\in K(n)\mid\dim\sg\le k,\,u_\rho\le\theta_{\dim\rho}\text{ for every nonempty face $\rho\subset\sg$}\}.
  \]
  Write
  \[
  N_n(c,\bm\theta)
  :=\nullity\bigl(\Dl_k^\up(Y_n(c)\setminus K_n(\bm\theta))\bigr).
  \]
  We construct $Y_n(c')$ and $K_n(\bm\theta')$ using the same variables $(v_\tau)$ and $(u_\rho)$.

  The quantity $N_n(c,\bm\theta)$ is the nullity of the coboundary matrix of $Y_n(c)$ after deleting the columns indexed by the $k$-simplices of $K_n(\bm\theta)$.
  Adding or deleting a row or a column changes this nullity by at most one.
  Under the above coupling, the expected number of differing rows is $\binom n{k+2}|c-c'|/n$.
  For a $k$-simplex $\sg$, its membership in $K_n(\bm\theta)$ and $K_n(\bm\theta')$ can differ only if the labels of at least one nonempty face of $\sg$ differ.
  Since the two labels of a $j$-face differ with probability $|\theta_j-\theta_j'|$, the union bound gives
  \[
  \P\bigl(\sg\in K_n(\bm\theta)\mathbin\triangle K_n(\bm\theta')\bigr)
  \le\sum_{\emptyset\ne\rho\subset\sg}\P\bigl(\1_{\{u_\rho\le\theta_{\dim\rho}\}}\neq\1_{\{u_\rho\le\theta'_{\dim\rho}\}}\bigr)\\
  =\sum_{j=0}^k\binom{k+1}{j+1}|\theta_j-\theta_j'|.
  \]
  Counting the differing rows and columns therefore yields
  \begin{equation}\label{eq:row-column}
  \frac{\E[|N_n(c,\bm\theta)-N_n(c',\bm\theta')|]}{\binom n{k+1}}
  \le\frac{n-k-1}{n(k+2)}|c-c'|
  +\sum_{j=0}^k\binom{k+1}{j+1}|\theta_j-\theta_j'|.
  \end{equation}

  It remains to identify the limits of the normalized expected nullities.
  For $c>0$, the pair $K_n(\bm\theta)\subset Y_n(c)$ has the same law as the pair observed at times $0$ and $1+c/n$ in a multiparameter complex process whose independent simplex weights in~(1.4) have distribution $\Bernoulli(1-\theta_{\dim\rho})$ when $\dim\rho\le k$ and $\Unif([1,2])$ when $\dim\rho=k+1$, and are equal to $\infty$ otherwise.
  For this process, $p_j(0)=\theta_j$ and $p_j(1+c/n)=1$ for $0\le j\le k$, while $p_{k+1}(0)=0$ and $p_{k+1}(1+c/n)=c/n$.
  Therefore, the corresponding limiting parameter vector is $(\theta_0,\theta_1,\ldots,\theta_k,0)$.
  Applying the nullity convergence~(5.7) established in the proof of Theorem~1.1 to this pair, and using Remark~4.1, we obtain
  \[
  \frac{N_n(c,\bm\theta)}{\binom n{k+1}}
  \xrightarrow{\mathrm p}h_k(c,\bm\theta).
  \]
  Since $0\le N_n(c,\bm\theta)\le\binom n{k+1}$, this convergence also holds in $L^1$.
  Consequently,
  \begin{equation}\label{eq:N_n-hk}
  \lim_{n\to\infty}\frac{\E[N_n(c,\bm\theta)]}{\binom n{k+1}}
  =h_k(c,\bm\theta).
  \end{equation}
  The same conclusion holds for $c=0$, since $Y_n(0)$ has no $(k+1)$-simplices almost surely and hence
  \[
  \frac{\E[N_n(0,\bm\theta)]}{\binom n{k+1}}
  =\frac{\E[f_k(K_n(\bm1)\setminus K_n(\bm\theta))]}{\binom n{k+1}}
  =1-\prod_{j=0}^k\theta_j^{\binom{k+1}{j+1}}
  =h_k(0,\bm\theta).
  \]
  The last equality follows directly from~(4.1)--(4.2) with $c=0$.

  Finally, using
  \[
  \bigl|\E[N_n(c,\bm\theta)]-\E[N_n(c',\bm\theta')]\bigr|
  \le\E[|N_n(c,\bm\theta)-N_n(c',\bm\theta')|]
  \]
  and letting $n\to\infty$ in~\eqref{eq:row-column}, we obtain~(6.2) from~\eqref{eq:N_n-hk}.
\end{proof}

\subsection{Proof of Lemma~6.4}
\begin{proof}
  Finite intersections of the quadrants $Q(x,y)$ are again quadrants, and inclusion--exclusion expresses the masses of their set differences in terms of quadrant masses.
  Thus, the convergence of the mean measures and the almost sure assertion follow from~\cite[Proposition~3.4 and Corollary~A.3]{HST2018}.

  For the assertion in probability, choose a countable family of $\sfD$-continuity sets of the form $Q(x,y)$, $0<x<y<\infty$, such that convergence of their masses to the corresponding $\sfD$-masses implies vague convergence to $\sfD$~(a convergence-determining family for $\sfD$); see~\cite[Appendix~A]{HST2018}.
  Vague convergence of the mean measures gives $c_{x,y}=\sfD(Q(x,y))$ for every quadrant in this family.
  The assumed convergence in probability allows us to extract from any subsequence a further subsequence along which all these quadrant masses converge almost surely simultaneously.
  By the choice of the family, $\sfD_n$ converges vaguely to $\sfD$ almost surely along this further subsequence.
  The subsequence characterization of convergence in probability proves the assertion.

  Finally, suppose that $(x,y)\mapsto c_{x,y}$ is continuous.
  Fix $0<x<y<\infty$ and $0<\eps<\min\{x,(y-x)/2\}$.
  Since
  \[
  \overline{Q(x-\eps,y+\eps)}
  \subset Q(x,y)^\circ
  \subset\overline{Q(x,y)}
  \subset Q(x+\eps,y-\eps)^\circ,
  \]
  the Portmanteau inequalities for vague convergence, applied to the mean measures, give
  \[
  c_{x-\eps,y+\eps}
  \le\sfD(Q(x,y)^\circ)
  \le\sfD(\overline{Q(x,y)})
  \le c_{x+\eps,y-\eps}.
  \]
  Letting $\eps\downarrow0$ and using continuity yields $\sfD(Q(x,y))=c_{x,y}$.
\end{proof}

\subsection{Proof of Corollary~1.7}
\begin{proof}
  Fix $p\in[1,\infty)$.
  Since all norms on $\R^2$ are equivalent, the corresponding optimal transport distances are bounded by constant multiples of one another.
  It therefore suffices to use the Euclidean norm in the definition of $\OT_p$, which we do throughout this proof.
  All death times in $\tilde\cX_n$ are finite, and Theorem~1.4 gives $\sfD_k(\Dl\setminus\Dl_\fin)=0$.
  We therefore regard $\sfD_{k,n}$ and $\sfD_k$ as measures on $\Dl_\fin$.
  Every function in $C_c(\Dl_\fin)$ extends by zero to a function in $C_c(\Dl)$, so Theorem~1.3 implies vague convergence on $\Dl_\fin$, both in probability and for the mean measures.
  Moreover, Corollary~1.6 gives
  \begin{equation}\label{eq:Pers_p-L1}
  \Pers_p(\sfD_{k,n})\xrightarrow{L^1}\Pers_p(\sfD_k)<\infty.
  \end{equation}

  The Euclidean distance from $(x,y)\in\Dl_\fin$ to the diagonal equals $c(y-x)$, where $c:=1/\sqrt2$.
  The $p$th total persistence in the sense of~\cite{DL2021} is therefore $c^p\Pers_p$, and~\eqref{eq:Pers_p-L1} gives its convergence as well.
  By~\cite[Theorem~3.7]{DL2021}, convergence in $\OT_p$ is equivalent to vague convergence together with convergence of the $p$th total persistence.
  From any subsequence, we may extract a further subsequence along which both of these convergences hold almost surely, and the cited characterization gives convergence in $\OT_p$ almost surely along it.
  The subsequence characterization of convergence in probability therefore yields
  \[
  \OT_p(\sfD_{k,n},\sfD_k)
  \xrightarrow{\mathrm p}0.
  \]

  To upgrade this to convergence in $L^p$, consider the admissible transport plan $\gm$ that sends all mass of $\sfD_{k,n}$ to the diagonal and supplies all mass of $\sfD_k$ from the diagonal, using orthogonal projection in both cases.
  Explicitly,
  \[
  \gm:=(z\mapsto(z,P_\del z))_\#\sfD_{k,n}+(z\mapsto(P_\del z,z))_\#\sfD_k.
  \]
  Here, $P_\del(x,y):=((x+y)/2,(x+y)/2)$ denotes the orthogonal projection onto the diagonal.
  Its cost gives
  \[
  \OT_p(\sfD_{k,n},\sfD_k)^p
  \le c^p\bigl\{\Pers_p(\sfD_{k,n})+\Pers_p(\sfD_k)\bigr\}.
  \]
  By~\eqref{eq:Pers_p-L1}, the right-hand side converges in $L^1$ and is therefore uniformly integrable.
  Hence, so are the random variables $\OT_p(\sfD_{k,n},\sfD_k)^p$, which converge to zero in probability, and consequently
  \[
  \E\bigl[\OT_p(\sfD_{k,n},\sfD_k)^p\bigr]\longrightarrow0.
  \]

  Finally, integration against the mean measure commutes with taking expectations for nonnegative integrands.
  Together with~\eqref{eq:Pers_p-L1}, this yields
  \[
  \Pers_p\bigl(\E[\sfD_{k,n}]\bigr)
  =\E\bigl[\Pers_p(\sfD_{k,n})\bigr]
  \longrightarrow\Pers_p(\sfD_k).
  \]
  Together with vague convergence of the mean measures on $\Dl_\fin$, another application of~\cite[Theorem~3.7]{DL2021} proves the second assertion.
\end{proof}

\bibliographystyle{spr-ims-number}
\bibliography{references}